\documentclass[11pt, preprint]{imsart}
\RequirePackage[OT1]{fontenc}
\RequirePackage{amsthm,amsmath}
\RequirePackage[colorlinks,citecolor=blue,urlcolor=blue]{hyperref}
\usepackage{graphicx}
\usepackage{enumerate}
\usepackage{latexsym}
\usepackage{amssymb}
\usepackage{multirow}
\usepackage{float}
\usepackage{bm}
\usepackage{soul}
\usepackage{paralist}
\usepackage{mathabx}
\usepackage{natbib}
\usepackage{commath}
\usepackage{subcaption}
\usepackage{soul,xcolor}
\usepackage{algorithm,algorithmic}
\usepackage[inline]{enumitem}
\usepackage[figuresright]{rotating}
\usepackage{pifont}
\usepackage{threeparttable}
{\end{quotation}}
\topmargin=-0.1in
\oddsidemargin=0.1in \evensidemargin=0.1in \textwidth=6.4in
\textheight=8.8in
\usepackage[normalem]{ulem}

\DeclareMathOperator*{\argmin}{argmin}

\numberwithin{equation}{section}

\def \mcF{\mathcal{F}}

\def \mcH{\mathcal{H}}

\def \mcCS{\mathcal{C}\mathcal{S}}
\def \mcG{\mathcal{G}}
\def \mcD{\mathcal{D}}
\def \mcRn{\mathcal{R}_n}

\def \mcN{\mathcal{N}}
\def \mcP{\mathcal{P}}

\def \bfgamma{\bm{\gamma}}

\setstcolor{red}

\def \bfx{\mathbf{x}}
\def \bfD{\mathbf{D}}
\def \bfg{\mathbf{g}}
\def \bfX{\mathbf{X}}
\def \bfx{\mathbf{x}}
\def \bfZ{\mathbf{Z}}
\def \bfz{\mathbf{z}}
\def \bfd{\mathbf{d}}
\def \bft{\mathbf{t}}
\def \bfh{\mathbf{h}}
\def \bfp{\mathbf{p}}
\def \bfV{\mathbf{V}}
\def \bfa{\mathbf{a}}
\def \bftheta{\bm{\theta}}
\def \bfb{\mathbf{b}}
\def \bfy{\mathbf{y}}
\def \bfD{\mathbf{D}}
\def \bfB{\mathbf{B}}

\def \bff{\mathbf{f}}

\def \bfa{\mathbf{a}}
\def \bfsig{\bm{\sigma}}
\def \bfb{\mathbf{b}}

\def \ev{\mathbb{E}}
\def \pr{\mathbb{P}}
\def \bfW{\mathbf{W}}
\def \bff{\mathbf{f}}

\def \bfA{\mathbf{A}}

\def \bft{\mathbf{t}}
\def \bfv{\mathbf{v}}

\def \bfR{\mathbf{R}}
\def \cid{\xrightarrow[\text{}]{\text{$\mathbb{L}$}}}
\def \cip{\xrightarrow[\text{}]{\text{$\mathbb{P}$}}}
\newcommand{\vo}{\vec{o}\@ifnextchar{^}{\,}{}}

\newcommand{\floor}[1]{\left\lfloor #1 \right\rfloor}
\startlocaldefs

\theoremstyle{plain}

\newtheorem{theorem}{\indent Theorem}
\newtheorem*{theorem*}{\indent Theorem}

\newtheorem{Assumption}{Assumption}

\newtheorem{Definition}{Definition}

\newtheorem{lemma}{\indent Lemma}

\theoremstyle{definition}
\newtheorem{Example}{Example}
\DeclareSymbolFont{largesymbolsA}{U}{txexa}{m}{n}
\DeclareMathSymbol{\varprod}{\mathop}{largesymbolsA}{16}
\usepackage{xparse}
\NewDocumentCommand{\ceil}{s O{} m}{%
    {#2\lceil#3#2\rceil} 
}
\DeclareFontFamily{U}{mathx}{\hyphenchar\font45}
\DeclareFontShape{U}{mathx}{m}{n}{
      <5> <6> <7> <8> <9> <10>
      <10.95> <12> <14.4> <17.28> <20.74> <24.88>
      mathx10
      }{}
\DeclareSymbolFont{mathx}{U}{mathx}{m}{n}
\DeclareMathSymbol{\bigtimes}{1}{mathx}{"91}

\makeatletter\def\theequation{\arabic{section}.\arabic{equation}}
\@addtoreset{equation}{section}\makeatother

\endlocaldefs
\pagestyle{plain}

\begin{document}
\setstcolor{red}
\title{On Deep Instrumental Variables Estimate}

\runtitle{Deep IV}
	\begin{aug}
		\author{\fnms{Ruiqi} \snm{Liu}\thanksref{m1}\ead[label=e4]{liuruiq@iu.edu}},
		\author{\fnms{Zuofeng} \snm{Shang}\thanksref{k1}\ead[label=e1]{zuofeng.shang@njit.edu}},
		\and
		\author{\fnms{Guang} \snm{Cheng}\thanksref{k2}
			\ead[label=e3]{chengg@purdue.edu}}
        \runauthor{Liu et al.}
         \thankstext{m1}{Department of Mathematical Sciences, Indiana University - Purdue University Indianapolis, IN 46202, USA.}
          \thankstext{k1}{Department of Mathematical Sciences, New Jersey Institute of Technology, NJ 07102, USA.}
          \thankstext{k2}{Department of Statistics, Purdue University, IN 47907, USA.}
	\end{aug}
%
%
%
%
%
%

\begin{center}
\textit{Working Paper Version}  \today
\end{center}
\maketitle

 \begin{center}
\textbf{Abstract}
\end{center}
The endogeneity issue is fundamentally important as many empirical applications may suffer from the omission of explanatory variables, measurement error, or simultaneous causality. Recently, \cite{hllt17} propose a ``Deep Instrumental Variable (IV)" framework based on deep neural networks to address endogeneity, demonstrating superior performances than existing approaches. The aim of this paper is to theoretically understand the empirical success of the Deep IV. Specifically, we consider a two-stage estimator using deep neural networks in the linear instrumental variables model. By imposing a latent structural assumption on the reduced form equation between endogenous variables and instrumental variables, the first-stage estimator can automatically capture this latent structure and converge to the optimal instruments at the minimax optimal rate, which is free of the dimension of instrumental variables and thus mitigates the curse of dimensionality. Additionally, in comparison with classical methods, due to the faster convergence rate of the first-stage estimator, the second-stage estimator has {a smaller (second order) estimation error} and requires a weaker condition on the smoothness of the optimal instruments. Given that the depth and width of the employed deep neural network are well chosen, we further show that the second-stage estimator achieves the semiparametric efficiency bound. Simulation studies on synthetic data and application to automobile market data confirm our theory.

\noindent \textbf{Keywords:} Deep Learning, Efficiency Bound, Endogeneity, Instrumental Variables, Semiparametric Model.
\begin{center}
\textbf{\newpage}
\end{center} 
\section{Introduction}
Endogeneity is a common issue in empirical studies and naturally arises from simultaneous causality, omitted variables, or measurement errors (\citealp{tbr08,a85,y96,m03}).
In the presence of endogeneity, the ordinary least squares (OLS) estimator is known to be inconsistent. One signature tool in addressing the endogeneity issue is to use the so-called two-stage least squares (2SLS) procedure by introducing instrumental variables (IV), as widely used in the literature (\citealp{ak91,ss97,pc06,ll12}). However, the 2SLS estimator is generally inefficient if the reduced form equation between instrumental variables and endogenous variables is not linear. Therefore, as pointed out by \cite{a74} and \cite{n90}, to obtain an efficient estimator, one needs to find the optimal IVs, which involves estimating a set of unknown functions. 

Various non/semiparametric approaches have been proposed to estimate the optimal IVs with guaranteed efficiency (\citealp{a74,n90,npv99,np03}); nonetheless, they could suffer from the curse of dimensionality in the presence of {\em many} IVs. To overcome this difficulty, \cite{bcch12} assume that the optimal IVs can be approximated by a series of basis and propose a lasso-like algorithm to estimate the optimal instruments. Following \cite{bcch12}, \cite{fz18} impose an additive structure assumption on the optimal IVs, which yields dimension-free results, whereas their estimator could be inefficient when the additive structure fails to hold. This motivates \cite{hllt17} to propose a very flexible deep learning framework, named as Deep IV, under which impressive empirical performances are demonstrated even when IVs are high-dimensional and the optimal IVs are of complex structures. Based on the same framework, \cite{bennett2019deep} further propose a Deep Generalized Method of Moments (Deep GMM) for the IV analysis, {while \cite{flm18} use a two-stage estimator based on neural networks to conduct inferences on the treatment effect.} As discussed in \cite{hllt17}, in comparison with Deep IV, the classical series or kernel estimation (e.g., \citealp{np03, bck07, cp12, bcch12}) is computationally intractable in high-dimensional feature spaces. However, theoretical understandings on the benefits of the use of deep neural networks in the IV analysis are still missing. 

{The present work aims to theoretically explain the empirical success of  the Deep IV framework.} For simplicity of presentation, we mainly consider the linear regression model with endogenous predictors and observable IVs. In the first stage, using the IVs as the regressors and endogenous variables as the responses, the optimal IVs are estimated by a fully connected \textit{rectifier linear unit} (ReLU) deep neural network (DNN). By imposing a general compositional structure assumption on the optimal IVs, we derive the convergence rate for the proposed neural network estimator, which is free of the dimension of IVs, as either depth, width, or both diverge. In particular, the derived rate is minimax optimal as long as the product of depth and width for the neural network is greater than the number of IVs and is a polynomial order of the sample size. In practice, the implementation of DNN does not explicitly rely on the imposed structure assumption, i.e., latent compositional structure, unlike additive {or linear} regression. As a side remark, the choices of depth and width have different impacts on the numerical optimization in learning the neural network. Specifically, to obtain the optimal convergence rate, a very deep network is more ``economical" in terms of the number of parameters to be learned; on the other hand it faces the challenge of the vanishing gradient issue in comparison with shallow neural networks.

In the second stage, we perform the least squares estimation for the linear coefficients based on the estimated optimal IVs in the first stage. If the product of depth, width, and IV dimension is of a polynomial order of sample size, the estimator is proven to be asymptotically normal and achieve the semiparametric efficiency bound (\citealp{bickel1993efficient}) as either depth, width, or both diverge. Moreover, by taking advantage of the faster convergent neural network estimate, the second-stage estimate not only has {a smaller (second order) estimation error (in the sense of \cite{cheng2008general} to be explained later)} but also requires a weaker condition on the smoothness of the optimal IVs when compared with classical methods. Specifically, using polynomial spline basis, the series approach in \cite{bcch12} requires that the smoothness degree of the optimal IVs should be greater than half of the IV dimension. In contrast, our results hold as long as the {\em intrinsic} smoothness degree of the optimal IVs is greater than half of their {\em intrinsic} dimension, which will be proven to be weaker. {To be more concrete, we present a scenario where our estimator outperforms some widely used estimators in the literature; see Example \ref{example:5}.} 

Several extensions can be made based on the above theoretical results. To be more specific, we propose a sample-split estimator to remove the requirement on the intrinsic smoothness and intrinsic dimension of the optimal IVs. A specification testing procedure is also proposed to test the validity of the instrumental variables based on deep neural network. Finally, we consider an extended model containing both endogenous and exogenous variables.

{Recently, a number of researchers study deep neural networks from nonparametric perspective. To name a few, \cite{bk19} and \cite{s19} use sparse neural network in the nonparametric regression setting, while \cite{kim2018fast} study the problem of classification based on sparse network. Recently,  \cite{mk19} extend the result in \cite{bk19} to very deep (diverging depth) fully connected neural network with fixed width. Our IV estimator is a fully connected neural network with either depth, width, or both being diverging. This more flexible network structure, which covers the network architecture in \cite{mk19}, avoids the selection of sparsity parameter in practice and does not need to impose a truncation parameter to bound the neural network estimation, in contrast with \cite{bk19, s19, mk19}. Our theoretical results are derived by extending the recent neural network approximation theory in \cite{lsyz20} from Sobolev space to H\" older space.}


This paper is organized as follows. Section \ref{sec:model}
reviews mathematical formulation of linear IV model. Section \ref{sec:DIVE} describes the Deep IV estimation procedure. Section \ref{sec:asymptotics} provides the asymptotic results of the proposed estimator and its advantages over the existing approaches. {Section \ref{sec:extra:results} provides some additional inferential procedures based on deep neural network.} Section \ref{sec:simulation} compares the finite-sample performance of our estimator and some completing approaches
through a simulation study. Section \ref{sec:empirical} applies the proposed procedure to a real-world data set to study the connection between automobile sales and price.  All the mathematical proofs are deferred to the Appendix. 

\section{Linear Instrumental Variable Model}\label{sec:model}
Consider i.i.d. observations $\{Y_i, \bfX_i\}_{i=1}^n$ generated from the following linear regression model:
\begin{equation}\label{eq:model:1}
	Y=\beta_0^\top\bfX+\epsilon, 
\end{equation}
where $Y\in \mathbb{R}$ is the response variable, $\bfX=(X_{1},\ldots, X_{q})^\top\in \mathbb{R}^q$ is the vector of explanatory variables, $\epsilon$ is the random
noise, and $\beta_0\in \mathbb{R}^q$ is the vector of unknown coefficients. The explanatory variables $X_{1}, \ldots, X_{q}$ are assumed to be endogenous in the sense that 
\begin{eqnarray*}
 \textrm{Cov}(\epsilon, X_{s})\neq 0,\quad \textrm{ for all } s=1,\ldots, q.
\end{eqnarray*}
{In order to consistently estimate $\beta_0$, one set  of instrumental variables $\bfZ=(Z_{1},\ldots, Z_{d})^\top \in\mathbb{R}^d$ with i.i.d. observations $\{\bfZ_i\}_{i=1}^n$ is introduced as follows (e.g., see \citealp{wooldridge2008introductory}):}
\begin{equation}\label{eq:IV:Z}
	\ev(\epsilon| Z_{j})=0,\quad \textrm{ for all } j=1,\ldots, d.
\end{equation}

In view of (\ref{eq:model:1}) and (\ref{eq:IV:Z}), one set of unconditional moment equations to identify $\beta_0$ is
\begin{equation}\label{eq:unconditional:moment:condition}
	\ev[(Y-\beta_0^\top\bfX)\bfh(\bfZ)]=0,\quad \textrm{ for all $q$-dimensional vector-valued measurable function } \bfh.
\end{equation}
Define a collection of optimal IVs: $$\bfh_{\textrm{opt}}(\bfz):=(f_{0,1}(\bfz),\ldots, f_{0,q}(\bfz))^\top\;\;\mbox{with}\;\;f_{0,s}(\bfz):=\ev(X_{s} | \bfZ=\bfz).$$ By setting $\bfh(\bfz)=\bfh_{\textrm{opt}}(\bfz)$ in (\ref{eq:unconditional:moment:condition}), the corresponding method of moments estimator solved through (\ref{eq:empirical:moment:condition}) can be proven to be semiparametric efficient (see \citealp{a74} and \citealp{n90}):
\begin{equation}\label{eq:empirical:moment:condition}
	\frac{1}{n}\sum_{i=1}^n (Y_i-\beta^\top\bfX_i)\widehat{\bfh}_{\textrm{opt}}(\bfZ_i)=0,
\end{equation}
where $\widehat{\bfh}_{\textrm{opt}}$ is an estimate for ${\bfh}_{\textrm{opt}}$. 

This above idea has been exhaustively studied based on various forms of $\widehat{\bfh}_{\textrm{opt}}$. For example, \cite{n90}  uses both k-nearest-neighbors and series approximation methods to estimate ${\bfh}_{\textrm{opt}}$. {As pointed out by \cite{cheng2008general} in a general semiparametric setting, the convergence rate of  $\widehat{\bfh}_{\textrm{opt}}$ to ${\bfh}_{\textrm{opt}}$ plays an important role in estimating $\beta_0$: the second order error in estimating $\beta_0$ based on (\ref{eq:empirical:moment:condition}) is smaller when $\widehat{\bfh}_{\textrm{opt}}$ converges at a faster rate.} Moreover, in order to ensure semiparametric efficiency for estimating $\beta$, $\widehat{\bfh}_{\textrm{opt}}$ is required to converge to $\bfh_{\textrm{opt}}$ at a sufficiently fast rate. However, when the dimension of $\bfZ$ is large, the convergence rate is often slow due to the curse of dimensionality. To address this issue, \cite{bcch12} impose a sparsity assumption that ${\bfh}_{\textrm{opt}}$ can be approximated by a functional series and then estimated by a lasso-type series estimator. Following \cite{bcch12}, \cite{fz18} assume that each element of $\bfh_{\textrm{opt}}$, namely $f_{0,s}$, follows an additive model of $Z_1,\ldots, Z_d$. However, if $\bfh(\bfz)$ has interaction terms, this method could lead to an inefficient estimator due to model misspecification. Recently, \cite{hllt17} consider the pure nonparametric regression $Y=g(\bfX)+\epsilon$ with $\bfX$ being endogenous. They propose the Deep IV procedure based on two deep nerual networks to estimate the underlying regression function. A followup work of Deep IV is Deep GMM for instrumental variables analysis by \cite{bennett2019deep}. Both Deep IV and Deep GMM demonstrate very impressive empirical performances but with very limited theoretical investigation. The aim of our work is to understand the theoretical benefits of the use of deep neural networks in instrumental variables analysis.

{\bf Notation:} Let $\|\bfv\|_2^2=\bfv^\top\bfv$ denote the Euclidean norm of the vector $\bfv$. Let $\cid$ and $\cip$ denote the convergence in distribution and convergence in probability, respectively. For two sequence $a_n$ and $b_n$, we say $a_n\asymp b_n$ if $c^{-1} a_n\leq b_n\leq ca_n$ for some constant $c>1$ and all  sufficiently large $n$. For each observation,  let $X_{i1},\ldots, X_{iq}$ and $Z_{i1}, \ldots, Z_{id}$ be the elements of $\bfX_i$ and $\bfZ_i$. For any $f:\mathbb{R}^d \to \mathbb{R}$, define the $L^2$-norm $\|f\|^2=\ev(f^2(\bfZ))$ and its
empirical counter part $\|f\|_n^2=n^{-1}\sum_{i=1}^nf^2(\bfZ_i)$. For $a>0$, let $\floor{a}$ denote the largest integer strictly less than $a$ and $\ceil{a}=\floor{a}+1$.  We say $(L, W)\to \infty$, if ether $L$, $W$ or both diverge. 

\section{Deep Instrumental Variables Estimation}\label{sec:DIVE}

In this section, we first review the setup for fully connected neural networks. 
Let $\sigma$ denote the ReLU activation function,
i.e., $\sigma(x):=(x)_+$ for $x\in\mathbb{R}$. For any $r$-dimensional real vectors $\bfv=(v_1,\ldots,v_r)^T$ and $\bfa=(a_1,\ldots,a_r)^T$, define the shift activation function
$\bfsig_\bfv(\bfa)=(\sigma(a_1-v_1),\ldots,\sigma(a_r-v_r))^T$. Moreover, a vector-valued function $\bff: \mathbb{R}^d \to \mathbb{R}^q$ is a fully connected deep neural network with depth $L$ and width $W$, if it has the following expression:
\begin{eqnarray}
	\bff(\bfz)=\bfv_{L+1}+\bfA_{L+1}\bfsig_{\bfv_{L}}\circ \bfA_{L}\bfsig_{\bfv_{L-1}}\circ\ldots \circ \bfA_2\bfsig_{\bfv_1}\circ \bfA_1 \bfz,\quad \quad \textrm{ for }\bfz \in \mathbb{R}^{d},\label{eq:def:dnn}
\end{eqnarray}
where $\bfv_{L+1}\in \mathbb{R}^q$ and $\bfv_{l}\in \mathbb{R}^{W}$ for $l=1,\ldots, L$ are the shift vectors, $\bfA_{1}\in \mathbb{R}^{W\times d}, \bfA_{L+1}\in \mathbb{R}^{q\times W}$ and $\bfA_{l}\in \mathbb{R}^{W\times W}$ for $l=2,\ldots, L$ are the weight matrices. Finally, we denote $\mcF_{d,q}(L, W)$ as the collection of fully connected deep neural networks with depth $L$, width $W$, $d$-dimensional input and $q$-dimensional output.

We propose a two-stage estimation procedure based on the fully connected neural network. The first stage is to construct a DNN estimate as $\widehat{\bfh}_{\textrm{opt}}$: 
\begin{equation}\label{eq:estimator:step:1}
	\widehat{\bff}:=\argmin_{\substack{\bff\in \mcF_{d,q}(L, W)}}\frac{1}{n}\sum_{i=1}^n\|\bfX_i-\bff(\bfZ_i)\|_2^2,
\end{equation}
where the elements of $\widehat{\bff}$ can be written as $(\widehat{f}_1, \ldots, \widehat{f}_q)^\top$. Correspondingly, we define $\widehat{\bfX}_i:=\widehat{\bff}(\bfZ_i)=(\widehat{f}_1(\bfZ_i),\ldots, \widehat{f}_q(\bfZ_i))^\top\in \mathbb{R}^q$ for $i=1,\ldots, n$. As will be shown in Theorem~\ref{theorem:rate:deep}, the DNN estimation procedure is able to capture the intrinsic structure of ${\bfh}_{\textrm{opt}}$ without explicitly using the prior information of its compositional structure (to be specified later). The second stage is to construct an estimator of $\beta_0$ in an OLS manner:
\begin{equation}\label{eq:estimator:step:2}
	\widehat{\beta}=\bigg(\frac{1}{n}\sum_{i=1}^n\widehat{\bfX}_i\bfX_i^\top\bigg)^{-1}\frac{1}{n}\sum_{i=1}^n\widehat{\bfX}_i Y_i.
\end{equation}
Intuitively, if the DNN estimators $\widehat{f}_1, \ldots, \widehat{f}_q$ are close to the ground truth $f_{0,1},\ldots, f_{0, q}$ enough, the second-stage estimator will be also close to the ``oracle''  estimator obtained by using the ground truth in (\ref{eq:estimator:step:2}), which is known to achieve semiparametric efficiency. 

It is worth mentioning that the optimization problem in (\ref{eq:estimator:step:1}) is unconstrained, and it is usually solved by the Stochastic Gradient Descent (SGD) algorithm or its variants. However, the neural network estimators proposed by \cite{bk19} and \cite{mk19}  are truncated by a threshold parameter, while \cite{s19} needs to solve a constrained optimization problem requiring the estimator is bounded by some predetermined constant. Therefore, our estimator is practically convenient and avoids the issue of choosing all inds of hyper-parameters. Furthermore, the neural networks considered in \cite{bk19} and \cite{s19} are both sparse; namely, some of the weights should be zero. In practice, how to determine the sparsity is difficult. Recently, \cite{mk19} extend their earlier work \cite{bk19} to fully connected networks, but require $W$ to be fixed. Rather, our estimator in (\ref{eq:estimator:step:1}) allows either $L$, $W$, or both to diverge, which is more practically flexible. 

\section{Asymptotic Theory}\label{sec:asymptotics}
In this section, we develop rate of convergence for $\widehat{\bff}$ and asymptotic distribution for $\widehat{\beta}$.
\subsection{Rate of Convergence}
We begin with the definitions of H\" older smooth function and a class of multivariate functions with a compositional structure.
\begin{Definition}\label{Definition:p:M:smooth}
A function $g: \mathbb{R}^d \to \mathbb{R}$ is said to be $(p, C)$-H\" older smooth for some positive constants $p$ and $C$, if for every $\bm{\gamma}=(\gamma_1, \ldots, \gamma_d)\in \mathbb{N}^d$ the following two conditions hold:
\begin{eqnarray}
	\sup_{\bfz\in \mathbb{R}^d}\bigg|\frac{\partial^{|\bm{\gamma}|}g}{\partial z_1^{\gamma_1}\ldots \partial z_1^{\gamma_d}}(\bfz)\bigg|\leq C, \quad	 \textrm{ for all  } |\bm{\gamma}|\leq \floor{p},\nonumber
\end{eqnarray}
and
\begin{eqnarray}
	\bigg|\frac{\partial^{|\bm{\gamma}|}g}{\partial z_1^{\gamma_1}\ldots \partial z_1^{\gamma_d}}(\bfz)-\frac{\partial^{|\bm{\gamma}|}g}{\partial z_1^{\gamma_1}\ldots \partial z_1^{\gamma_d}}(\widetilde{\bfz})\bigg|\leq C\|\bfz-\widetilde{\bfz}\|_2^{p-\floor{p}}, \quad	 \textrm{ for all  } |\bm{\gamma}|=\floor{p} \textrm{ and } \bfz,\widetilde{\bfz}\in \mathbb{R}^d.\nonumber
\end{eqnarray}
Here  $|\bfgamma|=\sum_{i=1}^d\gamma_d$.  For convenience, we say $g$ is $(\infty, C)$-H\" older smooth convenience if $g$ is $(p, C)$-H\" older smooth for all $p>0$.  
\end{Definition}

\begin{Definition}\label{Definition:Convolution:f0}
A function $f: \mathbb{R}^d \to \mathbb{R}$ is said to have a compositional structure with parameters $(L_*, \bfd, \bft, \bfp, \bfa, \bfb, C)$ for $L_*\in \mathbb{Z}_+$, $\bfd=(d_0, \ldots, d_{L_*+1})\in \mathbb{Z}_+^{L_*+2}$ with $d_0=d, d_{L_*+1}=1$, $\bft=(t_0,\ldots, t_{L_*})\in \mathbb{Z}_+^{L_*+1}$, $\bfp=(p_0,\ldots, p_{L_*})\in \mathbb{R}_+^{L_*+1}$, $\bfa=(a_0,\ldots, a_{L_*+1}), \bfb=(b_0,\ldots, b_{L_*+1})\in \mathbb{R}^{L_*+2}$ and $C\in \mathbb{R}_+$, if 
\begin{equation*}
	f(\bfz)=\bfg_{L_*}\circ\ldots\circ \bfg_1 \circ \bfg_0(\bfz),\quad\quad \textrm{ for all } \bfz\in [a_0, b_0]^d
\end{equation*}
where $\bfg_i=(g_{i,1},\ldots, g_{i,d_{i+1}})^\top: [a_i, b_i]^{d_i}\to [a_{i+1}, b_{i+1}]^{d_{i+1}}$ for some $|a_i|, |b_i|\leq C$ and the functions $g_{i,j}: [a_i, b_i]^{t_i} \to [a_{i+1}, b_{i+1}]$ are $(p_i, C)$-H\" older smooth only relying on $t_i$ variables. We denote $\mcCS(L_*, \bfd, \bft, \bfp,\bfa, \bfb, C)$ as the class of functions defined above
\end{Definition}
Definition \ref{Definition:p:M:smooth} characterizes the smoothness of the regression function which is commonly used in the nonparametric literature (see \citealp{gkkw06,h03}). 
Definition \ref{Definition:Convolution:f0} requires that the function $f$ is a composition of $L_*+1$ {layers} with each {layer} being a vector-valued multivariate function which demonstrates a local connectivity structure.  
Such a compositional structure, also adopted by \cite{bk19}, \cite{s19} and \cite{mk19}, 
is naturally motivated from the structure of the neural network. The functions $g_{i,j}$'s can be viewed as  hidden features of the target function $f$, which makes up more complex features $g_{i+1,j}$'s in the next layer. One essence of deep neural network is to learn these hidden features from the data (\citealp{zf14}).

It is worthwhile to discuss the connection between Definitions \ref{Definition:p:M:smooth} and \ref{Definition:Convolution:f0}. For this purpose, we define the following two important quantities:
\begin{eqnarray}
	p^*=p_{i^*}^*\quad \textrm{ and }\quad t^*=t_{i^*},\nonumber
\end{eqnarray}
where $p_i^*=p_i\prod_{s=i+1}^{L_*}(p_s\wedge 1)$ for $i=0,\ldots, L_*$,
and $i^*=\argmin_{0\leq i \leq L_*}p_i^*/t_i$.
We will adopt the convention $\prod_{s=L_*+1}^{L_*}(p_s\wedge 1)=1$ for convenience. Similar to \cite{bk19} and \cite{s19}, $p^*$ and $t^*$ can be interpreted as the intrinsic smoothness and intrinsic dimension of a function satisfying Definition \ref{Definition:Convolution:f0}, and $t^*$ tends to be  smaller than the input dimension $d$ in several important models, as seen from examples below. 

It is not difficult to verify that a $(p, C)$-H\" older smooth function has a trivial compositional structure with $L_*=0$. On the other hand, the following lemma indicates that the functions with a compositional structure are also H\" older smooth.
\begin{lemma}\label{lemma:pc:smooth:degree}
\begin{enumerate}[label={(\roman*}),ref={(\roman*})]
\item \label{lemma:pc:smooth:degree:result:1} Suppose $g_1: \mathbb{R}^{q} \to \mathbb{R}$ is $(p_1, C)$-H\" older smooth and $\bfg_2=(g_{21},\ldots, g_{2q}): \mathbb{R}^{d} \to \mathbb{R}^q$ with  $g_{2i}$'s are all $(p_2, C)$-H\" older smooth, then degree of H\" older smoothness of $g_1\circ \bfg_2$ is $\min\{p_1p_2, p_1, p_2\}$.
\item  \label{lemma:pc:smooth:degree:result:2}  If $f\in \mcCS(L_*, \bfd, \bft, \bfp,\bfa, \bfb, C)$, then the degree of H\" older smoothness of $f$ is $p_H\leq p^*$. 
\end{enumerate}
\end{lemma}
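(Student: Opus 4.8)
The plan is to prove part~\ref{lemma:pc:smooth:degree:result:1} by a case analysis on whether $p_1$ and $p_2$ exceed $1$, and then to derive part~\ref{lemma:pc:smooth:degree:result:2} by iterating this two-function rule through the $L_*+1$ layers of the compositional structure. Throughout, all functions live on bounded boxes, so the H\"older conditions of Definition~\ref{Definition:p:M:smooth} are read as restricted to those boxes (constants may change, exponents do not). Put $\rho:=\min\{p_1p_2,p_1,p_2\}$. If $p_1\le1$, then $g_1$ is only $(p_1,C)$-H\"older continuous, so $|g_1(\bfg_2(\bfz))-g_1(\bfg_2(\widetilde{\bfz}))|\le C\|\bfg_2(\bfz)-\bfg_2(\widetilde{\bfz})\|_2^{p_1}$; bounding $\|\bfg_2(\bfz)-\bfg_2(\widetilde{\bfz})\|_2$ by $C\sqrt{q}\,\|\bfz-\widetilde{\bfz}\|_2^{p_2}$ when $p_2\le1$, and by a Lipschitz estimate $C'\|\bfz-\widetilde{\bfz}\|_2$ when $p_2>1$ (since then the first-order partials of each $g_{2i}$ are bounded by $C$), gives the exponent $\min\{p_1p_2,p_1\}=\rho$. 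If $p_1>1$ and $p_2\le1$, then $g_1$ is Lipschitz and the same estimate yields exponent $p_2=\rho$.

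The remaining and main case is $p_1>1$, $p_2>1$, which I expect to be the principal obstacle because of the combinatorial bookkeeping. Set $m:=\floor{\rho}=\min\{\floor{p_1},\floor{p_2}\}$. I would apply the multivariate Fa\`a di Bruno formula: for a multi-index $\bm{\alpha}$ with $1\le|\bm{\alpha}|\le m$, $\partial^{\bm{\alpha}}(g_1\circ\bfg_2)$ is a finite sum of terms of the form $c\,(\partial^{\bm{\gamma}}g_1)(\bfg_2(\cdot))\prod_{\ell}(\partial^{\bm{\beta}_\ell}g_{2,i_\ell})(\cdot)$, where $\sum_\ell\bm{\beta}_\ell=\bm{\alpha}$, each $|\bm{\beta}_\ell|\ge1$, and $|\bm{\gamma}|$ is the number of factors; hence $|\bm{\gamma}|\le|\bm{\alpha}|$ and $|\bm{\beta}_\ell|\le|\bm{\alpha}|$. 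Since $|\bm{\alpha}|\le m\le\floor{p_1}$ and $|\bm{\alpha}|\le m\le\floor{p_2}$, every factor is bounded by $C$, so all partials of $g_1\circ\bfg_2$ up to order $m$ are bounded. For $|\bm{\alpha}|=m$ I would check that each factor is, on the box, $(\rho-m,C')$-H\"older continuous: if $|\bm{\beta}_\ell|<\floor{p_2}$ then $\partial^{\bm{\beta}_\ell}g_{2,i_\ell}$ is Lipschitz, while $|\bm{\beta}_\ell|=\floor{p_2}$ forces $m=\floor{p_2}$ and then $\partial^{\bm{\beta}_\ell}g_{2,i_\ell}$ is $(p_2-\floor{p_2},C)$-H\"older with $p_2-\floor{p_2}\ge\rho-m$; likewise $(\partial^{\bm{\gamma}}g_1)(\bfg_2(\cdot))$ is a $(\,\cdot\,,C)$-H\"older function composed with the Lipschitz map $\bfg_2$, hence $(\rho-m,C')$-H\"older (using $\rho-m\in[0,1)$ in each comparison). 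Since a finite sum of products of bounded $(\rho-m,\cdot)$-H\"older functions is again $(\rho-m,\cdot)$-H\"older (as $\rho-m<1$), $\partial^{\bm{\alpha}}(g_1\circ\bfg_2)$ satisfies the order-$m$ H\"older condition, which together with the boundedness of the lower-order partials establishes part~\ref{lemma:pc:smooth:degree:result:1}.

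For part~\ref{lemma:pc:smooth:degree:result:2}, let $\bfh_i:=\bfg_i\circ\cdots\circ\bfg_0$, so $f$ is the single component of $\bfh_{L_*}$, and let $\pi_i$ denote the common H\"older-smoothness degree of the components of $\bfh_i$ (common since the components of $\bfg_i$ all share the exponent $p_i$). Because $g_{i,j}$ depends on only $t_i$ of its arguments, $g_{i,j}\circ\bfh_{i-1}$ is a $(p_i,C)$-H\"older function of $t_i$ of the components of $\bfh_{i-1}$, and the box-to-box ranges in Definition~\ref{Definition:Convolution:f0} make the composition legitimate; part~\ref{lemma:pc:smooth:degree:result:1} then gives $\pi_0=p_0$ and, for $i\ge1$, $\pi_i=\min\{p_i\pi_{i-1},p_i,\pi_{i-1}\}=\min\{(p_i\wedge1)\pi_{i-1},\,p_i\}$. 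A straightforward induction yields $\pi_i=\min_{0\le j\le i}\big(p_j\prod_{s=j+1}^{i}(p_s\wedge1)\big)$: multiplying the inductive hypothesis by $p_i\wedge1$ absorbs one more factor into each existing term, and the index $j=i$ contributes the extra term $p_i$. Setting $i=L_*$ shows the H\"older-smoothness degree of $f$ equals $p_H=\min_{0\le j\le L_*}p_j^*$; since $p^*=p_{i^*}^*$ with $i^*=\argmin_{0\le j\le L_*}p_j^*/t_j$ is just one of the $p_j^*$, we obtain $p_H=\min_{0\le j\le L_*}p_j^*\le p_{i^*}^*=p^*$, which is the assertion.
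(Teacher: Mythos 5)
Your proposal is correct, and for part~\ref{lemma:pc:smooth:degree:result:2} it is essentially the paper's argument: the paper also just iterates the two-function composition rule, writing $\widetilde{p}_i=\widetilde{p}_{i-1}(p_i\wedge 1)\wedge p_i$ for the bottom-up partial compositions (and symmetrically $\bar{p}_i$ for the top-down ones) and deducing $p_H\leq \min_{0\leq i\leq L_*}p_i^*\leq p^*$. Your single forward induction with the closed form $\pi_i=\min_{0\le j\le i}\bigl(p_j\prod_{s=j+1}^{i}(p_s\wedge 1)\bigr)$ is a cleaner packaging of the same computation (the paper keeps only the $j=0$ term of your min and recovers the other terms from the reverse recursion), and your verification that $\min\{p\pi,p,\pi\}=\min\{(p\wedge1)\pi,p\}$ is exactly the identity the paper uses. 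The genuine difference is part~\ref{lemma:pc:smooth:degree:result:1}: the paper does not prove it at all but simply cites \cite{jlt09} for the composition rule, whereas you supply a self-contained proof via the case split on $p_1,p_2\lessgtr 1$ and, in the main case, the multivariate Fa\`a di Bruno expansion with the bookkeeping $|\bm{\gamma}|\le|\bm{\alpha}|\le m\le\floor{p_1}$, $|\bm{\beta}_\ell|\le m\le\floor{p_2}$ and the check that each factor is $(\rho-\floor{\rho})$-H\"older on the (bounded) box. That argument is sound and makes the lemma independent of the external reference, at the cost of length; like the paper, you implicitly read ``the degree of smoothness is $\min\{p_1p_2,p_1,p_2\}$'' as the guaranteed worst-case degree rather than an exact characterization, which is the interpretation part~\ref{lemma:pc:smooth:degree:result:2} and the surrounding discussion require.
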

The conclusion \ref{lemma:pc:smooth:degree:result:1} in Lemma \ref{lemma:pc:smooth:degree} was obtained by \cite{jlt09}. The conclusion \ref{lemma:pc:smooth:degree:result:2} of Lemma \ref{lemma:pc:smooth:degree} suggests that the  H\" older smoothness of a function with a compositional structure is smaller than its intrinsic smoothness. {An interesting implication is that if one ignores the compositional structure, the smoothness could be underestimated in the sense that it is not larger than the intrinsic dimension.}

The compositional structure specified in  \ref{Definition:Convolution:f0} covers many important models in statistics and economics, as demonstrated in the following examples. 
\begin{Example}\label{example:4} (Classical Nonparametric Regression) In classic nonparametric regression,
it is often assumed that the regression function $f(z_1,\ldots, z_d)$ is $(p, C)$-H\" older smooth
(see \citealp{h98, h03, c07}). Therefore, $f$ has a compositional structure with $L_*=0$, $\bfd=(d,1)$, $\bft=d$ and $\bfp=p$. Consequently, $p^*=p$, $t^*=d$ and $p_H=p$.
\end{Example}

\begin{Example}\label{example:2} (Generalized Additive Model) The generalized additive model assumes that the condition mean 
of the response given a set of predictors $z_1,\ldots,z_d$ has an expression $f(z_1, \ldots, z_d)=g(\sum_{j=1}^dh_j(z_j))$, where $g$ is $(p_g, C)$-H\" older smooth and $h_j$'s are $(p_h, C)$-H\" older smooth. It can be shown that $f$ has a compositional structure with $L_*=2$, $\bfd=(d, d, 1, 1)$, $\bft=(1, d, 1)$ and $\bfp=(p_h, \infty, p_g)$. Therefore,  $p^*=\min(p_g, g_h)$,  $t^*=1$ and $p_H=\min\{p_gp_h, p_g, p_h\}$.
\end{Example}

\begin{Example}\label{example:1} (Production Function with $d$ Inputs) In economic studies, the production function is often assumed to be of the form $f(z_1, \ldots, z_d)=A\prod_{j=1}^d z_i^{\lambda_i}$, in which $z_j$'s represent the quantities of production factors, the constant $A$ represents the factor productivity, and $\lambda_j$'s represent elasticities.
This is a generalization of the classical Cobb-Douglas production function (see \citealp{n65}). Thus, $f$ has a compositional structure with $L_*=1, \bfd=(d, d, 1), \bft=(1, d)$ and $\bfp=(\infty, \infty)$. If the domain of $\bfz$ is compact and does not contain zero, it can be shown that $p^*=\infty$, $t^*=1$ and $p_H=\infty$. 
\end{Example}
To establish the asymptotic theory, we need the following technical conditions. 
\begin{Assumption}\label{Assumption:AC}
\begin{enumerate}[label={(\roman*}),ref={(\roman*})]
\item \label{Assumption:A1}$\ev(e^{\kappa_1|X_s|})\leq \kappa_2$, for some $\kappa_1, \kappa_2>0$ and all $s=1,\ldots, q$.
\item \label{A1:b} 
The underlying  functions $f_{0,1},\ldots, f_{0,q}\in \mcCS(L_*, \bfd, \bft, \bfp, \bfa, \bfb, C)$. We assume that $d_0=d$ is allowed to diverge with $n$, while all the rest parameters are fixed constants.
\item \label{Assumption:A2:deep}The network structure satisfies the conditions that
\begin{eqnarray}
	(L, W)\to \infty, \quad L\geq L_*+\sum_{i=0}^{L_*}L_i\;\; \textrm{ and }\;\; W\geq \max_{0\leq i \leq L_*}qW_id_{i+1}\nonumber
\end{eqnarray} 
with $L_i= 216\ceil{p_i}^{2}+1$ and $W_i=81(\ceil{p_i}+t_i+2)^{t_i+1}3^{t_i+1}$.
\end{enumerate}
\end{Assumption}

Assumption \ref{Assumption:AC}\ref{Assumption:A1} requires exponential tail of $X_s$, which is a key assumption to obtain the optimal (up to a logarithm factor) convergence rate of $\widehat{f}_s$. This assumption is weaker than the sub-Gaussian condition proposed by \cite{bk19} and \cite{s19}. Moreover, Assumption \ref{Assumption:AC}\ref{Assumption:A1} can be relaxed to some moment conditions of $X_s$, but this relaxation sacrifices a polynomial rate in convergence. Assumption \ref{Assumption:AC}\ref{A1:b} imposes a compositional structure on $f_{0,s}$, which can be viewed as a neural network version of sparsity condition. In a similar spirit, \cite{bcch12} assume that $f_{0,s}$ has a sparse basis expression. Assumption \ref{Assumption:AC}\ref{Assumption:A2:deep} is to specify what kind of neural networks can accurately approximate the functions in $\mcCS(L_*, \bfd, \bft, \bfp,\bfa, \bfb, C)$, and the lower bounds of depth and width are relying on the parameters $p_i$'s and $t_i$'s, {which are assumed to be fixed}. Therefore, it is worth mentioning that there are three types of diverging behaviours of $(L, W)$ {based on Assumption \ref{Assumption:AC}\ref{Assumption:A2:deep}}, namely (a) diverging $L$ and fixed $W$; (b) fixed $L$ and diverging $W$; (c) diverging $L$ and $W$. Different choices of $(L, W)$ correspond to different network architectures. In particular, the first type network corresponds to the so-called  \textit{fixed-width DNN} (\citealp{mk19}), while the second one is called  \textit{fixed-depth DNN} (\citealp{bk19}). Our general theory covers all three types (a)-(c).

The following theorem states the convergence rate of $\widehat{f}_s$.
\begin{theorem}\label{theorem:rate:deep}
Under Assumption \ref{Assumption:AC}, if $LW=o(\sqrt{n})$ and $LWd=o(n)$, then for all $s=1,\ldots, q$, it follows that
\begin{equation*}
	\|\widehat{f}_s-f_{0,s}\|_n=O_P\bigg(\log^5(n)\sqrt{\frac{L^2W^2+LWd}{n}}+\log^{\frac{4p^*}{t^*}}(n)(LW)^{-\frac{2p^*}{t^*}}\bigg).
\end{equation*}
As a consequence, if $d=O(LW)$ and $LW\asymp n^{\frac{t^*}{2(2p^*+t^*)}}$, then
\begin{equation*}
	\|\widehat{f}_s-f_{0,s}\|_n=O_P\bigg([\log(n)]^{5+\frac{4p^*}{t^*}}n^{-\frac{p^*}{2p^*+t^*}}\bigg),\quad s=1,\ldots, q.
\end{equation*}
\end{theorem}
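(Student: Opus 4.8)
\textbf{Proof proposal for Theorem \ref{theorem:rate:deep}.}

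The plan is to combine a standard empirical-process decomposition for M-estimators with a neural network approximation result for functions in $\mcCS(L_*, \bfd, \bft, \bfp, \bfa, \bfb, C)$. Fix $s$ and write $\widehat f_s$ for the $s$-th component of $\widehat{\bff}$. Since $f_{0,s}(\bfz)=\ev(X_s\mid \bfZ=\bfz)$, we may decompose $X_s = f_{0,s}(\bfZ) + \xi_s$ with $\ev(\xi_s\mid\bfZ)=0$; the exponential moment bound in Assumption \ref{Assumption:AC}\ref{Assumption:A1} gives a sub-exponential tail for $\xi_s$. The first step is the classical ``basic inequality'': because $\widehat{\bff}$ minimizes the empirical squared loss over $\mcF_{d,q}(L,W)$, for any fixed competitor $\bff^\dagger\in\mcF_{d,q}(L,W)$ one has $\|\widehat f_s - f_{0,s}\|_n^2 \le \|f_s^\dagger - f_{0,s}\|_n^2 + \tfrac{2}{n}\sum_i \xi_{is}\big(\widehat f_s(\bfZ_i) - f_s^\dagger(\bfZ_i)\big)$ (after summing the component inequalities; one has to be mildly careful that the first-stage loss couples the $q$ coordinates, but $q$ is fixed so this only costs constants). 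The deterministic term $\|f_s^\dagger - f_{0,s}\|_n^2$ is controlled by the approximation error, and the stochastic cross term is the object requiring empirical-process control.

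Second, I would invoke the approximation theorem: under Assumption \ref{Assumption:AC}\ref{Assumption:A2:deep} the class $\mcF_{d,q}(L,W)$ contains an $\bff^\dagger$ whose $s$-th coordinate satisfies $\|f_s^\dagger - f_{0,s}\|_\infty \lesssim (LW)^{-2p^*/t^*}$ up to logarithmic factors — this is exactly the content the paper attributes to extending \cite{lsyz20} from Sobolev to H\"older space, applied layer-by-layer along the compositional structure with the intrinsic quantities $p^*,t^*$ defined via $i^*=\argmin_i p_i^*/t_i$. This yields $\|f_s^\dagger - f_{0,s}\|_n = O_P\big(\log^{2p^*/t^*}(n)(LW)^{-2p^*/t^*}\big)$ and, a fortiori, uniform boundedness of $f_s^\dagger$, which lets us truncate/localize. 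The third step handles the stochastic term by a peeling (localization) argument: on the event $\|\widehat f_s - f_{0,s}\|_n \le \delta$, bound $\sup$ over the shell of the centered empirical process $\tfrac1n\sum_i \xi_{is} g(\bfZ_i)$ indexed by differences $g = f - f_{0,s}$, $f\in\mcF_{d,q}(L,W)$. Here I would use a covering-number / metric-entropy bound for $\mcF_{d,q}(L,W)$: the log-covering number of a depth-$L$, width-$W$ ReLU network at scale $\varepsilon$ is of order $L^2W^2\log(1/\varepsilon)$ (plus an $LWd$ term from the first layer acting on the $d$-dimensional input), combined with a Bernstein/chaining inequality for the sub-exponential multipliers $\xi_{is}$. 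A standard fixed-point/basic-inequality manipulation then converts the resulting bound into $\delta \lesssim \log^5(n)\sqrt{(L^2W^2 + LWd)/n}$ plus the approximation term; the $\log^5(n)$ factor absorbs the sub-exponential (rather than sub-Gaussian) tails and the $\log(1/\varepsilon)$ from chaining evaluated at $\varepsilon \asymp 1/\sqrt n$. Adding the approximation and stochastic pieces gives the first displayed bound. The conditions $LW=o(\sqrt n)$ and $LWd = o(n)$ are exactly what make the stochastic term $o(1)$ so that the localization is self-consistent.

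Finally, the corollary is pure bookkeeping: with $d = O(LW)$ the $LWd$ term is dominated by $L^2W^2$, so the stochastic term is $\asymp \log^5(n)\, LW/\sqrt n$; balancing this against $\log^{4p^*/t^*}(n)(LW)^{-2p^*/t^*}$ (ignoring logs) gives $LW \asymp n^{t^*/(2(2p^*+t^*))}$, at which both terms are of order $n^{-p^*/(2p^*+t^*)}$ up to the pooled logarithmic factor $[\log n]^{5 + 4p^*/t^*}$. I expect the main obstacle to be the stochastic term for diverging-input-dimension networks: getting the entropy bound to produce precisely the $L^2W^2 + LWd$ scaling (rather than something worse in $d$) requires a careful accounting of how the first weight matrix $\bfA_1\in\mathbb{R}^{W\times d}$ contributes, and the chaining must be done with sub-exponential increments so that no extra power of $d$ or of $n$ sneaks into the logarithmic factors — this is where the bulk of the technical work lies, and where the exponential-tail Assumption \ref{Assumption:AC}\ref{Assumption:A1} is essential.
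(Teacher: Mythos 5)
Your proposal is correct and follows essentially the same route as the paper: a basic inequality from the joint minimization (summed over the $q$ coordinates, exactly as you anticipate), the compositional-structure approximation bound $(LW/(\log L\log W))^{-2p^*/t^*}$, a pseudo-dimension entropy bound of order $L(LW^2+Wd)\log(LW^2+Wd)$ fed into Dudley chaining with the sub-exponential multipliers truncated at level $m_n\asymp\log n$, and a peeling argument over shells, followed by the same balancing of the two terms. No substantive differences.
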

Theorem \ref{theorem:rate:deep} provides a convergence rate for $\widehat{f}_s$ under the norm $\|\cdot\|_n$ in terms of $L,W,d$. This rate consists of two parts: the first part $\sqrt{(L^2W^2+LWd)n^{-1}}$ corresponds to the estimation error which relies on the entropy of $\mcF_{d, q}(L, W)$; the second part $(LW)^{-2p^*/t^*}$ corresponds to the approximation error of $\mcF_{d,q}(L, W)$ to $\bff_0:=(f_{0,1},\ldots, f_{0,1})^\top$. Note that the approximation error only depends on $p^*$ and $t^*$, which is free of the input dimension $d$, and it also implies that to approximate less smooth function with higher intrinsic dimension, the neural network needs to be more complicated, namely, with large depth or width. 

The proof of Theorem \ref{theorem:rate:deep} relies on the recent results in \cite{lsyz20} that approximate functions in Sobolev space by the fully connected neural network with general choices of $L$ and $W$. We extend their results to approximate the H\" older smooth functions and the functions with a compositional structure. By a suitable choice of $L$ and $W$, the DNN estimators can achieve the convergence rate $n^{-p^*/(2p^*+t^*)}$ (up to a logarithm factor), which is minimax optimal according to \cite{s19}, as long as $d$ does not grow faster than $LW$. Recently, \cite{bk19} and \cite{s19} obtain a similar convergence rate by considering a {\em sparse} network for a {\em fixed} $d$. More recently, \cite{mk19} extend the result of \cite{bk19} to fully connected neural networks but still with a fixed $W$ based on a new approximation theorem. Note that their result is a special case of our Theorem \ref{theorem:rate:deep}. It is worth mentioning that, using series or kernel based methods, the optimal convergence rate is $n^{-\frac{d}{2p_H+d}}$ when estimating a H\" older smooth function (e.g. see \citealp{s94,s19}). Since $p_H\leq p^*$ and $d \geq t^*$, the neural network estimator has a faster convergence rate by capturing the intrinsic smoothness and dimension.

In the end, we would like to stress that different network structures will have different consequences on the optimization. From a theoretical point of view, for $\bff \in \mcF_{d,q}(L, W)$, there are $W(L+d+q)+(L-1)W^2$ parameters to be estimated, including all the shift vectors and weight matrices. If $d$ is fixed, the number of parameters is of an order of $LW^2$. In particular, to achieve the optimal convergence rate, it requires to estimate about $n^{\frac{t^*}{2(2p^*+t^*)}}$ parameters for fixed-width DNN and about $n^{\frac{t^*}{2p^*+t^*}}$ parameters for fixed-depth DNN. Therefore, increasing depth is more ``economical" to obtain the optimal convergence rate in terms of the number of parameters, which is due to the fact that depth is more effective than width for the expressiveness of
ReLU networks (e.g., see \citealp{lpwhw17,yz19,lsyz20}). On the other hand, training very deep neural network is numerically more challenging due to the vanishing gradient issue (see \citealp{sgs15}), and thus fixed-depth DNN or neural networks with less depth are also of practical importance.

\subsection{Asymptotic Distribution}
In this section, we will show that the second stage estimator $\widehat{\beta}$ is asymptotically normal and moreover, achieves the semiparametric efficiency bound \citep{n90}. Let $\bfD:=(f_{0,1}(\bfZ),\ldots, f_{0,q}(\bfZ))^\top \in \mathbb{R}^q$ and assume the following regularity conditions. 
\begin{Assumption}\label{Assumption:A3}
\begin{enumerate}[label={(\roman*}),ref={(\roman*})]
\item \label{A3:0} $\ev(\epsilon|Z_j)=0$ for $j=1,\ldots, d$.
\item \label{A3:a} $\ev(e^{\kappa_3|\epsilon|})\leq \kappa_4$ for some $\kappa_3, \kappa_4>0$ and $\ev(\epsilon^2|\bfZ)=\sigma_\epsilon^2$.
\item \label{A3:b} The matrix $\ev(\bfD\bfD^\top)$ is positive definite.
\end{enumerate}
\end{Assumption}

Assumptions \ref{Assumption:A3}\ref{A3:0} and \ref{Assumption:A3}\ref{A3:a} are both standard in the IV literature. Assumption \ref{Assumption:A3}\ref{A3:b}, called as the strong-instrument condition in \cite{bcch12}, guarantees the invertibility of $\ev(\bfD\bfD^\top)$.

\begin{theorem}\label{theorem:asymptotic:distribution:deep}
Under Assumptions \ref{Assumption:AC} and \ref{Assumption:A3},  if $L^2W^2\log^8(n)=o(n^{1/2})$, $LWd\log^8(n)=o(n^{1/2})$ and $(LW)^{1-2p^*/t^*}[\log(n)]^{3+4p^*/t^*}=o(1)$, then  it follows that
\begin{equation*}
	\sqrt{n}(\widehat{\beta}-\beta_0)\cid \textrm{N}(0, \sigma_\epsilon^2 \ev^{-1}(\bfD\bfD^\top)).
\end{equation*}
\end{theorem}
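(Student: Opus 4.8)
The plan is to use the linearity of (\ref{eq:model:1}) to reduce the statement to a weak law of large numbers for a denominator matrix and a central limit theorem for a score vector, and then to control the effect of replacing the true optimal instruments $\bfD$ by their DNN estimates. Plugging $Y_i=\beta_0^\top\bfX_i+\epsilon_i$ into (\ref{eq:estimator:step:2}) gives
$$\sqrt n(\widehat\beta-\beta_0)=\widehat{\mathbf M}^{-1}\,\frac{1}{\sqrt n}\sum_{i=1}^n\widehat\bfX_i\epsilon_i,\qquad \widehat{\mathbf M}:=\frac1n\sum_{i=1}^n\widehat\bfX_i\bfX_i^\top,$$
so by Slutsky's theorem and the positive-definiteness of $\ev(\bfD\bfD^\top)$ (Assumption \ref{Assumption:A3}\ref{A3:b}) it is enough to prove (i) $\widehat{\mathbf M}\cip\ev(\bfD\bfD^\top)$ and (ii) $\frac{1}{\sqrt n}\sum_{i=1}^n\widehat\bfX_i\epsilon_i\cid\mathrm N(0,\sigma_\epsilon^2\ev(\bfD\bfD^\top))$; the asserted limit is then $\ev^{-1}(\bfD\bfD^\top)\,\mathrm N(0,\sigma_\epsilon^2\ev(\bfD\bfD^\top))=\mathrm N(0,\sigma_\epsilon^2\ev^{-1}(\bfD\bfD^\top))$. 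Throughout I write $\widehat\bfX_i-\bfD_i=(\widehat f_1(\bfZ_i)-f_{0,1}(\bfZ_i),\dots,\widehat f_q(\bfZ_i)-f_{0,q}(\bfZ_i))^\top$ and let $\delta_n$ denote the rate of Theorem \ref{theorem:rate:deep}, which is $o(1)$ under the present conditions.

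For (i), I would decompose $\widehat{\mathbf M}=\frac1n\sum_i\bfD_i\bfX_i^\top+\frac1n\sum_i(\widehat\bfX_i-\bfD_i)\bfX_i^\top$. Since $\bfD=\ev(\bfX\mid\bfZ)$, iterated expectations give $\ev(\bfD\bfX^\top)=\ev(\bfD\bfD^\top)$, and Assumption \ref{Assumption:AC}\ref{Assumption:A1} gives $\ev\|\bfD\bfX^\top\|<\infty$, so the first term converges in probability to $\ev(\bfD\bfD^\top)$ by the weak law of large numbers. Each entry of the second term is at most $\|\widehat f_s-f_{0,s}\|_n\|X_t\|_n$ by Cauchy--Schwarz, where $\|X_t\|_n=O_P(1)$ (again by Assumption \ref{Assumption:AC}\ref{Assumption:A1}) and $\|\widehat f_s-f_{0,s}\|_n=O_P(\delta_n)=o_P(1)$ by Theorem \ref{theorem:rate:deep}; hence the second term is $o_P(1)$. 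This gives $\widehat{\mathbf M}\cip\ev(\bfD\bfD^\top)$, and since the limit is invertible, $\widehat{\mathbf M}^{-1}\cip\ev^{-1}(\bfD\bfD^\top)$ (so $\widehat\beta$ is well defined with probability tending to one).

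For (ii), split $\frac{1}{\sqrt n}\sum_i\widehat\bfX_i\epsilon_i=\frac{1}{\sqrt n}\sum_i\bfD_i\epsilon_i+R_n$ with $R_n:=\frac{1}{\sqrt n}\sum_i(\widehat\bfX_i-\bfD_i)\epsilon_i$. The first term is a normalized sum of i.i.d.\ mean-zero vectors --- mean zero since $\bfD$ is $\bfZ$-measurable and $\ev(\epsilon\mid\bfZ)=0$ (Assumption \ref{Assumption:A3}\ref{A3:0}), with finite second moments by Assumptions \ref{Assumption:AC}\ref{Assumption:A1} and \ref{Assumption:A3}\ref{A3:a} --- and covariance $\ev(\epsilon^2\bfD\bfD^\top)=\sigma_\epsilon^2\ev(\bfD\bfD^\top)$ by Assumption \ref{Assumption:A3}\ref{A3:a}; the Lindeberg--L\'evy CLT then gives $\frac{1}{\sqrt n}\sum_i\bfD_i\epsilon_i\cid\mathrm N(0,\sigma_\epsilon^2\ev(\bfD\bfD^\top))$. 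Everything thus reduces to showing $R_n=o_P(1)$, and I expect this to be the main obstacle, since $\widehat\bff$ is fitted on the same sample that carries the $\epsilon_i$ and one cannot simply condition on it --- this is exactly the price for not sample-splitting.

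To handle $R_n$ I would argue via a localized maximal inequality over the network class. Fix a deterministic $\delta_n$ dominating the rate of Theorem \ref{theorem:rate:deep} and a sub-class $\mcB_n=\{\bff\in\mcF_{d,q}(L,W):\|f_s-f_{0,s}\|_n\le\delta_n,\ \|f_s\|_\infty\le B_n,\ s=1,\dots,q\}$ with $B_n$ growing at most polylogarithmically; then $\widehat\bff\in\mcB_n$ with probability tending to one, the $\|\cdot\|_n$-part by Theorem \ref{theorem:rate:deep} and the sup-norm part from the minimizing property of (\ref{eq:estimator:step:1}) together with the boundedness estimates in its proof. After truncating $\epsilon$ at level of order $\log n$ (the discarded tail contributes $o_P(1)$ to $R_n$ by Markov's inequality and the exponential moment in Assumption \ref{Assumption:A3}\ref{A3:a}), each coordinate of $R_n$ is bounded by $\sup_{\bff\in\mcB_n}\big|\frac{1}{\sqrt n}\sum_i\epsilon_i(f_s-f_{0,s})(\bfZ_i)\big|$, which is the supremum of a \emph{centered} empirical process because $\ev[\epsilon(f_s-f_{0,s})(\bfZ)]=\ev[(f_s-f_{0,s})(\bfZ)\ev(\epsilon\mid\bfZ)]=0$. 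A Bernstein/chaining maximal inequality, using the metric entropy estimate $\log N(\varepsilon,\mcF_{d,q}(L,W),\|\cdot\|_n)=O((L^2W^2+LWd)\log(n/\varepsilon))$ and the sub-exponential tail of $\epsilon$, bounds this supremum in expectation by $O\big(\delta_n\sqrt{L^2W^2+LWd}\,\mathrm{polylog}(n)+(L^2W^2+LWd)\,n^{-1/2}\,\mathrm{polylog}(n)\big)$. The second term is $o(1)$ by $L^2W^2\log^8(n)=o(n^{1/2})$ and $LWd\log^8(n)=o(n^{1/2})$; the first term, which carries the approximation-error part of $\delta_n$, is $o(1)$ by the smoothness condition $(LW)^{1-2p^*/t^*}[\log n]^{3+4p^*/t^*}=o(1)$ (the quantitative form of $p^*>t^*/2$). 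Hence $R_n=o_P(1)$, completing (ii), and with (i) and Slutsky the theorem follows. The step I would devote the most care to is this last empirical-process bound --- in particular checking that $\widehat\bff$ stays in a bounded localized sub-class and matching the entropy growth against the three rate conditions.
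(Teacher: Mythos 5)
Your overall strategy is the same as the paper's: plug $Y_i=\beta_0^\top\bfX_i+\epsilon_i$ into (\ref{eq:estimator:step:2}), reduce via Slutsky to a weak law for the matrix $\widehat{\mathbf M}=n^{-1}\sum_i\widehat\bfX_i\bfX_i^\top$ and a CLT for the score; control $\widehat{\mathbf M}-\ev(\bfD\bfD^\top)$ by Cauchy--Schwarz plus the first-stage rate $r_n$ of Theorem~\ref{theorem:rate:deep} (this is Lemma~\ref{lemma:difference:rate:2}); split the score into $n^{-1/2}\sum_i\bfD_i\epsilon_i$ (Lindeberg--L\'evy) plus a remainder $R_n$; and bound $R_n$ by localizing $\widehat\bff$ in $\|\cdot\|_n$, truncating $\epsilon$, and chaining over $\mcF_{d,q}(L,W)$ with the pseudo-dimension entropy bound (Lemma~\ref{lemma:entroy:bound}), which is precisely Lemma~\ref{lemma:difference:rate:3} leading to Lemma~\ref{theorem:asymptotic:distribution}. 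The rate accounting at the end also matches how the three stated conditions are deployed.

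The one step you should not trust is the claim that $\widehat\bff$ lands, with probability tending to one, in a sub-class $\mcB_n$ with $\|\widehat f_s\|_\infty\le B_n$ of polylogarithmic size ``from the minimizing property of (\ref{eq:estimator:step:1}) together with the boundedness estimates in its proof.'' The minimizer of the unconstrained empirical risk over $\mcF_{d,q}(L,W)$ carries no such sup-norm guarantee --- the paper in fact advertises the absence of any truncation or boundedness constraint as an advantage over \cite{bk19,s19,mk19}, and when an $\|\cdot\|$-norm (hence sup-norm) bound really is needed in Section~\ref{sec:extra:results} they explicitly introduce the truncated estimators $\widecheck f_s^k$ because $\widehat f_s$ is not bounded. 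Fortunately this constraint is unnecessary for the chaining step: since $|f(\bfZ_i)-f_{0,s}(\bfZ_i)|\le\sqrt n\,\|f-f_{0,s}\|_n$, the $\|\cdot\|_n$-localization already yields the polynomial-in-$n$ bound $\max_i|f(\bfZ_i)-f_{0,s}(\bfZ_i)|\le b n$ that Lemma~\ref{lemma:entroy:bound} requires, and the paper's Lemma~\ref{lemma:difference:rate:3} runs the maximal inequality on $\mcH_s^b=\{f-f_{0,s}:f\in\mcF_{d,1}(L,W),\,\|f-f_{0,s}\|_n\le br_n\}$ with no sup-norm restriction at all. Drop the sup-norm constraint from $\mcB_n$ and the remainder of your argument goes through as in the paper.
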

Theorem \ref{theorem:asymptotic:distribution:deep} establishes the asymptotic distribution of $\widehat{\beta}$ with general choices of depth and width for the neural network. We allow the number of explanatory variables $d$ to be possibly diverging. In particular, when $d=o(LW)$, the rate conditions of Theorem \ref{theorem:asymptotic:distribution:deep} can be further
simplified as $L^2W^2\log^8(n)=o(n^{1/2})$ and $(LW)^{1-2p^*/t^*}[\log(n)]^{3+4p^*/t^*}=o(1)$, which specify upper and lower bounds for $LW$. It is also worthwhile to mention that, since $LW$ is diverging, to satisfy the rate condition $(LW)^{1-2p^*/t^*}[\log(n)]^{3+4p^*/t^*}=o(1)$, one needs $p^*>t^*/2$. In other words,  to apply Theorem \ref{theorem:asymptotic:distribution:deep}, a sufficient condition is that the underlying functions $f_{0,s}$'s need to be smooth enough in the sense that the degree of intrinsic smoothness should be larger than half of the intrinsic dimension. In addition, this condition is weaker than that in literature, e.g., \cite{bcch12}, since in general $p_H\leq p^*$ and $d\geq t^*$. For the same reason, our neural network estimate has a faster convergence rate than 
series or kernel estimators; see Theorem~\ref{theorem:rate:deep}. An implication is that, according to Section 3 in \cite{cheng2008general} {and the discussion in Section \ref{sec:model}}, $\sqrt{n}(\widehat{\beta}-\beta_0)$ converges to its Gaussian limit at a faster rate than the resulting second-stage estimators from series or kernel estimators.

We next discuss how to consistently estimate the unknown asymptotic covariance matrix of $\widehat{\beta}$.

\begin{lemma}\label{lemma:estimation:variance}
Under conditions of Theorem \ref{theorem:asymptotic:distribution:deep}, it holds that $$\widehat{\bfV}^2\cip \sigma_\epsilon^2\ev^{-1}(\bfD\bfD^\top),$$ where $\widehat{\bfV}^2= (\sum_{i=1}^n\widehat{\bfX}_i\bfX_i^\top) ^{-1}\sum_{i=1}^n\widehat{\epsilon}_i^2$ with $\widehat{\epsilon}_i=Y_i-\widehat{\beta}^\top \bfX_i$.
\end{lemma}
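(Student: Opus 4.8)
The plan is to show that the two factors in $\widehat{\bfV}^2$ converge separately: $\frac{1}{n}\sum_{i=1}^n\widehat{\bfX}_i\bfX_i^\top \cip \ev(\bfD\bfD^\top)$ and $\frac{1}{n}\sum_{i=1}^n\widehat{\epsilon}_i^2 \cip \sigma_\epsilon^2$, and then combine them with the continuous mapping theorem (using Assumption \ref{Assumption:A3}\ref{A3:b} to justify inverting the limit). The first convergence should already be available from the proof of Theorem \ref{theorem:asymptotic:distribution:deep}: writing $\frac{1}{n}\sum_i\widehat{\bfX}_i\bfX_i^\top = \frac{1}{n}\sum_i \bfD_i\bfX_i^\top + \frac{1}{n}\sum_i(\widehat{\bfX}_i-\bfD_i)\bfX_i^\top$, the first term converges to $\ev(\bfD\bfX^\top)=\ev(\bfD\bfD^\top)$ by the law of large numbers (the equality because $\ev(\bfX|\bfZ)=\bfh_{\textrm{opt}}(\bfZ)=\bfD$), and the second term is $o_P(1)$ since each coordinate is bounded by $\|\widehat{f}_s-f_{0,s}\|_n \cdot (\frac{1}{n}\sum_i X_{it}^2)^{1/2}$ by Cauchy--Schwarz, where $\|\widehat{f}_s-f_{0,s}\|_n = o_P(1)$ by Theorem \ref{theorem:rate:deep} and $\frac{1}{n}\sum_i X_{it}^2 = O_P(1)$ by Assumption \ref{Assumption:AC}\ref{Assumption:A1}.

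For the residual term, I would expand $\widehat{\epsilon}_i = Y_i - \widehat{\beta}^\top\bfX_i = \epsilon_i + (\beta_0-\widehat{\beta})^\top\bfX_i$, so that
\begin{equation*}
\frac{1}{n}\sum_{i=1}^n\widehat{\epsilon}_i^2 = \frac{1}{n}\sum_{i=1}^n\epsilon_i^2 + \frac{2}{n}\sum_{i=1}^n\epsilon_i(\beta_0-\widehat{\beta})^\top\bfX_i + \frac{1}{n}\sum_{i=1}^n\big((\beta_0-\widehat{\beta})^\top\bfX_i\big)^2.
\end{equation*}
The first term converges to $\sigma_\epsilon^2 = \ev(\epsilon^2)$ by the law of large numbers (finite second moment from Assumption \ref{Assumption:A3}\ref{A3:a}). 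For the cross term, bound it by $2\|\beta_0-\widehat{\beta}\|_2 \cdot \big(\frac{1}{n}\sum_i \|\bfX_i\|_2^2 \cdot \frac{1}{n}\sum_i\epsilon_i^2\big)^{1/2}$ via Cauchy--Schwarz; since Theorem \ref{theorem:asymptotic:distribution:deep} gives $\|\widehat{\beta}-\beta_0\|_2 = O_P(n^{-1/2}) = o_P(1)$ and the two empirical averages are $O_P(1)$, this is $o_P(1)$. The quadratic term is bounded by $\|\beta_0-\widehat{\beta}\|_2^2 \cdot \frac{1}{n}\sum_i\|\bfX_i\|_2^2 = o_P(1)$ similarly. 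Hence $\frac{1}{n}\sum_i\widehat{\epsilon}_i^2 \cip \sigma_\epsilon^2$.

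Finally I would assemble the pieces: since $\big(\frac{1}{n}\sum_i\widehat{\bfX}_i\bfX_i^\top\big) \cip \ev(\bfD\bfD^\top)$ which is positive definite and hence invertible, the continuous mapping theorem gives $\big(\frac{1}{n}\sum_i\widehat{\bfX}_i\bfX_i^\top\big)^{-1} \cip \ev^{-1}(\bfD\bfD^\top)$; multiplying by $\frac{1}{n}\sum_i\widehat{\epsilon}_i^2 \cip \sigma_\epsilon^2$ and using Slutsky's theorem yields $\widehat{\bfV}^2 \cip \sigma_\epsilon^2\ev^{-1}(\bfD\bfD^\top)$. I do not anticipate a serious obstacle here, since all the hard work (the rate for $\widehat{f}_s$ and the $\sqrt{n}$-consistency of $\widehat{\beta}$) is already done; the main thing to be careful about is controlling $\frac{1}{n}\sum_i\widehat{\bfX}_i\bfX_i^\top - \frac{1}{n}\sum_i\bfD_i\bfX_i^\top$ uniformly over coordinates and verifying that the empirical second moments $\frac{1}{n}\sum_i X_{it}^2$ are $O_P(1)$, both of which follow from arguments already used in establishing Theorems \ref{theorem:rate:deep} and \ref{theorem:asymptotic:distribution:deep}.
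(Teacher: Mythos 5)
Your proposal is correct and follows essentially the same route as the paper: the paper likewise splits $\widehat{\bfV}^2$ into the matrix factor (handled via Lemma \ref{lemma:difference:rate:2}, which is the content of your Cauchy--Schwarz step, plus the CLT/LLN and Assumption \ref{Assumption:A3}\ref{A3:b}) and the residual sum of squares, which it expands exactly as you do into $\frac{1}{n}\sum_i\epsilon_i^2$ plus cross and quadratic terms that vanish by the consistency of $\widehat{\beta}$. The only cosmetic difference is that you center the matrix factor at $\frac{1}{n}\sum_i\bfD_i\bfX_i^\top$ rather than $\frac{1}{n}\sum_i\bfD_i\bfD_i^\top$, which is an equivalent bookkeeping choice.
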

Combining Theorem \ref{theorem:asymptotic:distribution:deep} and Lemma \ref{lemma:estimation:variance}, we can construct a $100\times(1-\alpha)\%$ confidence interval for $\beta_0$ as follows:
\begin{equation}\label{eq:confidence:interval}
	\widehat{\beta}\pm \frac{z_{\alpha/2}^*}{\sqrt{n}}\textrm{Diag}(\widehat{\bfV}),
\end{equation}
where $z_{\alpha/2}^*$ is the $\alpha/2$ upper percentile of the standard normal distribution and $\textrm{Diag}(A)$ is the vector consisting of the diagonal elements of squared matrix $A$.

\subsection{Theoretical Benefits}\label{sec:boosting:efficiency}

In this section, we highlight the theoretical advantages of $\widehat{\beta}$ when compared with the existing IV estimators. In summary, due to the faster convergence rate and the ability to capture the intrinsic structure of the first-stage estimator, the second-stage estimator can achieve the efficiency bound with a smaller (second order) estimation error under a weaker smoothness condition.

For illustration, we assume $q=1$ so that $\bfX$ is a scalar. Let $\mcF$ denote some function class and $f_\mcF\in\mcF$ denote the projection of $\bfX$ onto $\mcF$:
\begin{eqnarray}\label{eq:population:minimizer}
	f_\mcF=\argmin_{f\in \mcF}\ev[(\bfX-f(\bfZ))^2].\nonumber
\end{eqnarray}
The form of $f_\mcF$ relies on the choice of $\mcF$; see the following examples.
\begin{enumerate}
\item\label{eq:mcFC} $\mcF_{\textrm{C}}=\{f: \mathbb{R}^d\to \mathbb{R}\;: f\in \mcCS(L_*, \bfd, \bft, \bfp,\bfa, \bfb, C) \textrm{ for parameters } L_*, \bfd, \bft, \bfp,\bfa, \bfb, C.\}$, the class of  functions with a compositional structure.
By Theorem \ref{theorem:rate:deep}, neural network estimator proposed in (\ref{eq:estimator:step:1}) is a consistent estimator of $f_{\mcF_\textrm{C}}$. 
\item \label{eq:mcFS}  $\mcF_{\textrm{S}}=\{f: \mathbb{R}^d\to \mathbb{R}\;: f \textrm{ is } (p, C)\textrm{-H\" older smooth for parameters } p, C\}$,  the class of H\"older smooth functions. By \cite{s94} and \cite{h03}, spline estimators can effectively estimate $f_{\mcF_\textrm{S}}$. 
\item \label{eq:mcFA}  $\mcF_{\textrm{A}}=\{f: \mathbb{R}^d\to \mathbb{R}\;: f(\bfz)=\sum_{i=1}^df_j(z_j), \textrm{ each } f_j \textrm{ is } (p_j, C)\textrm{-H\" older smooth for parameters } \newline p_j, C \}$, the class of  functions with smooth additive components.
\cite{h03} and \cite{hhw10} consistently estimate $f_{\mcF_\textrm{A}}$ for both fixed $d$ and increasing $d$.

\item \label{eq:mcFL} $\mcF_{\textrm{L}}=\{f: \mathbb{R}^d\to \mathbb{R}\;: f(\bfz)=u^\top \bfz \textrm{ for some } u\in \mathbb{R}^d\}$, the class of linear functions. $f_{\mcF_\textrm{L}}$ can be consistently estimated by the standard linear least squares regression.
\end{enumerate}

Suppose an estimator of  $f_\mcF$ has been obtained, denoted by $\widehat{f}_\mcF$. 
Similar to (\ref{eq:estimator:step:1}), we can define an estimator of $\beta_0$ based on $\widehat{f}_\mcF$ as follows.
\begin{equation}\label{eq:general:beta:mcF}
	\widehat{\beta}_{\mcF}=\bigg(\frac{1}{n}\sum_{i=1}^n\widehat{f}_\mcF(\bfZ_i) \bfX_i\bigg)^{-1}\frac{1}{n}\sum_{i=1}^n\widehat{f}_\mcF(\bfZ_i) Y_i.
\end{equation}
As discussed in Section \ref{sec:model}, $\widehat{\beta}_{\mcF}$ is essentially the solution to (\ref{eq:empirical:moment:condition}) by replacing $\widehat{\bfh}_\textrm{opt}$ with $\widehat{f}_{\mcF}$. Under certain assumptions, it can be shown that
\begin{eqnarray}
	\sqrt{n}(\widehat{\beta}_{\mcF}-\beta_0)\cid N(0, \sigma_\epsilon^2D_\mcF^{-2}),\label{eq:general:beta:asymptotic}
\end{eqnarray}
where $D_\mcF^2=\ev[f_\mcF^2(\bfZ)]$. Note that $\widehat{\beta}_{\mcF_{\textrm{C}}}$ is our second-stage estimator defined in  (\ref{eq:estimator:step:1}); $\widehat{\beta}_{\mcF_{\textrm{S}}}$ is the efficient estimator proposed in \cite{n90};
$\widehat{\beta}_{\mcF_{\textrm{A}}}$  is the nonparametric additive instrumental variables estimator proposed
in \cite{fz18}; and $\widehat{\beta}_{\mcF_{\textrm{L}}}$ is asymptotically equivalent to the classical 2SLS estimator commonly used in the economics literature (e.g, see \citealp{ak91}). The last three estimators, $\widehat{\beta}_{\mcF_{\textrm{S}}}$, $\widehat{\beta}_{\mcF_{\textrm{A}}}$ and $\widehat{\beta}_{\mcF_{\textrm{L}}}$, can be incorporated into the general  framework in \cite{bcch12}. 

{We are now ready to summarize three benefits of Deep IV in comparison to its competitors. First, it is easy to see that $\mcF_{\textrm{L}}\subseteq \mcF_{\textrm{A}} \subseteq \mcF_{\textrm{S}} \subseteq \mcF_{\textrm{C}}$, and thus $D_{\mcF_\textrm{L}}^2\leq D_{\mcF_\textrm{A}}^2\leq D_{\mcF_\textrm{S}}^2\leq D_{\mcF_\textrm{C}}^2$. This indicates that our estimator $\widehat{\beta}_{\mcF_{\textrm{C}}}$ has the {\em smallest} variance. In fact, $\widehat{\beta}_{\mcF_\textrm{C}}$ turns out to be semiparametric efficient since Assumption \ref{Assumption:AC}\ref{A1:b} essentially requires the conditional mean $\ev(\bfX | \bfZ=\bfz)=f_0(\bfz)\in \mcF_\textrm{C}$. Second, the discussions right after Theorem \ref{theorem:asymptotic:distribution:deep} reveal that the semiparametric efficient  $\widehat{\beta}_{\mcF_{\textrm{C}}}$ requires a {\em weaker} condition $p^*>t^*/2$ on the smoothness of the underlying function when compared with the condition $p_H>d/2$ for $\widehat{\beta}_{\mcF_{\textrm{S}}}$. Finally, $\widehat{\beta}_{\mcF_{\textrm{C}}}$ has a smallest (second order) estimation error in the sense of \cite{cheng2008general} due to the fastest convergence rate of $\widehat{f}_{\textrm{C}}$ obtained by neural networks}

The following concrete example clearly illustrates these three advantages.
\begin{Example}\label{example:5}
Consider $\bfX=\prod_{j=1}^dg(Z_j)$ with $g$ being $(p, C)$-H\" older smooth and $Z_j$ being i.i.d. such that $\ev(g(Z_j))=0$. It can be shown that $f_{\mcF_{\textrm{A}}}=f_{\mcF_{\textrm{L}}}=0$. As a consequence, neither $\widehat{\beta}_{\mcF_{\textrm{L}}}$ nor $\widehat{\beta}_{\mcF_{\textrm{A}}}$ in \cite{fz18} is consistent. Notice  the function $f(\bfz)=\prod_{j=1}^dg(z_j)$  has a compositional  structure with $p^*=p$ and $t^*=1$. Moreover, by the conclusion \ref{lemma:pc:smooth:degree:result:1} in Lemma \ref{lemma:pc:smooth:degree},  we can verify  that the H\" older smoothness of $f$ is $p_H=p$. Therefore, the necessary conditions for $\widehat{\beta}_{\mcF_\textrm{S}}$ and $\widehat{\beta}_{\mcF_\textrm{C}}$ to guarantee (\ref{eq:general:beta:asymptotic}) are $p>d/2$ and $p>1/2$, respectively (see discussion of Theorem \ref{theorem:asymptotic:distribution:deep}). Finally, according to the discussion of Theorem \ref{theorem:rate:deep}, the convergence rate of $\widehat{f}_{\mcF_\textrm{C}}$ is $n^{-\frac{p}{2p+1}}$ that is faster than $n^{-\frac{p}{2p+d}}$. Hence, the estimation error of the second-stage estimator $\widehat{\beta}_{\mcF_\textrm{C}}$ will be smaller.
\end{Example}

\section{Some Auxiliary Results}\label{sec:extra:results}

The general theoretical results developed in previous sections are useful in other statistical inference problems in the field of IV. In this section, we present these auxiliary but useful results including split sample estimate, specification test and an extension of our models to contain exogenous variables, with different inferential purposes.

\subsection{Split-sample Estimator}
As mentioned in the discussion of Theorem \ref{theorem:asymptotic:distribution:deep}, a necessary condition for our theoretical results is that $f_{0,s}$'s must be sufficiently smooth, say $p^*>t^*/2$. Our goal in this section is to relax this condition by proposing a four-stage estimator through splitting the samples, as motivated by \cite{ak95, bcch12}.

We randomly divide the samples into two groups: group $a$ of size $n_a=\floor{n/2}$ and group $b$ of size $n_b=n-n_a$. Correspondingly, we define $(\bfX_i^a. \bfX_i^b)$ and $(\bfZ_i^a,\bfZ_i^b)$. 

Stage 1: Use the data in each group to construct the first-stage neural network estimators as in (\ref{eq:estimator:step:1}), and denote them by $\bff^k=(\widehat{f}_1^k,\ldots, \widehat{f}_q^k)$ for $k=a,b$.  

Stage 2: Given any $C_n>0$, we define the truncated neural network estimators:  
\begin{eqnarray}
	\widecheck{f}_s^k=\widehat{f}_s^kI(|\widehat{f}_s^k|\leq C_n),\quad \textrm{ for } k=a,b \textrm{ and } s=1,\ldots, q. \nonumber
\end{eqnarray}

Stage 3: Let $\widecheck{\bfX}_i^a=(\widecheck{f}_1^b(\bfZ_i^a),\ldots, \widecheck{f}_q^b(\bfZ_i^a))^\top$ and $\widecheck{\bfX}_i^b=(\widecheck{f}_1^a(\bfZ_i^b),\ldots, \widecheck{f}_q^a(\bfZ_i^b))^\top$, and we define the following two estimators:
\begin{equation*}
	\widecheck{\beta}^a=\bigg(\frac{1}{n_a}\sum_{i=1}^{n_a}\widecheck{\bfX}_i^a\bfX_i^{a\top}\bigg)^{-1}\frac{1}{n_a}\sum_{i=1}^{n_a}\widecheck{\bfX}_i^a Y_i^a \quad \textrm{ and } \quad \widecheck{\beta}^b=\bigg(\frac{1}{n_b}\sum_{i=1}^{n_b}\widecheck{\bfX}_i^b\bfX_i^{b\top}\bigg)^{-1}\frac{1}{n_b}\sum_{i=1}^{n_b}\widecheck{\bfX}_i^b Y_i^b.
\end{equation*}

Stage 4: Combining $\check{\beta}^a$ and $\check{\beta}^b$, we construct the following split-sample estimator:
\begin{equation*}
	\widecheck{\beta}^{ab}=\bigg(\sum_{i=1}^{n_a}\widecheck{\bfX}_i^a\bfX_i^{a\top}+\sum_{i=1}^{n_b}\widecheck{\bfX}_i^b\bfX_i^{b\top}\bigg)^{-1}\bigg(\sum_{i=1}^{n_a}\widecheck{\bfX}_i^a\bfX_i^{a\top}\widecheck{\beta}^a+\sum_{i=1}^{n_b}\widecheck{\bfX}_i^b\bfX_i^{b\top}\widecheck{\beta}^b\bigg).
\end{equation*}
The truncation idea in the second stage is inspired by \cite{gkkw06} and \cite{bk19}, and aims to obtain the convergence results in terms of norm $\|\cdot\|$, which is critical to the success of the final split-sample estimator. Without bounding the estimator, only the convergence rate in terms of  norm $\|\cdot\|_n$ can be derived. {In practice, the truncation parameter $C_n$ can be chosen as $c\log(n)$ for some constant $c$.}

Theorems \ref{theorem:split:sample:DIVE:deep} reveals that the truncated neural network estimators $\widecheck{f}_{s}^k$'s converge at the optimal rate (up to a logarithm term) and the split-sample estimator $\widecheck{\beta}^{ab}$ achieves semiparametric efficiency, as long as $C_n$ grows slowly at a $\log n$-rate and the network structure is properly specified. In comparison with Theorem \ref{theorem:asymptotic:distribution:deep}, the smoothness condition $p^*>t^*/2$ is neither explicitly nor implicitly required in Theorem \ref{theorem:split:sample:DIVE:deep} due to sample splitting. 
\begin{theorem}\label{theorem:split:sample:DIVE:deep}
Under Assumption \ref{Assumption:AC}, if $C_n \to \infty$, $C_n=O(\log(n))$, $LW=o(\sqrt{n})$ and $LWd=o(n)$, then for $k=a,b$ and  $s=1,\ldots, q$, it follows that
\begin{equation}
	\|\widecheck{f}_s^k-f_{0,s}\|=O_P\bigg(\log^5(n)\sqrt{\frac{L^2W^2+LWd}{n}}+\log^{\frac{4p^*}{t^*}}(n)(LW)^{-\frac{2p^*}{t^*}}\bigg),\; \nonumber
\end{equation}
As a consequence, if $d=O(LW)$ and $LW\asymp n^{\frac{t^*}{2(2p^*+t^*)}}$, then for $k=a,b$ and  $s=1,\ldots, q$, the following holds:
\begin{equation*}
	\|\widecheck{f}_s^k-f_{0,s}\|=O_P\bigg([\log(n)]^{5+\frac{4p^*}{t^*}}n^{-\frac{p^*}{2p^*+t^*}}\bigg).
\end{equation*}
In addition, if Assumption \ref{Assumption:A3} holds and $\log^6(n)LW=o(\sqrt{n})$, $\log^{12}(n)LWd=o(n)$, $\log^{\frac{t^*+4p^*}{2p^*}}(n)=o(LW)$, then we have
\begin{equation*}
	\sqrt{n}(\widecheck{\beta}^{ab}-\beta_0)\cid  \textrm{N}(0, \sigma_\epsilon^2 \ev^{-1}(\bfD\bfD^\top)).
\end{equation*}
\end{theorem}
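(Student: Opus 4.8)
The plan is to prove the three displayed claims in order, beginning with the $\|\cdot\|$-rate for $\widecheck{f}_s^k$, on which the rest rests. Since the two groups have sizes $\asymp n$ and the hypotheses $LW=o(\sqrt n)$, $LWd=o(n)$ hold, Theorem~\ref{theorem:rate:deep} applies within group $k$ and gives $\|\widehat f_s^k-f_{0,s}\|_{n_k}=O_P(r_n)$, where $r_n=\log^5(n)\sqrt{(L^2W^2+LWd)/n}+\log^{4p^*/t^*}(n)(LW)^{-2p^*/t^*}$ and $\|\cdot\|_{n_k}$ is the empirical $L^2$-norm over group $k$. Because $|f_{0,s}|\le C$ by Definition~\ref{Definition:Convolution:f0}, for $C_n\ge 2C$ one has the pointwise domination $|\widecheck f_s^k-f_{0,s}|\le|\widehat f_s^k-f_{0,s}|$: on $\{|\widehat f_s^k|\le C_n\}$ the two sides coincide, while on its complement the left side is at most $C$ and the right side exceeds $C_n-C\ge C$. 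Hence $\|\widecheck f_s^k-f_{0,s}\|_{n_k}=O_P(r_n)$ too, and it remains only to upgrade the empirical norm to the population norm. This is exactly where truncation is essential: $\widecheck f_s^k-f_{0,s}$ is uniformly bounded by $C_n+C=O(\log n)$ and lies in the class $\{\textrm{trunc}_{C_n}(f)-f_{0,s}:f\in\mcF_{d,q}(L,W)\}$, so a localized (ratio-type) uniform deviation inequality over this bounded class — built on the same metric-entropy bound for $\mcF_{d,q}(L,W)$ that produces the estimation-error term in the proof of Theorem~\ref{theorem:rate:deep} — yields $\|\widecheck f_s^k-f_{0,s}\|=O_P(r_n)$, the extra cost of passing from $\|\cdot\|_{n_k}$ to $\|\cdot\|$ being only logarithmic because the estimator is now bounded. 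The second display follows by substituting $LW\asymp n^{t^*/(2(2p^*+t^*))}$ into $r_n$, which balances its two components.

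For the asymptotic normality of $\widecheck\beta^{ab}$, the first observation is algebraic: since $\big(\sum_{i\le n_a}\widecheck{\bfX}_i^a\bfX_i^{a\top}\big)\widecheck\beta^a=\sum_{i\le n_a}\widecheck{\bfX}_i^a Y_i^a$ (and likewise for group $b$), the definition of $\widecheck\beta^{ab}$ collapses to the pooled cross-fitted two-stage estimator $\widecheck\beta^{ab}=\widehat\Sigma_n^{-1}\,n^{-1}\big(\sum_{i\le n_a}\widecheck{\bfX}_i^a Y_i^a+\sum_{i\le n_b}\widecheck{\bfX}_i^b Y_i^b\big)$ with $\widehat\Sigma_n=n^{-1}\big(\sum_{i\le n_a}\widecheck{\bfX}_i^a\bfX_i^{a\top}+\sum_{i\le n_b}\widecheck{\bfX}_i^b\bfX_i^{b\top}\big)$; substituting $Y_i^k=\beta_0^\top\bfX_i^k+\epsilon_i^k$ then gives $\sqrt n(\widecheck\beta^{ab}-\beta_0)=\widehat\Sigma_n^{-1}\,n^{-1/2}\big(\sum_{i\le n_a}\widecheck{\bfX}_i^a\epsilon_i^a+\sum_{i\le n_b}\widecheck{\bfX}_i^b\epsilon_i^b\big)$. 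For the denominator, conditioning on the complementary group $\mcD_{k'}$ makes each inner sum an average of i.i.d.\ terms, so a conditional law of large numbers, the consistency $\|\widecheck{\bff}^{k'}-\bff_0\|=o_P(1)$ from the first part, Cauchy--Schwarz (the conditional mean $\ev[\widecheck{\bff}^{k'}(\bfZ)\bfD^\top\mid\mcD_{k'}]$ differs from $\ev(\bfD\bfD^\top)$ by at most $\|\widecheck{\bff}^{k'}-\bff_0\|\,\|\bfD\|$), and the exponential-tail Assumption~\ref{Assumption:AC}\ref{Assumption:A1} on $\bfX$ (so $C_n^2/n\to 0$ controls the fluctuation about the conditional mean) together give $\widehat\Sigma_n\cip\ev(\bfD\bfD^\top)$, which is invertible by Assumption~\ref{Assumption:A3}\ref{A3:b}.

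The key decomposition is $n^{-1/2}\big(\sum_{i\le n_a}\widecheck{\bfX}_i^a\epsilon_i^a+\sum_{i\le n_b}\widecheck{\bfX}_i^b\epsilon_i^b\big)=n^{-1/2}\sum_{i=1}^n\bfD_i\epsilon_i+R_a+R_b$, where $R_k=n^{-1/2}\sum_{i\in k}(\widecheck{\bfX}_i^k-\bfD_i^k)\epsilon_i^k$. The leading term is a sum of i.i.d.\ mean-zero vectors, since $\ev(\bfD\epsilon)=\ev\big(\bfD\,\ev(\epsilon\mid\bfZ)\big)=0$ by Assumption~\ref{Assumption:A3}\ref{A3:0}, with covariance $\sigma_\epsilon^2\ev(\bfD\bfD^\top)$ by Assumption~\ref{Assumption:A3}\ref{A3:a}; the Lindeberg condition follows from the exponential tails of $\epsilon$ and $\bfX$, so this term $\cid\textrm{N}(0,\sigma_\epsilon^2\ev(\bfD\bfD^\top))$. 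For the remainder, conditioning on $\mcD_{k'}$ makes $R_k$ a normalized sum of conditionally i.i.d., conditionally mean-zero terms — because $\widecheck{\bfX}_i^k-\bfD_i^k$ is $\bfZ_i^k$-measurable and $\ev(\epsilon_i^k\mid\bfZ_i^k)=0$ — with conditional covariance $\tfrac{n_k}{n}\sigma_\epsilon^2\sum_{s}\|\widecheck f_s^{k'}-f_{0,s}\|^2=o_P(1)$ by the first part, so conditional Chebyshev together with dominated convergence give $R_k=o_P(1)$. Slutsky's theorem then yields $\sqrt n(\widecheck\beta^{ab}-\beta_0)\cid\ev^{-1}(\bfD\bfD^\top)\textrm{N}(0,\sigma_\epsilon^2\ev(\bfD\bfD^\top))=\textrm{N}(0,\sigma_\epsilon^2\ev^{-1}(\bfD\bfD^\top))$. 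I expect the main obstacle to be the first part — the localized uniform deviation inequality for \emph{truncated, unconstrained} ReLU networks with diverging $(L,W)$ and possibly diverging input dimension $d$, sharp enough that passing from $\|\cdot\|_{n_k}$ to $\|\cdot\|$ costs only logarithmic factors rather than a spurious factor of order $\sqrt n$. By contrast, the remainder control is now essentially automatic: cross-fitting turns $R_k$ into a conditionally centered sum whose variance is a \emph{single} power of the first-stage $\|\cdot\|$-rate, hence vanishes whenever $p^*>0$ — which is precisely why the condition $p^*>t^*/2$ needed in Theorem~\ref{theorem:asymptotic:distribution:deep} is no longer required here.
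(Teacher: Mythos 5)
Your proposal matches the paper's route at every structural joint: the pointwise domination $|\widecheck f_s^k-f_{0,s}|\leq|\widehat f_s^k-f_{0,s}|$ (and you correctly flag the needed threshold $C_n\geq 2C$, which the paper glosses over with ``$C_n\geq C$''), the empirical-to-population upgrade via a localized Rademacher/peeling argument on the bounded truncated class, the algebraic collapse of $\widecheck\beta^{ab}$ to the pooled cross-fitted two-stage estimator, the conditional-on-$\mcD_{k'}$ consistency of $\widehat\Sigma_n$, and the conditional Chebyshev control of the remainder $R_k$. These are exactly the paper's Lemma~\ref{lemma:convergence:truncated:DNN}, Lemma~\ref{lemma:split:sample:empirical:rate}, Lemma~\ref{lemma:split:sample:difference:rate:2}, Lemma~\ref{lemma:split:sample:difference:rate:3}, and Lemma~\ref{lemma:split:sample:DIVE}, respectively.

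One place where you actually streamline relative to the paper: for $R_k$ you apply conditional Chebyshev directly with $\ev(\epsilon^2\mid\bfZ)=\sigma_\epsilon^2$, giving a conditional variance $\tfrac{n_k}{n}\sigma_\epsilon^2\sum_s\|\widecheck f_s^{k'}-f_{0,s}\|^2=O_P(r_n^2)$. The paper's Lemma~\ref{lemma:split:sample:difference:rate:3} instead splits $\epsilon$ into truncated and tail pieces with threshold $m_n\asymp\log n$, picking up the extra condition $\log(n)\,r_n=o(1)$; since Assumption~\ref{Assumption:A3}\ref{A3:a} already fixes the conditional second moment, that truncation is unnecessary and your route avoids a spurious $\log$ factor (the theorem's stated rate conditions are strong enough either way, so this is an improvement in hygiene rather than in the final statement). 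The one thing you defer rather than prove is the localized ratio inequality that carries $\|\cdot\|_{n_k}$ to $\|\cdot\|$ for the truncated, unconstrained ReLU class with diverging $(L,W,d)$: that is indeed the technical core (the paper's Steps 1--3 of Lemma~\ref{lemma:convergence:truncated:DNN}, combining the Lipschitz contraction for $x\mapsto x^2$ on $[-2C_n,2C_n]$, a Talagrand-type concentration bound, the pseudo-dimension entropy estimate, and dyadic peeling over population-norm shells), and you correctly identify both its role and why boundedness makes the excess cost only polylogarithmic. As a proposal this is sound; to turn it into a proof you would need to spell out that peeling argument.
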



To conduct statistical inference using the split-sample estimator, we propose the following estimators for the asymptotic covariance:
\begin{eqnarray}
	\widecheck{\bfV}^2_{ab}= \bigg(\sum_{i=1}^{n_a}\widecheck{\bfX}_i^a\bfX_i^{a\top}+\sum_{i=1}^{n_b}\widecheck{\bfX}_i^b\bfX_i^{b\top}\bigg) ^{-1}\bigg(\sum_{i=1}^{n_a}|\widecheck{\epsilon}_i^{a}|^2+\sum_{i=1}^{n_b}|\widecheck{\epsilon}_i^{b}|^2\bigg),\nonumber
\end{eqnarray}
where $\widecheck{\epsilon}_i^{k}=Y_i^k-\bfX_i^{k\top}\widecheck{\beta}^{ab}$  for $k=a,b$. Lemma \ref{lemma:split:sample:estimation:variance} below show that the above covariance estimator is consistent.
\begin{lemma}\label{lemma:split:sample:estimation:variance}
Under conditions of Theorems \ref{theorem:split:sample:DIVE:deep}, it holds that $\widecheck{\bfV}_{ab}^2\cip \sigma_\epsilon^2\ev^{-1}(\bfD\bfD^\top)$.
\end{lemma}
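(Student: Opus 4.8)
The plan is to follow the argument for Lemma~\ref{lemma:estimation:variance}, using that $\widecheck{\bfV}_{ab}^2$ factors as $\widehat{M}^{-1}\widehat{s}^2$ with
$\widehat{M}=n^{-1}\big(\sum_{i=1}^{n_a}\widecheck{\bfX}_i^a\bfX_i^{a\top}+\sum_{i=1}^{n_b}\widecheck{\bfX}_i^b\bfX_i^{b\top}\big)$ and $\widehat{s}^2=n^{-1}\big(\sum_{i=1}^{n_a}|\widecheck{\epsilon}_i^{a}|^2+\sum_{i=1}^{n_b}|\widecheck{\epsilon}_i^{b}|^2\big)$, and then establishing $\widehat{M}\cip\ev(\bfD\bfD^\top)$ and $\widehat{s}^2\cip\sigma_\epsilon^2$ separately. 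The claim then follows from the continuous mapping theorem, since $\ev(\bfD\bfD^\top)$ is positive definite by Assumption~\ref{Assumption:A3}\ref{A3:b}, so $\widehat{M}$ is invertible with probability tending to one and matrix inversion is continuous there.

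For $\widehat{M}$ I would write $X_{is}=f_{0,s}(\bfZ_i)+U_{is}$ with $\ev(U_{is}\mid\bfZ_i)=0$, so that the $(s,t)$ entry of $\widehat{M}$ is (up to the $n_a/n,n_b/n$ weights) an average of terms $\widecheck{f}_s^{b}(\bfZ_i^a)X_{it}^a$ plus the symmetric piece. First replace $\widecheck{f}_s^{b}$ by $f_{0,s}$: because $\widecheck{f}_s^{b}$ is a function of the group-$b$ sample only, conditioning on that sample makes $n_a^{-1}\sum_{i}(\widecheck{f}_s^{b}(\bfZ_i^a)-f_{0,s}(\bfZ_i^a))^2$ an average of i.i.d.\ terms that are bounded by $(C_n+M)^2$ (with $M=\sup|f_{0,s}|<\infty$ and $|\widecheck{f}_s^b|\le C_n=O(\log n)$) and have mean $\|\widecheck{f}_s^{b}-f_{0,s}\|^2$; a conditional Chebyshev bound then yields $n_a^{-1}\sum_{i}(\widecheck{f}_s^{b}(\bfZ_i^a)-f_{0,s}(\bfZ_i^a))^2=\|\widecheck{f}_s^{b}-f_{0,s}\|^2+o_P(1)=o_P(1)$ by the $\|\cdot\|$-rate of Theorem~\ref{theorem:split:sample:DIVE:deep}. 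Combined with $n_a^{-1}\sum_i (X_{it}^a)^2=O_P(1)$ (finite second moments from Assumption~\ref{Assumption:AC}\ref{Assumption:A1}) and Cauchy--Schwarz, the replacement error is $o_P(1)$, and the remaining term $n_a^{-1}\sum_i f_{0,s}(\bfZ_i^a)X_{it}^a\cip\ev[f_{0,s}(\bfZ)X_t]=\ev[f_{0,s}(\bfZ)f_{0,t}(\bfZ)]=\ev[D_sD_t]$ by the law of large numbers, the $U_t$ contribution vanishing in expectation. Hence $\widehat{M}\cip\ev(\bfD\bfD^\top)$.

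For $\widehat{s}^2$, substitute $\widecheck{\epsilon}_i^k=\epsilon_i^k-(\widecheck{\beta}^{ab}-\beta_0)^\top\bfX_i^{k}$ and expand the square into three sums over all observations. The leading sum $n^{-1}\sum_i\epsilon_i^2\cip\ev(\epsilon^2)=\ev[\ev(\epsilon^2\mid\bfZ)]=\sigma_\epsilon^2$ by the law of large numbers and Assumption~\ref{Assumption:A3}\ref{A3:a}. The cross sum is bounded by $2\|\widecheck{\beta}^{ab}-\beta_0\|\cdot\|n^{-1}\sum_i\epsilon_i\bfX_i\|=O_P(n^{-1/2})\cdot O_P(1)=o_P(1)$, using $\|\widecheck{\beta}^{ab}-\beta_0\|=O_P(n^{-1/2})$ from Theorem~\ref{theorem:split:sample:DIVE:deep} (note $n^{-1}\sum_i\epsilon_i\bfX_i$ only needs to be $O_P(1)$, and it has a nonzero probability limit because of endogeneity). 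The quadratic sum is $\le\|\widecheck{\beta}^{ab}-\beta_0\|^2\cdot n^{-1}\sum_i\|\bfX_i\|^2=O_P(n^{-1})O_P(1)=o_P(1)$. Therefore $\widehat{s}^2\cip\sigma_\epsilon^2$.

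The main obstacle is the control of $\widehat{M}$: it genuinely requires the $L^2$-type convergence $\|\widecheck{f}_s^{b}-f_{0,s}\|=o_P(1)$ rather than the empirical $\|\cdot\|_n$-bound of Theorem~\ref{theorem:rate:deep}, since $\widecheck{f}_s^{b}$ is evaluated on the \emph{other} subsample, and it also requires keeping track of the growing truncation level $C_n=O(\log n)$ inside the conditional second-moment estimate --- this is precisely why the truncation stage and the $\|\cdot\|$-convergence statement were built into Theorem~\ref{theorem:split:sample:DIVE:deep}. The rest is a routine combination of laws of large numbers and the continuous mapping theorem, paralleling the proof of Lemma~\ref{lemma:estimation:variance}.
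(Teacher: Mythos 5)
Your proposal is correct and follows essentially the same route as the paper: the control of $\widehat{M}$ reproduces the conditional-Chebyshev/Cauchy--Schwarz argument of Lemmas \ref{lemma:split:sample:empirical:rate} and \ref{lemma:split:sample:difference:rate:2} (including the correct observation that the cross-fitted evaluation forces one to use the truncated $\|\cdot\|$-rate rather than the $\|\cdot\|_n$-rate), and the treatment of $\widehat{s}^2$ is the same expansion used in Lemma \ref{lemma:estimation:variance} combined with the $\sqrt{n}$-consistency of $\widecheck{\beta}^{ab}$. No gaps; you have simply written out in full what the paper compresses into a reference to those earlier lemmas.
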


\subsection{Specification Test}

One fundamental problem in the field of IV is whether or not the instrumental variables are indeed exogenous. In this section, following \cite{h78} and \cite{bcch12}, we propose a Hausman-type testing procedure to address this issue. 

Suppose that we have several baseline instrumental variables and also that the first $d_b$ instruments are valid, denoted as $\widetilde{\bfZ}=(Z_1, \ldots, Z_{d_b})^\top$. The goal is to test whether the rest variables $Z_{d_b+1},\ldots, Z_{d}$ are also valid instruments or not. Let $\widehat{\beta}$ and $\widetilde{\beta}$ be the estimators proposed in (\ref{eq:estimator:step:2}) based on the potential instruments $\bfZ=(Z_1,\ldots, Z_{d_b}, Z_{d_b+1},\ldots, Z_d)^\top$ and baseline instruments $\widetilde{\bfZ}$, respectively. Define $g_{0,s}(\widetilde{\bfz})=\ev(X_{s}|\widetilde{\bfZ}=\widetilde{\bfz})$ for $\widetilde{\bfz}\in \mathbb{R}^{d_b}$ and $\widehat{g}_s$ the DNN estimator in (\ref{eq:estimator:step:1}) by replacing $\bfZ$ with $\widetilde{\bfZ}$. Similarly, define $\widetilde{\bfD}=(g_{0,1}(\widetilde{\bfZ}),\ldots, g_{0,q}(\widetilde{\bfZ}))^\top$ and $\widetilde{\bfX}_i=(\widehat{g}_1(\widetilde{\bfZ}_i),\ldots, \widehat{g}_q(\widetilde{\bfZ}_i))^\top$. With these notation, the estimands of  $\widehat{\beta}$ and $\widetilde{\beta}$ are essentially $$\beta_{\bfZ}:=\ev^{-1}(\bfD\bfD^\top)\ev(\bfD Y)\quad\mbox{ and }\quad \beta_{\widetilde{\bfZ}}:=\ev^{-1}(\widetilde{\bfD}\widetilde{\bfD}^\top)\ev(\widetilde{\bfD} Y),$$ respectively.
Since $\widetilde{\bfZ}$ is a vector of valid instruments, it can be shown that
\begin{eqnarray*}
	\beta_{\bfZ}=\beta_0+\ev^{-1}(\bfD\bfD^\top)\ev(\bfD \epsilon)\quad \textrm{ and }\quad  \beta_{\widetilde{\bfZ}}=\beta_0.
\end{eqnarray*}
Therefore, if  all the elements in $\bfZ$ are  also valid instruments, then $\beta_{\bfZ}=\beta_{\widetilde{\bfZ}}$ such that the difference between $\widehat{\beta}$ and $\widetilde{\beta}$ is expected to be small. 

Based on the above intuition, we propose to test $$H_0: \beta_{\bfZ}=\beta_{\widetilde{\bfZ}}\;\;\mbox{versus}\;\;H_1: \beta_{\bfZ}\neq \beta_{\widetilde{\bfZ}}$$ using the following test statistic
\begin{equation}\label{eq:test:statistics}
	J=\frac{n}{\widetilde{\sigma}_\epsilon^{2}} (\widehat{\beta}-\widetilde{\beta})^\top\bigg[\bigg(\frac{1}{n}\sum_{i=1}^n\widetilde{\bfX}_i\bfX_i^\top\bigg)^{-1}-\bigg(\frac{1}{n}\sum_{i=1}^n\widehat{\bfX}_i\bfX_i^\top\bigg)^{-1}\bigg]^{-1} (\widehat{\beta}-\widetilde{\beta}),
\end{equation}
where $\widetilde{\sigma}_\epsilon^2=n^{-1}\sum_{i=1}^n(Y_i-\widetilde{\beta}^\top\bfX_i)^2$ and the two matrices in (\ref{eq:test:statistics}) are essential the estimators of $\ev^{-1}(\widetilde{\bfD}\widetilde{\bfD}^\top)$ and $\ev^{-1}({\bfD}{\bfD}^\top)$. 
Some additional regularity conditions are introduced for studying the proposed test statistic.
\begin{Assumption}\label{Assumption:A7}
\begin{enumerate}[label={(\roman*}),ref={(\roman*})]
\item \label{A7.a} The underlying  functions $g_{0,1},\ldots, g_{0,q}\in \mcCS(L_*, \bfd, \bft, \bfp, \bfa, \bfb, C)$, with all the parameters fixed constants, except for $d_0=d_b$ which could be diverging.
\item \label{A7.b} The matrices $\ev(\widetilde{\bfD}\widetilde{\bfD}^\top)$ and $\ev^{-1}(\widetilde{\bfD}\widetilde{\bfD}^\top)-\ev^{-1}({\bfD}{\bfD}^\top)$ are positive definite.
\end{enumerate}
\end{Assumption}

The compositional structure Assumption \ref{Assumption:A7}\ref{A7.a} is similar to Assumption \ref{A1:b}. Assumption \ref{Assumption:A7}\ref{A7.b} is a regularity condition for the asymptotic covariance matrices.  Since $\widetilde{\bfZ}$ is a subvector of $\bfZ$, we can show that the matrix $\ev^{-1}(\widetilde{\bfD}\widetilde{\bfD}^\top)-\ev^{-1}({\bfD}{\bfD}^\top)$ is at least nonnegative definite.

The following theorem justifies the proposed test statistic $J$: reject $H_0$ if $J>\chi^2_\alpha(q)$, where $\chi^2_\alpha(q)$ is the $\alpha$-th upper percentile of $\chi_q^2$. 

\begin{theorem}\label{theorem:specification:test}
\begin{enumerate}
\item Under the conditions in Theorem \ref{theorem:asymptotic:distribution:deep} and Assumption \ref{Assumption:A7}, we have $J\cid \chi^2(q)$.
\item  Suppose that the conditions in Theorem \ref{theorem:asymptotic:distribution:deep} and Assumption \ref{Assumption:A7} hold, except that Assumption \ref{Assumption:A3}\ref{A3:0} therein is replaced by $E(\epsilon|Z_j)=0$ for $j=1,\ldots, d_b$. Furthermore, if $\|\ev(\bfD\epsilon)\|_2>0$, then $J \to \infty$ in probability.
\end{enumerate}
\end{theorem}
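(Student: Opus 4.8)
The plan is to reduce both parts to the asymptotic linearization that already underlies Theorem~\ref{theorem:asymptotic:distribution:deep} and then run the classical Hausman computation. Since $\widetilde{\bfZ}=(Z_1,\dots,Z_{d_b})^\top$ is a subvector of $\bfZ$ with $d_b\le d$, and Assumption~\ref{Assumption:A7}\ref{A7.a} puts on $g_{0,s}(\widetilde{\bfz})=\ev(X_s\mid\widetilde{\bfZ}=\widetilde{\bfz})$ the same compositional structure that Assumption~\ref{Assumption:AC}\ref{A1:b} puts on $f_{0,s}$, Theorem~\ref{theorem:rate:deep} applies verbatim to the baseline first-stage estimators $\widehat{g}_s$, so all stage-one rates and all bandwidth conditions of Theorem~\ref{theorem:asymptotic:distribution:deep} are inherited by the baseline problem. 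First I would establish, by reusing the argument for Theorem~\ref{theorem:asymptotic:distribution:deep} (which shows that substituting $\widehat{\bfX}_i$ by $\bfD_i$, resp.\ $\widetilde{\bfX}_i$ by $\widetilde{\bfD}_i$, costs only $o_P(n^{-1/2})$),
\begin{equation*}
\sqrt{n}(\widehat{\beta}-\beta_0)=\ev^{-1}(\bfD\bfD^\top)\frac1{\sqrt{n}}\sum_{i=1}^n\bfD_i\epsilon_i+o_P(1),\qquad \sqrt{n}(\widetilde{\beta}-\beta_0)=\ev^{-1}(\widetilde{\bfD}\widetilde{\bfD}^\top)\frac1{\sqrt{n}}\sum_{i=1}^n\widetilde{\bfD}_i\epsilon_i+o_P(1),
\end{equation*}
where $\bfD_i=(f_{0,1}(\bfZ_i),\dots,f_{0,q}(\bfZ_i))^\top$ and $\widetilde{\bfD}_i=(g_{0,1}(\widetilde{\bfZ}_i),\dots,g_{0,q}(\widetilde{\bfZ}_i))^\top$.

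Subtracting the two expansions gives $\sqrt{n}(\widehat{\beta}-\widetilde{\beta})=n^{-1/2}\sum_{i=1}^n\bfu_i\epsilon_i+o_P(1)$ with $\bfu_i:=\ev^{-1}(\bfD\bfD^\top)\bfD_i-\ev^{-1}(\widetilde{\bfD}\widetilde{\bfD}^\top)\widetilde{\bfD}_i$. I would then apply the multivariate Lindeberg CLT in the fixed dimension $q$ (note $\bfD,\widetilde{\bfD}\in\mathbb{R}^q$ even though $d$ diverges), using $\ev(\epsilon^2\mid\bfZ)=\sigma_\epsilon^2$ from Assumption~\ref{Assumption:A3}\ref{A3:a} together with the exponential tails in Assumptions~\ref{Assumption:AC}\ref{Assumption:A1} and \ref{Assumption:A3}\ref{A3:a} to verify the Lindeberg condition. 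The covariance of $\bfu_i\epsilon_i$ is $\sigma_\epsilon^2\ev(\bfu_i\bfu_i^\top)$, and all cross terms collapse via the tower property $\ev(\bfD\widetilde{\bfD}^\top)=\ev\big(\ev(\bfD\mid\widetilde{\bfZ})\widetilde{\bfD}^\top\big)=\ev(\widetilde{\bfD}\widetilde{\bfD}^\top)$, leaving
\begin{equation*}
\sqrt{n}(\widehat{\beta}-\widetilde{\beta})\cid\textrm{N}\big(0,\ \sigma_\epsilon^2\Delta\big),\qquad \Delta:=\ev^{-1}(\widetilde{\bfD}\widetilde{\bfD}^\top)-\ev^{-1}(\bfD\bfD^\top),
\end{equation*}
which is nondegenerate by Assumption~\ref{Assumption:A7}\ref{A7.b}.

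Next I would show the plug-in normalizers in $J$ are consistent: by the stage-one $\|\cdot\|_n$ rates of Theorem~\ref{theorem:rate:deep} and the exponential tails, $n^{-1}\sum_i\widehat{\bfX}_i\bfX_i^\top\cip\ev(\bfD\bfD^\top)$ and $n^{-1}\sum_i\widetilde{\bfX}_i\bfX_i^\top\cip\ev(\widetilde{\bfD}\widetilde{\bfD}^\top)$ (the same steps as in Lemma~\ref{lemma:estimation:variance}), so the bracketed matrix in (\ref{eq:test:statistics}) converges in probability to $\Delta$, which is positive definite by Assumption~\ref{Assumption:A7}\ref{A7.b}; and writing $Y_i-\widetilde{\beta}^\top\bfX_i=\epsilon_i+(\beta_0-\widetilde{\beta})^\top\bfX_i$ with $\widetilde{\beta}\cip\beta_0$, $n^{-1}\sum_i\bfX_i\bfX_i^\top=O_P(1)$ and $n^{-1}\sum_i\epsilon_i\bfX_i=O_P(n^{-1/2})$ gives $\widetilde{\sigma}_\epsilon^2\cip\sigma_\epsilon^2$. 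Part~1 then follows from Slutsky and the continuous mapping theorem: $J=\widetilde{\sigma}_\epsilon^{-2}\big(\sqrt{n}(\widehat{\beta}-\widetilde{\beta})\big)^\top(\Delta+o_P(1))^{-1}\big(\sqrt{n}(\widehat{\beta}-\widetilde{\beta})\big)\cid(\Delta^{1/2}\bfg)^\top\Delta^{-1}(\Delta^{1/2}\bfg)=\bfg^\top\bfg\sim\chi^2(q)$ for $\bfg\sim\textrm{N}(0,I_q)$.

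For Part~2, under the stated alternative the baseline moment condition still holds, so $\widetilde{\beta}\cip\beta_0$, while the stage-one network still consistently estimates $\bfD=\ev(\bfX\mid\bfZ)$ (its consistency in Theorem~\ref{theorem:rate:deep} does not use exogeneity of $\bfZ$), hence $\widehat{\beta}\cip\beta_{\bfZ}=\beta_0+\ev^{-1}(\bfD\bfD^\top)\ev(\bfD\epsilon)$; therefore $\widehat{\beta}-\widetilde{\beta}\cip\bm{\delta}:=\ev^{-1}(\bfD\bfD^\top)\ev(\bfD\epsilon)\ne0$, because $\ev^{-1}(\bfD\bfD^\top)$ is invertible and $\|\ev(\bfD\epsilon)\|_2>0$. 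Since the bracketed matrix in (\ref{eq:test:statistics}) still converges to the positive-definite $\Delta$ and $\widetilde{\sigma}_\epsilon^2$ is still bounded and bounded away from $0$ in probability, $n^{-1}J\cip\widetilde{\sigma}_\epsilon^{-2}\bm{\delta}^\top\Delta^{-1}\bm{\delta}>0$, so $J\to\infty$ in probability. The main obstacle is the opening step: forcing the stage-one neural-network errors for both the full instrument set (input dimension $d$, possibly diverging) and the baseline set to be simultaneously $o_P(n^{-1/2})$, which is exactly where the conjunction of the rates in Theorem~\ref{theorem:rate:deep} with the bandwidth conditions $L^2W^2\log^8 n=o(n^{1/2})$, $LWd\log^8 n=o(n^{1/2})$ and $(LW)^{1-2p^*/t^*}[\log n]^{3+4p^*/t^*}=o(1)$ is needed, as in the proof of Theorem~\ref{theorem:asymptotic:distribution:deep}; everything afterward is the standard Hausman algebra, whose only non-generic ingredient is the identity $\ev(\bfD\widetilde{\bfD}^\top)=\ev(\widetilde{\bfD}\widetilde{\bfD}^\top)$.
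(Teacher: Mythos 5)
Your proposal is correct and follows essentially the same route as the paper: both rest on the asymptotic linearizations of $\widehat{\beta}$ and $\widetilde{\beta}$, the projection identity $\ev(\bfD\widetilde{\bfD}^\top)=\ev(\widetilde{\bfD}\widetilde{\bfD}^\top)$ (the paper phrases it as $\ev(\epsilon^2\widetilde{\bfD}\bfD^\top)=\sigma_\epsilon^2\ev(\widetilde{\bfD}\widetilde{\bfD}^\top)$ by conditioning on $\widetilde{\bfZ}$), consistency of the plug-in normalizers, and, for Part 2, the fact that $\widehat{\beta}-\widetilde{\beta}$ has a nonzero limit so that $J$ grows like $n$. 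The only cosmetic difference is that you apply the CLT directly to the difference $\sqrt{n}(\widehat{\beta}-\widetilde{\beta})$ while the paper states the joint CLT for $(\widetilde{\beta},\widehat{\beta})$ and invokes the delta method; these are the same computation.
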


\subsection{Model with Exogenous Variables}\label{sec:extension}
In this section, we discuss an extension of the model (\ref{eq:model:1}) containing both endogenous and exogenous variables. To be more specific, we consider i.i.d. observations $(Y_i, \bfX_i, \bfR_i, \bfZ_i)$ generated from the following model:
\begin{eqnarray}
	Y=\beta_0^\top \bfX+ \alpha_0^\top\bfR+\epsilon,\nonumber
\end{eqnarray}
where $Y$ is the response, $\bfX\in \mathbb{R}^{q_1}$ are the endogenous explanatory variables, $\bfR\in \mathbb{R}^{q_2}$ are the exogenous explanatory variables, $\epsilon\in \mathbb{R}$ is the random noise, and $\bfZ\in \mathbb{R}^d$ are the instrumental variables. Since $\bfR$ are exogenous such that $\ev(\epsilon | \bfR)=0$, we can add them to the instruments set and define $\widetilde{\bfZ}=(\bfR^\top, \bfZ^\top)^\top\in \mathbb{R}^{q_2+d}$. 

Given the above setup, we propose the following two-stage estimator for $(\beta_0^\top, \alpha_0^\top)^\top \in \mathbb{R}^{q_1+q_2}$. In the first stage, we fit a neural network using $\bfX$ as response and $\widetilde{\bfZ}$ as the explanatory variables:
\begin{equation}
	\widehat{\bff}:=\argmin_{\substack{\bff\in \mcF_{d,q_1}(L, W)}}\frac{1}{n}\sum_{i=1}^n\|\bfX_i-\bff(\widetilde{\bfZ}_i)\|_2^2.\nonumber
\end{equation}
Furthermore, we denote the $q_1$ outputs of $\widehat{\bff}$ as $\widehat{f}_1,\ldots, \widehat{f}_{q_1}$. In the second stage,  we can estimate $\beta$ and $\alpha$ as follows:
\begin{equation}
	\begin{pmatrix}
	\widehat{\beta}\\
	\widehat{\alpha}
	\end{pmatrix}=\bigg(\frac{1}{n}\sum_{i=1}^n\widehat{\bfD}_i\bfD_i^\top\bigg)^{-1}\frac{1}{n}\sum_{i=1}^n\widehat{\bfD}_i Y_i.\nonumber
\end{equation}
where $\widehat{\bfD}_i:=(\widehat{\bff}^\top(\bfZ_i), \bfR_i^\top)^\top=(\widehat{f}_1(\bfZ_i),\ldots, \widehat{f}_{q_1}(\bfZ_i), \bfR_i^\top)^\top\in \mathbb{R}^{q_1+q_2}$ and $\bfD_i:=(\bfX_i^\top, \bfR_i^\top)^\top \in \mathbb{R}^{q_1+q_2}$ for $i=1,\ldots, n$. Our previous theory can naturally carry over to this extension.

\section{Monte Carlo Simulation}\label{sec:simulation}
In this section, we provide several  simulation studies to demonstrate the finite-sample performance of the proposed procedure.  We consider the following two data generating processes (DGP):

\begin{itemize}[wide, labelwidth=!, labelindent=0pt]
\item[DGP 1 (Weak IV): ]  $Y=3x+20\epsilon$ and $X=f_0(\bfZ)+\epsilon$, with $f_0(\bfZ)=Z_1\sin(Z_2)+Z_3Z_4$. Here $Z_i$'s are i.i.d. uniformly distributed in $[-3, 3]$ and $\epsilon\sim N(0, 1)$ independent of $Z_i$'s;
\item[DGP 2 (Linear Reduced Form):]  $Y=3x+20\epsilon$ and $X=f_0(\bfZ)+\epsilon$, with $f_0(\bfZ)=3Z_1+4Z_2-2Z_3+Z_4$. Here $Z_i$'s and $\epsilon$ are generated similarly as DGP 1.
\end{itemize}

DGP 1 corresponds to the weak IV case and is a special case of Example \ref{example:5}. DGP 2 requires a linear reduced form equation. In our simulation settings, the sample size was chosen to be $n=100, 200, 500, 1000, 2000$, and each experiment was repeated 1000 times. 

\subsection{First-Stage Estimator}
We consider the following four types of nonparametric and parametric estimation procedures discussed in Section \ref{sec:boosting:efficiency} to obtain the first-stage estimators.
\begin{enumerate}
\item Deep Neural Network (DNN): This estimator is constructed as in (\ref{eq:estimator:step:1}) using deep neural network with depth $L=3$ and width $W=10$.
\item Penalized Tensor Product Spline (P-Spline): The univariate cubic polynomial basis on $[-3, 3]$  is chosen to be $\bfB(z)=(1, z, z^2, z^3, (z-t_1)^3_+, \ldots, (z-t_{20})^3_+)$, where $t_i$'s are the equally-spaced points in $[-3, 3]$. The tensor product spline basis on $[-3, 3]^4$ is defined as a collection of all the interactions between $B_{j_1}(z_1)B_{j_2}(z_2)B_{j_3}(z_3)B_{j_4}(z_4)$, where $B_j(z)$ is the $j$-th element of $\bfB(z)$. Based on the cubic tensor product spline basis, we apply the lasso estimation procedure to select the optimal instruments.
\item Additive Spline (A-Spline): We use  $(\bfB(z_1), \bfB(z_2), \bfB(z_3), \bfB(z_4))$ as the additive spline basis, and  apply the lasso estimation procedure to select the optimal instruments.
\item Linear Regression (LR): The first-stage is the simple linear regression estimator using $\bfX$ as the response and $\bfZ$ as the explanatory variables.
\end{enumerate}

For the DNN estimator,  a widely used and effective algorithm to solve the optimization problem in (\ref{eq:estimator:step:1}) is Stochastic Gradient Descent (SGD). We randomly divided the observations into the training set with sample size $\floor{0.8n}$ and testing set with sample size $n-\floor{0.8n}$. The training set is used to update the weights of the neural network by SGD, while the testing set is used to calculate the testing error. The TensorFlow package in python was applied to obtain the numerical results. 

We used the root mean square error (RMSE) to evaluate the first-stage estimator $\widehat{f}$. Figure \ref{fig:rmse:dnn1} reveals that for DGP 1, the RMSEs of the A-Spline and LR estimators do not decrease even the sample size increases since $f_0$ in DGP 1 does not have the linear or additive structure. At the same time, the DNN estimator has a significantly lower RMSE than the P-Spline estimator because the latter may not be able to effectively capture the intrinsic structure of $f_0$ in DGP 1. For DGP 2, it can be seen from Figure \ref{fig:rmse:dnn2} that the estimator $\widehat{f}$ obtained by LR has the smallest RMSE, while A-Spline and DNN estimators have slightly larger RMSEs. However,  the RMSE of the P-Spline estimator decreases slowly when the sample size increases.

\begin{figure}[H]
\centering
\begin{minipage}{.45\textwidth}
  \centering
  \includegraphics[width=3.2 in, height=3 in]{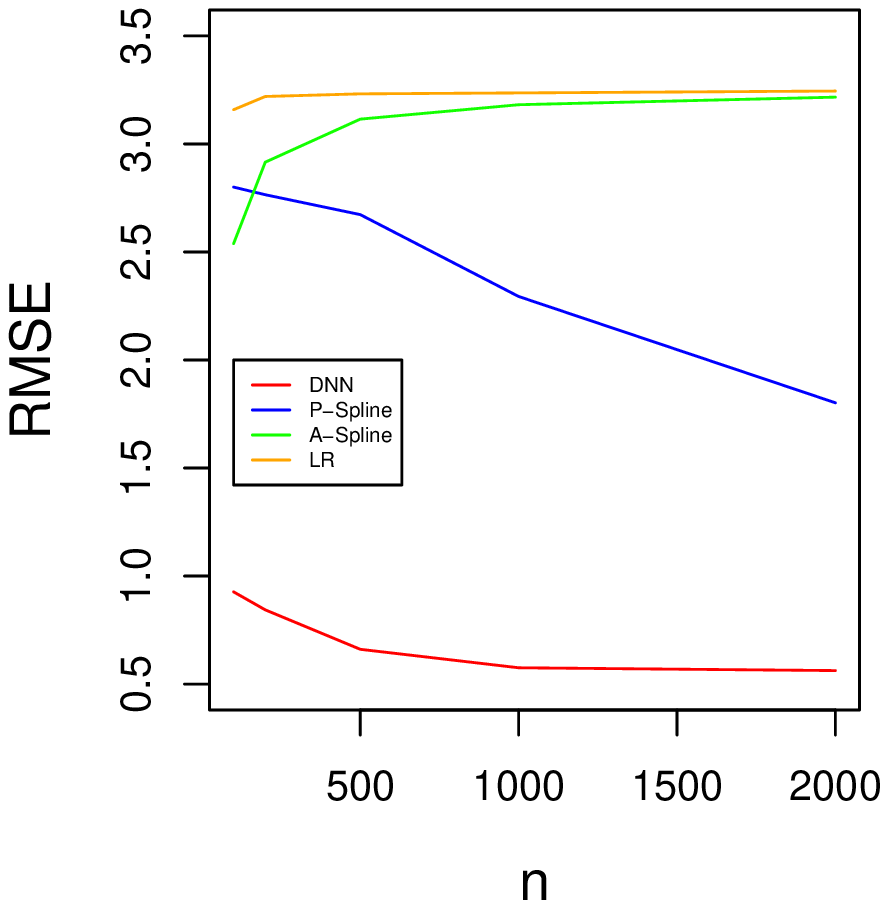}
  \captionof{figure}{RMSE of first-stage estimator for DGP  1}
  \label{fig:rmse:dnn1}
\end{minipage}%
\hfill
\begin{minipage}{.45\textwidth}
  \centering
  \includegraphics[width=3.2 in, height=3 in]{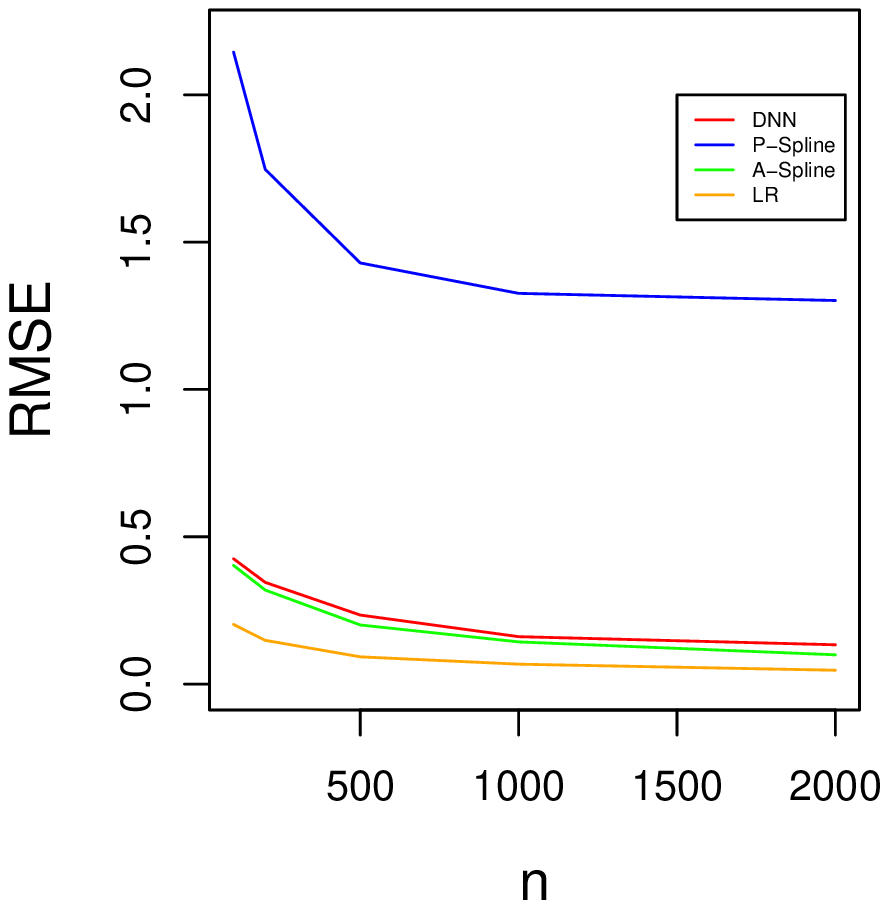}
  \captionof{figure}{RMSE of first-stage estimator for DGP  2}
  \label{fig:rmse:dnn2}
\end{minipage}%
\end{figure}

We next evaluate the performance of the DNN estimators with different neural network structures by investigating the RMSE of $\widehat{f}$ in (\ref{eq:estimator:step:1}) with all the combinations of $W=5, 10, 20$ and $L=3, 5, 10$. It can be observed from Figures \ref{fig:rmse:wl:dnn1} and \ref{fig:rmse:wl:dnn2} that, the errors decrease when the sample size increases regardless of the choices of $W$ and $L$. Moreover, in terms of RMSE, the performance of $\widehat{f}$ is quite similar, especially when the sample size is large ($n\geq 1000$).

\begin{figure}[H]
\includegraphics[width=2.1 in]{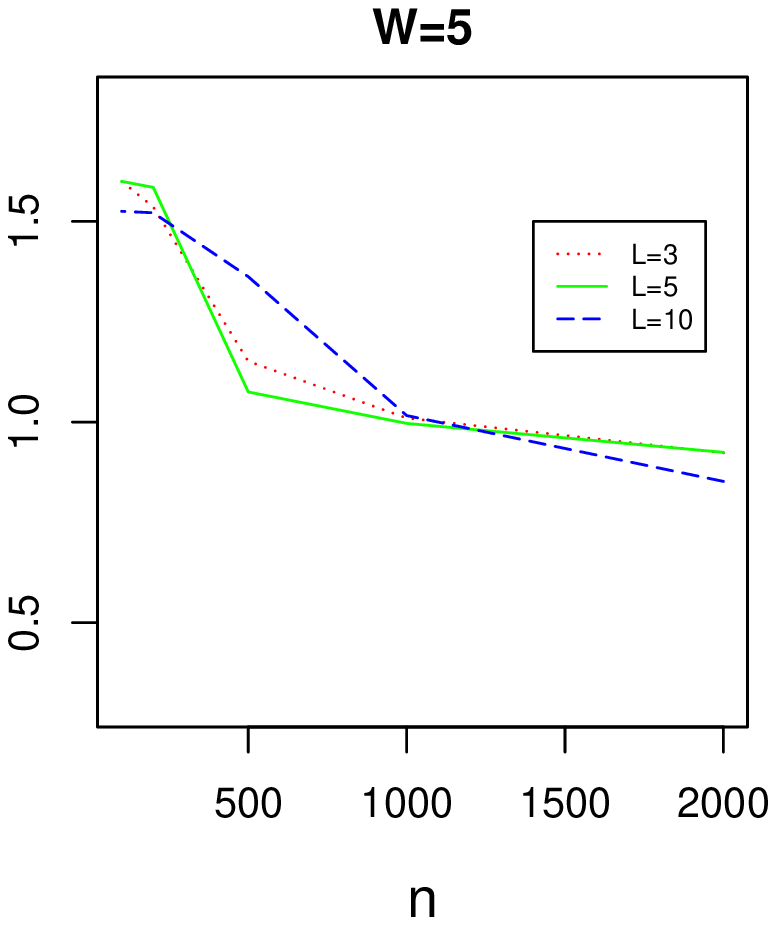}
\includegraphics[width=2.1 in]{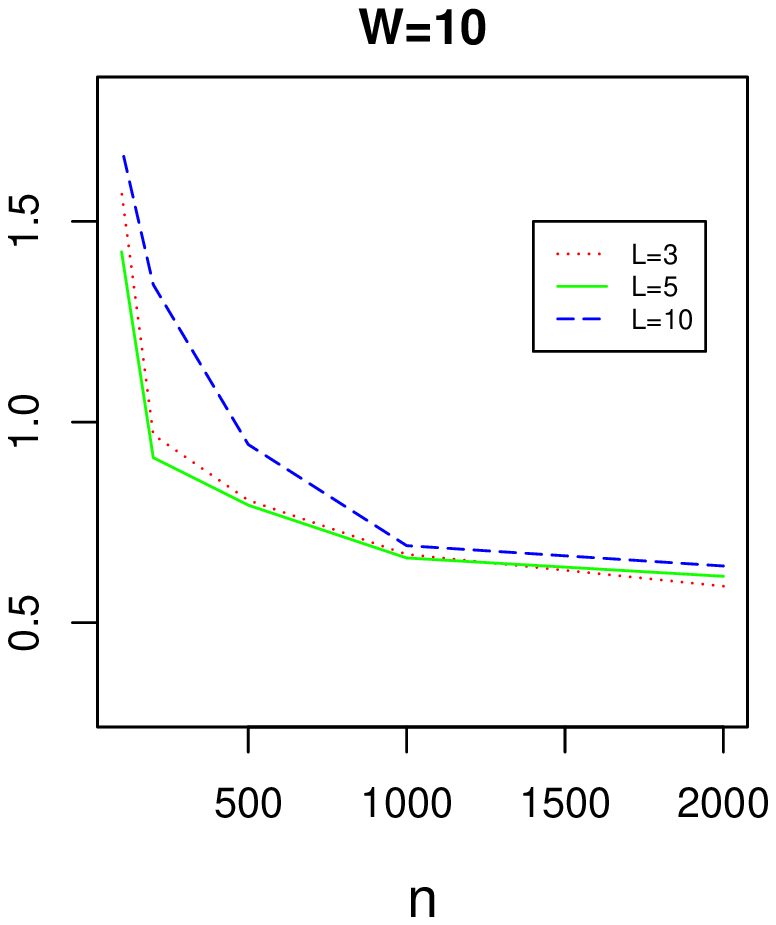}
\includegraphics[width=2.1 in]{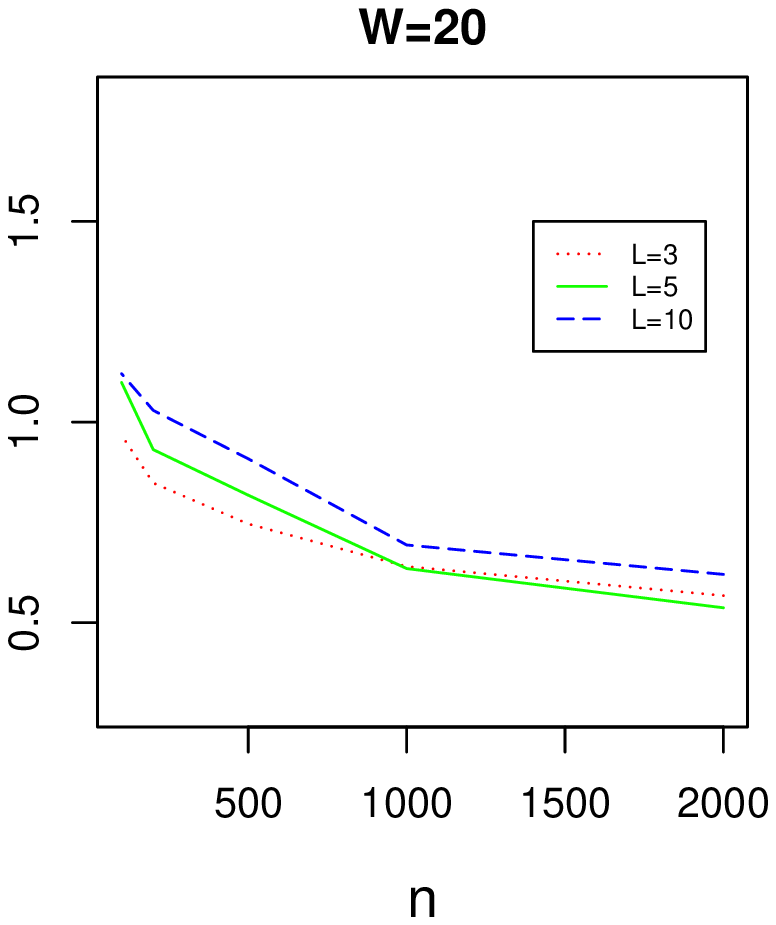}
\caption{RMSE of the DNN estimator $\widehat{f}$ with different $(W, L)$ under DGP 1}
\label{fig:rmse:wl:dnn1}
\end{figure}

\begin{figure}[H]
\includegraphics[width=2.1 in]{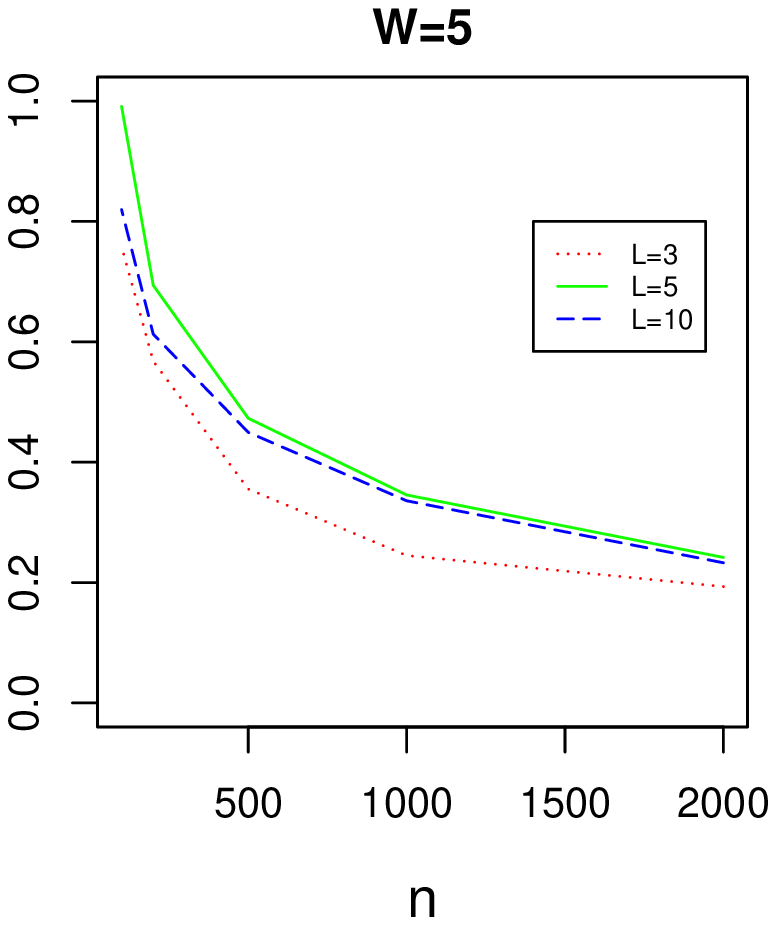}
\includegraphics[width=2.1 in]{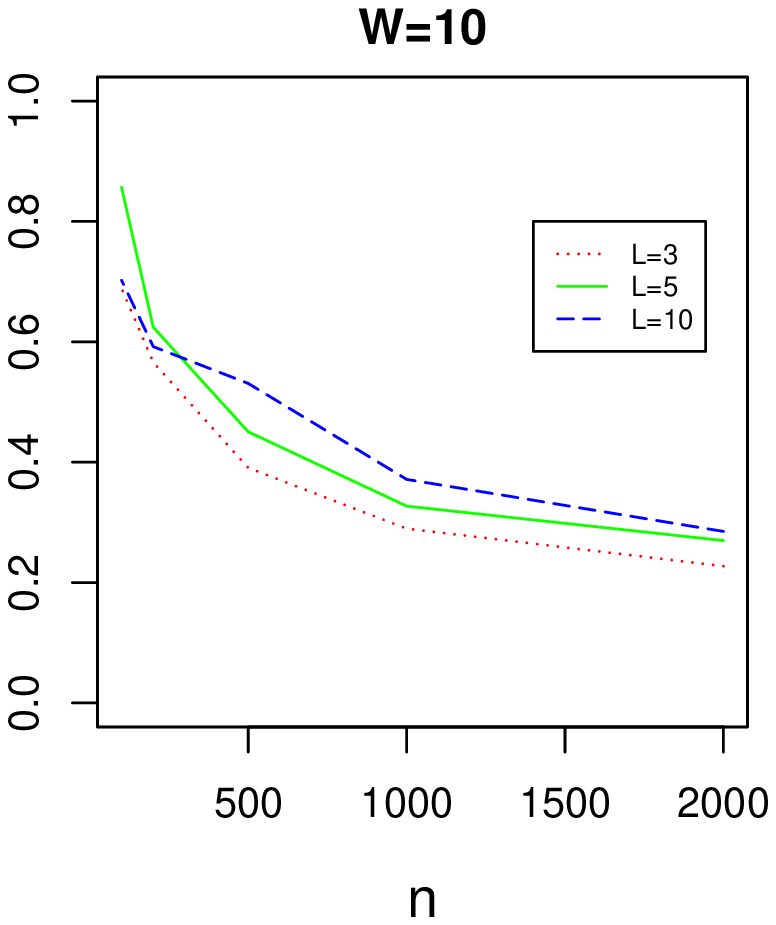}
\includegraphics[width=2.1 in]{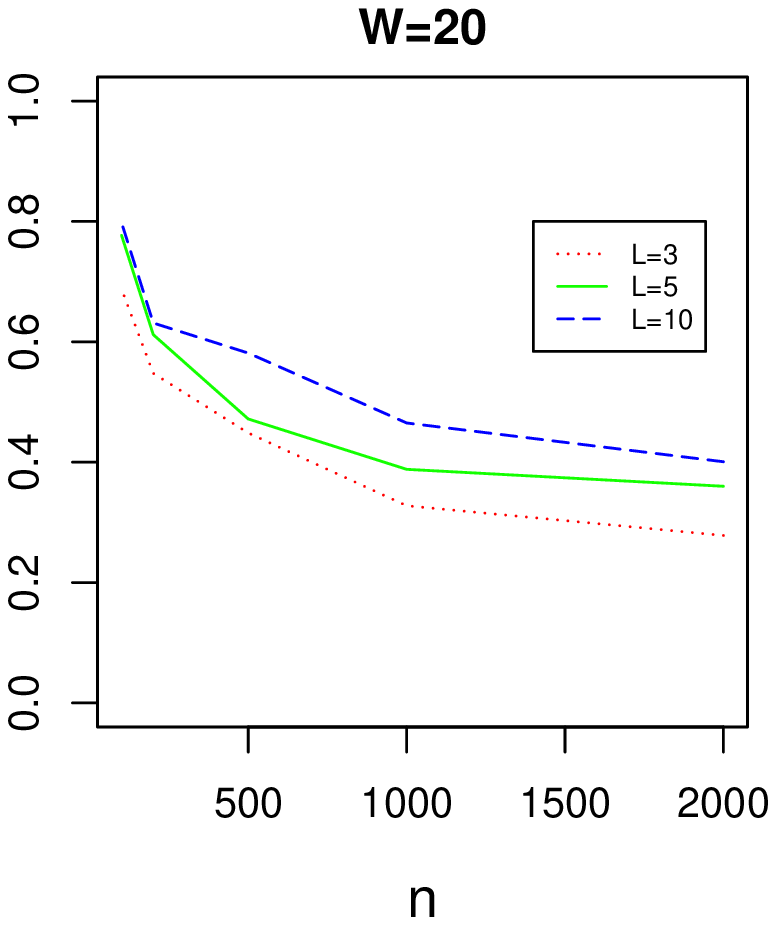}
\caption{RMSE of the DNN estimator $\widehat{f}$ with different $(W, L)$ under DGP 2}
\label{fig:rmse:wl:dnn2}
\end{figure}

\subsection{Second-Stage Estimator}

In addition to the second-stage estimator $\widehat{\beta}$ obtained through (\ref{eq:general:beta:mcF}), we consider two other estimators of $\beta_0$ for comparison purpose. The first one is the ORACLE estimator that was obtained using $Y$ as the response and $f_0(\bfZ)$ as the explanatory variable, and the second one is the naive OLS estimator that regressed $Y$ with respect to $X$. The former is unrealistic but used as a benchmark, while the second is known to be inconsistent due to endogeneity. 

We use RMSE  of $\widehat{\beta}$ as a criterion to evaluate the finite sample performance. Figure \ref{fig:rmse:dive:1} suggests that in DGP 1,  the LR, A-Spline, and OLS estimators have large RMSEs, which are not decreasing even when the sample size is relatively large ($n=2000$). This coincides with the theoretical analysis of the weak IV case in Example \ref{example:5}. In contrast,  the P-Spline, DNN, and ORACLE estimators have relatively smaller error. Besides, the DNN estimator has a similar performance as the ORACLE estimator with a large sample size ($n\geq 1000$). However, when compared with the P-Spline estimator, the DNN estimator is uniformly better regardless of the sample size. In DGP 2, Figure \ref{fig:rmse:dive:2} shows that the errors of all the estimators, except for the OLS, are significantly reduced when increasing the sample size. Moreover, the ORACLE, LR, A-Spline and DNN estimators have comparable performance when the sample size is great than $500$. However, when compared with the P-Spline estimator, the DNN estimator stands out under different sample sizes.

\begin{figure}[H]
\centering
\begin{minipage}{.45\textwidth}
  \centering
  \includegraphics[width=3.2 in, height=3 in]{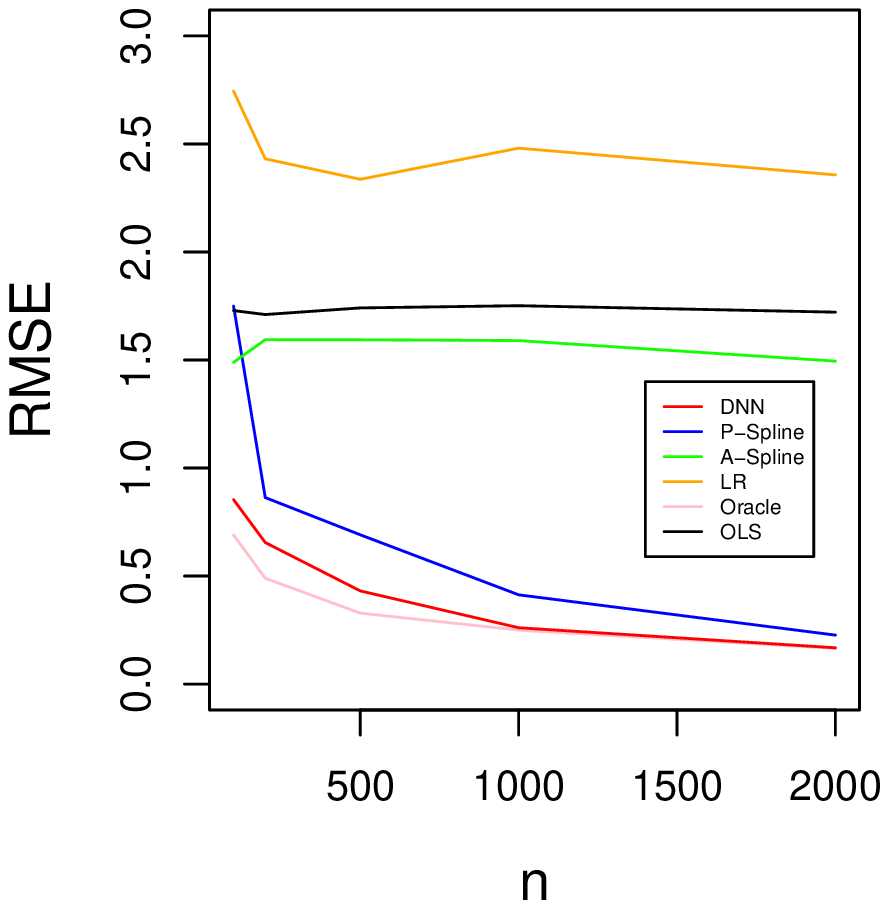}
  \captionof{figure}{RMSE of second-stage estimator for DGP 1}
  \label{fig:rmse:dive:1}
\end{minipage}
\hfill
\begin{minipage}{.45\textwidth}
  \centering
  \includegraphics[width=3.2 in, height=3 in]{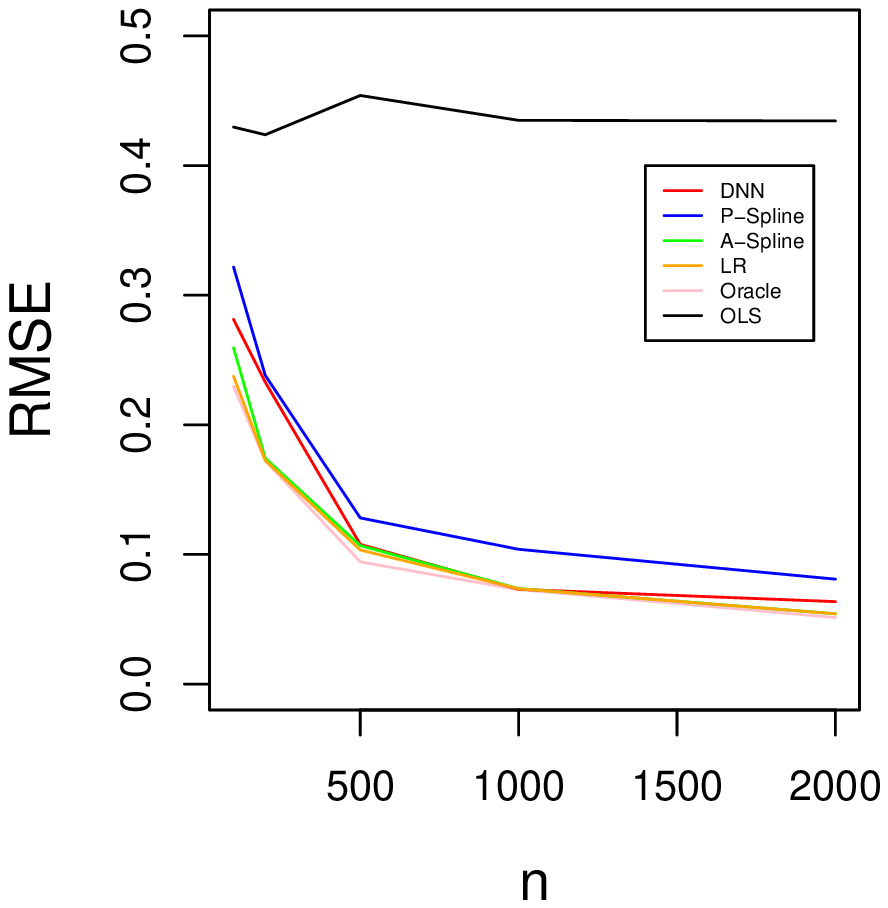}
  \captionof{figure}{RMSE of second-stage estimator for DGP 2}
  \label{fig:rmse:dive:2}
\end{minipage}
\end{figure}

We further conduct additional studies to evaluate the performance of $\widehat{\beta}$ using deep neural network with different $W$ and $L$. Figures \ref{fig:rmse:wl:beta1} and \ref{fig:rmse:wl:beta2} reveal that the DNN estimator is fairly stable to the choices of network structure. When sample size is great than $1000$, the RMSEs are very close for different $W$ and $L$.

\begin{figure}[H]
\includegraphics[width=2.1 in]{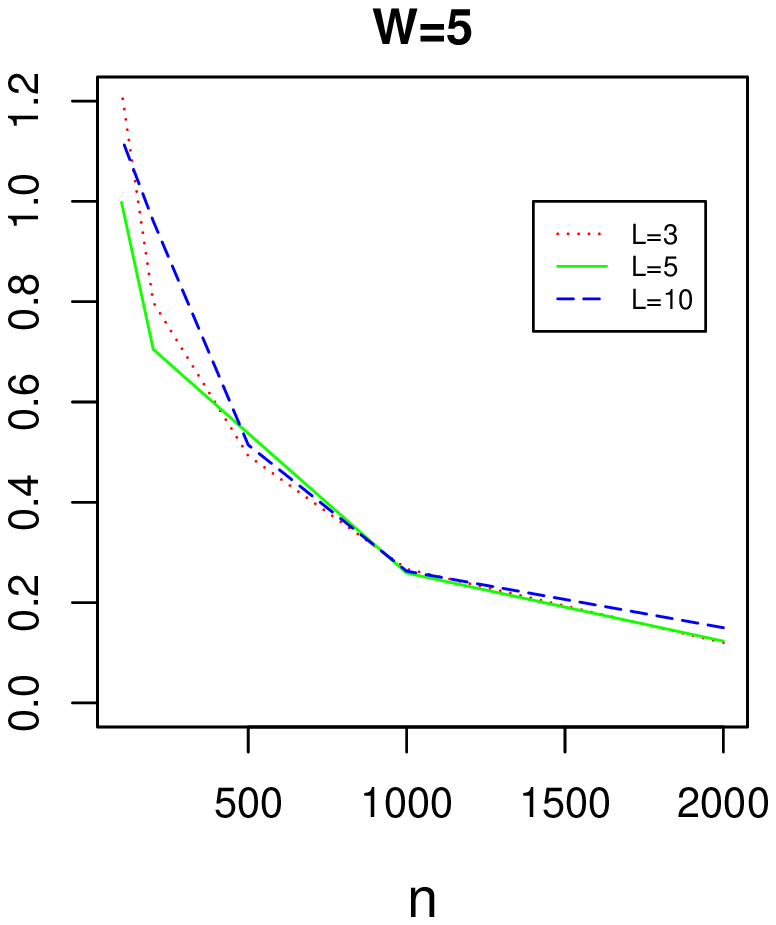}
\includegraphics[width=2.1 in]{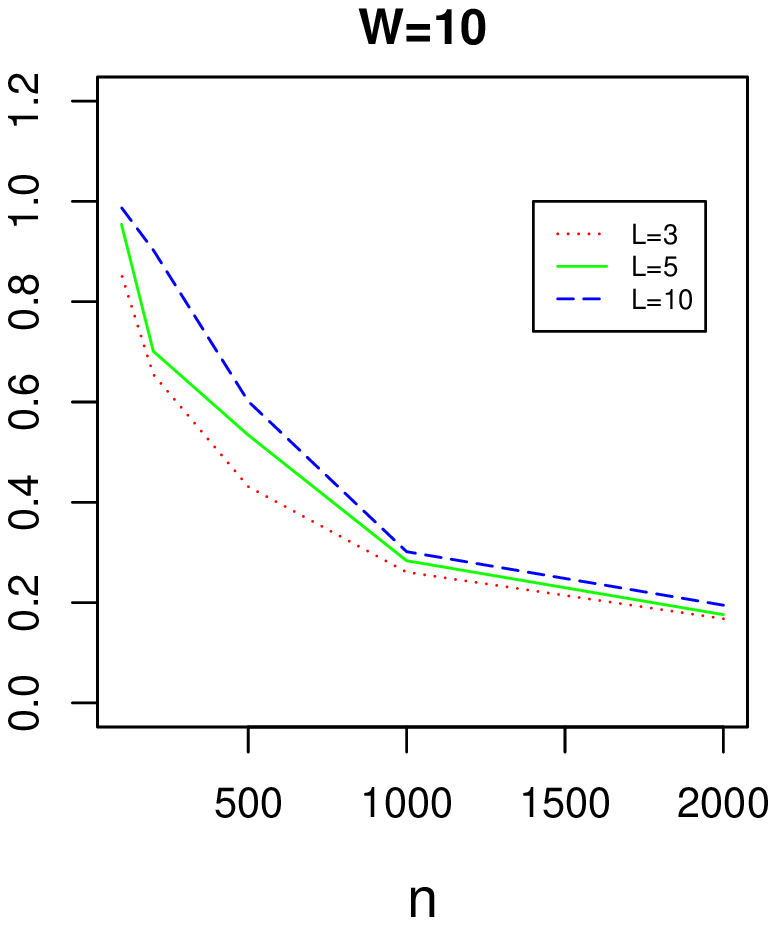}
\includegraphics[width=2.1 in]{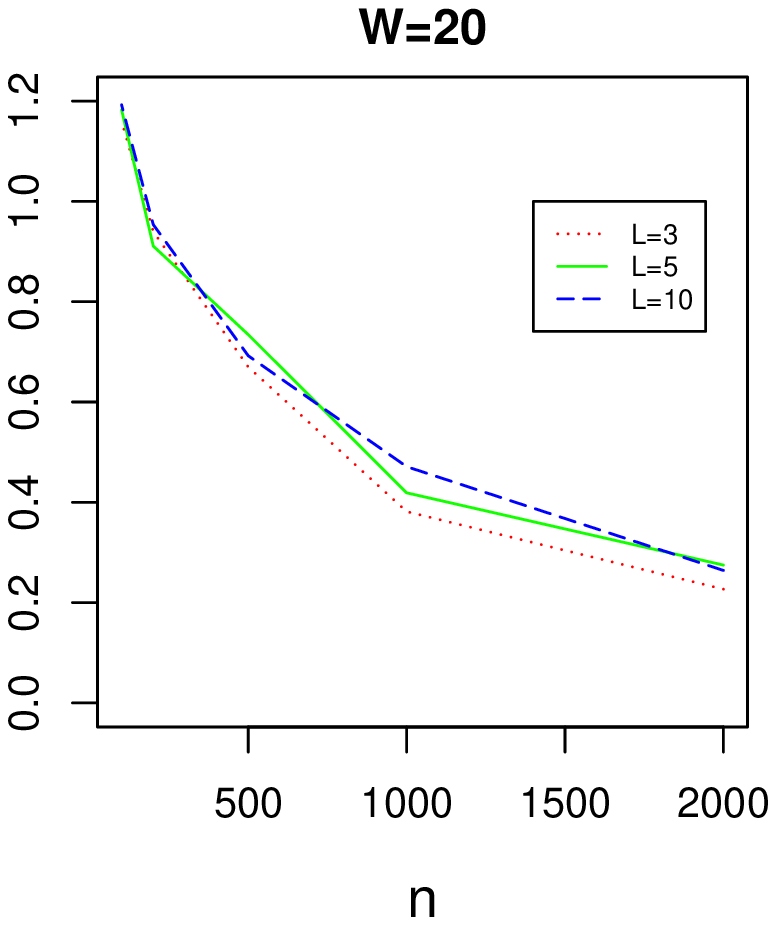}
\caption{RMSE of the DNN estimator $\widehat{\beta}$ with different $(W, L)$ for DGP 1}
\label{fig:rmse:wl:beta1}
\end{figure}

\begin{figure}[H]
\includegraphics[width=2.1 in]{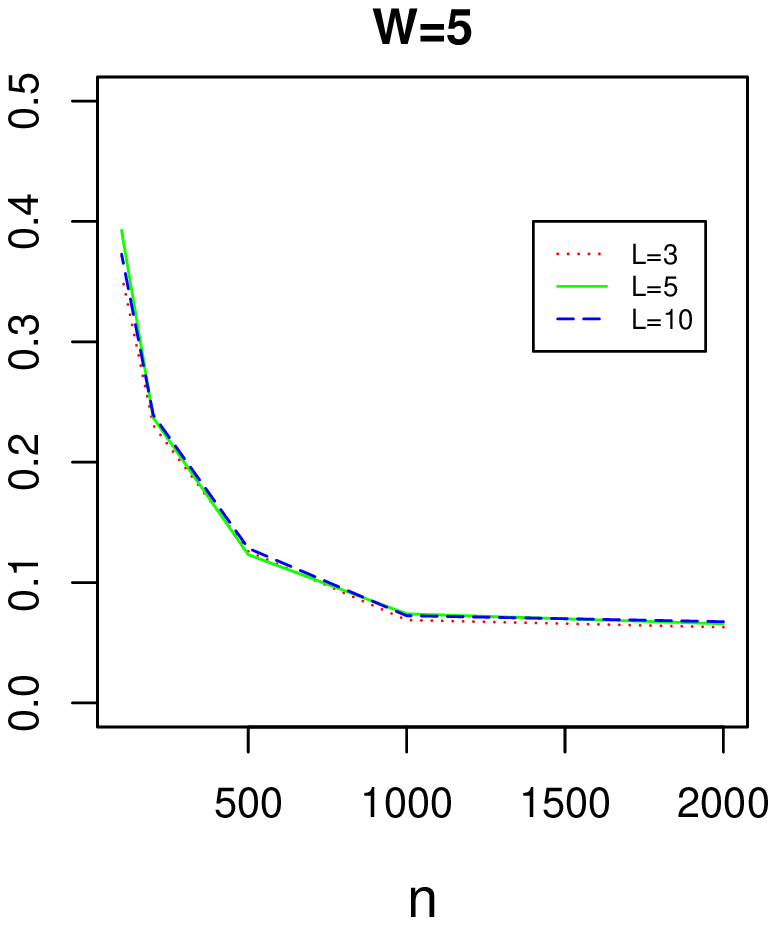}
\includegraphics[width=2.1 in]{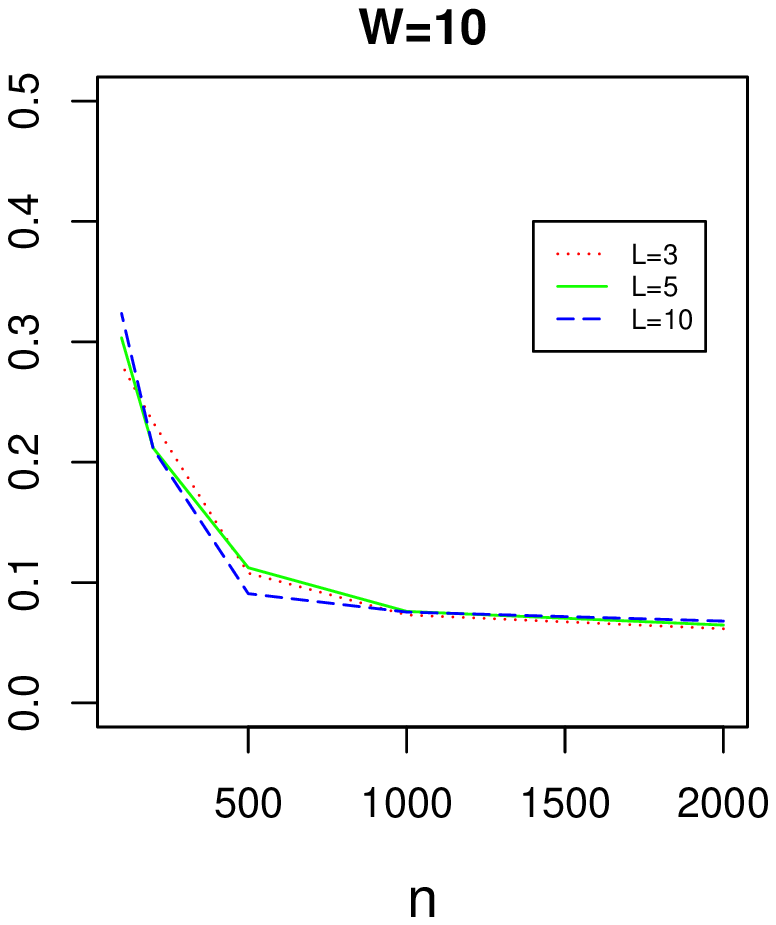}
\includegraphics[width=2.1 in]{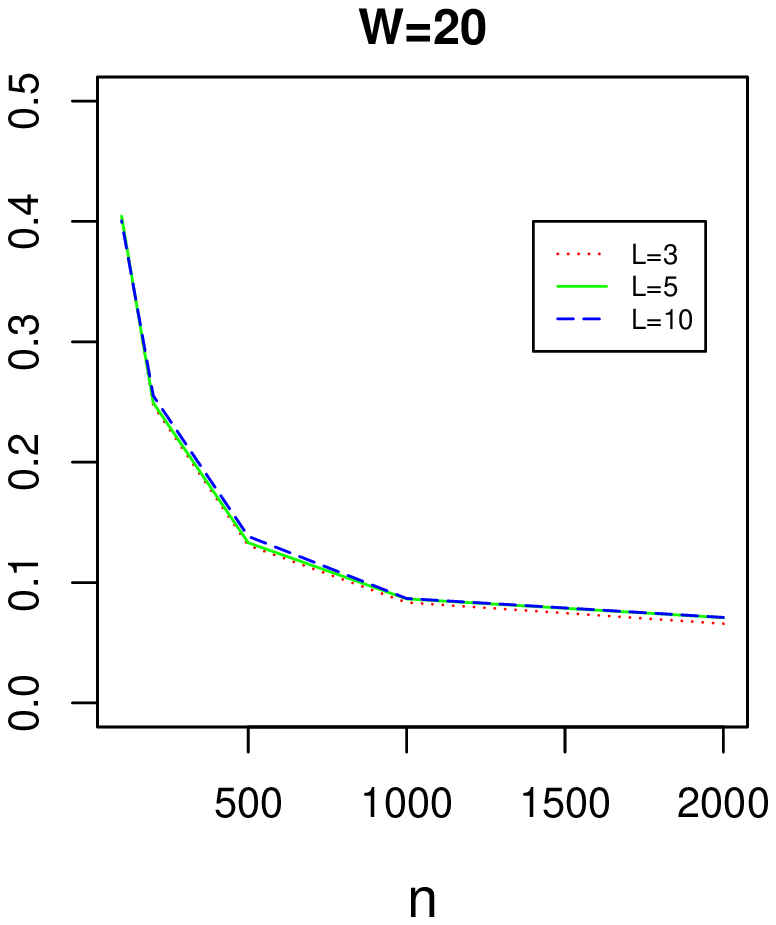}
\caption{RMSE of the DNN estimator $\widehat{\beta}$ with different $(W, L)$ for DGP 2}
\label{fig:rmse:wl:beta2}
\end{figure}


\subsection{Coverage Probability}
We calculated the coverage probabilities of the proposed confidence interval in (\ref{eq:confidence:interval}) to examine its empirical performance. The benchmark for comparison is based on the ORACLE estimator. Figures \ref{fig:ci:dgp1} and  \ref{fig:ci:dgp2}  report the coverage probabilities of the 95\% confidence intervals based on DNN estimator and the ORACLE estimator under different sample sizes. In DGP 1, Figure \ref{fig:ci:dgp1} shows that when sample size is relatively large ($n\geq 1000$), the coverage probability of the DNN estimator is around $95\%$, while it is about $93\%$ for small sample. 
Figure \ref{fig:ci:dgp2} reveals that, in DGP 2, the performance of the DNN estimator and the ORACLE estimator are comparable, even when the sample size is around $500$. When the sample size is large ($n=2000$), the coverage probability stays around the 95\% nominal level. The above findings confirm the validity of our theoretical results.
\begin{figure}[H]
\centering
\begin{minipage}{.45\textwidth}
  \centering
  \includegraphics[width=3.2 in, height=3 in]{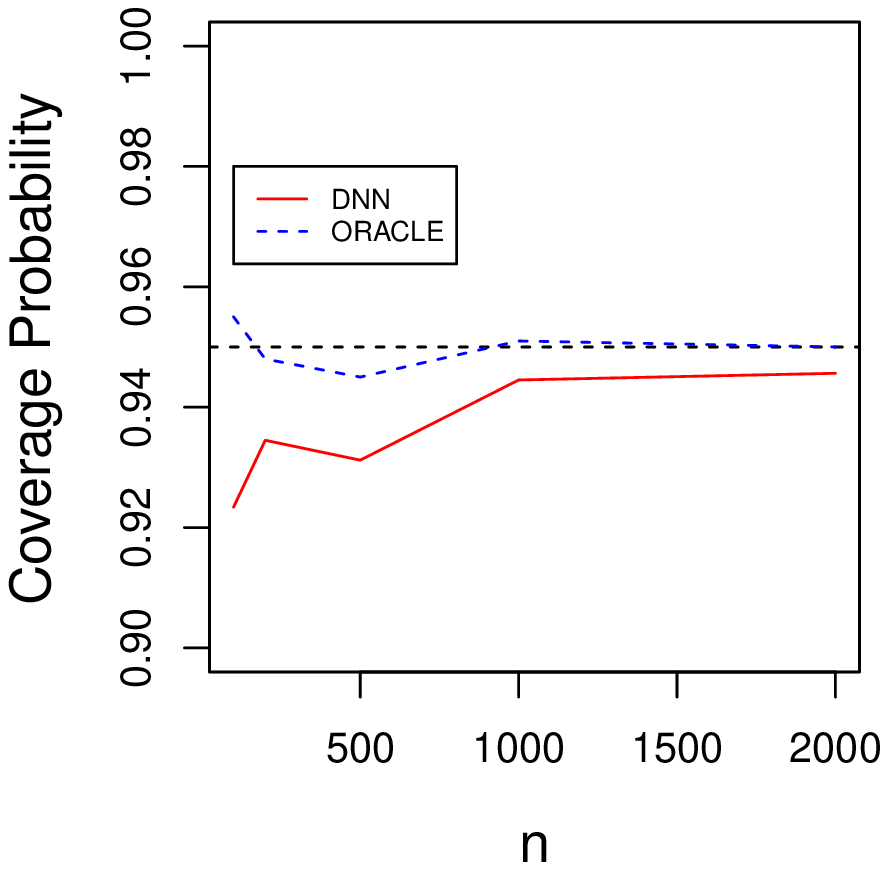}
  \captionof{figure}{Coverage Probability for DGP 1}
  \label{fig:ci:dgp1}
\end{minipage}
\hfill
\begin{minipage}{.45\textwidth}
  \centering
  \includegraphics[width=3.2 in, height=3 in]{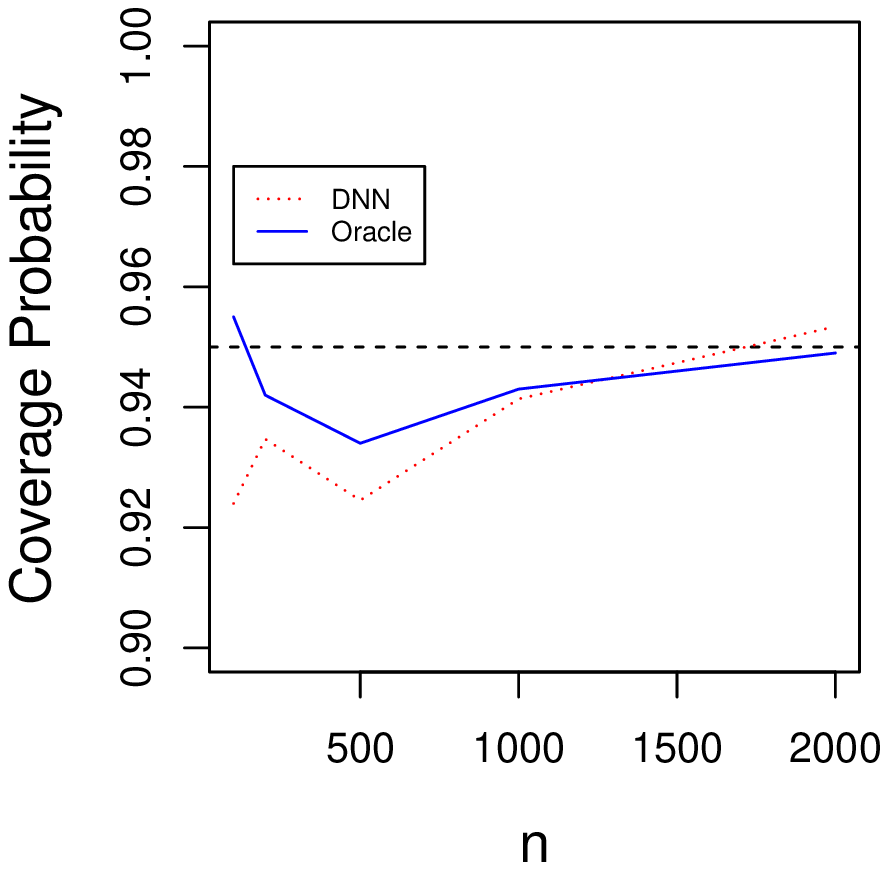}
  \captionof{figure}{Coverage Probability for DGP 2}
    \label{fig:ci:dgp2}
\end{minipage}

\end{figure}

Additional simulation studies were conducted to evaluate the stability of the DNN estimator with different  structures in terms of coverage probability. It can be seen from Figures \ref{fig:rmse:wl:ci1} and \ref{fig:rmse:wl:ci2} that, for various choices of $W$ and $L$, the difference of coverage probabilities is quite small (less than $2\%$).

\begin{figure}[H]
\includegraphics[width=2.1 in]{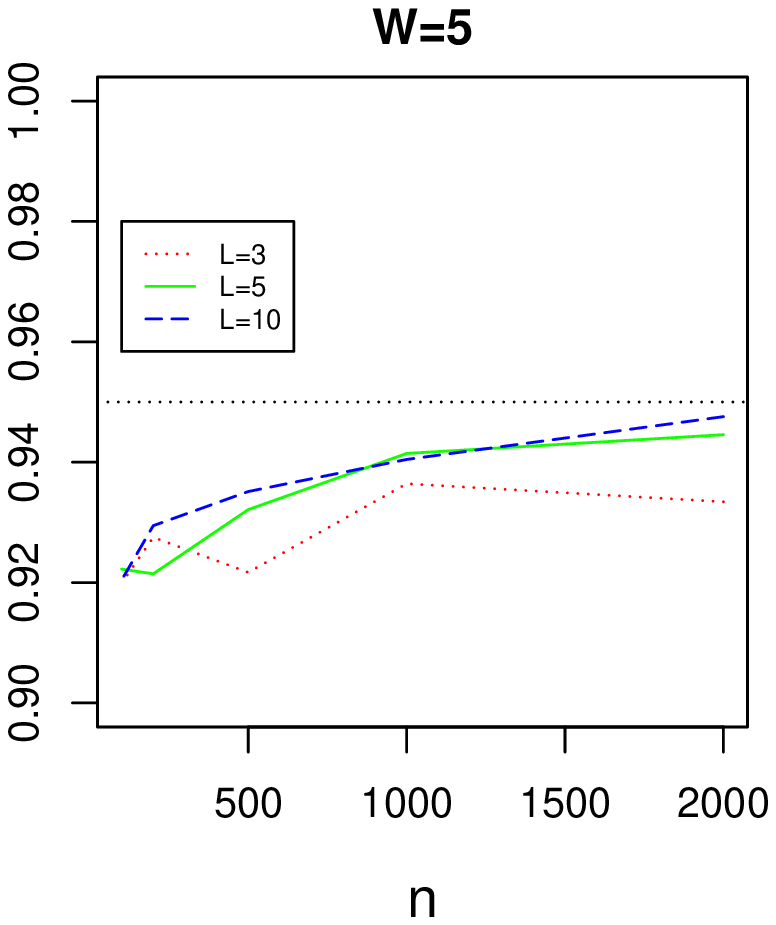}
\includegraphics[width=2.1 in]{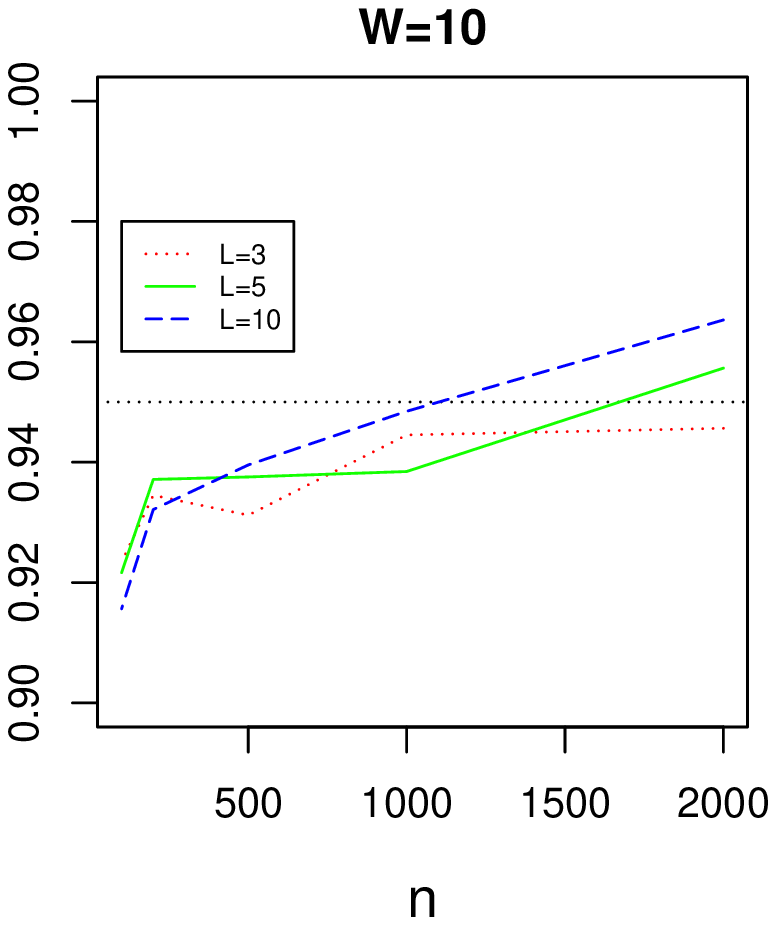}
\includegraphics[width=2.1 in]{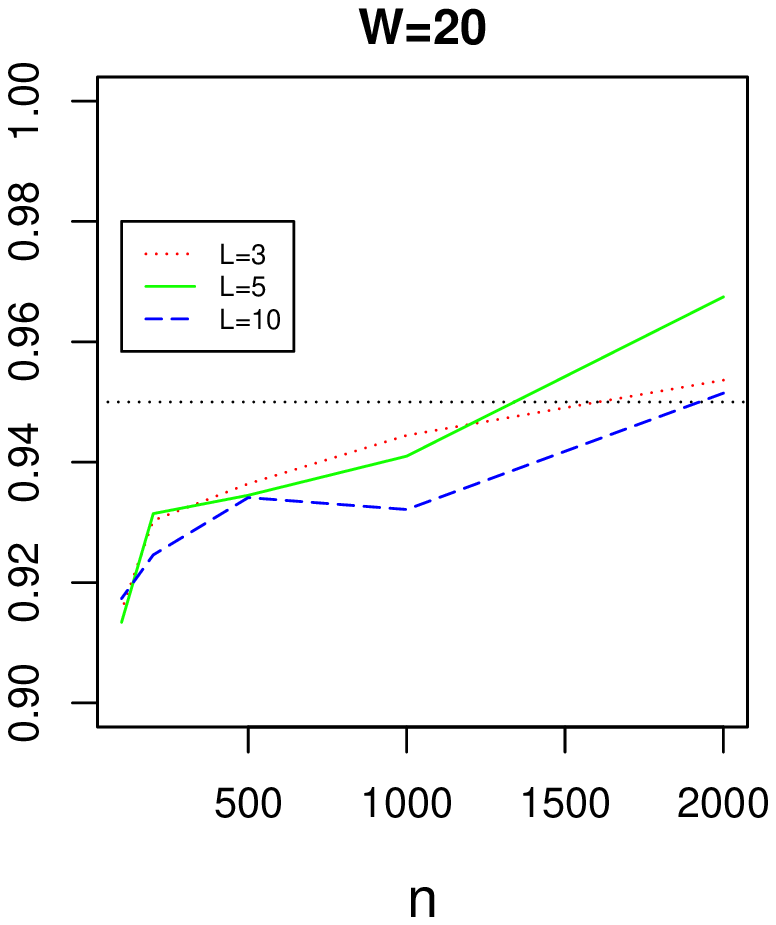}
\caption{Coverage probability with different $(W, L)$ for DGP 1}
\label{fig:rmse:wl:ci1}
\end{figure}
\begin{figure}[H]
\includegraphics[width=2.1 in]{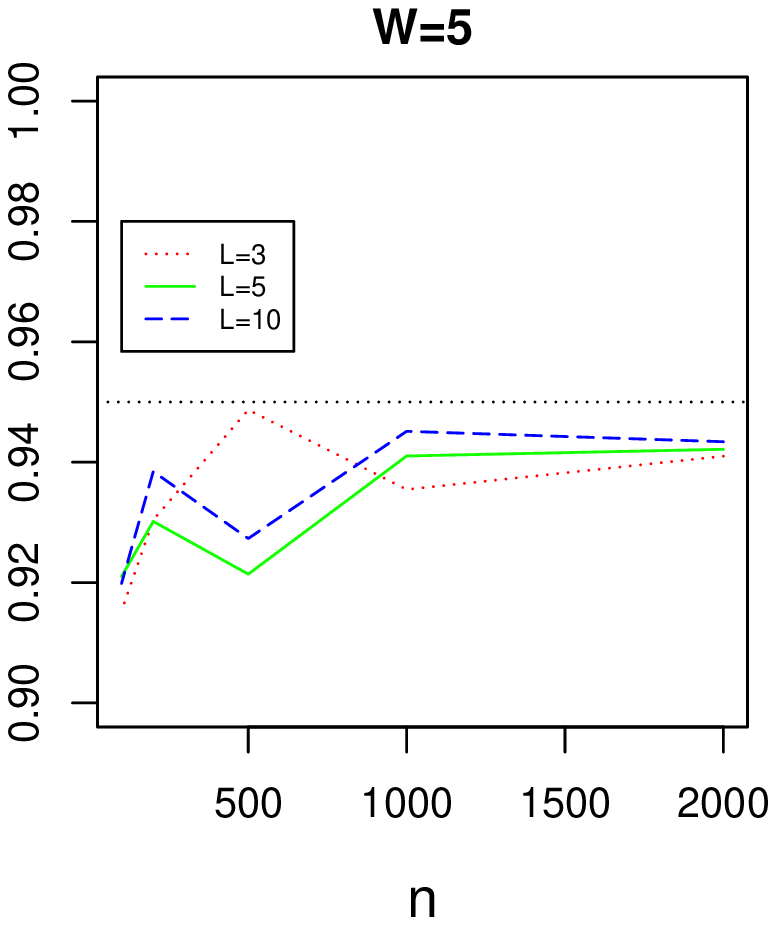}
\includegraphics[width=2.1 in]{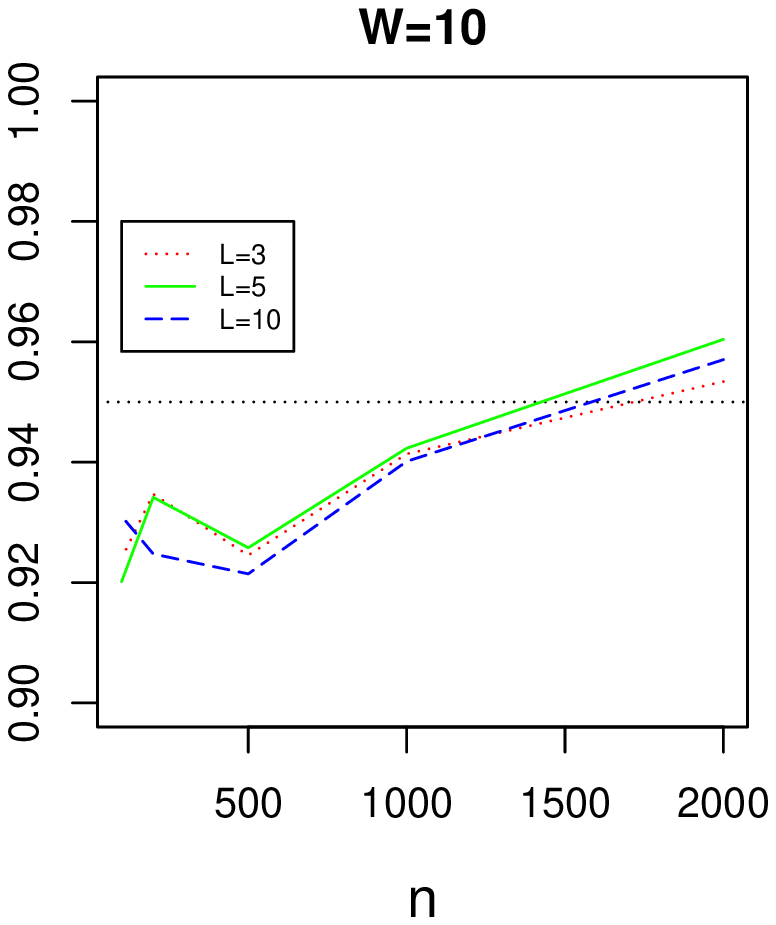}
\includegraphics[width=2.1 in]{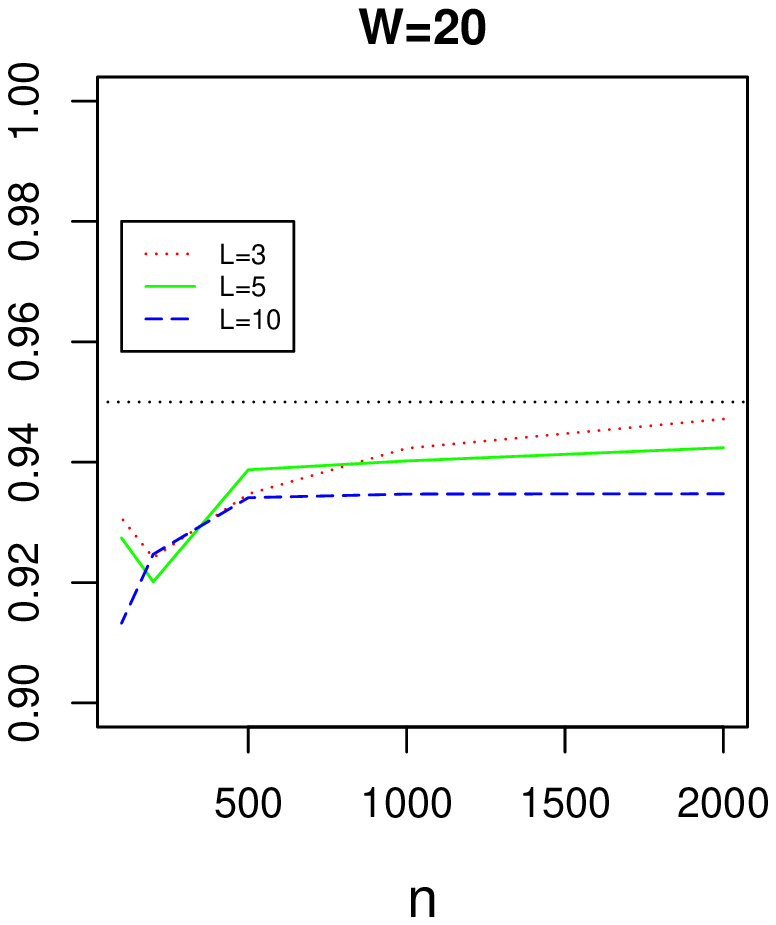}
\caption{Coverage probability with different $(W, L)$ for DGP 2}
\label{fig:rmse:wl:ci2}
\end{figure}

%
%
%
%
\section{Empirical Application}\label{sec:empirical}
In this section, we apply the proposed estimation procedure to study automobile sales and price. Specifically, we consider the following model:
\begin{equation*}
	\textrm{sales}_i=\beta\times \textrm{price}_i+\epsilon,
\end{equation*}
where sales$_i$ and price$_i$ indicate the market sales and price of vehicle of type $i$. The instrumental variables we adopt are (a) a 0-1 valued variable indicating the air conditioning; (b) horsepower divided by weight; (c) miles per dollar measuring the routine costs; (d) size of the vehicle. Similar settings were also considered in \cite{blp95}. After eliminating missing values, we keep 2217 types of automobiles in the dataset. We fit simple OLS, LR, A-Spline, P-Spline, and DNN based on the 2217 observations and summarize the results in Table  \ref{table:automobile:price}. Several interesting findings can be observed. First, all the estimators are highly significant at 1\% significance level. Second, without using instrumental variables, OLS gives an estimator of the coefficient $-0.0840$. Among the rest four instrumental variables estimators, the P-Spline estimator is more elastic than the OLS estimator, while LR, A-Spline, and DNN estimators are less elastic. Finally, we observe that LR and  DNN estimators are almost equal, but the standard deviation of the DNN estimator is slightly smaller.

\begin{table}[htp!]
\centering
\caption{Estimators of automobile price}
\label{table:automobile:price}
\begin{tabular}{cccccc}
\hline\hline
      & OLS                                                        & LR                                                       & A-Spline                                                      & P-Spline                                                       & DNN                                                       \\ \cline{2-6} 
\;\;\;$price$\;\;\;  & \begin{tabular}[c]{@{}c@{}}-0.0840$^{***}$\\ (0.0029)\end{tabular} & \begin{tabular}[c]{@{}c@{}}-0.0804$^{***}$\\ (0.0038)\end{tabular} & \begin{tabular}[c]{@{}c@{}}-0.0814$^{***}$\\ (0.0037)\end{tabular} & \begin{tabular}[c]{@{}c@{}}-0.0909$^{***}$\\ (0.0055)\end{tabular} & \begin{tabular}[c]{@{}c@{}}-0.0805$^{***}$\\ (0.0036)\end{tabular}\\\hline
\end{tabular}
\vspace*{1mm}

Note: *, **, and *** refer to significance at 10\%, 5\% and 1\% level, respectively.
\end{table}



\newpage

\bibliography{ref}{}
\bibliographystyle{apalike}
\newpage

\section*{Appendix}
\setcounter{subsection}{0}
\renewcommand{\thesubsection}{A.\arabic{subsection}}
\setcounter{subsubsection}{0}
\renewcommand{\thesubsubsection}{\textbf{A.\arabic{subsection}.\arabic{subsubsection}}}
\setcounter{equation}{0}
\renewcommand{\theequation}{A.\arabic{equation}}
\setcounter{lemma}{0}
\renewcommand{\thelemma}{A.\arabic{lemma}}
\setcounter{proposition}{0}
\renewcommand{\theproposition}{A.\arabic{proposition}}
For any $k$-dimensional vector $\bfv=(v_1,\ldots, v_k)^\top$ and real number $a$, we denote $\bfv+a=(v_1+a,\ldots, v_k+a)^\top$. Let $\bfW=(\bfX, \bfZ)$ be an independent copy of $\bfW_i=(\bfX_i, \bfZ_i)$ and for any function $f:\mathbb{R}^d \to \mathbb{R}$, we define norms $\|f\|_\infty=\sup_{\bfz}|f(\bfz)|$ and $\|f\|_{n,\infty}=\sup_{1\leq i\leq n}|f(\bfZ_i)|$. For matrix $A$, we define its Frobenius norm $\|A\|_F=\sqrt{\textrm{Tr}(A^\top A)}$. We recall the Rademacher Complexity (see \citealp{w19}) of function class $\mcF$ is defined as
\begin{equation*}
	\mcRn\mcF=\ev\bigg(\sup_{f\in \mcF}\frac{1}{n}\sum_{i=1}^n \eta_i f(\bfZ_i) \bigg| \bfZ_1,\ldots, \bfZ_n\bigg),
\end{equation*}
where $\eta_i$'s are i.i.d. Rademacher random variables which are independent of $\bfZ_i$'s. Moreover, we denote $\textrm{Pdim}(\mcF)$ as the pseudo-dimension of $\mcF$ (see, \citealp{ab09nn}). For functions $f,g: \mathbb{R}^d\to \mathbb{R}$ and random variables $\zeta$ and $\xi$, we denote $\langle f, g\rangle_n=n^{-1}\sum_{i=1}^nf(\bfZ_i)g(\bfZ_i)$,  $\langle \zeta, \xi\rangle_n=n^{-1}\sum_{i=1}^n\zeta_i\xi_i$ and $\langle \zeta, f\rangle_n=n^{-1}\sum_{i=1}^n\zeta_if(\bfZ_i)$, where $\zeta_i, \xi_i$ are the observations of $\zeta, \xi$.

\subsection{Some Preliminary Lemmas}\label{sec:preliminary:lemma}
\begin{lemma}\label{prop:conditional:indpendence:1}
Suppose $(\zeta_i, \bfZ_i), i=1,\ldots, n$ are i.i.d. observations and $\ev(|\zeta_i|)<\infty$, then
$\ev(\zeta_i|\bfZ_1,\ldots, \bfZ_n)=\ev(\zeta_i|\bfZ_i)$ for all $i=1,\ldots, n$.
\end{lemma}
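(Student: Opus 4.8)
The plan is a routine measure-theoretic argument exploiting that the pair $(\zeta_i,\bfZ_i)$ is independent of the remaining design vectors. Fix $i\in\{1,\ldots,n\}$ and write $\mcG=\sigma(\bfZ_1,\ldots,\bfZ_n)$ and $\mcH=\sigma(\bfZ_i)$, so that $\mcH\subseteq\mcG$. Since $\ev(\zeta_i|\bfZ_i)$ is $\mcH$-measurable, hence $\mcG$-measurable and integrable (using $\ev|\zeta_i|<\infty$), it suffices by the defining property of conditional expectation to show that
\begin{equation*}
\ev\big[\zeta_i\,\mathbf{1}_A\big]=\ev\big[\ev(\zeta_i|\bfZ_i)\,\mathbf{1}_A\big],\qquad\text{for all }A\in\mcG.
\end{equation*}

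First I would reduce this to a generating $\pi$-system. The collection of sets of the form $A=A_i\cap B$ with $A_i\in\sigma(\bfZ_i)$ and $B\in\sigma(\bfZ_j:j\neq i)$ is a $\pi$-system generating $\mcG$, and both sides of the displayed identity are finite measures in $A$ (after the usual decomposition into positive and negative parts of $\zeta_i$), so by the $\pi$--$\lambda$ theorem it is enough to verify the identity for such product sets. For $A=A_i\cap B$, observe that $\zeta_i\mathbf{1}_{A_i}$ is a measurable function of $(\zeta_i,\bfZ_i)$ while $\mathbf{1}_B$ is a measurable function of $\{\bfZ_j:j\neq i\}$; since the $(\zeta_j,\bfZ_j)$ are i.i.d., these two random variables are independent, and therefore
\begin{equation*}
\ev\big[\zeta_i\,\mathbf{1}_{A_i}\mathbf{1}_B\big]=\ev\big[\zeta_i\,\mathbf{1}_{A_i}\big]\,\pr(B).
\end{equation*}
By the same independence (note $\ev(\zeta_i|\bfZ_i)\mathbf{1}_{A_i}$ is $\sigma(\bfZ_i)$-measurable), $\ev\big[\ev(\zeta_i|\bfZ_i)\mathbf{1}_{A_i}\mathbf{1}_B\big]=\ev\big[\ev(\zeta_i|\bfZ_i)\mathbf{1}_{A_i}\big]\pr(B)$, and since $A_i\in\mcH$ the defining property of $\ev(\zeta_i|\bfZ_i)$ gives $\ev\big[\ev(\zeta_i|\bfZ_i)\mathbf{1}_{A_i}\big]=\ev\big[\zeta_i\,\mathbf{1}_{A_i}\big]$. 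Combining the last three displays shows both sides agree on the $\pi$-system, completing the proof.

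The only genuinely delicate point is the passage from the $\pi$-system to all of $\mcG$: one must either invoke the $\pi$--$\lambda$ (Dynkin) theorem on the two signed measures $A\mapsto\ev[\zeta_i^{\pm}\mathbf{1}_A]$ (which are finite by integrability), or equivalently the functional monotone class theorem. Everything else is bookkeeping with the independence of the i.i.d. blocks, and no structure beyond $\ev|\zeta_i|<\infty$ is needed.
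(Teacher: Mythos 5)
Your argument is essentially the same as the paper's: both set up the $\pi$-system of product-type sets $A_i\cap B$ (the paper writes $A\times B$), verify the defining identity for conditional expectation on that $\pi$-system via independence, and conclude by the $\pi$--$\lambda$ theorem. The only cosmetic difference is that you handle integrability by decomposing $\zeta_i$ into $\zeta_i^{\pm}$ to get two finite measures, whereas the paper instead appeals to dominated convergence to verify directly that its collection $\mcD$ is a $\lambda$-system; both are valid and equivalent in substance.
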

\begin{proof}
We prove the case when $n=2$, and the extension to the case of general $n$ can be made analogically. We define $h_1=\ev(\zeta_1 | \bfZ_1)$, $\mcD=\{A\in \mathcal{S}(\bfZ_1, \bfZ_2) : \ev(\zeta_1 I_A)=\ev(h_1I_A)\}$, and $\mcP=\{A\times B : A\in \mathcal{S}(\bfZ_1), B\in \mathcal{S}(\bfZ_2)\}$. Clearly, $\mcP$ is a $\pi$-system. Since $\ev(|\zeta_1|)<\infty$, by Lebesgue's dominated convergence theorem, it is not difficult to see $\mcD$ is a $\lambda$-system. Moreover, for $A\times B \in \mcP$, we can see
\begin{eqnarray}
	\ev(\zeta_1I_{A\times B})=\ev(\zeta_1I_{A}I_{B})=\ev(\zeta_1I_{A})\ev(I_B)=\ev(h_1I_A)\ev(I_B)=\ev(h_1I_AI_B)=\ev(h_1I_{A\times B}),\nonumber
\end{eqnarray}
where we use the facts that $A\in \mathcal{S}(\bfZ_1)$, $B\in \mathcal{S}(\bfZ_2)$ and their independence. As a consequence of $\pi-\lambda$ monotone class theorem, we have $\mcP\subset \mcD$ and $\mathcal{S}(\bfZ_1, \bfZ_2)=\mathcal{S}(\mcP)\subset \mcD\subset \mathcal{S}(\bfZ_1, \bfZ_2)$. By the definition of $\mcD$, we see that $\ev(\zeta_1 | \bfZ_1, \bfZ_2)=h_1$. 
\end{proof}
\begin{lemma}\label{prop:conditional:indpendence:2}
Suppose $(\zeta_i, \bfZ_i), i=1,\ldots, n$ are i.i.d. observations and $\ev(|\zeta_i|)<\infty$, then the random variables $\zeta_1,\ldots, \zeta_n$ are  conditionally mutually independent and identically distributed given $\bfZ_1,\ldots, \bfZ_n$
\end{lemma}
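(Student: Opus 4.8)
The plan is to reduce the assertion to a single factorization identity for conditional expectations of bounded test functions, and then to invoke Lemma~\ref{prop:conditional:indpendence:1} to trade conditioning on the whole vector $(\bfZ_1,\ldots,\bfZ_n)$ for conditioning on individual coordinates. First I would fix bounded measurable functions $\phi_1,\ldots,\phi_n:\mathbb{R}\to\mathbb{R}$, set $g_i(\bfZ_i):=\ev(\phi_i(\zeta_i)\mid\bfZ_i)$, and aim to prove
\[
\ev\Big(\prod_{i=1}^n\phi_i(\zeta_i)\;\Big|\;\bfZ_1,\ldots,\bfZ_n\Big)=\prod_{i=1}^n g_i(\bfZ_i)\qquad\text{a.s.}
\]
By the definition of conditional expectation together with a $\pi$--$\lambda$ (monotone class) argument over the $\pi$-system of measurable rectangles $A_1\times\cdots\times A_n$ that generates $\mathcal{S}(\bfZ_1,\ldots,\bfZ_n)$ --- exactly as in the proof of Lemma~\ref{prop:conditional:indpendence:1} --- it suffices to verify, for all Borel $A_1,\ldots,A_n$, that
\[
\ev\Big(\prod_{i=1}^n\phi_i(\zeta_i)\prod_{i=1}^n I_{A_i}(\bfZ_i)\Big)=\ev\Big(\prod_{i=1}^n g_i(\bfZ_i)\prod_{i=1}^n I_{A_i}(\bfZ_i)\Big).
\]
Here the independence of the pairs $(\zeta_i,\bfZ_i)$ does the work: the left side factors as $\prod_i\ev(\phi_i(\zeta_i)I_{A_i}(\bfZ_i))$, and the right side factors as $\prod_i\ev(g_i(\bfZ_i)I_{A_i}(\bfZ_i))=\prod_i\ev\big(\ev(\phi_i(\zeta_i)\mid\bfZ_i)\,I_{A_i}(\bfZ_i)\big)=\prod_i\ev(\phi_i(\zeta_i)I_{A_i}(\bfZ_i))$, so the two coincide.

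Next I would apply Lemma~\ref{prop:conditional:indpendence:1} to each $\phi_i(\zeta_i)$ (which is bounded, hence integrable), giving $g_i(\bfZ_i)=\ev(\phi_i(\zeta_i)\mid\bfZ_1,\ldots,\bfZ_n)$ a.s.; substituting this into the displayed identity yields
\[
\ev\Big(\prod_{i=1}^n\phi_i(\zeta_i)\;\Big|\;\bfZ_1,\ldots,\bfZ_n\Big)=\prod_{i=1}^n\ev\big(\phi_i(\zeta_i)\mid\bfZ_1,\ldots,\bfZ_n\big),
\]
which is precisely conditional mutual independence of $\zeta_1,\ldots,\zeta_n$ given $\bfZ_1,\ldots,\bfZ_n$; taking $\phi_i=I_{(-\infty,t_i]}$ records this at the level of joint conditional distribution functions. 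For the ``identically distributed'' clause I would observe that, because $(\zeta_i,\bfZ_i)$ has the same joint law for every $i$, one may choose a regular conditional distribution $\mu(\bfz,\cdot)$ of $\zeta_i$ given $\bfZ_i=\bfz$ that does not depend on $i$; combined with $\ev(\phi(\zeta_i)\mid\bfZ_1,\ldots,\bfZ_n)=\ev(\phi(\zeta_i)\mid\bfZ_i)=\int\phi\,d\mu(\bfZ_i,\cdot)$, this shows the conditional law of each $\zeta_i$ is produced by the \emph{same} kernel evaluated at its own $\bfZ_i$.

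The only genuinely delicate points are the measure-theoretic bookkeeping: carrying the monotone-class extension from rectangles up to all of $\mathcal{S}(\bfZ_1,\ldots,\bfZ_n)$, justifying the passage from bounded $\phi_i$'s to indicator functions, and invoking a regular conditional distribution to make ``identically distributed'' rigorous. I expect this bookkeeping, rather than any substantive probabilistic content, to be the main obstacle. As a side remark, the hypothesis $\ev|\zeta_i|<\infty$ is not actually needed for the distributional conclusion (one works throughout with bounded $\phi_i$), and is retained only for parallelism with Lemma~\ref{prop:conditional:indpendence:1}.
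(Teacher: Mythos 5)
Your proposal is correct and takes essentially the same route as the paper: a $\pi$--$\lambda$ argument over measurable rectangles, factorization via independence of the pairs $(\zeta_i,\bfZ_i)$, and an appeal to Lemma~\ref{prop:conditional:indpendence:1} to identify $\ev(\phi_i(\zeta_i)\mid\bfZ_i)$ with $\ev(\phi_i(\zeta_i)\mid\bfZ_1,\ldots,\bfZ_n)$ (the paper works directly with the indicators $I(\zeta_i\le a_i)$ for $n=2$, which is just the specialization of your bounded test functions). Your added remarks on the ``identically distributed'' clause and on the dispensability of $\ev|\zeta_i|<\infty$ are fine but inessential.
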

\begin{proof}
We prove the case when $n=2$, and the extension to the case of general $n$ can be made analogically. For any $a_1, a_2\in \mathbb{R}$, we define $\delta_i=I(\zeta_i\leq a_i)$ for $i=1,2$. Also we denote $h_i=\ev(\delta_i | \bfZ_1, \bfZ_2)$, $\mcD=\{A\in \mathcal{S}(\bfZ_1, \bfZ_2) : \ev(\delta_1\delta_2 I_A)=\ev(h_1h_2I_A)\}$, and $\mcP=\{A\times B : A\in \mathcal{S}(\bfZ_1), B\in \mathcal{S}(\bfZ_2)\}$.  It is not difficult to verify that $\mcP$ is a $\pi$-system and $\mcD$ is a $\lambda$-system. Moreover, due to Lemma \ref{prop:conditional:indpendence:1}, we have $h_i \in \mathcal{S}(Z_i)$. Therefore, for $A\times B \in \mcP$, we can verify
\begin{align*}
\ev(\delta_1\delta_2I_{A\times B})=\ev(\delta_1\delta_2I_{A}I_{B})=\ev(\delta_1I_{A})\ev(\delta_2I_{B})
&=\ev(h_1I_{A})\ev(h_2I_{B})\nonumber\\
&=\ev(h_1h_2I_{A}I_{B})\nonumber\\
&=\ev(h_1h_2I_{A\times B}),\nonumber
\end{align*}
where we use the facts that $A\in \mathcal{S}(\bfZ_1)$, $B\in \mathcal{S}(\bfZ_2)$ and their independence. As a consequence of $\pi-\lambda$ monotone class theorem, we have $\mcP\subset \mcD$ and $\mathcal{S}(\bfZ_1, \bfZ_2)=\mathcal{S}(\mcP)\subset \mcD\subset \mathcal{S}(\bfZ_1, \bfZ_2)$. By the definition of $\mcD$ and $h_i$, we see that $\pr(\zeta_1\leq a_1, \zeta_2\leq a_2 | \bfZ_1, \bfZ_2)=h_1h_2=\pr(\zeta_1\leq a_1| \bfZ_1, \bfZ_2)\pr(\zeta_2\leq a_2| \bfZ_1, \bfZ_2)$. 
\end{proof}
\begin{lemma}\label{lemma:contraction:inequality}
Let $\mcF$ be a class of functions and $\phi: \mathbb{R}\to \mathbb{R}$ be a Lipschitz function with  Lipschitz constant $K$, then 
\begin{eqnarray}
	\ev(\mcRn  \phi\circ\mcF)\leq K\ev(\mcRn\mcF),\nonumber
\end{eqnarray}
where $\phi\circ\mcF=\{\phi\circ f: f\in \mcF\}$.
\end{lemma}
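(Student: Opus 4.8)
The plan is to prove the classical Ledoux--Talagrand contraction inequality by a ``one coordinate at a time'' peeling argument, carried out conditionally on $\bfZ_1,\ldots,\bfZ_n$. First I would reduce to $K=1$: from the definition, $\mcRn$ is positively homogeneous ($\mcRn(c\,\mcF)=c\,\mcRn\mcF$ for $c>0$), and $\phi\circ\mcF=K\big((\phi/K)\circ\mcF\big)$ with $\phi/K$ being $1$-Lipschitz, so it suffices to show $\ev(\mcRn\,\psi\circ\mcF)\le\ev(\mcRn\mcF)$ whenever $\psi$ is $1$-Lipschitz. Since everything below is conditional on $\bfZ_1,\ldots,\bfZ_n$, I treat $\bfZ_1,\ldots,\bfZ_n$ as fixed points and take the outer expectation only at the very end.

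The core estimate is the replacement of $\psi\circ f$ by $f$ in a single coordinate. Fix $\eta_2,\ldots,\eta_n$ and write $h(f)=\sum_{i=2}^n\eta_i\psi(f(\bfZ_i))$. Averaging over $\eta_1\in\{\pm1\}$ and expanding the two resulting suprema against an independent copy $f'$,
\begin{equation*}
\ev_{\eta_1}\sup_{f\in\mcF}\big[\eta_1\psi(f(\bfZ_1))+h(f)\big]=\tfrac12\sup_{f,f'\in\mcF}\big[\psi(f(\bfZ_1))-\psi(f'(\bfZ_1))+h(f)+h(f')\big].
\end{equation*}
The $1$-Lipschitz property yields $\psi(f(\bfZ_1))-\psi(f'(\bfZ_1))\le|f(\bfZ_1)-f'(\bfZ_1)|$; since both $|f(\bfZ_1)-f'(\bfZ_1)|$ and $h(f)+h(f')$ are invariant under exchanging $f$ and $f'$, the resulting supremum is unchanged if the absolute value is dropped, so the right-hand side is at most $\tfrac12\sup_{f,f'}\big[f(\bfZ_1)-f'(\bfZ_1)+h(f)+h(f')\big]=\ev_{\eta_1}\sup_{f}\big[\eta_1 f(\bfZ_1)+h(f)\big]$. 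Taking expectation over $\eta_2,\ldots,\eta_n$ gives the inequality with the first coordinate ``delinearized'', i.e. with $\psi$ removed from the term indexed by $1$.

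Iterating this replacement over coordinates $2,3,\ldots,n$ --- at the $k$-th step putting $\sum_{i<k}\eta_i f(\bfZ_i)+\sum_{i>k}\eta_i\psi(f(\bfZ_i))$ in the role of $h(f)$ --- one obtains, after $n$ steps and each step an inequality in the right direction, $\ev_{\eta}\sup_{f}\sum_{i=1}^n\eta_i\psi(f(\bfZ_i))\le\ev_{\eta}\sup_{f}\sum_{i=1}^n\eta_i f(\bfZ_i)$. Dividing by $n$ gives $\mcRn\,\psi\circ\mcF\le\mcRn\mcF$ conditionally on $\bfZ_1,\ldots,\bfZ_n$; taking the outer expectation and rescaling by $K$ finishes the proof. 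The step that needs the most care is the single-coordinate estimate: correctly expanding the symmetrized average into a supremum over the pair $(f,f')$, applying the Lipschitz bound to only that one term, and using the exchange symmetry of the remainder to eliminate the absolute value --- together with the routine but necessary remark that if the suprema are not attained one repeats the computation with $\varepsilon$-maximizers and lets $\varepsilon\downarrow0$.
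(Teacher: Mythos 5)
Your argument is correct. Note first that the paper does not prove this lemma at all: it simply cites Proposition 5.28 of Wainwright's book. What you have written out is precisely the standard Ledoux--Talagrand contraction proof of that cited result, so you are supplying the argument the paper delegates to a reference. The key points all check out: the reduction to $K=1$ by positive homogeneity of $\mcRn$; the single-coordinate step, where averaging over $\eta_1\in\{\pm 1\}$ turns the conditional expectation into $\tfrac12\sup_{f,f'}\bigl[\psi(f(\bfZ_1))-\psi(f'(\bfZ_1))+h(f)+h(f')\bigr]$, the Lipschitz bound is applied to that one term only, and the exchange symmetry of $|f(\bfZ_1)-f'(\bfZ_1)|+h(f)+h(f')$ in $(f,f')$ lets you drop the absolute value before re-splitting the pair supremum; and the iteration over the remaining coordinates. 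Two details you handle correctly and that are worth keeping explicit: (i) the paper's $\mcRn$ is defined conditionally on $\bfZ_1,\ldots,\bfZ_n$ and contains no absolute value around the sum, which is exactly why the constant is $K$ rather than $2K$ and why no assumption such as $\phi(0)=0$ is needed; your argument in fact proves the stronger conditional inequality $\mcRn(\phi\circ\mcF)\le K\,\mcRn\mcF$ almost surely, from which the stated expectation version follows; (ii) the $\varepsilon$-maximizer remark covers the case of non-attained suprema. No gaps.
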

\begin{proof}
This is Proposition 5.28 in \cite{w19}.
\end{proof}
\begin{lemma}\label{lemma:concentration:inequality}
Let $\mcF$ be a class of functions such that $|f|\leq B$ and $\textrm{Var}(f)\leq V$ for all $f\in \mcF$ and some $B, V \in \mathbb{R}$. Then for any $\eta>0$, with probability at least $1-2e^{-\eta}$,
\begin{eqnarray}
	\sup_{f\in \mcF}\bigg|(\pr-\pr_n)[f]\bigg|\leq 3\ev(\mcRn\mcF)+\sqrt{\frac{2V\eta}{n}}+\frac{4B\eta}{3n}.\nonumber
\end{eqnarray}  
\end{lemma}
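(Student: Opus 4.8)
The plan is to split the bound into two standard pieces: a symmetrization estimate that replaces the expected supremum by a Rademacher complexity, and a Talagrand-type concentration inequality that controls the fluctuation of the supremum around its mean. Write $Z=\sup_{f\in\mcF}|(\pr-\pr_n)[f]|$. It is worth noting at the outset why the refined tool is needed: a crude bounded-differences (McDiarmid) argument would only give a fluctuation of order $B\sqrt{\eta/n}$, whereas the stated bound demands the sharper variance-dependent term $\sqrt{2V\eta/n}$ together with a Bernstein-type sub-exponential term of order $B\eta/n$, which is exactly what Talagrand's inequality delivers.

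First I would handle the expectation. By the classical symmetrization inequality (see, e.g., \cite{w19}), $\ev Z\le 2\,\ev(\mcRn\mcF)$; when $\mcF$ is not symmetric one first replaces it by $\mcF\cup(-\mcF)$, which does not change $\mcRn$ and still dominates $\sup_{f\in\mcF}|(\pr-\pr_n)[f]|$. This step is routine and uses only that the Rademacher signs $\eta_i$ in the definition of $\mcRn\mcF$ are independent of $\bfZ_1,\ldots,\bfZ_n$.

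Next I would apply Bousquet's form of Talagrand's concentration inequality to the separable empirical process indexed by $\mcF$, with envelope $|f|\le B$ and variance proxy $\textrm{Var}(f)\le V$ (after centering, working with $\{f-\ev f:f\in\mcF\}$). Using it once for $\sup_f(\pr-\pr_n)[f]$ and once for $\sup_f(\pr_n-\pr)[f]$ and taking a union bound --- which is where the factor $2$ in $1-2e^{-\eta}$ enters --- yields, with probability at least $1-2e^{-\eta}$, an estimate of the shape
\begin{equation*}
Z\le \ev Z+\sqrt{\frac{2\eta\,(V+c_1 B\,\ev Z)}{n}}+\frac{c_2 B\eta}{n}
\end{equation*}
for absolute constants $c_1,c_2$. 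The rest is elementary: $\sqrt{x+y}\le\sqrt{x}+\sqrt{y}$ separates the variance term $\sqrt{2V\eta/n}$ from a cross term of order $\sqrt{B\eta\,\ev Z/n}$, and the arithmetic--geometric mean inequality $2\sqrt{uv}\le\lambda u+\lambda^{-1}v$ with a suitable $\lambda$ absorbs the cross term into a multiple of $\ev Z$ plus a further multiple of $B\eta/n$. Collecting the $\ev Z$ contributions and inserting $\ev Z\le 2\,\ev(\mcRn\mcF)$ from the previous step produces the coefficient $3$ in front of $\ev(\mcRn\mcF)$ and $4/3$ in front of $B\eta/n$.

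The only genuinely nontrivial input is the Talagrand/Bousquet concentration inequality, which I would quote as a black box; like Lemma \ref{lemma:contraction:inequality} and the definition of $\mcRn$, it is standard and available in \cite{w19}. Everything else is the classical symmetrization lemma plus elementary manipulations, so the main obstacle is purely one of bookkeeping: tracking the numerical constants carefully through the centering step and the peeling of the cross term so that they land precisely on $\sqrt{2V\eta/n}$ and $4B\eta/(3n)$ rather than on slightly larger quantities.
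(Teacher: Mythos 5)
Your outline is, in substance, exactly the proof of the result the paper invokes: the paper's own ``proof'' of Lemma \ref{lemma:concentration:inequality} is a one-line citation to Theorem 2.1 of \cite{bbm05}, and that theorem is established precisely by the route you describe --- symmetrization to bound the expected supremum by $2\ev(\mcRn\mcF)$, Bousquet's form of Talagrand's inequality applied to the centered class for each of the two one-sided suprema, the elementary steps $\sqrt{x+y}\le\sqrt{x}+\sqrt{y}$ and AM--GM to peel the cross term back into a multiple of the mean, and a union bound over the two signs to produce the probability $1-2e^{-\eta}$. So you have not taken a different route; you have unfolded the black box that the paper leaves folded, and the structure is correct.

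The one soft spot is your assertion that the bookkeeping ``lands precisely'' on the coefficients $3$, $\sqrt{2V\eta/n}$ and $4B\eta/(3n)$. Carrying out the computation with the centered class $\{f-\ev f\}$, whose envelope is $2B$ when $|f|\le B$, Bousquet's inequality followed by AM--GM with parameter $\alpha$ gives a bound of the form $(1+\alpha)\,\ev\sup_f(\pr-\pr_n)[f]+\sqrt{2V\eta/n}+2B(\tfrac{1}{3}+\tfrac{1}{\alpha})\eta/n$; the choice $\alpha=1/2$ that produces the factor $3$ in front of $\ev(\mcRn\mcF)$ then produces $14B\eta/(3n)$ on the last term, and no single choice of $\alpha$ delivers both stated constants simultaneously. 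This is really a looseness in the lemma's statement as transcribed from \cite{bbm05} rather than a flaw in your argument, and it is immaterial downstream because the lemma is only ever used through $O_P$ rates; but as written your sketch promises exact constants that the described computation does not yield, so you should either quote the constants of \cite{bbm05} verbatim (with the $(b-a)(\tfrac{1}{3}+\tfrac{1}{\alpha})$ dependence made explicit) or state the conclusion with an unspecified absolute constant on the $B\eta/n$ term.
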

\begin{proof}
This is Theorem 2.1 in \cite{bbm05}.
\end{proof}
\begin{lemma}\label{lemma:Massart:inequality}
Suppose $(\zeta_i, \bfZ_i), i=1,\ldots, n$ are i.i.d. observations with $\ev(\zeta_i | \bfZ_i)=0$. Let $\mcF$ be a class of function with finite elements, and  conditioning on $\mathbb{Z}$, $\|f\|_n\leq r$ for all $f\in \mcF$ and some $r>0$. Furthermore, if $\zeta_1,\zeta_2, \ldots, \zeta_n$ are conditionally independent given $\mathbb{Z}$ and $|\zeta_i|\leq B$ for some $B>0$, then 
\begin{equation*}
\ev_{\mathbb{Z}}(\sup_{f\in \mcF}\langle f, \zeta\rangle_n)\leq Br\sqrt{\frac{2\log(|\mcF|)}{n}}
\end{equation*}
\end{lemma}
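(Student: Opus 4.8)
The plan is to condition on the entire vector $\mathbb{Z}=(\bfZ_1,\ldots,\bfZ_n)$ and then run the classical Massart finite-class maximal inequality. Once $\mathbb{Z}$ is fixed, each $f(\bfZ_i)$ is a deterministic constant, and by Lemmas~\ref{prop:conditional:indpendence:1} and~\ref{prop:conditional:indpendence:2} the variables $\zeta_1,\ldots,\zeta_n$ are, conditionally on $\mathbb{Z}$, mutually independent with $\ev(\zeta_i\mid\mathbb{Z})=\ev(\zeta_i\mid\bfZ_i)=0$, while $|\zeta_i|\le B$ by hypothesis. Hence for each fixed $f\in\mcF$, $n\langle f,\zeta\rangle_n=\sum_{i=1}^n f(\bfZ_i)\zeta_i$ is a sum of independent, centered random variables, with $f(\bfZ_i)\zeta_i$ supported in the symmetric interval $[-B|f(\bfZ_i)|,\,B|f(\bfZ_i)|]$.

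First I would control the conditional moment generating function via Hoeffding's lemma, applied term by term: for every $\lambda\in\bbR$,
$$\ev_{\mathbb{Z}}\!\big(e^{\lambda f(\bfZ_i)\zeta_i}\big)\le \exp\!\Big(\frac{\lambda^2 B^2 f(\bfZ_i)^2}{2}\Big).$$
Multiplying over $i=1,\ldots,n$ and using $\sum_{i=1}^n f(\bfZ_i)^2=n\|f\|_n^2\le nr^2$ gives
$$\ev_{\mathbb{Z}}\!\big(e^{\lambda n\langle f,\zeta\rangle_n}\big)\le \exp\!\Big(\frac{\lambda^2 B^2 n r^2}{2}\Big),$$
that is, $n\langle f,\zeta\rangle_n$ is conditionally sub-Gaussian with variance proxy $B^2 n r^2$, uniformly in $f\in\mcF$.

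Next I would pass to the supremum by a union bound over the finite set $\mcF$ (so that $\sup=\max$). For any $\lambda>0$, Jensen's inequality together with the preceding bound yields
$$\exp\!\Big(\lambda\,\ev_{\mathbb{Z}}\sup_{f\in\mcF} n\langle f,\zeta\rangle_n\Big)\le \ev_{\mathbb{Z}}\max_{f\in\mcF} e^{\lambda n\langle f,\zeta\rangle_n}\le \sum_{f\in\mcF}\ev_{\mathbb{Z}} e^{\lambda n\langle f,\zeta\rangle_n}\le |\mcF|\exp\!\Big(\frac{\lambda^2 B^2 n r^2}{2}\Big).$$
Taking logarithms and dividing by $\lambda$, $\ev_{\mathbb{Z}}\sup_{f\in\mcF} n\langle f,\zeta\rangle_n\le \lambda^{-1}\log|\mcF|+\tfrac12\lambda B^2 n r^2$; optimizing at $\lambda=\sqrt{2\log|\mcF|}\,/(Br\sqrt{n})$ gives $\ev_{\mathbb{Z}}\sup_{f\in\mcF} n\langle f,\zeta\rangle_n\le Br\sqrt{2n\log|\mcF|}$, and dividing through by $n$ is exactly the asserted inequality.

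There is no genuine obstacle here; the only point requiring a little care is the justification that conditioning on the \emph{full} vector $\mathbb{Z}$, rather than on the individual $\bfZ_i$, still leaves the $\zeta_i$'s centered and mutually independent — which is precisely what Lemmas~\ref{prop:conditional:indpendence:1} and~\ref{prop:conditional:indpendence:2} supply — together with the elementary observation that finiteness of $\mcF$ makes the exchange of expectation and supremum in the union-bound step legitimate.
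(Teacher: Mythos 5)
Your proof is correct and follows essentially the same route as the paper: condition on $\mathbb{Z}$, establish conditional sub-Gaussianity of $\langle f,\zeta\rangle_n$ with variance proxy $B^2\|f\|_n^2/n\le B^2r^2/n$, then apply Jensen's inequality with a union bound over the finite class and optimize over $\lambda$. The only difference is that you make the sub-Gaussian MGF bound explicit via a termwise application of Hoeffding's lemma, where the paper simply asserts it; this is a harmless (indeed welcome) elaboration.
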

\begin{proof}
Conditioning on $\mathbb{Z}$, $\langle f, \zeta\rangle_n$ is $n^{-1/2}B\|f\|_n$-subgaussian due to the conditional independence of $\zeta_1,\ldots, \zeta_n$. Furthermore, for any $\lambda>0$, it follows from Jensen’s inequality that
\begin{eqnarray*}
\exp\bigg(\lambda\ev_{\mathbb{Z}}(\sup_{f\in \mcF}\langle f, \zeta\rangle_n)\bigg)&\leq& \ev_{\mathbb{Z}}\bigg(\exp(\lambda\sup_{f\in \mcF}\langle f, \zeta\rangle_n)\bigg)\nonumber\\
&=& \ev_{\mathbb{Z}}\bigg(\sup_{f\in \mcF}\exp(\lambda\langle f, \zeta\rangle_n)\bigg)\nonumber\\
&\leq& \ev_{\mathbb{Z}}\bigg(\sum_{f\in \mcF}\exp(\lambda\langle f, \zeta\rangle_n)\bigg)\nonumber\\
&=& \sum_{f\in \mcF}\ev_{\mathbb{Z}}\bigg(\exp(\lambda\langle f, \zeta\rangle_n)\bigg)\nonumber\\
&\leq&  \sum_{f\in \mcF}\exp\bigg(\frac{\lambda^2B^2\|f\|_n^2}{2n}\bigg)\leq |\mcF|\exp\bigg(\frac{\lambda^2B^2r^2}{2n}\bigg).
\end{eqnarray*}
As a consequence, it follows that
\begin{eqnarray*}
	\ev_{\mathbb{Z}}(\sup_{f\in \mcF}\langle f, \zeta\rangle_n)\leq \frac{\lambda B^2r^2}{2n}+\frac{\log(|\mcF|)}{\lambda},\textrm{ for all } \lambda>0.
\end{eqnarray*}
We complete the proof by setting $\lambda=\frac{\sqrt{2n\log(|\mcF|)}}{Br}$.
\end{proof}

\begin{lemma}\label{lemma:dudley:inequality}
Suppose $(\zeta_i, \bfZ_i), i=1,\ldots, n$ are i.i.d. observations with $\ev(\zeta_i | \bfZ_i)=0$. Let $\mcF$ be a class of functions such that conditioning on $\mathbb{Z}$, $\|f\|_n\leq r$ for all $f\in \mcF$ and some $r>0$. Furthermore, if $\zeta_1,\zeta_2, \ldots, \zeta_n$ are conditionally independent given $\mathbb{Z}$ and $|\zeta_i|\leq B$ for some $B>0$, then it follows that
\begin{eqnarray}
	\ev_{\mathbb{Z}}(\sup_{f\in \mcF_r}\langle f, \zeta \rangle_n)\leq \inf_{0<x<r}\bigg\{4x\sqrt{\frac{1}{n}\sum_{i=1}^n\ev(\zeta_i^2 |\bfZ_i)}+12B\int_{x}^{r}\sqrt{\frac{\log \mcN(u, \mcF_r, \|\cdot\|_n)}{n}}du\bigg\}.\nonumber
\end{eqnarray}
As a consequence, we have
\begin{eqnarray}
	\mcRn\mcF_r\leq \inf_{0<x<r}\bigg\{4x+12\int_{x}^{r}\sqrt{\frac{\log \mcN(u, \mcF_r, \|\cdot\|_n)}{n}}du\bigg\}.\nonumber
\end{eqnarray}
\end{lemma}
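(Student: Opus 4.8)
The plan is to run Dudley's chaining argument conditionally on $\mathbb{Z}=(\bfZ_1,\ldots,\bfZ_n)$, viewing $\{X_f:=\langle f,\zeta\rangle_n=\tfrac1n\sum_{i=1}^n\zeta_i f(\bfZ_i):f\in\mcF_r\}$ as a stochastic process indexed by the semimetric space $(\mcF_r,\|\cdot\|_n)$. Two preliminary observations set everything up. First, since $\ev(\zeta_i|\bfZ_i)=0$, the $\zeta_i$ are conditionally independent given $\mathbb{Z}$, and $|\zeta_i|\le B$, Hoeffding's lemma shows that for fixed $f,g$ the increment $X_f-X_g=\tfrac1n\sum_i\zeta_i(f-g)(\bfZ_i)$ is, conditionally on $\mathbb{Z}$, sub-Gaussian with variance proxy $\tfrac{B^2}{n}\|f-g\|_n^2$; this is precisely the situation in which Lemma \ref{lemma:Massart:inequality} controls the conditional expectation of a supremum over a finite class. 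Second, because $(\zeta_i,\bfZ_i)$ are i.i.d., Lemma \ref{prop:conditional:indpendence:1} gives $\ev(\zeta_i|\mathbb{Z})=\ev(\zeta_i|\bfZ_i)=0$ and $\ev(\zeta_i^2|\mathbb{Z})=\ev(\zeta_i^2|\bfZ_i)$; in particular $\ev_{\mathbb{Z}}X_f=0$ for every $f$, so we may center the process at an arbitrary anchor $f_0\in\mcF_r$ without changing $\ev_{\mathbb{Z}}\sup_f X_f$.

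Now fix $x\in(0,r)$ and build a dyadic chain. Since $\|f\|_n\le r$ for all $f\in\mcF_r$, the semimetric diameter is at most $2r$; set $u_j=2^{-j}\cdot 2r$ and let $N$ be the smallest index with $u_N\le x$. For each $j\ge1$ pick a minimal $u_j$-net $T_j$ of $(\mcF_r,\|\cdot\|_n)$, so $|T_j|=\mcN(u_j,\mcF_r,\|\cdot\|_n)$ and $|T_{j-1}|\le|T_j|$, take $T_0=\{f_0\}$, and let $\pi_j(f)\in T_j$ denote a closest point to $f$. Telescoping along the chain,
\[
X_f-X_{f_0}=\big(X_f-X_{\pi_N(f)}\big)+\sum_{j=1}^{N}\big(X_{\pi_j(f)}-X_{\pi_{j-1}(f)}\big).
\]
For each $j\ge1$, the functions $\pi_j(f)-\pi_{j-1}(f)$ form a finite class of cardinality at most $|T_j|\,|T_{j-1}|\le\mcN(u_j,\mcF_r,\|\cdot\|_n)^2$ with $\|\pi_j(f)-\pi_{j-1}(f)\|_n\le u_j+u_{j-1}\le\tfrac32 u_{j-1}$, so Lemma \ref{lemma:Massart:inequality} yields
\[
\ev_{\mathbb{Z}}\sup_{f}\big(X_{\pi_j(f)}-X_{\pi_{j-1}(f)}\big)\le B\cdot\tfrac32 u_{j-1}\cdot\sqrt{\tfrac{2\log\mcN(u_j,\mcF_r,\|\cdot\|_n)^2}{n}}=3B\,u_{j-1}\sqrt{\tfrac{\log\mcN(u_j,\mcF_r,\|\cdot\|_n)}{n}}.
\]
For the bottom-scale remainder I would deliberately not use sub-Gaussianity but instead Cauchy--Schwarz in $\|\cdot\|_n$: $|X_f-X_{\pi_N(f)}|\le\|f-\pi_N(f)\|_n\big(\tfrac1n\sum_i\zeta_i^2\big)^{1/2}\le x\big(\tfrac1n\sum_i\zeta_i^2\big)^{1/2}$, followed by Jensen's inequality (concavity of the square root) together with $\ev(\zeta_i^2|\mathbb{Z})=\ev(\zeta_i^2|\bfZ_i)$, giving $\ev_{\mathbb{Z}}\sup_f|X_f-X_{\pi_N(f)}|\le x\big(\tfrac1n\sum_i\ev(\zeta_i^2|\bfZ_i)\big)^{1/2}$ --- this is exactly what produces the refined leading term with $\ev(\zeta_i^2|\bfZ_i)$ rather than a crude $B^2$. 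Summing the level bounds and comparing the dyadic sum $\sum_{j\ge1}u_{j-1}\sqrt{\log\mcN(u_j,\mcF_r,\|\cdot\|_n)/n}$ to the entropy integral $\int_x^r\sqrt{\log\mcN(u,\mcF_r,\|\cdot\|_n)/n}\,du$ (via $u_{j-1}=4(u_j-u_{j+1})$, monotonicity of $u\mapsto\mcN(u,\mcF_r,\|\cdot\|_n)$, and $\log\mcN(u_0,\mcF_r,\|\cdot\|_n)=\log 1=0$) yields the $12B$-term; taking the infimum over $x\in(0,r)$ gives the first displayed inequality.

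The Rademacher corollary then follows by specializing $\zeta_i=\eta_i$, the i.i.d. Rademacher signs: these satisfy $\ev(\eta_i|\bfZ_i)=0$, are conditionally independent given $\mathbb{Z}$, obey $|\eta_i|\le1$, and have $\ev(\eta_i^2|\bfZ_i)=1$, so $B=1$ and $\big(\tfrac1n\sum_i\ev(\eta_i^2|\bfZ_i)\big)^{1/2}=1$, while $\ev_{\mathbb{Z}}\sup_f\langle f,\eta\rangle_n$ is by definition $\mcRn\mcF_r$. I expect the only real friction to be bookkeeping rather than ideas: aligning the dyadic scales with the exact integration limits $x$ and $r$ and tracking the absolute constants down to $4$ and $12$, plus the routine separability/measurability caveat needed for $\sup_f X_f$ to be well-defined; the substance --- conditional sub-Gaussianity of increments (Hoeffding plus Lemma \ref{lemma:Massart:inequality}) together with the Cauchy--Schwarz/Jensen treatment of the finest scale --- is the standard chaining scheme.
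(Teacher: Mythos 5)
Your proposal is correct and follows essentially the same route as the paper's proof: a dyadic chaining argument conditional on $\mathbb{Z}$, with Lemma \ref{lemma:Massart:inequality} controlling each link term, Cauchy--Schwarz plus Jensen at the finest scale producing the refined $\sqrt{n^{-1}\sum_i\ev(\zeta_i^2|\bfZ_i)}$ factor, the dyadic sum converted to the entropy integral via $|T_{j-1}|\le|T_j|$ and the telescoping identity for the scales, and the Rademacher bound obtained by specializing $\zeta_i$ to Rademacher signs. The remaining discrepancies (anchor at $f_0\in\mcF_r$ versus at $0$, initial scale $2r$ versus $r$, the exact choice of the truncation level $N$) are the bookkeeping you already flagged and do not affect the constants $4$ and $12$.
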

\begin{proof}
Let $\alpha_j=2^{-j}r$ and $T_j$ be the a proper $alpha_j$-covering of $\mcF_r$ with respect to $\|\cdot\|_n$. For $f\in \mcF_r$, denote $\tau_j(f)\in T_j$ as the function such that $\|\tau_j(f)-f\|_n\leq \alpha_j$. For integer $N>0$, we have
\begin{equation*}
	f=f-\tau_N(f)+\sum_{j=1}^N(\tau_j(f)-\tau_{j-1}(f)),
\end{equation*}
here we denote $g_0=0$ for simplicity. Therefore, it follows that
\begin{eqnarray}
	\langle f, \zeta \rangle_n&\leq&	\langle f-\tau_N(f), \zeta \rangle_n+\sum_{j=1}^N \langle \tau_j(f)-\tau_{j-1}(f), \zeta \rangle_n\nonumber\\
	&\leq& \|\zeta\|_n \|f-\tau_N(f)\|_n+\sum_{j=1}^N \langle \tau_j(f)-\tau_{j-1}(f), \zeta \rangle_n\nonumber\\
	&\leq&  \alpha_N\|\zeta\|_n+\sum_{j=1}^N \langle \tau_j(f)-\tau_{j-1}(f), \zeta \rangle_n. \label{eq:lemma:dudley:inequality:eq1}
\end{eqnarray}
Now notice that
\begin{eqnarray*}
	\|\tau_j(f)-\tau_{j-1}(f)\|_n\leq \|\tau_j(f)-f\|_n+\|\tau_{j-1}(f)-f\|_n\leq \alpha_j+\alpha_{j-1}= 3\alpha_j.
\end{eqnarray*}
Therefore, apply  Lemma \ref{lemma:Massart:inequality} to the class $\{f_j-f_{j-1}: \|f_j-f_{j-1}\|_n\leq 3\alpha_j, f_j\in T_j, f_{j-1}\in T_{j-1}\}$, it follows that
\begin{eqnarray}
	\ev_{\mathbb{Z}}\bigg(\sup_{f\in \mcF_r}\langle \tau_j(f)-\tau_{j-1}(f), \zeta \rangle_n\bigg)&\leq&   3\alpha_j B\sqrt{\frac{2\log(|T_j||T_{j-1}|)}{n}}\nonumber\\
	&\leq& 6\alpha_j B\sqrt{\frac{\log(|T_j|)}{n}}\nonumber\\
	&\leq& 12B(\alpha_j-\alpha_{j+1})\sqrt{\frac{\log(|T_j|)}{n}}\nonumber\\
	&\leq& 12B(\alpha_j-\alpha_{j+1})\sqrt{\frac{\log \mcN(\alpha_j, \mcF_r, \|\cdot\|_n)}{n}}\nonumber\\
	&\leq&12B\int_{\alpha_{j+1}}^{\alpha_j}\sqrt{\frac{\log \mcN(u, \mcF_r, \|\cdot\|_n)}{n}}du.\nonumber
\end{eqnarray}
Above inequality and (\ref{eq:lemma:dudley:inequality:eq1}) together imply that
\begin{eqnarray}
	\ev_{\mathbb{Z}}(\sup_{f\in \mcF_r}\langle f, \zeta \rangle_n)\leq \alpha_N\ev_\bfZ(\|\zeta\|_n)+12B\int_{\alpha_{N+1}}^{\alpha_0}\sqrt{\frac{\log \mcN(u, \mcF_r, \|\cdot\|_n)}{n}}du.\nonumber
\end{eqnarray}
Now pick $N$ such that, $\alpha_N> 2\epsilon$ and $\alpha_{N+1}\leq 2\epsilon$. Therefore, $\alpha_N=2\alpha_{N+1}\leq 4\epsilon$ and $\alpha_{N+1}=\alpha_N/2>\epsilon$. Hence, we conclude that
\begin{eqnarray}
	\ev_{\mathbb{Z}}(\sup_{f\in \mcF_r}\langle f, \zeta \rangle_n)\leq 4\epsilon\sqrt{\frac{1}{n}\sum_{i=1}^n\ev(\zeta_i^2 |\bfZ_i)}+12B\int_{\epsilon}^{r}\sqrt{\frac{\log \mcN(u, \mcF_r, \|\cdot\|_n)}{n}}du,\nonumber
\end{eqnarray}
where we use  the inequality $\ev_{\mathbb{Z}}(\|\zeta\|_n)\leq \sqrt{\ev_{\mathbb{Z}}(\|\zeta\|_n^2)}$ and Lemma \ref{prop:conditional:indpendence:1}. Since $\epsilon>0$ is arbitrary, we finish the proof.
\end{proof}

\begin{lemma}\label{lemma:entroy:bound}
For a class of function $\mcF$, if $n\geq \textrm{Pdim}(\mcF)$ and $\max_{1\leq i\leq  n}\|f(\bfZ_i)\| \leq C$ for all $f \in \mcF$ and some constant $C>0$, then it follows that
\begin{equation*}
	\log N(x, \mcF, \|\cdot\|_{n,\infty})\leq \textrm{Pdim}(\mcF)\log\bigg(\frac{2enC}{x\textrm{Pdim}(\mcF)}\bigg)\leq \textrm{Pdim}(\mcF)\log\bigg(\frac{2enC}{x}\bigg).
\end{equation*}
Moreover for deep neural network class, it holds that
\begin{equation*}
	\textrm{Pdim}(\mcF_{d,1}(L,W))\leq \textrm{VC}(\mcF_{d+1,1}(L+1,W+1))\leq cL(LW^2+Wd)\log(LW^2+Wd),
\end{equation*}
for some universal constant .
\end{lemma}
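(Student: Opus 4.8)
The plan is to prove the two displayed inequalities separately, the first by a Sauer--Shelah argument and the second by reducing to the VC dimension of a slightly enlarged ReLU class. For the covering-number bound, recall that $\textrm{Pdim}(\mcF)$ is by definition the VC dimension of the subgraph collection $\{(\bfz,t)\mapsto I(f(\bfz)\ge t):f\in\mcF\}$. Fix $\bfZ_1,\dots,\bfZ_n$ and take a maximal $x$-separated subset $f_1,\dots,f_M\subset\mcF$ with respect to $\|\cdot\|_{n,\infty}$; since a maximal separated set is also a cover, it suffices to bound $M$. Partition the range $[-C,C]$ into $k=\lceil 2C/x\rceil$ equally spaced levels $t_1<\dots<t_k$. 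Any two of the $f_j$ disagree by more than $x$ at some $\bfZ_i$, so the binary vectors $\big(I(f_j(\bfZ_i)\ge t_\ell)\big)_{i\le n,\ \ell\le k}\in\{0,1\}^{nk}$ are pairwise distinct, whence $M$ is at most the number of dichotomies the subgraph class induces on the $nk$ pairs $(\bfZ_i,t_\ell)$. By Sauer--Shelah this is at most $(e\,nk/d_P)^{d_P}$ with $d_P=\textrm{Pdim}(\mcF)$ (using $nk\ge n\ge d_P$); substituting $k\le 2C/x$ and taking logarithms gives $\log M\le d_P\log\!\big(2enC/(x d_P)\big)$, and the second inequality follows from $d_P\ge 1$. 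Equivalently one may invoke the sup-norm covering-number bound in terms of the pseudo-dimension, e.g.\ Theorem~12.2 of \cite{ab09nn}, after rescaling the range to $[0,1]$.

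For the passage from pseudo-dimension to VC dimension, the same subgraph characterization shows that $\textrm{Pdim}(\mcF_{d,1}(L,W))$ equals the VC dimension of $\{(\bfz,t)\mapsto I(f(\bfz)-t\ge 0):f\in\mcF_{d,1}(L,W)\}$, so it suffices to realize $(\bfz,t)\mapsto f(\bfz)-t$ as the output of a network in $\mcF_{d+1,1}(L+1,W+1)$. We regard $t$ as an extra input coordinate; since $f$ is bounded, only $t$ in a bounded interval is relevant, so after a fixed translation the extra coordinate may be taken nonnegative and therefore passes through each $\bfsig$ layer unchanged. One extra unit per layer (width $+1$) then carries a shift of $t$ forward to the last hidden layer, and the subtraction together with the undoing of the shift is absorbed into the final affine map at the cost of at most one more layer (depth $+1$). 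Hence $\textrm{Pdim}(\mcF_{d,1}(L,W))\le\textrm{VC}(\mcF_{d+1,1}(L+1,W+1))$.

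It remains to bound the VC dimension of a ReLU class. A network in $\mcF_{d',1}(L',W')$ has $U\asymp L'W'^2+W'd'$ parameters and depth $L'$; for a fixed input its output is, as a function of the parameters, piecewise-polynomial of degree at most $L'$, the pieces being the cells cut out by the sign patterns of the individual ReLU units. Processing the $m$ sample inputs layer by layer, bounding the number of such cells at each layer by Sauer--Shelah and then the number of sign patterns of the degree-$\le L'$ output polynomials within each cell by the Warren / Milnor--Thom estimate, yields a growth function of the form $(\mathrm{const}\cdot m)^{\,\mathrm{const}\cdot UL'}$; the standard conversion then gives $\textrm{VC}(\mcF_{d',1}(L',W'))\le c\,UL'\log U=c\,L'(L'W'^2+W'd')\log(L'W'^2+W'd')$ for a universal $c$, which is the nearly-tight bound of Bartlett--Harvey--Liaw--Mehta. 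Specializing $d'=d+1$, $L'=L+1$, $W'=W+1$ and absorbing the $O(1)$ shifts into $c$ completes the chain of inequalities.

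The substantive step is this last one: the layer-by-layer counting of linear regions together with the polynomial sign-pattern bookkeeping needed to reach the $O(UL\log U)$ VC bound; the rest is routine once the subgraph/pseudo-dimension formalism is in place. The only other mildly delicate point is the explicit ``route $t$ through the ReLU layers'' construction, which uses boundedness to make ReLU act as the identity on the carried coordinate and to justify working with a bounded range for $t$.
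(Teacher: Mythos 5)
Your proposal is correct and takes essentially the same route as the paper: the paper's proof of this lemma consists entirely of citing Theorem~12.2 and Theorem~14.1 of \cite{ab09nn} together with Theorem~6 of \cite{bhlm19}, and your three steps (Sauer--Shelah discretization for the covering bound, routing the threshold $t$ through the layers to pass from pseudo-dimension to the VC dimension of $\mcF_{d+1,1}(L+1,W+1)$, and the region-counting plus Warren-type sign-pattern argument for the final VC bound) are precisely reconstructions of those three cited results. The only cosmetic slip is replacing $k=\lceil 2C/x\rceil$ by $2C/x$, which costs at most an absolute constant inside the logarithm and does not affect how the lemma is used.
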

\begin{proof}
The first result is Theorem 12.2 in \cite{ab09nn}. The second one follows from Theorem 14.1 in \cite{ab09nn} and
 Theorem 6 in \cite{bhlm19}.
\end{proof}
\begin{lemma}\label{proposition:cauchy:F:norm}
	Let $\bfa_1, \bfb_2,\ldots, \bfa_n, \bfb_n$ be real vectors of the same dimension. Then it follows that
	\begin{eqnarray}
		\bigg\|\frac{1}{n}\sum_{i=1}^n \bfa_i\bfb_i^\top\bigg\|_F^2\leq \frac{1}{n}\sum_{i=1}^n\|\bfa_i\|_2^2\times \frac{1}{n}\sum_{i=1}^n\|\bfb_i\|_2^2.\nonumber
	\end{eqnarray}
\end{lemma}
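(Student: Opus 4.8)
The plan is to expand $\|\cdot\|_F^2$ as a trace and then apply the Cauchy--Schwarz inequality twice. Write $\mathbf{S}:=\sum_{i=1}^n\bfa_i\bfb_i^\top$, so that the left-hand side of the asserted bound equals $n^{-2}\|\mathbf{S}\|_F^2$. Using $\|\mathbf{S}\|_F^2=\textrm{Tr}(\mathbf{S}^\top\mathbf{S})$, linearity of the trace, and its cyclic property, I would compute
$$\|\mathbf{S}\|_F^2=\textrm{Tr}\bigg(\sum_{i=1}^n\sum_{j=1}^n\bfb_i\bfa_i^\top\bfa_j\bfb_j^\top\bigg)=\sum_{i=1}^n\sum_{j=1}^n(\bfa_i^\top\bfa_j)(\bfb_i^\top\bfb_j),$$
where the last step uses $\textrm{Tr}(\bfb_i\bfb_j^\top)=\bfb_i^\top\bfb_j$.

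Next, for each fixed pair $(i,j)$ I would pass to absolute values and apply the vector Cauchy--Schwarz inequality to each factor:
$$(\bfa_i^\top\bfa_j)(\bfb_i^\top\bfb_j)\le |\bfa_i^\top\bfa_j|\,|\bfb_i^\top\bfb_j|\le \|\bfa_i\|_2\|\bfa_j\|_2\,\|\bfb_i\|_2\|\bfb_j\|_2.$$
Summing over $i$ and $j$ and factoring the resulting double sum gives $\|\mathbf{S}\|_F^2\le\big(\sum_{i=1}^n\|\bfa_i\|_2\|\bfb_i\|_2\big)^2$. A final application of Cauchy--Schwarz, now to the two sequences $(\|\bfa_i\|_2)_{i=1}^n$ and $(\|\bfb_i\|_2)_{i=1}^n$ in $\mathbb{R}^n$, yields
$$\bigg(\sum_{i=1}^n\|\bfa_i\|_2\|\bfb_i\|_2\bigg)^2\le\bigg(\sum_{i=1}^n\|\bfa_i\|_2^2\bigg)\bigg(\sum_{i=1}^n\|\bfb_i\|_2^2\bigg);$$
dividing both sides by $n^2$ completes the argument.

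There is essentially no obstacle here — the statement is a routine consequence of Cauchy--Schwarz — and the only point needing mild care is passing from the signed summand $(\bfa_i^\top\bfa_j)(\bfb_i^\top\bfb_j)$ to its absolute value before bounding, since the off-diagonal inner products may be negative; inserting the absolute value makes the estimate uniform over all pairs. As an alternative one could observe that $\mathbf{S}=A^\top B$, where $A$ and $B$ are the matrices with rows $\bfa_i^\top$ and $\bfb_i^\top$ respectively, and then use $\|A^\top B\|_F\le\|A\|_{\mathrm{op}}\|B\|_F\le\|A\|_F\|B\|_F$ together with $\|A\|_F^2=\sum_i\|\bfa_i\|_2^2$ and $\|B\|_F^2=\sum_i\|\bfb_i\|_2^2$; the trace computation above is, however, more self-contained and I would use it.
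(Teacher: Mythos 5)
Your proposal is correct. The paper takes a slightly different but equally elementary route: it expands the Frobenius norm entry-wise as $\big\|\sum_i\bfa_i\bfb_i^\top\big\|_F^2=\sum_{j}\sum_{s}\big(\sum_i a_{ij}b_{is}\big)^2$ and applies Cauchy--Schwarz exactly once per matrix entry $(j,s)$, after which the double sum factors directly into $\big(\sum_i\|\bfa_i\|_2^2\big)\big(\sum_i\|\bfb_i\|_2^2\big)$. You instead expand via the trace identity, producing the Gram-style double sum $\sum_{i,j}(\bfa_i^\top\bfa_j)(\bfb_i^\top\bfb_j)$ indexed over sample pairs rather than matrix coordinates, and then need Cauchy--Schwarz twice more (on the inner products, and once on the resulting scalar sequences) together with the care you correctly flag about taking absolute values before bounding. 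Both are valid; the paper's entry-wise decomposition is marginally shorter and more direct, while your trace version exposes the Gram-matrix structure and comes with the tidy alternative $\|A^\top B\|_F\le\|A\|_F\|B\|_F$ that you mention, which is arguably the most conceptual framing.
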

\begin{proof}
Denote $\bfa_i=(a_{i1},\ldots, a_{ik})^\top$ and $\bfb_i=(b_{i1},\ldots, b_{ik})^\top$ for $i=1,\ldots, n$. By definition of Frobenius norm and Cauchy–Schwarz inequality, we have
\begin{eqnarray}
	\bigg\|\sum_{i=1}^n \bfa_i\bfb_i^\top\bigg\|_F^2=\sum_{j=1}^k\sum_{s=1}^k \bigg(\sum_{i=1}^na_{ij}b_{is}\bigg)^2\leq\sum_{j=1}^k\sum_{s=1}^k  \sum_{i=1}^na_{ij}^2\sum_{i=1}^nb_{is}^2=\sum_{i=1}^n\|\bfa_i\|_2^2\times \sum_{i=1}^n\|\bfb_i\|_2^2.\nonumber
\end{eqnarray}
\end{proof}

\subsection{DNN Approximation}\label{sec:dnn:approximation}
The proof of approximation results of DNN, we mainly rely on the result in \cite{lsyz20}. We borrow the following notation from their paper. For given $K\in \mathbb{Z}_+$ and $\delta>0$ with $\delta<\frac{1}{K}$, define the  trifling region of $[0, 1]^d$ as follows:
\begin{eqnarray}
	\Omega(K, \delta, d):=\bigcup_{i=1}^d\bigg\{\bfz=(z_1,\ldots, z_d)^\top : x_i\in \bigcup_{k=1}^{K-1}\bigg(\frac{k}{K}-\delta, \frac{k}{K}\bigg)\bigg\}.
\end{eqnarray}
\begin{lemma}\label{lemma:multiplication:network}
For any $N, L\in \mathbb{Z}_+$, there exists a network $\phi_{\times}\in \mcF_{2,1}(L+1,9N)$ such that 
\begin{enumerate}
\item $|\phi_{\times}(x,y)-xy|\leq 12N^{-L}$ for all $x,y\in [0,1]$;
\item $|\phi_{\times}(x,y)|\leq 1$ for all $x,y\in [0,1]$.
\end{enumerate}
\end{lemma}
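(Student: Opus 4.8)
The plan is to reduce the construction of the multiplication network to that of a \emph{squaring} network --- the one genuinely nontrivial ingredient, which is available from \cite{lsyz20} --- together with a trivial clipping network. The starting point is the polarization identity
\begin{equation*}
	ab \;=\; 2\Big[\big(\tfrac{a+b}{2}\big)^2 - \big(\tfrac{a}{2}\big)^2 - \big(\tfrac{b}{2}\big)^2\Big],\qquad a,b\in\mathbb{R},
\end{equation*}
in which, for $a,b\in[0,1]$, all three arguments $\tfrac{a+b}{2},\tfrac{a}{2},\tfrac{b}{2}$ again lie in $[0,1]$. So it suffices to have, for the given $N$ and $L$, a squaring network $\phi_{\mathrm{sq}}$ of depth $L$ and width at most $3N$ (as in the construction of \cite{lsyz20}) with $\sup_{t\in[0,1]}|\phi_{\mathrm{sq}}(t)-t^2|\le N^{-L}$.

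Given $\phi_{\mathrm{sq}}$, I would assemble $\phi_{\times}$ as follows. In the first hidden layer, form the three functions $u_1=\tfrac{x+y}{2}$, $u_2=\tfrac{x}{2}$, $u_3=\tfrac{y}{2}$, which are affine in the input $(x,y)$ and hence can be absorbed into the input weight matrix, and feed each $u_j$ into its own copy of $\phi_{\mathrm{sq}}$; the three parallel copies therefore cost no extra depth and, using block-diagonal weight matrices padded with zeros, total width at most $3\cdot 3N=9N$. In the output weights take the affine combination $w:=2\big(\phi_{\mathrm{sq}}(u_1)-\phi_{\mathrm{sq}}(u_2)-\phi_{\mathrm{sq}}(u_3)\big)$, and finally append a single ReLU layer implementing the clip $w\mapsto\sigma(w)-\sigma(w-1)=\min\{\max\{w,0\},1\}$. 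This last layer adds $1$ to the depth, so $\phi_{\times}\in\mcF_{2,1}(L+1,9N)$, matching the claimed architecture as defined in \eqref{eq:def:dnn}.

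For the two bounds, observe that the clip is $1$-Lipschitz and acts as the identity on $[0,1]$, while $xy\in[0,1]$ whenever $x,y\in[0,1]$; hence clipping cannot enlarge the error, and
\begin{equation*}
	|\phi_{\times}(x,y)-xy| \;\le\; |w-xy| \;\le\; 2\sum_{j=1}^{3}\big|\phi_{\mathrm{sq}}(u_j)-u_j^2\big| \;\le\; 6N^{-L} \;\le\; 12N^{-L},
\end{equation*}
which gives the first property, while the second is immediate because the clip forces $\phi_{\times}(x,y)\in[0,1]$.

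The substance of the argument thus lies entirely in $\phi_{\mathrm{sq}}$, where I would either quote the corresponding lemma of \cite{lsyz20} or, for a self-contained version, reproduce its sawtooth argument: a piecewise-linear ``tooth'' map of $[0,1]$ onto itself built from $O(N)$ ReLU units, whose $L$-fold self-composition has $\asymp N^{L}$ linear pieces and is realized in depth $L$, width $O(N)$ while an extra coordinate accumulates the telescoping partial sum of the classical identity that writes $t^2$ as $t$ minus a geometric-type series in the iterated sawtooths, yielding uniform error $\le N^{-L}$. I expect the only real obstacle to be bookkeeping rather than analysis --- verifying that the three parallel squaring blocks, the input affine map, the output affine combination, and the clipping layer all fit inside $\mcF_{2,1}(L+1,9N)$ (fully connected, uniform width), with the extra depth being exactly $1$ and the width exactly $9N$; once the squaring lemma is in hand, nothing else is delicate.
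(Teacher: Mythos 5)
Your proposal is correct. The paper's own proof is shorter at the first step: it quotes Lemma 5.2 of \cite{lsyz20} directly, which already supplies a product network $\widetilde{\phi}_{\times}\in\mcF_{2,1}(L,9N)$ with $|\widetilde{\phi}_{\times}(x,y)-xy|\le 6N^{-L}$, whereas you re-derive exactly that statement from the more primitive squaring lemma of \cite{lsyz20} via the polarization identity and three parallel width-$3N$ blocks. That re-derivation is sound (it is in fact how \cite{lsyz20} proves their Lemma 5.2), so the two routes converge to the same intermediate object; the only substantive divergence is the boundedness step. You enforce $\phi_{\times}\in[0,1]$ with the two-sided clip $w\mapsto\sigma(w)-\sigma(w-1)$, which is $1$-Lipschitz and fixes $[0,1]$ pointwise, so it costs one layer and \emph{no} loss in accuracy, leaving you with the bound $6N^{-L}$ (stronger than the claimed $12N^{-L}$). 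The paper instead sets $\delta=6N^{-L}$ and takes $\phi_{\times}=\sigma(\widetilde{\phi}_{\times})/(1+\delta)$, a one-sided rectification plus rescaling that uses a single ReLU unit in the extra layer but inflates the error to $2\delta=12N^{-L}$ (after a short case analysis on the sign of $\widetilde{\phi}_{\times}$). Both constructions land in $\mcF_{2,1}(L+1,9N)$ and both bounds suffice for the downstream lemmas; your clip is marginally cleaner analytically, the paper's scaling is marginally cheaper in units, and neither difference matters at width $9N$.
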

\begin{proof}
By Lemma 5.2 in \cite{lsyz20}, there exists a $\widetilde{\phi}_{\times} \in \mcF_{2,1}(L, 9N)$ such that
\begin{eqnarray}
		|\widetilde{\phi}_{\times}(x,y)-xy|\leq 6N^{-L}, \;\;\textrm{ for all } x,y\in [0,1].\label{eq:lemma:multiplication:network:eq1}
\end{eqnarray}
Let $\delta:=6N^{-L}$, and we further define
\begin{eqnarray}
	\phi_{\times}(x,y)=\frac{\sigma(\widetilde{\phi}_{\times}(x,y))}{1+\delta}.\nonumber
\end{eqnarray}
Direct examination implies
\begin{eqnarray}
	|\phi_{\times}(x,y)-xy|=\bigg|\frac{\sigma(\widetilde{\phi}_{\times}(x,y))}{1+\delta}-xy\bigg|=\begin{cases}
	|xy|&\textrm{ if } \widetilde{\phi}_{\times}(x,y)<0\\
	\bigg|\frac{\widetilde{\phi}_{\times}(x,y)}{1+\delta}-xy\bigg| &\textrm{ if } \widetilde{\phi}_{\times}(x,y)\geq 0
	\end{cases}.\nonumber
\end{eqnarray}
Notice by (\ref{eq:lemma:multiplication:network:eq1}), if $ \widetilde{\phi}_{\times}(x,y)<0$ implies $|xy|\leq 6N^{-L}=\delta$, and
\begin{eqnarray}
	\bigg|\frac{\widetilde{\phi}_{\times}(x,y)}{1+\delta}-xy\bigg|=\bigg|\frac{\widetilde{\phi}_{\times}(x,y)-xy-\delta xy}{1+\delta}\bigg|\leq \frac{2\delta}{1+\delta}\leq 2\delta.\nonumber
\end{eqnarray}
Therefore, we show that $|\widetilde{\phi}_{\times}(x,y)-xy|\leq 2\delta=12N^{-L}$ for all $x,y\in [0, 1]$. Moreover, since $\widetilde{\phi}_{\times}(x,y)\leq 1+6N^{-L}\leq 1+\delta$, it holds that $\phi_{\times}(x,y)\leq 1$. Finally, it is trivial to see that $\phi_{\times}(x,y) \in \mcF_{2,1}(L+1, 9N)$.
\end{proof}

\begin{lemma}\label{lemma:multiplication:d:approximation}
For any $N, L, d\in \mathbb{Z}_+$, there exists a network $\phi_{\times}^d\in \mcF_{d,1}((L+1)(d-1)+d-2,9N+d-2)$ such that 
\begin{enumerate}
\item $|\phi_{\times}^d(\bfz)-\prod_{i=1}^d z_i|\leq 12(d-1)N^{-L}$ for all $\bfz=(z_1,\ldots, z_d)^\top \in [0, 1]^d$;
\item $|\phi_{\times}^d(\bfz)|\leq 1$ for all $\bfz=(z_1,\ldots, z_d)^\top \in [0, 1]^d$.
\end{enumerate}
\end{lemma}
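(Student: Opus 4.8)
The plan is to realize the $d$-fold product as an iterated composition of the two-argument multiplication network $\phi_\times$ supplied by Lemma~\ref{lemma:multiplication:network}, and then to control the accumulated error by a telescoping estimate.

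\textbf{Construction.} Set $p_1 := z_1$ and $p_k := \phi_\times(p_{k-1},z_k)$ for $k=2,\dots,d$, and define $\phi_\times^d(\bfz):=p_d$. Property~2 of Lemma~\ref{lemma:multiplication:network} guarantees $p_k\in[0,1]$ for every $k$, so each call to $\phi_\times$ is made on arguments in $[0,1]^2$ and the recursion is well defined; this also immediately gives $|\phi_\times^d(\bfz)|\le 1$, i.e. conclusion~(2). To exhibit $\phi_\times^d$ as one fully connected ReLU network, I would assemble it from $d-1$ blocks: the $k$-th block takes the vector $(p_{k-1},z_k,z_{k+1},\dots,z_d)\in[0,1]^{d-k+1}$, runs $\phi_\times$ (depth $L+1$, width $9N$) on the first two coordinates, and forwards the remaining $d-k$ coordinates through identity channels — legitimate since $\sigma(t)=t$ for $t\in[0,1]$ — producing $(p_k,z_{k+1},\dots,z_d)$. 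Each block then has depth $L+1$ and width $9N+(d-k)\le 9N+d-2$, so chaining the $d-1$ blocks (the affine maps linking consecutive blocks accounting for the extra $d-2$ layers) shows $\phi_\times^d\in\mcF_{d,1}\big((L+1)(d-1)+d-2,\;9N+d-2\big)$.

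\textbf{Error bound.} Write $P_k:=\prod_{i=1}^k z_i$. I would show $|p_k-P_k|\le 12(k-1)N^{-L}$ by induction on $k$; the case $k=1$ is trivial, and for the inductive step, using $z_k\in[0,1]$ together with property~1 of Lemma~\ref{lemma:multiplication:network},
\begin{align*}
|p_k-P_k| = |\phi_\times(p_{k-1},z_k)-z_k P_{k-1}|
&\le |\phi_\times(p_{k-1},z_k)-p_{k-1}z_k| + z_k|p_{k-1}-P_{k-1}| \\
&\le 12N^{-L} + |p_{k-1}-P_{k-1}|.
\end{align*}
Taking $k=d$ yields $\big|\phi_\times^d(\bfz)-\prod_{i=1}^d z_i\big|\le 12(d-1)N^{-L}$, which is conclusion~(1).

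\textbf{Main obstacle.} The analytic content is the routine telescoping induction above; the step needing genuine care is the network bookkeeping — checking that routing the unused coordinates through identity channels does not push the width past $9N+d-2$, and that concatenating the $d-1$ multiplication blocks yields a depth no larger than $(L+1)(d-1)+d-2$. One must also verify the domain condition is preserved throughout, namely that each intermediate $p_k$ remains in $[0,1]$ so that $\phi_\times$ may be applied again; this is precisely the role of property~2 of Lemma~\ref{lemma:multiplication:network}, which is used alongside the approximation bound rather than being a mere afterthought.
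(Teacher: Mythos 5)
Your proposal is correct and follows essentially the same route as the paper: iterate the two-argument network $\phi_\times$ of Lemma \ref{lemma:multiplication:network}, forward the unused coordinates through identity channels, and telescope the error, using the range bound $|\phi_\times|\le 1$ to keep each intermediate value in $[0,1]$. The depth and width bookkeeping also matches the paper's count (the paper works out the case $d=3$ explicitly and gets depth $2(L+1)+1$ and width $9N+1$, consistent with your general formulas).
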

\begin{proof}
For simplicity, we prove the case when $d=3$. By Lemma \ref{lemma:multiplication:network}, there exist a network $\phi_{\times}\in \mcF_{2,1}(L+1,9N)$ such that $|\phi_{\times}(z_1,z_2)-z_1z_2|\leq 12N^{-L}$. To pass the value of $z_3$ to next layer, we can add one more channel to store the $z_3$. Therefore,  we use $\phi_{\times}(\phi_{\times}(z_1,z_2),z_3)$ to approximate the products, and the error can be calculated as follows:
\begin{eqnarray}
	|\phi_{\times}(\phi_{\times}(z_1,z_2),z_3)-z_1z_2z_3|&\leq& |\phi_{\times}(\phi_{\times}(z_1,z_2),z_3)-\phi_{\times}(z_1,z_2)z_3|+|\phi_{\times}(z_1,z_2)z_3-z_1z_2z_3|\nonumber\\
	&\leq&12N^{-L}+12N^{-L}\leq 24N^{-L}.\nonumber
\end{eqnarray}
Finally, it is not difficult to verify that this neural network has $2(L+1)+1$ hidden layers and width $9N+1$.
\end{proof}

\begin{lemma}\label{lemma:multinomial:approximation}
For any $N, L\in \mathbb{Z}_+$ and $\bfgamma=(\gamma_1,\ldots, \gamma_d)\in \mathbb{Z}^{d}$, there exists a network $P_{\bfgamma}\in \mcF_{d,1}$ with $(L+2)(|\bfgamma|-1)$ hidden layers and width $9N+|\bfgamma|-2$ such that 
\begin{enumerate}
\item $|P_{\bfgamma}(\bfz)-\bfz^{\bfgamma}|\leq 12(|\bfgamma|-1)N^{-L}$ for all $\bfz=(z_1,\ldots, z_d)^\top \in [0, 1]^d$;
\item $|P_{\bfgamma}(\bfz)|\leq 1$ for all $\bfz=(z_1,\ldots, z_d)^\top \in [0, 1]^d$.
\end{enumerate}
\end{lemma}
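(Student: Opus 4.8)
The plan is to reduce the statement directly to Lemma~\ref{lemma:multiplication:d:approximation}, reading the monomial $\bfz^{\bfgamma}=\prod_{i=1}^{d}z_i^{\gamma_i}$ as a product of $m:=|\bfgamma|$ factors lying in $[0,1]$. Assume $m\ge 1$ (for $m=0$ the claimed depth is meaningless). First I would pick a $0$--$1$ matrix $\bfA\in\{0,1\}^{m\times d}$ whose rows are standard basis vectors, arranged so that $e_i^{\top}$ occurs exactly $\gamma_i$ times; then $\bfA\bfz=(w_1,\dots,w_m)^{\top}$, where the list $(w_1,\dots,w_m)$ consists of the coordinate $z_i$ repeated $\gamma_i$ times, so that $\prod_{k=1}^{m}w_k=\bfz^{\bfgamma}$. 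Since $\bfz\in[0,1]^d$ we have $\bfA\bfz\in[0,1]^m$, and because every entry of $\bfA\bfz$ is nonnegative we get $\bfsig_{\mathbf{0}}(\bfA\bfz)=\bfA\bfz$; hence $\bfz\mapsto\bfA\bfz$ is realizable as a single ReLU layer of width $m$.

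Next I would take the network $\phi_{\times}^{m}\in\mcF_{m,1}\big((L+1)(m-1)+m-2,\;9N+m-2\big)$ supplied by Lemma~\ref{lemma:multiplication:d:approximation} and set $P_{\bfgamma}(\bfz):=\phi_{\times}^{m}(\bfA\bfz)$, i.e. prepend the above ReLU layer to $\phi_{\times}^{m}$. Counting layers, $P_{\bfgamma}$ has $1+\big[(L+1)(m-1)+m-2\big]=(L+2)(m-1)$ hidden layers, and its width is $\max\{m,\,9N+m-2\}=9N+m-2$ because $9N\ge 2$. (When $m=1$, $P_{\bfgamma}$ is merely the projection $\bfz\mapsto z_i$ with $0=(L+2)(m-1)$ hidden layers, and the bounds below hold trivially.) Thus $P_{\bfgamma}\in\mcF_{d,1}\big((L+2)(|\bfgamma|-1),\,9N+|\bfgamma|-2\big)$, as required.

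The two estimates are then immediate quotations of Lemma~\ref{lemma:multiplication:d:approximation}. By part~(1), for every $\bfz\in[0,1]^d$,
\[
\big|P_{\bfgamma}(\bfz)-\bfz^{\bfgamma}\big|=\Big|\phi_{\times}^{m}(\bfA\bfz)-\textstyle\prod_{k=1}^{m}w_k\Big|\le 12(m-1)N^{-L}=12(|\bfgamma|-1)N^{-L},
\]
and by part~(2), $|P_{\bfgamma}(\bfz)|=|\phi_{\times}^{m}(\bfA\bfz)|\le 1$ on $[0,1]^d$. This establishes both conclusions.

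The only step requiring any care is the depth/width bookkeeping through the composition: one must verify that the duplication map $\bfz\mapsto\bfA\bfz$ costs exactly one ReLU layer --- which hinges on $\bfz\ge 0$, so that the ReLU nonlinearity acts as the identity on $\bfA\bfz$ --- and that $9N+m-2\ge m$, so that the width of $\phi_{\times}^{m}$ already accommodates the prepended layer. Everything else follows verbatim from Lemma~\ref{lemma:multiplication:d:approximation}; no new approximation argument is needed.
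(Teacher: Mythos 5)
Your proposal is correct and follows essentially the same route as the paper's own proof: duplicate each coordinate $z_i$ exactly $\gamma_i$ times in one ReLU layer (valid because $\bfz\ge 0$ so the ReLU is the identity there) and then feed the resulting $|\bfgamma|$ values into the product network $\phi_{\times}^{|\bfgamma|}$ of Lemma~\ref{lemma:multiplication:d:approximation}. Your depth count $1+[(L+1)(|\bfgamma|-1)+|\bfgamma|-2]=(L+2)(|\bfgamma|-1)$ and width count $\max\{|\bfgamma|,9N+|\bfgamma|-2\}=9N+|\bfgamma|-2$ match the paper's bookkeeping, and the two error bounds are indeed direct quotations of that lemma.
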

\begin{proof}
First step we pass the input value $\bfz$ to next layer as follows:
\begin{eqnarray}
	\bfz\to (\underbrace{z_1,\ldots, z_1}_{\gamma_1 \textrm{ times }}, \underbrace{z_2,\ldots, z_2}_{\gamma_2 \textrm{ times }},\ldots,  \underbrace{z_{d-1},\ldots, z_{d-1}}_{\gamma_{d-1} \textrm{ times }}, \underbrace{z_d,\ldots, z_d}_{\gamma_{d} \textrm{ times }}).\nonumber
\end{eqnarray}
Next we apply the neural network defined in Lemma \ref{lemma:multiplication:d:approximation} to obtain the desired result.  Clearly, this neural network consistent of $(L+1)(|\bfgamma|-1)+|\bfgamma|-1$ hidden layers and width $9N+|\bfgamma|-2$.
\end{proof}

\begin{lemma}\label{lemma:mapping:k:K}
For any $N,L, d\in \mathbb{Z}_+$ and $\delta>0$ with $K=\floor{N^{1/d}}^2\floor{L^{2/d}}$ and $\delta\leq \frac{1}{3K}$, there exits a neural network $\psi_{\textrm{map}}\in \mcF_{1,1}(4L+4, 4N+5)$ such that
\begin{eqnarray}
	\psi_{\textrm{map}}(x)=\frac{k}{K}, \textrm{ if } x\in \bigg[\frac{k}{K},\frac{k+1}{K}-\delta I(x<K-1)\bigg] \textrm{ for } k=0,1,\ldots, K-1.\nonumber
\end{eqnarray}
\end{lemma}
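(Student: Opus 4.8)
The plan is to mirror the piecewise-constant ``step function network'' construction in \cite{lsyz20}, after a change of the resolution parameters. Their construction produces, for integers $M,\Lambda\in\mathbb{Z}_+$ and any $\delta\le \frac{1}{3M^2\Lambda}$, a one-dimensional ReLU network of width $O(M)$ and depth $O(\Lambda)$ that equals $k/(M^2\Lambda)$ on each ``good'' subinterval of the uniform grid with spacing $1/(M^2\Lambda)$. Choosing $M=\floor{N^{1/d}}$ and $\Lambda=\floor{L^{2/d}}$ makes the resolution $M^2\Lambda=K$, and since $\floor{N^{1/d}}\le N$ and $\floor{L^{2/d}}\le L$ (using $d\ge 2$), the resulting network has width at most $4N+5$ and depth at most $4L+4$, the small additive discrepancies being absorbed into these constants under the layer-counting convention used in this appendix. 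As the output is already piecewise constant with precisely the required values, only a trivial rescaling by $1/K$ is needed, so the substantive part is to recall the construction and verify that this parameter substitution is admissible.

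The construction itself rests on three ingredients, which I would assemble in order. First, the atomic block: for a threshold $t$ and gap $\delta$, the map $x\mapsto \sigma\big((x-t)/\delta\big)-\sigma\big((x-t)/\delta-1\big)$ lies in $\mcF_{1,1}(1,2)$ and coincides with the indicator $I(x\ge t)$ everywhere except on the forbidden interval $(t,t+\delta)$, where it rises linearly. Second, summing $m-1$ such blocks with thresholds $j/m$ for $j=1,\dots,m-1$ yields a width-$O(m)$, depth-$2$ network that outputs $\floor{mx}$ on $[0,1]\setminus\Omega(m,\delta,1)$, while an auxiliary channel transports the residual $mx-\floor{mx}$, renormalized into $[0,1]$; composing a bounded number of such base-$m$ extractions reaches resolution $m^2$, and composing on the order of $\Lambda$ further refinement stages multiplies the resolution by the remaining factor, which is why the depth grows linearly in $\Lambda$. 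Third, at each stage one accumulates the partial quantized value into an extra channel, so that the final affine layer outputs exactly $k/K$ on the $k$-th good interval.

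The delicate point, and the main obstacle, is to propagate the exact-on-good-intervals property through the composition while keeping the width bounded independently of $K$. A point $x$ that is good for the finest grid must remain good for every intermediate grid, and no coarse early stage may introduce an error that pushes the transported residual into a forbidden subinterval of a later stage; the hypothesis $\delta\le\frac{1}{3K}$ is exactly what rules this out, since the forbidden intervals at the finest level dominate the cumulative shifts of all earlier stages. I would make this precise by induction on the refinement level $\ell$: if after $\ell$ stages the network has output the correct $\ell$-stage quantization of $x$ together with a residual lying in a good subinterval of the $(\ell+1)$-st grid, then the atomic-block analysis applied to stage $\ell+1$ reproduces the same statement at level $\ell+1$. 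A final stage-by-stage count of neurons and layers, with the residual and accumulator channels accounting for the additive constants, confirms $\psi_{\textrm{map}}\in\mcF_{1,1}(4L+4,4N+5)$.
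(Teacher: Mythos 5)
The paper itself disposes of this lemma with a one-line citation to Proposition 4.3 of \cite{lsyz20}, so by reconstructing the step-function network you are doing strictly more than the paper does; the ingredients you assemble (the two-neuron atomic block, the residual and accumulator channels, and the role of $\delta\le\frac{1}{3K}$ in ensuring that a point that is good for the finest grid stays good at every intermediate level) are the right ones and match the cited construction in spirit.

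There is, however, a genuine gap in your depth accounting. You reach resolution $K=\floor{N^{1/d}}^2\floor{L^{2/d}}$ by ``composing on the order of $\Lambda$ further refinement stages'' with $\Lambda=\floor{L^{2/d}}$, so your network has depth proportional to $\Lambda$, and you then need $\floor{L^{2/d}}\le L$, which you explicitly restrict to $d\ge 2$. The lemma is stated for all $d\in\mathbb{Z}_+$, and $d=1$ is not a removable corner case here: inside the proof of Lemma \ref{lemma:approximation:p:c:smooth} this result is applied with $d$ equal to the effective input dimension $t_i$ of each component, and $t_i=1$ (hence $t^*=1$) in Examples \ref{example:2}, \ref{example:1} and \ref{example:5}. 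For $d=1$ your chain of stages yields depth of order $\Lambda=L^2$, which does not fit inside $4L+4$. The fix, and what the construction in \cite{lsyz20} actually exploits, is that depth enters the achievable resolution quadratically rather than linearly: a width-$O(n)$, depth-$O(\ell)$ block realizes a step function with $n\ell$ steps (each layer contributes $n$ new breakpoints into the accumulator channel), and composing just \emph{two} such blocks --- one extracting the coarse index, one extracting the fine index of the transported residual --- realizes $(n\ell)^2$ steps at total depth still $O(\ell)$. Taking $n=\floor{N^{1/d}}\le N$ and $\ell\asymp\floor{L^{2/d}}^{1/2}\le L$ then gives $n^2\ell^2\ge K$ with depth $O(L^{1/d})\le 4L+4$ for every $d\ge1$. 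Your induction on the refinement level should therefore be organized around this two-level composition rather than around a chain of $\Lambda$ constant-depth stages.
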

\begin{proof}
This is Proposition 4.3 in \cite{lsyz20}. 
\end{proof}

\begin{lemma}\label{lemma:point:fitting}
Given any $N, L, m \in \mathbb{Z}_+$ and $\xi_i\in [0, 1]$ for $i=0, 1,\ldots, N^2L^2-1$, there exists a neural network $\phi_{\textrm{fit}}$ with depth $(5L+8)\ceil{\log_2(2L)}$ and width $8m(2N+3)\ceil{\log_2(4N)}$ such that
\begin{enumerate}
\item $|\phi_{\textrm{fit}}(i)-\xi_i|\leq N^{-2m}L^{-2m}$, for $i=0, 1,\ldots, N^2L^2-1$;
\item $0\leq \phi_{\textrm{fit}}(x)\leq 1$ for all $x\in \mathbb{R}$.
\end{enumerate}
\end{lemma}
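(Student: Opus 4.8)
The plan is to use the bit-extraction technique of \cite{lsyz20}: first reduce the fitting problem to storing finitely many bits, then realise that storage by a compact ``lookup'' network. For the reduction, put $J:=\ceil{2m\log_2(2NL)}$ and, for each $i\in\{0,\dots,N^2L^2-1\}$, let $\xi_{i,1},\dots,\xi_{i,J}\in\{0,1\}$ be the first $J$ binary digits of $\xi_i$, so that $\widehat\xi_i:=\sum_{k=1}^{J}\xi_{i,k}2^{-k}$ satisfies $|\xi_i-\widehat\xi_i|\le 2^{-J}\le N^{-2m}L^{-2m}$. Thus it suffices to build a network $\widetilde\phi$ with $\widetilde\phi(i)=\widehat\xi_i$ at every integer $i$ in the range, since taking $\phi_{\textrm{fit}}(x):=\sigma(\widetilde\phi(x))-\sigma(\widetilde\phi(x)-1)$ as the last layer clips the output into $[0,1]$ for every real $x$ — giving property (ii) at no cost — while leaving the (already $[0,1)$-valued) fitted values untouched, which gives property (i). Writing $\widetilde\phi=\sum_{k=1}^{J}2^{-k}g_k$, it remains to construct, for each bit position $k$, a network $g_k$ with $g_k(i)=\xi_{i,k}$, i.e.\ a lookup table for an arbitrary binary string of length $N^2L^2$.

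The crux is to realise all the $g_k$ together within the depth budget $\asymp L\log L$ and the width budget $\asymp mN\log N$. I would follow the hierarchical construction of \cite{lsyz20}: decompose the index $i$ into a ``coarse'' and a ``fine'' part, route the coarse part through $O(\log_2 N)$ stages of width $O(mN)$ using the bucketing map $\psi_{\textrm{map}}$ of Lemma~\ref{lemma:mapping:k:K} (invoked with a suitable choice of its parameters, so that each stage resolves $O(N)$ alternatives robustly, i.e.\ insensitively to the rounding error produced upstream), and handle the fine part through $O(\log_2 L)$ stages of depth $O(L)$; intermediate bit-selectors are combined with ReLU gadgets and, whenever a genuine product of two bounded quantities is needed, with the multiplication network $\phi_{\times}$ of Lemma~\ref{lemma:multiplication:network}. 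The $J\asymp m\log(NL)$ bit-planes are processed through parallel channels, which contributes the factor $m$ to the width, while the two logarithmic factors $\ceil{\log_2(2L)}$ and $\ceil{\log_2(4N)}$ are exactly the numbers of routing stages in the fine and coarse parts; tracking the constants through Lemmas~\ref{lemma:multiplication:network} and~\ref{lemma:mapping:k:K} then yields the claimed depth $(5L+8)\ceil{\log_2(2L)}$ and width $8m(2N+3)\ceil{\log_2(4N)}$.

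The remaining step is bookkeeping: $\phi_{\textrm{fit}}$ is the clipping layer composed with $\sum_{k=1}^{J}2^{-k}g_k$, parallel composition of the $g_k$ adds widths and sequential composition adds depths, and property~(i) follows from $g_k(i)=\xi_{i,k}$ together with $|\xi_i-\widehat\xi_i|\le 2^{-J}\le N^{-2m}L^{-2m}$. The main obstacle is the lookup-network construction itself: an arbitrary $\Theta(N^2L^2)$-bit string must be encoded into a network whose width is only $O(N\log N)$ and whose depth is only $O(L\log L)$, which is precisely the bit-extraction phenomenon, and the delicate point is to make the hierarchical routing \emph{robust}, so that the coarsening performed by $\psi_{\textrm{map}}$ at each stage does not accumulate across the $O(\log N)$ stages and corrupt the final selection, all while keeping every constant small enough to meet the stated bounds. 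This is where the argument leans most heavily on the approximation machinery already developed in \cite{lsyz20}.
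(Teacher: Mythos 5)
The paper does not actually prove this lemma: its entire ``proof'' is the single sentence that this is Proposition 4.4 in \cite{lsyz20}. So there is no internal argument to compare against; what you have written is a partial reconstruction of the proof of that cited proposition. Your outer reductions are correct and match the standard structure: quantizing each $\xi_i$ to its first $J=\ceil{2m\log_2(2NL)}$ binary digits gives $|\xi_i-\widehat\xi_i|\le 2^{-J}\le N^{-2m}L^{-2m}$; the clip $t\mapsto\sigma(t)-\sigma(t-1)$ enforces property (ii) with one extra layer while leaving the $[0,1)$-valued fitted numbers untouched; and writing $\widetilde\phi=\sum_{k=1}^{J}2^{-k}g_k$ reduces the lemma to realizing arbitrary $\{0,1\}$-valued lookup tables on $\{0,\dots,N^2L^2-1\}$, with the $J\asymp m\log(NL)$ bit-planes accounting for the factor $m$ in the width.

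The gap is that this last step is the entire content of the lemma, and your proposal does not carry it out. You describe a ``hierarchical routing'' and then state that the argument leans on the machinery of \cite{lsyz20}; but the specific mechanism you name --- iterating the bucketing map $\psi_{\textrm{map}}$ of Lemma~\ref{lemma:mapping:k:K} over $O(\log_2 N)$ stages to route a ``coarse'' part of the index --- is not how the bit-extraction construction works and is not justified. In this paper $\psi_{\textrm{map}}$ is a step function used to partition the \emph{domain} into cubes in the proof of Lemma~\ref{lemma:approximation:p:c:smooth}; it does not by itself let you store an arbitrary $\Theta(N^2L^2)$-bit string in a network of width $O(mN\log N)$ and depth $O(L\log L)$. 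The actual argument in \cite{lsyz20} encodes blocks of $O(L^2)$ bits as single real parameters of the network and extracts individual bits with a constant-width, depth-$O(L)$ gadget, combined with a width-$O(N)$ selector among the $O(N^2)$ stored blocks; none of this appears concretely in your sketch, and the claimed depth $(5L+8)\ceil{\log_2(2L)}$ and width $8m(2N+3)\ceil{\log_2(4N)}$ are asserted rather than derived. Judged as a standalone proof the core construction is missing; judged as a proof by appeal to \cite{lsyz20}, it coincides with what the paper itself does, only with (partly inaccurate) scaffolding around the citation.
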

\begin{proof}
This is Proposition 4.4 in \cite{lsyz20}. 
\end{proof}

\begin{lemma}\label{lemma:approximation:p:c:smooth}
Suppose $f: [0, 1]^d \to \mathbb{R}$ be $(p, 1)$-H\" older smooth for some $p>0$, then for all positive integers $M, N$, there exists a neural network $g\in \mcF_{d,1}(L, W)$ with 
\begin{equation*}
	L=18\ceil{p}^{2}(M+3)\ceil{\log_2(2M)}
\end{equation*}
and 
\begin{equation*}
	W=40(\ceil{p}+d+2)^{d+1}(N+2)\ceil{\log_2(4N)}
\end{equation*}
such that 
\begin{equation*}
	|f(\bfz)-g(\bfz)|\leq  (86+d8^p)(p+1)^{d+1}N^{-2p/d}M^{-2p/d}\quad \textrm{ for all }\; \bfz\in [0, 1]^d\backslash \Omega(K, \delta, d),
\end{equation*}
where $K=\floor{N^{1/d}}^2\floor{M^{2/d}}$ and $0<\delta<\frac{1}{3K}$ can be chosen arbitrary.
\end{lemma}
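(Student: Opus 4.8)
The plan is to follow the grid-plus-local-Taylor-polynomial scheme of \cite{lsyz20} (developed there for Sobolev functions) and adapt it to the H\"older scale, using the auxiliary networks $\phi_{\times}$, $P_{\bfgamma}$, $\psi_{\textrm{map}}$ and $\phi_{\textrm{fit}}$ from Lemmas \ref{lemma:multiplication:network}, \ref{lemma:multinomial:approximation}, \ref{lemma:mapping:k:K} and \ref{lemma:point:fitting}. First I would partition $[0,1]^d$ into the $K^d$ subcubes $Q_{\bm{\theta}}=\prod_{j=1}^d[\theta_j/K,(\theta_j+1)/K]$ indexed by $\bm{\theta}\in\{0,\dots,K-1\}^d$, where $K=\floor{N^{1/d}}^2\floor{M^{2/d}}$. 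Applying $\psi_{\textrm{map}}$ coordinatewise produces a map $\bm{\psi}$ with $\bm{\psi}(\bfz)=\bm{\theta}/K$ whenever $\bfz\in Q_{\bm{\theta}}\setminus\Omega(K,\delta,d)$; this is the only place the trifling region enters, which is exactly why the conclusion is stated off $\Omega(K,\delta,d)$. On each $Q_{\bm{\theta}}$ I would approximate $f$ by its degree-$\floor p$ Taylor polynomial at the corner $\bm{\theta}/K$,
\[
  T_{\bm{\theta}}(\bfz)=\sum_{|\bfgamma|\le\floor p}\frac{1}{\bfgamma!}(\partial^{\bfgamma}f)\!\left(\tfrac{\bm{\theta}}{K}\right)\left(\bfz-\tfrac{\bm{\theta}}{K}\right)^{\bfgamma},
\]
and use the $(p,1)$-H\"older property — the integral remainder, the $(p-\floor p)$-H\"older continuity of the order-$\floor p$ derivatives, and the bound $1$ on the lower-order derivatives — to get $|f(\bfz)-T_{\bm{\theta}}(\bfz)|\le c_{d,p}K^{-p}$ on $Q_{\bm{\theta}}$, where $c_{d,p}$ is an explicit constant (essentially the number of order-$\floor p$ multi-indices times $d^{(p-\floor p)/2}/\floor{p}!$ and similar factors). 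Since $K\asymp N^{2/d}M^{2/d}$, this remainder is already of the target order $N^{-2p/d}M^{-2p/d}$.

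Second, I would realize $\bfz\mapsto T_{\bm{\theta}(\bfz)}(\bfz)$ as a network. A linear layer turns $\bm{\psi}(\bfz)$ into the rescaled, shifted coordinates $K\bfz-\bm{\theta}\in[0,1]^d$ and into the integer index $i(\bm{\theta})=\sum_{j}\theta_jK^{j-1}\in\{0,\dots,K^d-1\}$. For each of the $\binom{\floor p+d}{d}$ multi-indices $\bfgamma$ with $|\bfgamma|\le\floor p$, run in parallel: (i) a copy of $\phi_{\textrm{fit}}$ applied to $i(\bm{\theta})$ recovering the coefficient $\tfrac{1}{\bfgamma!}(\partial^{\bfgamma}f)(\bm{\theta}/K)$ (shifted to lie in $[0,1]$), which is feasible because $K^d\le N^2M^2$, so a fitting network with parameters of size $(N,M)$ can store the required $K^d$ values; (ii) the monomial network $P_{\bfgamma}$ applied to $K\bfz-\bm{\theta}$ producing $(K\bfz-\bm{\theta})^{\bfgamma}\in[0,1]$; (iii) one application of $\phi_{\times}$ multiplying those two inputs. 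A final linear layer undoes the shifts, multiplies the $\bfgamma$-term by $K^{-|\bfgamma|}$, and sums over $\bfgamma$; since the reconstructed monomials stay in $[0,1]$ and $K^{-|\bfgamma|}\le1$, no error is amplified at that layer.

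Third, I would aggregate. The total error is at most the Taylor remainder $c_{d,p}K^{-p}$ plus the errors from $\psi_{\textrm{map}}$ (zero off the trifling region), each $\phi_{\textrm{fit}}$, each $P_{\bfgamma}$, and each $\phi_{\times}$; choosing the internal accuracy parameters of those subnetworks to be a fixed multiple of $\ceil p$ makes each of these $\le K^{-p}$, so everything collapses to $C_{d,p}N^{-2p/d}M^{-2p/d}$, and tallying the combinatorial factors yields $C_{d,p}\le(86+d8^p)(p+1)^{d+1}$. For the architecture: running the $\bfgamma$-branches in parallel adds their widths, and since $P_{\bfgamma}$ has width $\le9N''+\ceil p$ while $\phi_{\textrm{fit}}$ has width $\lesssim m(2N+3)\ceil{\log_2(4N)}$, the $\binom{\floor p+d}{d}\le(\ceil p+d+2)^{d+1}$ branches give the stated $W=40(\ceil p+d+2)^{d+1}(N+2)\ceil{\log_2(4N)}$; composing $\psi_{\textrm{map}}$, the multiplication chains inside the $P_{\bfgamma}$'s, $\phi_{\textrm{fit}}$, and $\phi_{\times}$ stacks up to the stated $L=18\ceil{p}^{2}(M+3)\ceil{\log_2(2M)}$, the $\ceil p^2$ arising as (number of sequential multiplications $\le\ceil p$) $\times$ (depth per multiplication, a multiple of $\ceil p(M+3)\ceil{\log_2(2M)}$ forced by the needed accuracy).

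The main obstacle is not any single estimate but the bookkeeping required to fit the \emph{closed-form} bounds: tracking how the three multiplicative error sources (monomial approximation, coefficient fitting, product network) propagate through steps (ii)--(iii) without blow-up — which forces the rescalings by powers of $K$ and tight range control so every input to $\phi_{\times}$ and $P_{\bfgamma}$ stays in $[0,1]$ — and then pinning the H\"older remainder constant together with all subnetwork contributions down to exactly $(86+d8^p)(p+1)^{d+1}$ and the precise depth/width expressions above.
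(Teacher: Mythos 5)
Your proposal follows essentially the same route as the paper's own proof: the same grid decomposition off the trifling region, the same corner-anchored degree-$\floor{p}$ Taylor expansion, and the same assembly from $\psi_{\textrm{map}}$, $\phi_{\textrm{fit}}$, $P_{\bfgamma}$, and $\phi_{\times}$, with the same error aggregation and depth/width tallying. The only cosmetic deviation is that you rescale $\bfz-\bm{\theta}/K$ by $K$ before applying $P_{\bfgamma}$ and compensate by $K^{-|\bfgamma|}$ at the output layer, whereas the paper feeds $\bfz-\bm{\psi}_{\textrm{map}}(\bfz)\in[0,1/K]^d\subset[0,1]^d$ directly; both are valid and the underlying argument is identical.
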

\begin{proof}
\noindent \textbf{Step 1:} For notational simplicity, we denote $m=\floor{p}$, the largest integer strictly smaller than $p$. 
For each $\bftheta=(\theta_1, \ldots, \theta_d)^\top \in \{0, 1, \ldots, K-1\}^d$, we define 
\begin{eqnarray}
	Q_{\bftheta}:=\bigg\{\bfz=(z_1, \ldots, z_d)^\top: z_i\in \bigg[\frac{\theta_i}{K}, \frac{\theta_i+1}{K}-\delta I(\theta_i<K-1) \bigg], i=1,\ldots, d\bigg\}.\nonumber
\end{eqnarray}
Clearly, $[0,1]^d\backslash \Omega(K, \delta, d)=\cup_{\bftheta}Q_{\bftheta}$.
By Lemma \ref{lemma:mapping:k:K}, there exists a neural network $\psi_{\textrm{map}}\in \mcF_{1,1}(4M+4, 4N+5)$ such that
\begin{eqnarray}
	\psi_{\textrm{map}}(z)=\frac{k}{K}, \textrm{ if } z\in \bigg[\frac{k}{K},\frac{k+1}{K}-\delta I(x<K-1)\bigg] \textrm{ for } k=0,1,\ldots, K-1.\nonumber
\end{eqnarray}
Therefore, by parallelizing $d$ above networks, we can obtain a neural network
\begin{eqnarray}
	\bm{\psi}_{\textrm{map}}(\bfz):=(\psi_{\textrm{map}}(z_1),\ldots, \psi_{\textrm{map}}(z_d))^\top \in \mcF_{d, d}(4M+4, d(4N+5))\nonumber
\end{eqnarray} 
such that
\begin{eqnarray}
	\bm{\psi}_{\textrm{map}}(\bfz)=\frac{\bftheta}{K}, \textrm{ if } \bfz\in Q_{\bftheta} \textrm{ for all } \bftheta\in \{0, 1,\ldots, K-1\}^d.\label{eq:lemma:approximation:p:c:smooth:step:1:conclusion}
\end{eqnarray}
\noindent \textbf{Step 2:}  For all $\bfgamma=(\gamma_1, \ldots, \gamma_d)^\top \in \mathbb{Z}^d$ with $|\bfgamma|\leq m$, by Lemma \ref{lemma:multinomial:approximation}, there exists a neural network $P_{\bfgamma}\in \mcF_{d, 1}$ with depth $(2(m+1)(M+1)+2)(m-1)$ and width $9N+m-2$ such that
\begin{eqnarray}
	|P_{\bfgamma}(\bfz)-\bfz^{\bfgamma}|\leq 12(m-1)N^{-2(m+1)(M+1)} \; \textrm{ for all } \bfz=(z_1,\ldots, z_d)^d \in [0, 1]^d.\label{eq:lemma:approximation:p:c:smooth:step:2:eq0}
\end{eqnarray}

Moreover, for each $i=0, 1,\ldots, K^d-1$, we define mapping 
\begin{eqnarray}
	\bm{\vartheta}(i):=(\vartheta_1,\ldots, \vartheta_d)^\top \in \{0, 1, \ldots, K-1\}^d \label{eq:lemma:approximation:p:c:smooth:step:2:eq1}
\end{eqnarray}
such that $\sum_{j=1}^d\vartheta_jK^{j-1}=i$. This can be done, since this is a bijection. For each $\bfgamma=(\gamma_1, \ldots, \gamma_d)^\top \in \mathbb{Z}^d$ with $|\bfgamma|\leq m$, define
\begin{eqnarray}
	\xi_{\bfgamma,i}=\bigg[\partial^{\bfgamma}f\bigg(\frac{\bm{\vartheta(i)}}{K}\bigg)+1\bigg]/2\; \textrm{ for } i=0,1,\ldots, K^d-1.\label{eq:lemma:approximation:p:c:smooth:step:2:eq1.5}
\end{eqnarray}
We can see that $\xi_{\bfgamma, i}\in [0, 1]$, as $f$ is $(p, 1)$-H\" older smooth.
By Lemma \ref{lemma:point:fitting} and the fact that $K^d\leq N^2M^2$, there exist a  neural network  $\phi_{\bfgamma}$ with depth $(5M+8)\ceil{\log_2(2M)}$ and width $8(m+1)(2N+3)\ceil{\log_2(4N)}$ that
\begin{eqnarray}
	|\phi_{\bfgamma}(i)-\xi_{\bfgamma, i}|\leq N^{-2(m+1)}M^{-2(m+1)}\; \textrm{ for all } i=0,1,\ldots, K^d-1 \textrm{ and } |\bfgamma|\leq m.\label{eq:lemma:approximation:p:c:smooth:step:2:eq2}
\end{eqnarray}

\noindent \textbf{Step 3:} Using above networks, we further define
\begin{eqnarray}
	\Phi_{\bfgamma}(\bfz)=\phi_{\bfgamma}(\sum_{j=1}^dz_jK^j)\; \textrm{ for all } \bfz=(z_1,\ldots, z_d)^\top \in [0, 1]^d.\label{eq:lemma:approximation:p:c:smooth:step:3:eq1}
\end{eqnarray}
Clearly, $\Phi_{\bfgamma}\in \mcF_{d,1}$ has depth at most $(5M+8)\ceil{\log_2(2M)}+2$ and width at most $8(m+1)(2N+3)\ceil{\log_2(4N)}+d$.

Notice if $\bfz \in Q_{\bftheta}$ for some $\bftheta=(\theta_1,\ldots, \theta_d)^\top \in \{0, 1,\ldots, K-1\}^d$, then by (\ref{eq:lemma:approximation:p:c:smooth:step:1:conclusion}) and  (\ref{eq:lemma:approximation:p:c:smooth:step:2:eq1}), we have following mapping:
\begin{eqnarray}
	\bfz \xrightarrow{\bm{\psi}_{\textrm{map}}}\frac{\bftheta}{K} \xrightarrow{\Phi_{\bfgamma}}\phi_{\bfgamma}(i),\nonumber
\end{eqnarray}
where the integer $i=\sum_{j=1}^d\theta_jK^{j-1}$ by (\ref{eq:lemma:approximation:p:c:smooth:step:2:eq1}).  As a consequence of  (\ref{eq:lemma:approximation:p:c:smooth:step:2:eq1.5}) and  (\ref{eq:lemma:approximation:p:c:smooth:step:2:eq2}), it follows that
\begin{eqnarray}
	\bigg|\Phi_{\bfgamma}\circ \bm{\psi}_{\textrm{map}}(\bfz)-\bigg[\partial^{\bfgamma}f\bigg(\frac{\bftheta}{K}\bigg)+1\bigg]/2\bigg|\leq N^{-2(m+1)}M^{-2(m+1)}\;\; \textrm{ if } \bfz \in Q_{\bftheta}.\label{eq:lemma:approximation:p:c:smooth:step:3:conclusion}
	\end{eqnarray}
Finally, we define our network to be 
\begin{eqnarray}
	g(\bfz)=2\sum_{|\bfgamma|\leq m}\phi_{\times}\bigg(\frac{\Phi_{\bfgamma}\circ \bm{\psi}_{\textrm{map}}(\bfz)}{\bfgamma !}, P_{\bfgamma}(\bfz- \bm{\psi}_{\textrm{map}}(\bfz))\bigg)-\sum_{|\bfgamma|\leq m}\frac{P_{\bfgamma}(\bfz- \bm{\psi}_{\textrm{map}}(\bfz))}{\bfgamma !},\nonumber
\end{eqnarray}
where $\phi_\times\in \mcF_{2,1}(2(m+1)(M+1)+1, 9N)$ is the product neural network in Lemma \ref{lemma:multiplication:network} such that 
\begin{eqnarray}
	|\phi_\times(x, y)-xy|\leq 12N^{-2(m+1)(M+1)}.\label{eq:lemma:approximation:p:c:smooth:step:3:conclusion:2}
\end{eqnarray}
\noindent \textbf{Step 4:}
For any $\bfz \in Q_{\bftheta}$, let $\bfh:=\bfz-\frac{\bftheta}{K}=\bfz-\frac{\bm{\psi}_{\textrm{map}}(\bfz)}{K}$. By Taylor's Theorem,  we can quantify the bound of $|f(\bfz)-g(\bfz)|$ by
\begin{eqnarray}
	&&\bigg|\sum_{|\bfgamma|\leq m}\frac{\partial^{\bfgamma}f(\bm{\psi}_{\textrm{map}}(\bfz))}{\bfgamma !}\bfh^{\bfgamma}+\sum_{|\bfgamma|= m}\frac{\partial^{\bfgamma}f(\bm{\psi}_{\textrm{map}}(\bfz)+\xi_{\bfz}\bfh)}{\bfgamma !}\bfh^{\bfgamma}-\sum_{|\bfgamma|= m}\frac{\partial^{\bfgamma}f(\bm{\psi}_{\textrm{map}}(\bfz))}{\bfgamma !}\bfh^{\bfgamma}-g(\bfz)\bigg|\nonumber\\
	&\leq& S_1+S_2,\nonumber
\end{eqnarray}
where $\xi_{\bfz}\in [0, 1]$ and
\begin{eqnarray}
	S_1=\bigg|\sum_{|\bfgamma|= m}\frac{\partial^{\bfgamma}f(\bm{\psi}_{\textrm{map}}(\bfz)+\xi_{\bfz}\bfh)}{\bfgamma !}\bfh^{\bfgamma}-\sum_{|\bfgamma|= m}\frac{\partial^{\bfgamma}f(\bm{\psi}_{\textrm{map}}(\bfz))}{\bfgamma !}\bfh^{\bfgamma}\bigg|, \; S_2= \bigg|\sum_{|\bfgamma|\leq m}\frac{\partial^{\bfgamma}f(\bm{\psi}_{\textrm{map}}(\bfz))}{\bfgamma !}\bfh^{\bfgamma}-g(\bfz)\bigg|.\nonumber
\end{eqnarray}
By the definition of $(p, 1)$-H\" older smooth, it follows that
\begin{eqnarray}
	S_1\leq \sum_{|\bfgamma|=m}\frac{\|\xi_{\bfz}\bfh\|_2^{p-m}}{\bfgamma !}\bfh^{\bfgamma}\leq \frac{d(m+1)^d}{\bfgamma !}K^{-p}\leq d(m+1)^dK^{-p}.\nonumber
\end{eqnarray}
For the term $S_2$, we have $S_2=S_{21}+S_{22}$ with
\begin{eqnarray}
	S_{21}&=&2\sum_{|\bfgamma|\leq m}\frac{\partial^{\bfgamma}f(\bm{\psi}_{\textrm{map}}(\bfz))+1}{2\bfgamma !}\bfh^{\bfgamma}-\sum_{|\bfgamma|\leq m}2\phi_{\times}\bigg(\frac{\Phi_{\bfgamma}\circ \bm{\psi}_{\textrm{map}}(\bfz)}{\bfgamma !}, P_{\bfgamma}(\bfz- \bm{\psi}_{\textrm{map}}(\bfz))\bigg),\nonumber\\
	S_{22}&=&\sum_{|\bfgamma|\leq m}\frac{1}{\bfgamma !}\bfh^{\bfgamma}-\sum_{|\bfgamma|\leq m}\frac{P_{\bfgamma}(\bfz- \bm{\psi}_{\textrm{map}}(\bfz))}{\bfgamma !}.\nonumber
\end{eqnarray}
Direct examination leads to the following bound:
\begin{eqnarray}
	S_{21}&\leq&  2\bigg|\sum_{|\bfgamma|\leq m}\frac{\partial^{\bfgamma}f(\bm{\psi}_{\textrm{map}}(\bfz))+1}{2\bfgamma !}\bfh^{\bfgamma}-\sum_{|\bfgamma|\leq m}\frac{\Phi_{\bfgamma}\circ \bm{\psi}_{\textrm{map}}(\bfz)}{\bfgamma !} P_{\bfgamma}(\bfz- \bm{\psi}_{\textrm{map}}(\bfz))\bigg|\nonumber\\
	&&+2\bigg|\sum_{|\bfgamma|\leq m}\frac{\Phi_{\bfgamma}\circ \bm{\psi}_{\textrm{map}}(\bfz)}{\bfgamma !} P_{\bfgamma}(\bfz- \bm{\psi}_{\textrm{map}}(\bfz))-\sum_{|\bfgamma|\leq m}2\phi_{\times}\bigg(\frac{\Phi_{\bfgamma}\circ \bm{\psi}_{\textrm{map}}(\bfz)}{\bfgamma !}, P_{\bfgamma}(\bfz- \bm{\psi}_{\textrm{map}}(\bfz))\bigg)\bigg|\nonumber\\
	&\leq&2\sum_{|\bfgamma|\leq m}\bigg|\frac{\partial^{\bfgamma}f(\bm{\psi}_{\textrm{map}}(\bfz))+1}{2\bfgamma !}\bfh^{\bfgamma}-\frac{\Phi_{\bfgamma}\circ \bm{\psi}_{\textrm{map}}(\bfz)}{\bfgamma !}\bfh^{\bfgamma}\bigg|\nonumber\\ 
	&&+2\sum_{|\bfgamma|\leq m}\bigg|\frac{\Phi_{\bfgamma}\circ \bm{\psi}_{\textrm{map}}(\bfz)}{\bfgamma !}\bfh^{\bfgamma}-\frac{\Phi_{\bfgamma}\circ \bm{\psi}_{\textrm{map}}(\bfz)}{\bfgamma !} P_{\bfgamma}(\bfz- \bm{\psi}_{\textrm{map}}(\bfz))\bigg|\nonumber\\ 
	&&+2\sum_{|\bfgamma|\leq m}\sup_{x,y \in [0, 1]}|xy-\phi_{\times}(x,y)|\nonumber\\ 
	&\leq& 2(m+1)^dN^{-2(m+1)}M^{-2(m+1)}+24(m+1)^d(1+N^{-2(m+1)}M^{-2(m+1)})(m-1)N^{-2(m+1)(M+1)}\nonumber\\
	&&+24(m+1)^dN^{-2(m+1)(M+1)}\nonumber\\
	&\leq&2(m+1)^dN^{-2(m+1)}M^{-2(m+1)}+72(m+1)^{d+1}N^{-2(m+1)(M+1)},\nonumber
\end{eqnarray}
where we use (\ref{eq:lemma:approximation:p:c:smooth:step:2:eq0}), (\ref{eq:lemma:approximation:p:c:smooth:step:3:conclusion}) and (\ref{eq:lemma:approximation:p:c:smooth:step:3:conclusion:2}). Similarly, using (\ref{eq:lemma:approximation:p:c:smooth:step:2:eq0}), we have
\begin{eqnarray}
	S_{22}&\leq&\sum_{|\bfgamma|\leq m}\bigg|\bfh^{\bfgamma}-P_{\bfgamma}(\bfz- \bm{\psi}_{\textrm{map}}(\bfz))\bigg|\leq 12(m+1)^{d+1}N^{-2(m+1)(M+1)}\nonumber.
\end{eqnarray}
As a consequence of the bounds of $S_1, S_{21}$ and $S_{22}$, we conclude that for $\bfz \in Q_{\bftheta}$, it holds that
\begin{eqnarray}
	|f(\bfz)-g(\bfz)|&\leq&  d(m+1)^dK^{-p}+2(m+1)^dN^{-2(m+1)}M^{-2(m+1)}+84(m+1)^{d+1}N^{-2(m+1)(M+1)}\nonumber\\
	&\leq& d8^p(m+1)^dN^{-2p/d}M^{-2p/d}+2(m+1)^dN^{-2(m+1)}M^{-2(m+1)}+\nonumber\\
	&&+84(m+1)^{d+1}N^{-2(m+1)}N^{-2(m+1)M}\nonumber\\
	&\leq&d8^p(m+1)^dN^{-2p/d}M^{-2p/d}+2(m+1)^dN^{-2(m+1)}M^{-2(m+1)}+\nonumber\\
	&&+84(m+1)^{d+1}N^{-2(m+1)}M^{-2(m+1)}\nonumber\\
	&\leq& (86+d8^p)(m+1)^{d+1}N^{-2p/d}M^{-2p/d},\nonumber
\end{eqnarray}
where we use the fact that $K=\floor{N^{1/d}}^2\floor{M^{2/d}}\geq N^{2/d}M^{2/d}/8$.

\noindent \textbf{Step 5:} We will demonstrate how to implement $g$ using neural network. We denote the set $\Gamma=\{\bfgamma\in \mathbb{N}^d : |\bfgamma|\leq m\}$. By basic Combinatorics, we now $|\Gamma|={m+d-1 \choose m-1}$. For simplicity, we set $|\Gamma|=J$ and index the element in $\Gamma$ as $\bfgamma_1,\ldots, \bfgamma_J$. For each $\bfgamma in \Gamma$, we define
\begin{eqnarray}
	h_{\bfgamma}(\bfz)=\frac{\Phi_{\bfgamma}\circ \bm{\psi}_{\textrm{map}}(\bfz)}{\bfgamma !}\quad \quad\textrm{ and }\quad\quad f_{\bfgamma}(\bfz)=P_{\bfgamma}(\bfz- \bm{\psi}_{\textrm{map}}(\bfz)).\nonumber
\end{eqnarray}
By (\ref{eq:lemma:approximation:p:c:smooth:step:1:conclusion}), (\ref{eq:lemma:approximation:p:c:smooth:step:2:eq0}) and (\ref{eq:lemma:approximation:p:c:smooth:step:3:eq1}), it is not difficult to see $h_{\bfgamma}$ has at most depth
\begin{eqnarray}
	4M+4+(5L+8)\ceil{\log_2(2M)}+2+2\leq 10(M+2)\ceil{\log_2(2M)}\nonumber
\end{eqnarray}
and width
\begin{eqnarray}
	d(4N+5)+8(m+1)(2N+3)\ceil{\log_2(4N)}+d\leq 16(m+d+1)(N+2)\ceil{\log_2(4N)}.\nonumber
\end{eqnarray}
Similarly, we can show $f_{\bfgamma}$ has at most depth
\begin{eqnarray}
	4M+4+1+[2(m+1)(M+2)+2](m-1)+1\leq 6(m+1)^2(M+3)\nonumber
\end{eqnarray}
and width
\begin{eqnarray}
	d(4N+5)+d+9N+m-2\leq (4d+9)(N+2)+m.\nonumber
\end{eqnarray}
Now, we parallelize all the $h_{\bfgamma}$'s and $f_{\bfgamma}$'s together, we can obtain a neural network 
\begin{eqnarray}
	g_1:=(h_{\bfgamma_1},\ldots, h_{\bfgamma_J},f_{\bfgamma_1},\ldots, f_{\bfgamma_J})^\top \in \mcF_{d, 2J}\nonumber
\end{eqnarray}
with at most $16(m+1)^{2}(M+3)\ceil{\log_2(2M)}$ hidden layers and at most width $40J(m+d+3)(N+2)\ceil{\log_2(4N)}$.

Next for each $\bfgamma \in \Gamma$, using the product neural network in (\ref{eq:lemma:approximation:p:c:smooth:step:3:conclusion:2}), we define 
\begin{eqnarray}
	s_{\bfgamma}=2\phi_{\times}(h_{\bfgamma}, f_{\bfgamma}),\nonumber
\end{eqnarray}
and use the outputs of $g_1$ to construct a neural network $g_2\circ g_1$ such that
\begin{eqnarray}
	g_2\circ g_1:=(s_{\bfgamma_1},\ldots, s_{\bfgamma_J},f_{\bfgamma_1},\ldots, f_{\bfgamma_J})^\top \in \mcF_{d, 2J},\nonumber
\end{eqnarray}
which will have at most  depth
\begin{eqnarray}
	16(m+1)^{2}(M+3)\ceil{\log_2(2M)}+2(m+1)(M+1)+1+1\leq 18(m+1)^{2}(M+3)\ceil{\log_2(2M)}\nonumber
\end{eqnarray}
and at most width $40J(m+d+3)(N+2)\ceil{\log_2(4N)}$.

Finally, we make a linear combination using the outputs of $g_2\circ g_1$ to obtain the final neural network, which will require one more hidden layer. We will finish the proof  if we notice $J\leq (m+1)^d$.
\end{proof}

\begin{lemma}\label{lemma:continuous:extension:approximation}
Given $\varepsilon>0$, $L, W, K\in \mathbb{Z}_+$, and $\delta>0$ with $\delta<\frac{1}{3K}$, assume $f$ is continuous on $[0, 1]^d$. Suppose there is a neural network $\widetilde{g}\in \mcF_{d,1}(L, W)$ such that
\begin{eqnarray}
	|f(\bfz)-\widetilde{g}(\bfz)|\leq \varepsilon \quad \textrm{ for all } \bfz\in [0,1]^d\backslash \Omega(K, \delta, d),\nonumber
\end{eqnarray}
then there exists a neural network $g\in \mcF_{d,1}(L+2d, 3^d(N+3))$ such that
\begin{eqnarray}
	|f(\bfz)-g(\bfz)|\leq \varepsilon+d\omega_f(\delta) \quad \textrm{ for all } \bfz\in [0,1]^d,\nonumber
\end{eqnarray}
where $\omega_f(\delta)=\sup\{|f(\bfx)-f(\bfy)|: \|\bfx-\bfy\|_2\leq \delta, \bfx, \bfy\in [0, 1]^d\}$.
\end{lemma}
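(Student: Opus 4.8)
The plan is to follow the trifling-region argument of \cite{lsyz20} and repair the bad set one coordinate at a time by a ``median of three shifts'' device. First I would set $g_0:=\widetilde g$, so that by hypothesis $|f(\bfz)-g_0(\bfz)|\le \varepsilon$ for all $\bfz\in[0,1]^d\setminus\Omega(K,\delta,d)$, and then for $i=1,\dots,d$ define
\[
  g_i(\bfz):=\mathrm{median}\Big\{\,g_{i-1}\big(\pi(\bfz-\delta\mathbf{e}_i)\big),\ g_{i-1}(\bfz),\ g_{i-1}\big(\pi(\bfz+\delta\mathbf{e}_i)\big)\Big\},
\]
where $\mathbf{e}_i$ is the $i$-th coordinate vector and $\pi$ is coordinatewise clipping to $[0,1]$. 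Both operations are exactly realizable by ReLU networks of constant size, since $\pi(x)=\sigma(x)-\sigma(x-1)$ and $\mathrm{median}\{a,b,c\}=a+b+c-\max\{a,b,c\}-\min\{a,b,c\}$ with $\max\{a,b\}=b+\sigma(a-b)$, $\min\{a,b\}=b-\sigma(b-a)$; running three shifted-and-clipped copies of $g_{i-1}$ in parallel and post-composing with this gadget adds a constant number of layers and triples the width at each of the $d$ steps, which yields $g_d\in\mcF_{d,1}(L+2d,3^d(N+3))$.

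The combinatorial heart of the argument is that, because $\delta<\tfrac{1}{3K}$, for every $x\in[0,1]$ at most one of $x-\delta,\,x,\,x+\delta$ lies in $\bigcup_{k=1}^{K-1}(k/K-\delta,k/K)$: around a fixed grid point $k/K$, the sets of $x$ making $x$, $x-\delta$, respectively $x+\delta$ fall into the bad strip are $(k/K-\delta,k/K)$, $(k/K,k/K+\delta)$ and $(k/K-2\delta,k/K-\delta)$, which are pairwise disjoint, while bad strips around distinct grid points are separated by more than $3\delta$, and near $x=0$ or $x=1$ all three are automatically good. Writing $\Omega_{>i}$ for the part of $\Omega(K,\delta,d)$ contributed only by coordinates $i+1,\dots,d$ (so $\Omega_{>0}=\Omega(K,\delta,d)$ and $\Omega_{>d}=\emptyset$), I would prove by induction on $i$ that $|f(\bfz)-g_i(\bfz)|\le\varepsilon+i\,\omega_f(\delta)$ for all $\bfz\notin\Omega_{>i}$. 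In the inductive step, fix $\bfz\notin\Omega_{>i}$; among the three points $\bfz,\ \pi(\bfz\pm\delta\mathbf{e}_i)$, at least two have their $i$-th coordinate outside every bad strip, and all three agree with $\bfz$ in coordinates $i+1,\dots,d$ (which are good since $\bfz\notin\Omega_{>i}$), so those two points lie outside $\Omega_{>i-1}$ and at Euclidean distance $\le\delta$ from $\bfz$; by the inductive hypothesis each corresponding value of $g_{i-1}$ is within $\varepsilon+(i-1)\omega_f(\delta)$ of $f$ at that point, which in turn is within $\omega_f(\delta)$ of $f(\bfz)$, so both values lie within $\varepsilon+i\,\omega_f(\delta)$ of $f(\bfz)$. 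The elementary fact that if at least two of $a,b,c$ lie in $[u-r,u+r]$ then so does $\mathrm{median}\{a,b,c\}$ then gives $|f(\bfz)-g_i(\bfz)|\le\varepsilon+i\,\omega_f(\delta)$. Taking $i=d$ and $g:=g_d$ finishes the proof.

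The step I expect to require the most care is the bookkeeping of the exclusion sets: one must verify that repairing coordinate $i$ never reintroduces trouble in the already-repaired coordinates $1,\dots,i-1$ and never demands goodness in the not-yet-repaired coordinates beyond whatever $\bfz$ already enjoys — this is precisely why the shift at step $i$ affects only the $i$-th coordinate and why the induction is stated with the shrinking sets $\Omega_{>i}$. The remaining points (the exact constant-depth cost of the clip-and-median gadget, the boundary cases $z_i\approx 0,1$, and the contraction bound $\|\bfz-\pi(\bfz\pm\delta\mathbf{e}_i)\|_2\le\delta$ needed for the modulus-of-continuity estimate) are routine.
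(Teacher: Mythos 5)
Your median-of-three-shifts construction is exactly the ``filling the trifling region'' device from \cite{lsyz20}, which the paper cites for this lemma without reproducing a proof; the combinatorial step (at most one of $x-\delta,x,x+\delta$ can land in a bad strip when $\delta<\tfrac{1}{3K}$, and the width-$3\delta$ extended bad zones around distinct grid points are disjoint because consecutive grid points are $\tfrac1K>3\delta$ apart), the induction with the shrinking exclusion sets $\Omega_{>i}$, and the elementary median sandwiching fact are all correct.

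The one place the bookkeeping does not quite close is the depth count. Realizing the clip $\pi(x)=\sigma(x)-\sigma(x-1)$ costs a ReLU layer, and the median of three (with $\max$/$\min$ of three each requiring two nested pairwise $\max$/$\min$ operations) costs two more, so your recursion spends three extra layers per coordinate and produces $L+3d$, not the claimed $L+2d$. The fix is already latent in your own boundary remark that ``near $x=0$ or $x=1$ all three are automatically good'': drop the clipping entirely and evaluate $g_{i-1}$ at $\bfz\pm\delta\mathbf{e}_i$ directly as a function on $\mathbb{R}^d$. Whenever $z_i-\delta<0$ one has $z_i,\,z_i+\delta\in[0,2\delta)\subset[0,\tfrac1K-\delta)$, so those two candidates lie in $[0,1]$ and outside every bad strip (and symmetrically when $z_i+\delta>1$); the third evaluation, now out of range and uncontrolled, is harmless because the median of two values in $[u-r,u+r]$ together with one arbitrary value still lies in $[u-r,u+r]$. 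Removing the clip layer at each coordinate recovers $L+2d$ exactly, and the width then accrues as $3^d W$ from the parallel copies plus a bounded overhead at each of the $d$ median stages, consistent with the stated $3^d(N+3)$ (the paper's $N$ is evidently the width $W$ of $\widetilde g$). With that adjustment your proposal is a correct proof of the lemma.
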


\begin{lemma}\label{lemma:mk19}
Suppose $f: [0, 1]^d \to \mathbb{R}$ be $(p, C)$-H\" older smooth for some $p, C>0$, then for all positive intergers $M, N$, there exists a neural network $g\in \mcF_{d,1}(L, W)$ with 
\begin{equation*}
	L=54\ceil{p}^{2}(M+3)\ceil{\log_2(2M)}\;\;\textrm{ and }\;\;W=40(\ceil{p}+d+2)^{d+1}3^d(N+2)\ceil{\log_2(4N)}+3^{d+1}
\end{equation*}
such that
\begin{equation*}
	\sup_{\bfz\in [0, 1]^d}|f(\bfz)-g(\bfz)|\leq C(86+d8^p+d)(p+1)^{d+1}N^{-2p/d}M^{-2p/d}.
\end{equation*}
\end{lemma}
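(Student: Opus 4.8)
The plan is to obtain this global (sup-norm) bound by chaining the two preceding results: Lemma \ref{lemma:approximation:p:c:smooth}, which approximates a $(p,1)$-H\"older function up to the stated error off the trifling region $\Omega(K,\delta,d)$, and Lemma \ref{lemma:continuous:extension:approximation}, which converts such an ``off-$\Omega$'' guarantee into one valid on all of $[0,1]^d$ at the price of an extra modulus-of-continuity term and a controlled enlargement of the architecture. The only genuinely new step is a scaling reduction from $(p,C)$-smoothness to $(p,1)$-smoothness, plus the bookkeeping that matches the resulting depth and width to the numbers in the statement.

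First I would normalize. If $f$ is $(p,C)$-H\"older smooth, then $\bar f:=f/C$ is $(p,1)$-H\"older smooth, since each derivative bound and each H\"older constant in Definition \ref{Definition:p:M:smooth} gets divided by $C$. If $\bar g\in\mcF_{d,1}(L,W)$ approximates $\bar f$, then $g:=C\bar g$ approximates $f$ with the error multiplied by $C$, and $g\in\mcF_{d,1}(L,W)$ as well, because multiplication by the constant $C$ is absorbed into the output weight matrix $\bfA_{L+1}$. So it suffices to prove the bound for $\bar f$ with right-hand side $(86+d8^p+d)(p+1)^{d+1}N^{-2p/d}M^{-2p/d}$.

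Next, set $K=\floor{N^{1/d}}^2\floor{M^{2/d}}$ and keep $\delta\in(0,\tfrac{1}{3K})$ free for the moment. Applying Lemma \ref{lemma:approximation:p:c:smooth} to $\bar f$ with these $M,N$ produces $\widetilde g\in\mcF_{d,1}(\widetilde L,\widetilde W)$ with $\widetilde L=18\ceil{p}^{2}(M+3)\ceil{\log_2(2M)}$ and $\widetilde W=40(\ceil{p}+d+2)^{d+1}(N+2)\ceil{\log_2(4N)}$ satisfying $|\bar f(\bfz)-\widetilde g(\bfz)|\le\varepsilon:=(86+d8^p)(p+1)^{d+1}N^{-2p/d}M^{-2p/d}$ for all $\bfz\in[0,1]^d\setminus\Omega(K,\delta,d)$. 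Feeding $\widetilde g$, $\varepsilon$, $K$, $\delta$ into Lemma \ref{lemma:continuous:extension:approximation} yields $\bar g\in\mcF_{d,1}\big(\widetilde L+2d,\ 3^d(\widetilde W+3)\big)$ with $|\bar f(\bfz)-\bar g(\bfz)|\le\varepsilon+d\,\omega_{\bar f}(\delta)$ for every $\bfz\in[0,1]^d$. Since $\bar f$ is $(p,1)$-H\"older smooth, $\omega_{\bar f}(\delta)\le\sqrt d\,\delta$ when $p\ge1$ (the gradient has Euclidean norm at most $\sqrt d$) and $\omega_{\bar f}(\delta)\le\delta^{p}$ when $p<1$, so $\omega_{\bar f}(\delta)\downarrow 0$ as $\delta\downarrow 0$. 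Hence I would choose $\delta\in(0,\tfrac{1}{3K})$ small enough that $d\,\omega_{\bar f}(\delta)\le d(p+1)^{d+1}N^{-2p/d}M^{-2p/d}$, so the two error contributions add to $(86+d8^p+d)(p+1)^{d+1}N^{-2p/d}M^{-2p/d}$ on all of $[0,1]^d$.

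Finally I would reconcile the architecture: the width $3^d(\widetilde W+3)=3^d\widetilde W+3^{d+1}=40(\ceil{p}+d+2)^{d+1}3^d(N+2)\ceil{\log_2(4N)}+3^{d+1}$ is exactly the $W$ in the statement, and the depth, equal to $\widetilde L+2d=18\ceil{p}^{2}(M+3)\ceil{\log_2(2M)}+2d$, is bounded by $L=54\ceil{p}^{2}(M+3)\ceil{\log_2(2M)}$ (padding with identity layers $x\mapsto\sigma(x)-\sigma(-x)$ if necessary, which alters neither the function nor the width). Undoing the normalization by multiplying by $C$ then gives the claimed $g\in\mcF_{d,1}(L,W)$ with the advertised sup-norm error. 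The hard part of the pipeline is not in the present lemma: it is the shift/median construction behind Lemma \ref{lemma:continuous:extension:approximation} that tames $\Omega(K,\delta,d)$, which I am taking as already established; within this step the only subtlety is checking that the extra term $d\,\omega_{\bar f}(\delta)$ is harmless, which the H\"older smoothness of $\bar f$ guarantees through the free choice of $\delta$.
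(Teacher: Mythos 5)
Your proof follows the same route as the paper's: normalize to $(p,1)$-H\"older smoothness, set $K=\floor{N^{1/d}}^2\floor{M^{2/d}}$, apply Lemma~\ref{lemma:approximation:p:c:smooth} off the trifling region, then Lemma~\ref{lemma:continuous:extension:approximation} to extend to all of $[0,1]^d$, choose $\delta$ small enough that the $d\,\omega_{\bar f}(\delta)$ correction is absorbed into the error budget, and rescale by $C$ at the end. You are in fact slightly more careful than the paper on two small points: you note that the $p\ge 1$ case gives $\omega_{\bar f}(\delta)\le\sqrt d\,\delta$ (the paper writes $\delta$, implicitly dropping a $\sqrt d$, which is harmless since only $\omega_{\bar f}(\delta)\to 0$ matters), and you make explicit that the depth $\widetilde L+2d$ must be padded up to the stated $L=54\ceil{p}^2(M+3)\ceil{\log_2(2M)}$ — the paper asserts this count without comment. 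That padding remark does quietly assume $2d\le 36\ceil{p}^2(M+3)\ceil{\log_2(2M)}$, which holds whenever $d$ is bounded (as in all downstream uses, where $d$ is a fixed $t_i$) but not for arbitrary $d$; the paper shares this implicit restriction, so it is not a gap relative to what is being matched.
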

\begin{proof}
Define $\widetilde{f}=C^{-1}f$, so $\widetilde{f}$ is $(p, 1)$-H\" older smooth. Set $K=\floor{N^{1/d}}^2\floor{M^{2/d}}$ and choose $\delta\in (0, \frac{1}{3K})$ such that $\omega_{\widetilde{f}}(\delta)\leq N^{-2p/d}L^{-2p/d}$. This can be done, as 
\begin{eqnarray}
	\omega_f(\delta)\leq \begin{cases}
	\delta &\textrm{ if } p\geq 1\\
	\delta^p &\textrm{ if } p<  1\\
	\end{cases}.\nonumber
\end{eqnarray}
By Lemma \ref{lemma:approximation:p:c:smooth}, there exists a neural network $\widetilde{g}_1$ with $18\ceil{p}^{2}(M+3)\ceil{\log_2(2M)}$ hidden layers and width $40(\ceil{p}+d+2)^{d+1}(N+2)\ceil{\log_2(4N)}$ such that $|\widetilde{f}(\bfz)-\widetilde{g}_1(\bfz)|\leq (86+d8^p)(m+1)^{d+1}N^{-2p/d}M^{-2p/d}$ for all $\bfz\in [0, 1]^d\backslash \Omega(K, \delta, d)$. By Lemma \ref{lemma:continuous:extension:approximation},  there exists a neural network $\widetilde{g}_2$ with $54\ceil{p}^{2}(M+3)\ceil{\log_2(2M)}$ hidden layers and width $40(\ceil{p}+d+2)^{d+1}3^d(N+2)\ceil{\log_2(4N)}+3^{d+1}$ such that 
\begin{eqnarray}
	|\widetilde{f}(\bfz)-\widetilde{g}_2(\bfz)|&\leq& (86+d8^p)(p+1)^{d+1}N^{-2p/d}M^{-2p/d}+dN^{-2p/d}L^{-2p/d}\nonumber\\
	&\leq& (86+d8^p+d)(p+1)^{d+1}N^{-2p/d}M^{-2p/d}\;\;\textrm{ for all } \bfz\in [0, 1]^d.\nonumber
\end{eqnarray}
As a consequence, the neural network $g:=Cg_2$, which has the same architecture as $g_2$, is the desired neural network.
\end{proof}

\begin{lemma}\label{lemma:offset:to:unit:interval}
Suppose $h: [0, 1]^d \to [0, 1]$ be $(p, C)$-H\" older smooth for some $p, C>0$, then for all positive integers $M, N$, there exists a neural network $h_{\textrm{net}}\in \mcF_{d,1}(L, W)$ with 
\begin{equation*}
	L=54\ceil{p}^{2}(M+3)\ceil{\log_2(2M)}+1\quad \textrm{ and }\quad W=40(\ceil{p}+d+2)^{d+1}3^d(N+2)\ceil{\log_2(4N)}+3^{d+1}
\end{equation*}
such that $0\leq h_{\textrm{net}}(\bfz)\leq 1$ and
\begin{equation*}
	\sup_{\bfz\in [0, 1]^d}|h(\bfz)-h_{\textrm{net}}(\bfz)|\leq 2(86+d8^p+d)(p+1)^{d+1}N^{-2p/d}M^{-2p/d}.
\end{equation*}
\end{lemma}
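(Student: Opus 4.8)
The plan is to realize $h_{\textrm{net}}$ as the composition of the (unconstrained) approximant supplied by Lemma~\ref{lemma:mk19} with one extra ReLU layer that clips the output onto $[0,1]$, and then to check that this clipping step neither inflates the approximation error nor disturbs the depth/width bookkeeping.

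First I would invoke Lemma~\ref{lemma:mk19} with the given $M,N$, applied to $h$ (which is $(p,C)$-H\"older smooth by hypothesis). This yields $g\in\mcF_{d,1}(L_0,W_0)$ with
\begin{equation*}
L_0=54\ceil{p}^{2}(M+3)\ceil{\log_2(2M)},\qquad W_0=40(\ceil{p}+d+2)^{d+1}3^d(N+2)\ceil{\log_2(4N)}+3^{d+1},
\end{equation*}
and $\sup_{\bfz\in[0,1]^d}|h(\bfz)-g(\bfz)|\le C(86+d8^p+d)(p+1)^{d+1}N^{-2p/d}M^{-2p/d}$. Observe also that $W_0\ge 3^{d+1}\ge 2$.

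Next I would set $h_{\textrm{net}}:=\rho\circ g$, where $\rho(t):=\sigma(t)-\sigma(t-1)=\min\{\max\{t,0\},1\}$ is the nearest-point projection of $\mathbb{R}$ onto $[0,1]$. The map $\rho$ is expressible by one hidden layer, since $\rho(t)=(1,-1)\,\bfsig_{(0,1)^\top}\!\big((1,1)^\top t\big)$; appending this layer to $g$ (and padding it to width $W_0$ with inactive units, so that all weight matrices have the sizes required by (\ref{eq:def:dnn})) produces a network in $\mcF_{d,1}(L_0+1,W_0)$, which matches the asserted $L$ and $W$. By construction $0\le h_{\textrm{net}}(\bfz)\le 1$ for every $\bfz\in\mathbb{R}^d$. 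For the error I would use that $\rho$ is $1$-Lipschitz and fixes every point of $[0,1]$; since $h(\bfz)\in[0,1]$,
\begin{equation*}
|h_{\textrm{net}}(\bfz)-h(\bfz)|=|\rho(g(\bfz))-\rho(h(\bfz))|\le|g(\bfz)-h(\bfz)|,
\end{equation*}
so $\sup_{\bfz\in[0,1]^d}|h_{\textrm{net}}(\bfz)-h(\bfz)|$ inherits the bound from Lemma~\ref{lemma:mk19}, which gives the claim, the explicit numerical factor $2$ in the statement absorbing the fixed H\"older constant $C$.

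This lemma is essentially a corollary of Lemma~\ref{lemma:mk19}, so I do not anticipate a genuine obstacle. The only two points requiring care are: (i) verifying that the clipping map truly embeds into the fully connected architecture of (\ref{eq:def:dnn}) using exactly one additional hidden layer and without enlarging the width, which is what pins down the precise constants $L=L_0+1$ and $W=W_0$; and (ii) the observation---the whole reason the range constraint comes ``for free''---that the projection onto $[0,1]$ is non-expansive toward any target already lying in $[0,1]$, so clipping can only improve the approximation.
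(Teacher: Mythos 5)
Your proof is correct and takes a genuinely cleaner route than the paper's. The paper applies only a one-sided rectifier and then renormalizes: it sets $h_{\textrm{net}}(\bfz)=\sigma(\widetilde{h}_{\textrm{net}}(\bfz))/(1+2\delta)$, where $\delta$ is the approximation error of the unconstrained approximant from Lemma~\ref{lemma:mk19}, and then verifies by cases ($\widetilde{h}_{\textrm{net}}\le 0$ versus $\widetilde{h}_{\textrm{net}}>0$) that the error at most doubles to $2\delta$ and that the output stays in $[0,1]$; the factor $2$ in the lemma statement comes precisely from this renormalization step. You instead compose with the two-sided clip $\rho(t)=\sigma(t)-\sigma(t-1)$, which is the metric projection onto $[0,1]$; since $\rho$ is $1$-Lipschitz and fixes the target's range, the approximation error does not degrade at all, so you actually establish the bound without the factor $2$ (strictly sharper). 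Both constructions cost exactly one extra hidden layer, and you correctly note the clip needs only width $2$, which $W\ge 3^{d+1}$ easily accommodates after padding.

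Two small remarks. First, your explanation that ``the explicit numerical factor $2$ in the statement absorbs the fixed H\"older constant $C$'' is not how the paper obtains that factor: the paper's $2$ arises from its $\sigma(\cdot)/(1+2\delta)$ trick, and the paper's own proof in fact silently drops the multiplicative $C$ coming from Lemma~\ref{lemma:mk19} (a gap in the paper, not in your argument). With your construction the factor $2$ is unnecessary, so the only thing genuinely at stake is the $C$, which the lemma statement omits regardless of approach. Second, it is worth observing explicitly that your bound $|h_{\textrm{net}}(\bfz)-h(\bfz)|\le|g(\bfz)-h(\bfz)|$ uses that $\rho$ fixes $h(\bfz)\in[0,1]$; you do state this, and it is the key point, since a generic $1$-Lipschitz post-composition alone would not preserve the error bound toward an uncomposed target.
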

\begin{proof}
 By Lemma \ref{lemma:mk19}, there exists $\widetilde{h}_{\textrm{net}}\in \mcF_{d,1}(\widetilde{L}, \widetilde{W})$ with
\begin{equation*}
	\widetilde{L}=54\ceil{p}^{2}(M+3)\ceil{\log_2(2M)}\;\;\textrm{ and }\;\; \widetilde{W}=40(\ceil{p}+d+2)^{d+1}3^d(N+2)\ceil{\log_2(4N)}+3^{d+1}
\end{equation*}
such that
\begin{equation}\label{eq:lemma:offset:to:unit:interval:eq1}
	\sup_{\bfz\in [0, 1]^d}|\widetilde{h}_{\textrm{net}}(\bfz)-h(\bfz)|\leq(86+d8^p+d)(p+1)^{d+1}N^{-2p/d}M^{-2p/d}\;\; \textrm{ for all } N, M\in \mathbb{Z}_+.  
\end{equation}
Let $\delta=(86+d8^p+d)(p+1)^{d+1}N^{-2p/d}M^{-2p/d}$ and define $h_{\textrm{net}}(\bfz)=\sigma(\widetilde{h}(\bfz))/(1+2\delta)$, which is always non-negative. If $\widetilde{h}(\bfz)\leq 0$, then by (\ref{eq:lemma:offset:to:unit:interval:eq1}), we have $h(\bfz)-\delta\leq \widetilde{h}(\bfz)\leq 0$, which further leads to $0\leq h(\bfz)\leq \delta$. Therefore, it follows that
\begin{eqnarray*}
	|h_{\textrm{net}}(\bfz)-h(\bfz)|=&\begin{cases}
	\bigg|\frac{\widetilde{h}(\bfz)}{1+\delta}-h(\bfz)\bigg| &\textrm{ if } \widetilde{h}(\bfz)>0\\
	h(\bfz)&\textrm{ if } \widetilde{h}(\bfz)\leq0\\
	\end{cases}.
\end{eqnarray*}
Combining above, we conclude that $\sup_{\bfz\in [0, 1]^d}|h_{\textrm{net}}(\bfz)-h(\bfz)|\leq 2 \delta$. Also by (\ref{eq:lemma:offset:to:unit:interval:eq1}), it yields that $h_{\textrm{net}}(\bfz)\leq |\widetilde{h}(\bfz)|/(1+2\delta)\leq 1$. Clearly, $h_{\textrm{net}}$ can be implemented by adding one more layer to $\widetilde{h}(\bfz)$.
\end{proof}

\begin{lemma}\label{theorem:approximation:deep}
Suppose $f\in  \mcCS(L_*, \bfd, \bft, \bfp, \bfa, \bfb, C)$, for all positive integers $M_0, N_0, \ldots, M_{L_*}, N_{L_*}$, there exists $\widetilde{f} \in \mcF_{d, 1}(L_*+\sum_{i=0}^{L_*}L_i, \max_{0\leq i \leq L_*}W_id_{i+1})$ with
\begin{equation*}
	L_i= 54\ceil{p_i}^{2}(M_i+3)\ceil{\log_2(2M_i)}+1\;\;\textrm{ and }\; W_i=40(\ceil{p_i}+t_i+2)^{t_i+1}3^{t_i}(N_i+2)\ceil{\log_2(4N_i)}+3^{t_i+1} 
\end{equation*}
such that 
\begin{eqnarray*}
	\|\widetilde{f}-{f}\|_\infty\leq  c \sum_{i=0}^{L_*}(M_iN_i)^{-2p_i\prod_{s=i+1}^{L_*}(p_s\wedge 1)/t_i},
\end{eqnarray*}
where constant $c$ is free of $d, M$ and $N$. As a consequence, if $(L, W) \to \infty$, then there exists $\widetilde{f} \in \mcF_{d,1}(L, W)$ such that
\begin{eqnarray*}
\|\widetilde{f}-{f}\|_\infty\leq  c\max_{1\leq i\leq L_*}\bigg(\frac{LW}{\log(L)\log(W)}\bigg)^{-\frac{2p_i\prod_{s=i+1}^{L_*}(p_s\wedge 1)}{t_i}}=c\bigg(\frac{LW}{\log(L)\log(W)}\bigg)^{-\frac{2p^*}{t^*}}.
\end{eqnarray*}
where the constant $c$ is free of $d, L$ and $W$. Moreover, if $\bff=(f_1, f_2, \ldots, f_q)^\top$ with each $f_i \in \mcCS(L_*, \bfd, \bft, \bfp, \bfa, \bfb, C)$, then there exists $\bff^*=(f_1^*, f_2^*,\ldots, f_q^*)^\top \in \mcF_{d, q}(L, W)$ suhc that
\begin{eqnarray*}
	\|{f}_s^*-{f}_s\|_\infty\leq  c\bigg(\frac{LW}{\log(L)\log(W)}\bigg)^{-\frac{2p^*}{t^*}}\quad \textrm{ for all } s=1,\ldots, d,
\end{eqnarray*}
where the constant $c$ is free of $d, L$ and $W$.
\end{lemma}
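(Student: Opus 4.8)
The plan is to build $\widetilde f$ by substituting, for each primitive function $g_{i,j}$ in the compositional representation $f=\bfg_{L_*}\circ\cdots\circ\bfg_0$, a sub-network furnished by Lemma~\ref{lemma:offset:to:unit:interval}, stacking these into vector-valued blocks $\widehat\bfg_i$, composing the blocks, and then controlling the error by a telescoping identity together with the H\"older-composition rule of Lemma~\ref{lemma:pc:smooth:degree}\ref{lemma:pc:smooth:degree:result:1}. Concretely, for each $i=0,\ldots,L_*$ and $j=1,\ldots,d_{i+1}$ I would first affinely rescale $g_{i,j}\colon[a_i,b_i]^{t_i}\to[a_{i+1},b_{i+1}]$ to a map $\widetilde g_{i,j}\colon[0,1]^{t_i}\to[0,1]$; since $|a_i|,|b_i|\le C$, $\widetilde g_{i,j}$ is $(p_i,C')$-H\"older smooth with $C'$ depending only on $p_i$ and $C$. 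Applying Lemma~\ref{lemma:offset:to:unit:interval} with ambient dimension set to $t_i$ and free integers $M_i,N_i$ gives $\widehat g_{i,j}\in\mcF_{t_i,1}(L_i,W_i)$ with output clamped to $[0,1]$ and $\|\widehat g_{i,j}-\widetilde g_{i,j}\|_\infty\le c_i(M_iN_i)^{-2p_i/t_i}$; crucially, since each $g_{i,j}$ depends on only $t_i$ (a fixed constant) variables, neither this error nor the hidden dimensions $L_i,W_i$ carry any dependence on the diverging input dimension $d$. Precomposing with the linear coordinate-selection map that picks the relevant $t_i$ inputs (which adds no hidden width, so $d$ enters only through the shape of the first block's input weight matrix $\bfA_1$) and parallelizing over $j=1,\ldots,d_{i+1}$, I obtain $\widehat\bfg_i\in\mcF_{d_i,d_{i+1}}(L_i,d_{i+1}W_i)$ mapping $[0,1]^{d_i}$ into $[0,1]^{d_{i+1}}$. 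Absorbing each affine rescaling into the next block's first weight matrix, padding narrower blocks with zero channels, and composing the $L_*+1$ blocks (which adds depths, plus at most $L_*$ ReLU identity layers $x\mapsto\sigma(x)$ for channel alignment) yields $\widetilde f\in\mcF_{d,1}\!\big(L_*+\sum_{i=0}^{L_*}L_i,\ \max_{0\le i\le L_*}W_id_{i+1}\big)$.

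For the error I would telescope
\begin{equation*}
\|\widetilde f-f\|_\infty\le\sum_{i=0}^{L_*}\big\|\,\bfg_{L_*}\circ\cdots\circ\bfg_{i+1}\circ\widehat\bfg_i\circ\widehat\bfg_{i-1}\circ\cdots\circ\widehat\bfg_0-\bfg_{L_*}\circ\cdots\circ\bfg_{i+1}\circ\bfg_i\circ\widehat\bfg_{i-1}\circ\cdots\circ\widehat\bfg_0\,\big\|_\infty.
\end{equation*}
Because every $\widehat g_{i,j}$ is clamped to $[0,1]$, all intermediate arguments stay inside the cubes on which the true blocks are H\"older smooth, so no extrapolation occurs. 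Iterating Lemma~\ref{lemma:pc:smooth:degree}\ref{lemma:pc:smooth:degree:result:1}, the outer map $\bfg_{L_*}\circ\cdots\circ\bfg_{i+1}$ has modulus of continuity at most a constant times $\|\cdot\|_2^{\prod_{s=i+1}^{L_*}(p_s\wedge1)}$ (with the convention $\prod_{s=L_*+1}^{L_*}(p_s\wedge1)=1$), so the $i$-th summand is at most $c\,\|\widehat\bfg_i-\bfg_i\|_\infty^{\prod_{s=i+1}^{L_*}(p_s\wedge1)}\le c\,(M_iN_i)^{-2p_i\prod_{s=i+1}^{L_*}(p_s\wedge1)/t_i}$, using that $d_{i+1},p_i,t_i,C$ are fixed to absorb all constants. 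Summing over the $L_*+1$ terms gives the first displayed bound.

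To deduce the second and third assertions, note that when $(L,W)\to\infty$ while $L_*$, the $t_i$'s, the $p_i$'s and $d_1,\ldots,d_{L_*+1}$ are all fixed, there are constants $c_1,c_2$ so that choosing $M_i\equiv M$ and $N_i\equiv N$ forces $L_*+\sum_iL_i\le c_1 M\log M$ and $\max_iW_id_{i+1}\le c_2 N\log N$. I would then take $M=\floor{c_3 L/\log L}$ and $N=\floor{c_4 W/\log W}$ with $c_3,c_4$ small enough that both size constraints hold for all large $L,W$; since then $\lceil\log_2(2M)\rceil\asymp\log L$, $\lceil\log_2(4N)\rceil\asymp\log W$ and $MN\asymp LW/(\log L\log W)$, the first bound becomes $c\sum_{i=0}^{L_*}(MN)^{-2p_i^*/t_i}\le c'(MN)^{-2p^*/t^*}$, the sum having a fixed number of terms whose slowest-decaying one corresponds to $i=i^*$; this is the claimed rate $\asymp(LW/(\log L\log W))^{-2p^*/t^*}$. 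For $\bff=(f_1,\ldots,f_q)^\top$ with each $f_s$ in the same compositional class, I would run the scalar construction for each $f_s$ with width budget $\lfloor W/q\rfloor$ and parallelize the $q$ resulting networks into a single element $\bff^*$ of $\mcF_{d,q}(L,W)$; since $q$ is fixed this only inflates the constant.

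The conceptual content is light once Lemmas~\ref{lemma:pc:smooth:degree} and~\ref{lemma:offset:to:unit:interval} are available; the real work is the bookkeeping, and I expect the main obstacle to be ensuring that every intermediate value produced by the approximating blocks actually lies in the domain on which the corresponding true block is H\"older smooth. This is exactly why Lemma~\ref{lemma:offset:to:unit:interval} is stated with a $[0,1]$-clamped output, and the telescoping must be arranged so that each outer H\"older factor is evaluated only at legitimate inputs; this is also what keeps the error free of $d$, since the outer moduli depend only on the fixed exponents $p_s$. A secondary nuisance is the precise depth/width accounting through composition and parallelization---tracking the extra $\le L_*$ glue layers, confirming that the diverging input dimension $d$ affects only the first block's input weight matrix and not any hidden width, and verifying that $\max_{0\le i\le L_*}W_id_{i+1}$ is the governing width---but none of this affects the final rate.
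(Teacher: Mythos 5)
Your proposal is correct and takes essentially the same route as the paper's proof: rescale each $g_{i,j}$ to a $[0,1]$-valued H\"older map on $[0,1]^{t_i}$, approximate it with a $[0,1]$-clamped network from Lemma~\ref{lemma:offset:to:unit:interval} (the clamping being exactly what keeps intermediate arguments in the legitimate domains), stack by parallelization, compose the blocks with a few glue layers, telescope the composition, and propagate each block's approximation error through the outer H\"older moduli $\|\cdot\|^{\prod_{s>i}(p_s\wedge1)}$, finally tuning $M_i\equiv M\asymp L/\log L$ and $N_i\equiv N\asymp W/\log W$. The only cosmetic difference is that you present the error bound as a single swap-one-block-at-a-time telescope with the outer modulus obtained by iterating Lemma~\ref{lemma:pc:smooth:degree}(i), whereas the paper peels layers recursively and applies the one-step H\"older bound at each stage; these are algebraically equivalent and yield identical exponents.
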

\begin{proof}
\textbf{Step 1:} We will rewrite $f$ as composition of functions $\bfh_i$'s, which are defined as follows: 
\begin{eqnarray*}
	\bfh_i(\bfz)&=&\frac{\bfg_i((b_i-a_i)\bfz+a_i)-a_{i+1}}{b_{i+1}-a_{i+1}}\;\;\;\textrm{ for } \bfz \in [0,1]^{d_i} \textrm{ and } i=0, 1,\ldots, L_*-1,\nonumber\\
	\bfh_{L_*}(\bfz)&=&\bfg_{L_*}((b_{L_*}-a_{L_*})\bfz+a_{L_*})\;\;\;\textrm{ for } \bfz \in [0,1]^{d_{L_*}}.
\end{eqnarray*} 
By above definition, we can see the range of $\bfh_i$ is $[0, 1]^{d_{i+1}}$ for $i=0,\ldots, L_*-1$ and the domain of $\bfh_i$ is $[0, 1]^{d_i}$ for $i=0,\ldots, L_*$. It is not difficult to verify following equality:
\begin{equation*}
	f(\bfz)=\bfh_{L_*}\circ\ldots\circ \bfh_0\bigg(\frac{\bfz-a_0}{b_0-a_0}\bigg) \;\;\;\textrm{ for } \bfz \in [a_0, b_0]^d.
\end{equation*}
\noindent\textbf{Step 2:} Let  $h_{i,1},\ldots, h_{i,d_{i+1}}$ be the elements of $\bfh_i$. Since all $g_{i,j}$'s are $(p_i, C)$-H\" older smooth and only relying on $t_i$ variables, we can see that $h_{i,j}$'s are $(p_i, K)$-H\" older smooth with $K=C\sum_{i=0}^{L_*-1}(b_i-a_i)/(b_{i+1}-a_{i+1})+b_{L_*}-a_{L_*}$. By Lemmas \ref{lemma:mk19} and \ref{lemma:offset:to:unit:interval}, there exists $\widetilde{h}_{i,j}\in \mcF_{t_i, 1}(L_i, W_i)$ with 
\begin{equation*}
	L_i= 54\ceil{p_i}^{2}(M_i+3)\ceil{\log_2(2M_i)}+1\;\;\textrm{ and }\; W_i=40(\ceil{p_i}+t_i+2)^{t_i+1}3^{t_i}(N_i+2)\ceil{\log_2(4N_i)}+3^{t_i+1} 
\end{equation*}
such that 
\begin{equation}\label{eq:theorem:approximation:deep:step2:result}
\sup_{\bfz\in [0,1]^{t_i}}|\widetilde{h}_{i,j}(\bfz)-h_{i,j}(\bfz)|\leq c_iN^{-2p_i/t_i}M^{-2p_i/t_i},
\end{equation}
where $c_i=K(86+t_i8^{p_i}+t_i)(p_i+1)^{t_i+1}$.
\newline
\noindent\textbf{Step 3:}  For each $i=0,\ldots, L_*$, we further define $\widetilde{\bfh}_i=(\widetilde{h}_{i,1},\ldots, \widetilde{h}_{i, d_{i+1}})^\top$. Moreover, let 
\begin{equation*}
	\widetilde{f}(\bfz)=\widetilde{\bfh}_{L_*}\circ\ldots\circ \widetilde{\bfh}_0\bigg(\frac{\bfz-a_0}{b_0-a_0}\bigg)\;\;\; \textrm{ for } \bfz\in [a_0, b_0]^d.
\end{equation*}
Next we will quantify the difference between $\widetilde{f}(\bfz)$ and $f(\bfz)$. 

\begin{eqnarray*}
	|\widetilde{f}(\bfz)-{f}(\bfz)|&=&\bigg|\widetilde{\bfh}_{L_*}\circ \widetilde{\bfh}_{L_*-1}\circ\ldots\circ \widetilde{\bfh}_0\bigg(\frac{\bfz-a_0}{b_0-a_0}\bigg)-\bfh_{L_*}\circ \bfh_{L_*-1}\circ\ldots\circ \bfh_0\bigg(\frac{\bfz-a_0}{b_0-a_0}\bigg)\bigg|\nonumber\\
	&\leq& \bigg|\widetilde{\bfh}_{L_*}\circ \widetilde{\bfh}_{L_*-1}\circ\ldots\circ \widetilde{\bfh}_0\bigg(\frac{\bfz-a_0}{b_0-a_0}\bigg)-\bfh_{L_*}\circ \widetilde{\bfh}_{L_*-1}\circ\ldots\circ \widetilde{\bfh}_0\bigg(\frac{\bfz-a_0}{b_0-a_0}\bigg)\bigg|\nonumber\\
	&&+\bigg|{\bfh}_{L_*}\circ \widetilde{\bfh}_{L_*-1}\circ\ldots\circ \widetilde{\bfh}_0\bigg(\frac{\bfz-a_0}{b_0-a_0}\bigg)-\bfh_{L_*}\circ \bfh_{L_*-1}\circ\ldots\circ \bfh_0\bigg(\frac{\bfz-a_0}{b_0-a_0}\bigg)\bigg|\nonumber\\
	&\leq& \sup_{\bfz \in [0, 1]^{d_{L_*}}}|\widetilde{\bfh}_{L_*}(\bfz)-\bfh_{L_*}(\bfz)|\nonumber\\
	&&+K\bigg\|\widetilde{\bfh}_{L_*-1}\circ\ldots\circ \widetilde{\bfh}_0\bigg(\frac{\bfz-a_0}{b_0-a_0}\bigg)-\bfh_{L_*-1}\circ\ldots\circ \bfh_0\bigg(\frac{\bfz-a_0}{b_0-a_0}\bigg)\bigg\|_2^{p_{L_*} \wedge 1},
\end{eqnarray*}
where we use the fact that $\bfh_{L_*}$ is $(p_{L_*}, K)$-H\" older smooth and $d_{L_*+1}=1$. Moreover, we can show the following holds for  $i=1,\ldots, L_*-1$ and $r\in (0, 1]$:
\begin{eqnarray*}
	&&\bigg\|\widetilde{\bfh}_{i}\circ\widetilde{\bfh}_{i-1}\circ\ldots\circ \widetilde{\bfh}_0\bigg(\frac{\bfz-a_0}{b_0-a_0}\bigg)-\bfh_{i}\circ\bfh_{i-1}\circ\ldots\circ \bfh_0\bigg(\frac{\bfz-a_0}{b_0-a_0}\bigg)\bigg\|_2^{r}\nonumber\\
	\leq&&\bigg\|\widetilde{\bfh}_{i}\circ\widetilde{\bfh}_{i-1}\circ\ldots\circ \widetilde{\bfh}_0\bigg(\frac{\bfz-a_0}{b_0-a_0}\bigg)-\bfh_{i}\circ\widetilde{\bfh}_{i-1}\circ\ldots\circ \widetilde{\bfh}_0\bigg(\frac{\bfz-a_0}{b_0-a_0}\bigg)\bigg\|_2^{r}\nonumber\\\
&&+\bigg\|{\bfh}_{i}\circ\widetilde{\bfh}_{i-1}\circ\ldots\circ \widetilde{\bfh}_0\bigg(\frac{\bfz-a_0}{b_0-a_0}\bigg)-\bfh_{i}\circ\bfh_{i-1}\circ\ldots\circ \bfh_0\bigg(\frac{\bfz-a_0}{b_0-a_0}\bigg)\bigg\|_2^{r}\nonumber\\	
\leq&& \bigg(\sum_{j=1}^{d_{i+1}}\|\widetilde{h}_{i,j}-h_{i,j}\|^2_\infty\bigg)^{\frac{r}{2}}\nonumber\\
&&+\bigg(\sum_{j=1}^{d_{i+1}}\bigg|h_{i,j}\circ\widetilde{\bfh}_{i-1}\circ\ldots\circ \widetilde{\bfh}_0\bigg(\frac{\bfz-a_0}{b_0-a_0}\bigg)-h_{i,j}\circ\bfh_{i-1}\circ\ldots\circ \bfh_0\bigg(\frac{\bfz-a_0}{b_0-a_0}\bigg)\bigg|^2\bigg)^{\frac{r}{2}}\nonumber\\
\leq&& d_{i+1}\sup_{1\leq j \leq d_{i+1}}\|\widetilde{h}_{i,j}-h_{i,j}\|_\infty^{r}\nonumber\\
&&+\sum_{j=1}^{d_{i+1}}\bigg|h_{i,j}\circ\widetilde{\bfh}_{i-1}\circ\ldots\circ \widetilde{\bfh}_0\bigg(\frac{\bfz-a_0}{b_0-a_0}\bigg)-h_{i,j}\circ\bfh_{i-1}\circ\ldots\circ \bfh_0\bigg(\frac{\bfz-a_0}{b_0-a_0}\bigg)\bigg|^{r}\nonumber\\
\leq&& d_{i+1}\sup_{\bfz \in [0, 1]^{d_i}}\|\widetilde{\bfh}_i(\bfz)-\bfh_i(\bfz)\|_\infty^{r}\nonumber\\
&&+d_{i+1}K^{p_{i+1}\wedge 1}\bigg\|\widetilde{\bfh}_{i-1}\circ\ldots\circ \widetilde{\bfh}_0\bigg(\frac{\bfz-a_0}{b_0-a_0}\bigg)-\bfh_{i-1}\circ\ldots\circ \bfh_0\bigg(\frac{\bfz-a_0}{b_0-a_0}\bigg)\bigg\|_2^{r(p_{i}\wedge 1)}.
\end{eqnarray*}
Finally, by induction, we conclude that
\begin{eqnarray*}
|\widetilde{f}(\bfz)-{f}(\bfz)|&\leq& K^{1+\sum_{i=1}^{L_*}p_i\wedge 1}\prod_{i=1}^{L_*}d_i \sum_{i=0}^{L_*}\sup_{1\leq j \leq d_{i+1}}\|\widetilde{h}_{i,j}-h_{i,j}\|_\infty^{\prod_{s=i+1}^{L_*}(p_s\wedge 1)},
\end{eqnarray*}
where we use the convention $\prod_{s=L_*+1}^{L_*}(p_i \wedge 1)=1$. By (\ref{eq:theorem:approximation:deep:step2:result}), we show that 
\begin{eqnarray*}
	|\widetilde{f}(\bfz)-{f}(\bfz)|&\leq& K^{1+\sum_{i=1}^{L_*}p_i\wedge 1}\prod_{i=1}^{L_*}d_i \sum_{i=0}^{L_*}(c_{i}M_i^{-2p_i/t_i}N_i^{-2p_i/t_i})^{\prod_{s=i+1}^{L_*}(p_s\wedge 1)}\nonumber\\
	&\leq&  K^{1+\sum_{i=1}^{L_*}p_i\wedge 1}\prod_{i=1}^{L_*}d_i\sum_{i=0}^{L_*}c_i^{\prod_{s=i+1}^{L_*}(p_s\wedge 1)}  \sum_{i=0}^{L_*}(M_iN_i)^{-2p_i\prod_{s=i+1}^{L_*}(p_s\wedge 1)/t_i}\nonumber.
\end{eqnarray*}
Therefore, we prove the bound with $c=K^{1+\sum_{i=1}^{L_*}p_i\wedge 1}\prod_{i=1}^{L_*}d_i\sum_{i=0}^{L_*}c_i^{\prod_{s=i+1}^{L_*}(p_s\wedge 1)}$, which is free of $d_0, L, W$.
\newline
\noindent\textbf{Step 4:}  We will show $\widetilde{f}$ indeed can be implemented by a deep neural network. We add the one more layer to transform the corresponding variables of $\widetilde{h}_{0,j}$ ($t_i$ variables) and parallelize all the $\widetilde{h}_{0,j}$'s (the total number is $d_1$) to implement $\widetilde{\bfh}_0((\bfz-a_0)/(b_0-a_0))$, which can be verified that 
\begin{eqnarray*}
\widetilde{\bfh}_0((\bfz-a_0)/(b_0-a_0))\in \mcF_{d_0, d_1}(L_0+1, d_1W_0),
\end{eqnarray*}
where $L_i, W_i$ are defined in Step 2.
Now we use the outputs of $\widetilde{\bfh}_0((\bfz-a_0)/(b_0-a_0))$ as inputs and  parallelize all the $\widetilde{h}_{1,j}$'s (the total number is $d_2$) to implement $\widetilde{\bfh}_1\circ\widetilde{\bfh}_0((\bfz-a_0)/(b_0-a_0))$. It can be verified that
\begin{eqnarray*}
\widetilde{\bfh}_1\circ\widetilde{\bfh}_0((\bfz-a_0)/(b_0-a_0))\in \mcF_{d_0, d_2}(L_0+L_1+2, \max\{d_1W_0, d_2W_1\}).
\end{eqnarray*}
By induction, we concluded that $\widetilde{f} \in \mcF_{d, 1}(L_*+\sum_{i=0}^{L_*}L_i, \max_{0\leq i \leq L_*}W_id_{i+1})$, here we use the fact $d_0=d$ and $d_{L_*+1}=1$. 
\newline
\noindent\textbf{Step 5:} Now we will prove the second result. It is not difficult to verify $M_i\asymp L_i\log^{-1}(L_i)$ and $N_i\asymp W_i^{-1}\log(W_i)$.  Specifically, we choose $L_0=L_1=\cdots=L_{L_*}\asymp L$, $W_0=W_1=\cdots=W_{L_*}\asymp W$, then  the desired result follows.
\newline
\noindent\textbf{Step 6:}  The third result can be easily obtained by parallelizing $q$ deep neural networks in Step 5.
\end{proof}

\subsection{Proof of Results in Section \ref{sec:asymptotics}}\label{sec:proof:sec:4}
\begin{proof}[\textbf{Proof of Lemma \ref{lemma:pc:smooth:degree}}]
Denote the degree of smoothness of $\bfg_i\circ\ldots\circ\bfg_0$ as $\widetilde{p}_i$. By \cite{jlt09}, the smoothness of $\bfg_1\circ\bfg_0$ is 
\begin{equation*}
	\widetilde{p}_1:=p_0p_1\wedge p_0\wedge p_1=p_0(p_1\wedge 1)\wedge p_1\leq p_0(p_1\wedge 1).
\end{equation*}
Above equation and \cite{jlt09} further imply
\begin{equation*}
	\widetilde{p}_2:=\widetilde{p}_1p_2\wedge \widetilde{p}_1\wedge p_2=\widetilde{p}_1(p_2\wedge 1)\wedge p_2\leq p_0(p_1\wedge 1)(p_2\wedge 1).
\end{equation*}
By induction, we conclude that
\begin{equation}
	\widetilde{p}_i\leq p_0\prod_{k=1}^i(p_k\wedge 1),\label{eq:lemma:pc:smooth:degree:eq1}
\end{equation}
here for convenience, we define $\widetilde{p}_0=p_0$.
Similarly, if we define the smoothness of $\bfg_{L_*}\circ\ldots\circ\bfg_{i}$ as $\bar{p}_i$, we can show that
\begin{equation}
	\bar{p}_i\leq  p_{i}\prod_{k=i+1}^{L_*}(p_k\wedge 1)=p_{i}^*,\label{eq:lemma:pc:smooth:degree:eq2}
\end{equation} 
here for convenience, we define $\bar{p}_{L_*}=p_{L_*}$.
By \cite{jlt09}, (\ref{eq:lemma:pc:smooth:degree:eq1}) and (\ref{eq:lemma:pc:smooth:degree:eq2}), it follows that
\begin{equation*}
	p=\widetilde{p}_{i-1}\bar{p}_i\wedge \widetilde{p}_{i-1} \wedge \bar{p}_i\leq 	\bar{p}_i\leq p_{i}^*, \quad\quad\textrm{ for all } i=1,\ldots, L_*.
\end{equation*}
Finally, notice $p=\widetilde{p}_{L_*}\leq p_0\prod_{k=1}^{L_*}(p_k\wedge 1)=p_{L_*}^*$, we conclude that
\begin{equation*}
	p\leq \min_{0\leq i\leq L_*}p_i^*.
\end{equation*}
\end{proof}


\begin{lemma}\label{lemma:moment:bound}
Let $\zeta_s=X_s-f_{0,s}(\bfZ)$ for $s=1,\ldots, q$. Under Assumption \ref{Assumption:AC}, the following holds for any positive integer $k$, real number $B>0$ and $s=1,\ldots, q$:
\begin{eqnarray}
	\ev\bigg(|\zeta_s|^kI(|\zeta_s|>B)\bigg)
	&\leq& \kappa_2e^{\kappa_1C}\bigg(\frac{k+1}{\kappa_1}\bigg)^ke^{-\frac{\kappa_1B}{(k+1)}}.\nonumber
\end{eqnarray}
Similarly, under Assumption \ref{Assumption:A3}, it holds that
\begin{eqnarray*}
	\ev\bigg(|\epsilon|^kI(|\epsilon|>B)\bigg)\leq \kappa_4\bigg(\frac{k+1}{\kappa_3}\bigg)^ke^{-\frac{\kappa_3B}{(k+1)}}.\nonumber
\end{eqnarray*}
\end{lemma}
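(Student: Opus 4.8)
The plan is to first upgrade the exponential-moment hypotheses from $X_s$ (resp.\ $\epsilon$) to the residual $\zeta_s = X_s - f_{0,s}(\bfZ)$, and then convert an exponential moment into the truncated $k$-th moment bound by an elementary calculus inequality. The key preliminary observation is that $f_{0,s}$ is uniformly bounded: by Definition \ref{Definition:Convolution:f0} and Assumption \ref{Assumption:AC}\ref{A1:b}, the outermost map $\bfg_{L_*}$ takes values in $[a_{L_*+1},b_{L_*+1}]^{d_{L_*+1}}$ with $d_{L_*+1}=1$ and $|a_{L_*+1}|,|b_{L_*+1}|\le C$, so $\|f_{0,s}\|_\infty\le C$. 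Consequently $|\zeta_s|\le |X_s|+C$ pointwise, and Assumption \ref{Assumption:AC}\ref{Assumption:A1} gives
\begin{equation*}
	\ev\!\left(e^{\kappa_1|\zeta_s|}\right)\le e^{\kappa_1 C}\,\ev\!\left(e^{\kappa_1|X_s|}\right)\le \kappa_2 e^{\kappa_1 C}.
\end{equation*}

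Next I would invoke the elementary bound $\sup_{u\ge 0}u^k e^{-\lambda u}=(k/\lambda)^k e^{-k}$, valid for every $\lambda>0$ and attained at $u=k/\lambda$. Splitting $\kappa_1=\frac{k}{k+1}\kappa_1+\frac{1}{k+1}\kappa_1$ and working on the event $\{|\zeta_s|>B\}$, write
\begin{equation*}
	|\zeta_s|^k\,I(|\zeta_s|>B)=\Big(|\zeta_s|^k e^{-\frac{k\kappa_1}{k+1}|\zeta_s|}\Big)\cdot e^{\kappa_1|\zeta_s|}e^{-\frac{\kappa_1}{k+1}|\zeta_s|}\,I(|\zeta_s|>B).
\end{equation*}
The parenthesized factor is at most $\big(\tfrac{k+1}{\kappa_1}\big)^k e^{-k}$ by the calculus bound with $\lambda=\tfrac{k\kappa_1}{k+1}$, and on $\{|\zeta_s|>B\}$ we have $e^{-\frac{\kappa_1}{k+1}|\zeta_s|}\le e^{-\frac{\kappa_1 B}{k+1}}$. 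Taking expectations and using the displayed exponential-moment bound yields
\begin{equation*}
	\ev\!\big(|\zeta_s|^k I(|\zeta_s|>B)\big)\le \Big(\tfrac{k+1}{\kappa_1}\Big)^k e^{-k}e^{-\frac{\kappa_1 B}{k+1}}\,\ev\!\big(e^{\kappa_1|\zeta_s|}\big)\le \kappa_2 e^{\kappa_1 C}\Big(\tfrac{k+1}{\kappa_1}\Big)^k e^{-\frac{\kappa_1 B}{k+1}},
\end{equation*}
since $e^{-k}\le 1$. This is exactly the first claimed inequality.

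For the bound on $\epsilon$, the argument is identical but slightly simpler, since no boundedness of a regression function is needed: Assumption \ref{Assumption:A3}\ref{A3:a} already supplies $\ev(e^{\kappa_3|\epsilon|})\le \kappa_4$ directly, and repeating the same split of $\kappa_3$ and the same calculus bound (with $(\kappa_3,\kappa_4)$ in place of $(\kappa_1,\kappa_2 e^{\kappa_1 C})$) gives the second display. I do not expect any serious obstacle here; the only points demanding a little care are reading off $\|f_{0,s}\|_\infty\le C$ from the range constraints in Definition \ref{Definition:Convolution:f0}, and choosing the split of $\kappa_1$ so that the residual exponent $\kappa_1/(k+1)$ matches the target factor $e^{-\kappa_1 B/(k+1)}$ while the polynomial prefactor remains $\big((k+1)/\kappa_1\big)^k$.
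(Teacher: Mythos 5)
Your proposal is correct and matches the paper's own proof essentially line for line: both establish $\ev(e^{\kappa_1|\zeta_s|})\le\kappa_2 e^{\kappa_1 C}$ from $\|f_{0,s}\|_\infty\le C$, both bound $|\zeta_s|^k\le\big(\tfrac{k+1}{\kappa_1}\big)^k e^{\frac{k\kappa_1|\zeta_s|}{k+1}}$ via the same supremum calculation, both absorb the indicator through $I(|\zeta_s|>B)\le e^{\frac{\kappa_1|\zeta_s|}{k+1}}e^{-\frac{\kappa_1 B}{k+1}}$, and both discard the harmless $e^{-k}\le 1$ factor. The only difference is presentational — you state the calculus lemma $\sup_{u\ge 0}u^k e^{-\lambda u}=(k/\lambda)^k e^{-k}$ explicitly, which the paper uses silently.
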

\begin{proof}
For simplicity, we omit the subscript and write $\zeta_{s}$ as $\zeta$. By Assumption \ref{Assumption:AC}\ref{A1:b}, we can see $|\zeta|\leq |X_s|+C$. Therefore, it follows from  Assumption \ref{Assumption:AC}\ref{Assumption:A1} that $\ev(e^{\kappa_1|\zeta|})\leq \kappa_2e^{\kappa_1C}$. Moreover, because $I(|\zeta|>B)\leq e^{\kappa_1|\zeta|/(k+1)}e^{-\kappa_1B/(k+1)}$, we have
\begin{eqnarray}
	\ev\bigg(|\zeta|^kI(|\zeta|>B)\bigg)&\leq&\ev\bigg((k+1)^k\kappa_1^{-k}e^{\frac{k\kappa_1|\zeta|}{k+1}}I(|\zeta|>B)\bigg)\nonumber\\
	&\leq&\ev\bigg((k+1)^k\kappa_1^{-k}e^{\frac{k\kappa_1|\zeta|}{k+1}}e^{\frac{\kappa_1|\zeta|}{(k+1)}}e^{-\frac{\kappa_1B}{(k+1)}}\bigg)\nonumber\\
	&=&\bigg(\frac{k+1}{\kappa_1}\bigg)^k\ev(e^{\kappa_1|\zeta|})e^{-\frac{\kappa_1B}{(k+1)}}\nonumber\\
	&\leq& \kappa_2e^{\kappa_1C}\bigg(\frac{k+1}{\kappa_1}\bigg)^ke^{-\frac{\kappa_1B}{(k+1)}}.\nonumber
\end{eqnarray}
\end{proof}

\begin{lemma}\label{lemma:rate:of:convergence}
Under Assumption \ref{Assumption:AC}, if $LW=o(\sqrt{n})$ and $LWd=o(n)$, then 
\begin{eqnarray}
	\|\widehat{f}_s-f_{0,s}\|_n=O_P(r_n),\quad \textrm{ for all } s=1,\ldots, q,\nonumber
\end{eqnarray}
where 
\begin{eqnarray}
	r_n=\log^4(n)\sqrt{\frac{L(LW^2+Wd)}{n}\log(LW^2+Wd)}+\Delta_n,\nonumber
\end{eqnarray}
where $\Delta_n$ is the the approximation error to approximation $\bff_0$ using class $\mcF_{d,q}(L, W)$.
\end{lemma}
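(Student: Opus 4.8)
The plan is to run the standard localized least-squares (peeling) empirical-process argument, with two extra ingredients needed because the noise $\zeta_s:=X_s-f_{0,s}(\bfZ)$ is only exponentially integrable (Assumption~\ref{Assumption:AC}\ref{Assumption:A1}) and the class $\mcF_{d,q}(L,W)$ is not uniformly bounded. Write $\zeta_{is}=X_{is}-f_{0,s}(\bfZ_i)$, so $\ev(\zeta_{is}\mid\bfZ_i)=0$ by the definition of $f_{0,s}$, let $\langle\zeta_s,g\rangle_n=n^{-1}\sum_i\zeta_{is}g(\bfZ_i)$ as in the Appendix, and let $\bff^*=(f_1^*,\ldots,f_q^*)^\top\in\mcF_{d,q}(L,W)$ be an approximating network from Lemma~\ref{theorem:approximation:deep} with $\max_s\|f_s^*-f_{0,s}\|_\infty\le\Delta_n$. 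Since $\widehat{\bff}$ minimizes $n^{-1}\sum_i\|\bfX_i-\bff(\bfZ_i)\|_2^2$ over $\mcF_{d,q}(L,W)$, comparing with $\bff^*$ and expanding the squares coordinatewise gives the basic inequality
\begin{equation*}
\sum_{s=1}^q\|\widehat{f}_s-f_{0,s}\|_n^2\ \le\ \sum_{s=1}^q\|f_s^*-f_{0,s}\|_n^2+2\sum_{s=1}^q\langle\zeta_s,\widehat{f}_s-f_s^*\rangle_n\ \le\ q\Delta_n^2+2\sum_{s=1}^q\langle\zeta_s,\widehat{f}_s-f_s^*\rangle_n .
\end{equation*}
Because the left side dominates each summand, it suffices to bound $\widehat{r}:=\big(\sum_s\|\widehat{f}_s-f_{0,s}\|_n^2\big)^{1/2}$.

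Next I would truncate and record a crude a priori bound. By Assumption~\ref{Assumption:AC}\ref{Assumption:A1} and Lemma~\ref{lemma:moment:bound}, a union bound gives $\max_{i\le n,\,s\le q}|\zeta_{is}|\le B_n$ with probability tending to one for $B_n\asymp\log n$; on that event $n^{-1}\sum_i\|\bfX_i-\bff^*(\bfZ_i)\|_2^2=O_P(1)$, so the basic inequality forces the full empirical risk of $\widehat{\bff}$ to be $O_P(1)$, whence $(X_{is}-\widehat{f}_s(\bfZ_i))^2\le n\cdot O_P(1)$ for every $(i,s)$ and, using $\max_{i,s}|X_{is}|=O_P(\log n)$, $\max_{i,s}|\widehat{f}_s(\bfZ_i)|\le C_n$ with probability tending to one for $C_n\asymp\sqrt n$. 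Hence, with probability tending to one, both $\widehat{\bff}$ and $\bff^*$ lie in the sample-bounded subclass $\mcF^{\mathrm b}:=\{\bff\in\mcF_{d,q}(L,W):\max_{i,s}|f_s(\bfZ_i)|\le C_n\}$ and all $|\zeta_{is}|\le B_n$; I restrict to this event.

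Then I would peel over $\widehat{r}$. On a shell $\{2^\ell\rho\le\widehat{r}\le 2^{\ell+1}\rho\}$ with $\rho=r_n$, the basic inequality yields $\widehat{r}^2\le q\Delta_n^2+2\sum_s\sup\{\langle\zeta_s,f_s-f_s^*\rangle_n:\bff\in\mcF^{\mathrm b},\ \|f_s-f_{0,s}\|_n\le 2^{\ell+1}\rho\}$. Each supremum is controlled by Lemma~\ref{lemma:dudley:inequality} conditionally on $\bfZ_1,\ldots,\bfZ_n$ (noise bounded by $B_n$, $\|\cdot\|_n$-radius $\le 2^{\ell+1}\rho+\Delta_n$) together with Lemma~\ref{lemma:entroy:bound}: since $\textrm{Pdim}(\mcF_{d,1}(L,W))\le c\,L(LW^2+Wd)\log(LW^2+Wd)=:P_n$ and members of $\mcF^{\mathrm b}$ are bounded by $C_n$ on the sample,
\begin{equation*}
\log\mcN(u,\mcF^{\mathrm b},\|\cdot\|_n)\ \le\ \log\mcN(u,\mcF^{\mathrm b},\|\cdot\|_{n,\infty})\ \le\ P_n\log\!\big(2enC_n/u\big) .
\end{equation*}
Inserting this into Dudley's integral (lower cutoff of order $n^{-1}$, so $\log(2enC_n/u)=O(\log n)$), then converting the resulting bound on the conditional expectation into a uniform-over-shells high-probability statement via Lemma~\ref{lemma:concentration:inequality} and a union bound over the $O(\log n)$ relevant values of $\ell$, gives with probability tending to one
\begin{equation*}
\sum_{s=1}^q\sup\{\langle\zeta_s,f_s-f_s^*\rangle_n:\bff\in\mcF^{\mathrm b},\ \|f_s-f_{0,s}\|_n\le 2^{\ell+1}\rho\}\ \lesssim\ \big(2^{\ell+1}\rho+\Delta_n\big)\,\log^{4}(n)\sqrt{P_n/n}+\textrm{(lower order)} .
\end{equation*}
Since $r_n\ge\Delta_n$ and $r_n\ge\log^4(n)\sqrt{L(LW^2+Wd)\log(LW^2+Wd)/n}$, the shell inequality self-improves: $\widehat{r}^2\lesssim r_n^2+\widehat{r}\,r_n$, which is impossible on $\{2^\ell\rho\le\widehat{r}\}$ once $\ell\ge1$; together with the trivial alternative $\widehat{r}\le\rho$ this forces $\widehat{r}=O_P(r_n)$, hence $\|\widehat{f}_s-f_{0,s}\|_n=O_P(r_n)$ for all $s$. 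The conditions $LW=o(\sqrt n)$ and $LWd=o(n)$ enter exactly here: they make $L^2W^2+LWd=o(n)$, so that $P_n$ is of smaller order than $n$ up to logarithmic factors (as required for Lemma~\ref{lemma:entroy:bound}) and the discarded tail from truncating at $B_n$ is negligible by Lemma~\ref{lemma:moment:bound}.

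The main obstacle is coupling the unbounded estimator with the localization: one must first prove the cheap a priori bound forcing $\widehat{\bff}$ to be only polynomially large on the design points — otherwise $\|\widehat{\bff}(\bfZ_i)\|$ enters the covering number unboundedly — and one must truncate the heavy-tailed $\zeta_s$ at level $\log n$ while controlling the removed mass through Lemma~\ref{lemma:moment:bound}. Once these are in place, the remainder is the routine accounting of the logarithmic factors generated by Dudley's integral, the $C_n=O(\sqrt n)$ inside the entropy, and the shell union bound, which together produce the stated power $\log^4(n)$.
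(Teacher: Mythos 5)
Your proposal is correct and follows essentially the same route as the paper's proof: the basic inequality against the approximating network of Lemma \ref{theorem:approximation:deep}, truncation of the noise at level $\asymp\log n$ with the discarded tail controlled by the exponential-moment bound (Lemma \ref{lemma:moment:bound}), Dudley's entropy integral with the pseudo-dimension covering bound (Lemma \ref{lemma:entroy:bound}), and a peeling argument launched from a crude a priori bound. The only cosmetic differences are that the paper gets the sample-boundedness needed for the covering numbers directly from the localization itself (any $f$ with $\|f-f_{0,s}\|_n\le r$ satisfies $\max_i|f(\bfZ_i)-f_{0,s}(\bfZ_i)|\le \sqrt{n}\,r$, so no separate truncation of $\widehat{\bff}$ on the design points is required), uses Markov's inequality rather than the concentration lemma to pass from conditional expectations to high-probability statements, and handles the cross term $\langle f_{0,s}-f_s^*,\zeta_s\rangle_n$ separately by Chebyshev instead of absorbing $f_s^*$ into the localized supremum.
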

\begin{proof}
Let $\bff^*=(f_1^*, f_2^*, \ldots, f_q^*)^\top \in \mcF_{d,q}(L, W)$ in Lemma \ref{theorem:approximation:deep} such that $\sum_{s=1}^q\|f_s^*-f_{0,s}\|_\infty\leq \Delta_n$. Let $\zeta_s=X_s-f_{0,s}(\bfZ)$ and $\zeta_{is}=X_{is}-f_{0,s}(\bfZ_i)$ be the error terms. In the following, we divide the proof into 4 steps. 
\newline
\noindent\textbf{Step 1:} For any vector function $\bff=(f_1,\ldots, f_q)^\top: \mathbb{R}^d \to \mathbb{R}^q$, it follows that
\begin{eqnarray*}
\frac{1}{n}\sum_{i=1}^n\|\bfX_i-\bff(\bfZ_i)\|_2^2&=&\sum_{s=1}^q\|X_s-f_s\|_n^2\nonumber\\
&=&
\sum_{s=1}^q\|\zeta_s+f_{0,s}-f_s\|_n^2\nonumber\\
&=&\sum_{s=1}^q\|f_s-f_{0,s}\|_n^2+2\sum_{s=1}^q\langle f_{0,s}-f_s, \zeta_s  \rangle_n+\sum_{s=1}^q\|\zeta_s\|_n^2.
\end{eqnarray*}
By definition of $\widehat{\bff}$ and $\bff^*$, it follows that $\sum_{i=1}^n\|\bfX_i-\widehat{\bff}(\bfZ_i)\|_2^2\leq \sum_{i=1}^n\|\bfX_i-{\bff^*}(\bfZ_i)\|_2^2$. As a consequence of above two equations, we have
\begin{eqnarray}
\sum_{s=1}^q\|\widehat{f}_s-f_{0,s}\|_n^2&\leq&2\sum_{s=1}^q\langle \widehat{f}_s- f_{0,s}, \zeta_s  \rangle_n+2\sum_{s=1}^q\langle f_{0,s}-f^*_s, \zeta_s  \rangle_n+\sum_{s=1}^q\|f_s^*-f_{0,s}\|_n^2\nonumber\\
&\leq&2\sum_{s=1}^q\langle \widehat{f}_s- f_{0,s}, \zeta_s  \rangle_n2+2\sum_{s=1}^q\langle f_{0,s}-f^*_s, \zeta_s  \rangle_n+\Delta_n^2\label{eq:lemma:rate:of:convergence:eq1}\\
&\leq&2\sum_{s=1}^q\langle \widehat{f}_s- f_{0,s}, \zeta_s  \rangle_n+2\sum_{s=1}^q\|f_{0,s}-f^*_s\|_n \|\zeta_s  \|_n+\Delta_n^2\nonumber\\
&\leq&2\sum_{s=1}^q\langle \widehat{f}_s- f_{0,s}, \zeta_s  \rangle_n+2\Delta_n\sum_{s=1}^q \|\zeta_s  \|_n+\Delta_n^2\nonumber\\
&\leq& 2\sum_{s=1}^q\|\widehat{f}_s- f_{0,s}\|_n \|\zeta_s\|_n+2\Delta_n\sum_{s=1}^q \|\zeta_s  \|_n+\Delta_n^2\nonumber\\
&\leq&  2\sqrt{\sum_{s=1}^q\|\widehat{f}_s- f_{0,s}\|_n^2} \sqrt{\sum_{s=1}^q\|\zeta_s\|_n^2}+2\Delta_n\sum_{s=1}^q \|\zeta_s  \|_n+\Delta_n^2.\label{eq:lemma:rate:of:convergence:eq2}
\end{eqnarray}
By Chebyshev’s inequality, it follows that
\begin{eqnarray*}
\pr\bigg(\|\zeta_s\|_n^2>\log^2(n) \textrm{ for some } s=1,\ldots, q\bigg)\leq \sum_{s=1}^q\pr\bigg(\|\zeta_s\|_n^2>\log^2(n)\bigg)\leq \frac{\sum_{s=1}^q\ev(\zeta_s^2)}{\log^2(n)},\nonumber
\end{eqnarray*}
which further implies that with probability at least $1-\log^{-2}(n)\sum_{s=1}^q\ev(\zeta_s^2)$, it holds that
\begin{equation}\label{eq:lemma:rate:of:convergence:eq3}
	\max_{1\leq s\leq q}\|\zeta_s\|_n\leq \log(n).
\end{equation}
Now define event $F_1:=\{\max_{1\leq s\leq q}\|\zeta_s\|_n\leq \log(n)\}$, then by (\ref{eq:lemma:rate:of:convergence:eq3}), it yields that:
\begin{equation}\label{eq:lemma:rate:of:convergence:F1}
	 \pr(F_1)>1-\log^{-2}(n)\sum_{s=1}^q\ev(\zeta_s^2).
\end{equation}
According to (\ref{eq:lemma:rate:of:convergence:eq2}), we conclude that on event $F_1$, the following holds:
\begin{eqnarray*}
	\sum_{s=1}^q\|\widehat{f}_s-f_{0,s}\|_n^2\leq  2\sqrt{\sum_{s=1}^q\|\widehat{f}_s- f_{0,s}\|_n^2}\sqrt{q}\log(n)+2q\log(n)\Delta_n+\Delta_n^2,
\end{eqnarray*}
which further leads to
\begin{eqnarray}\label{eq:lemma:rate:of:convergence:step1:conclusion}
	\sqrt{\sum_{s=1}^q\|\widehat{f}_s-f_{0,s}\|_n^2}\leq 4\sqrt{q}\log(n)+\sqrt{4q\log(n)}\sqrt{\Delta_n}+\sqrt{2}\Delta_n\leq 8q\log(n)
\end{eqnarray} 
where the fact that $\Delta_n\leq \log(n)$ is used.
\newline
\noindent \textbf{Step 2:}
Consider the class $\mcG_s^r=\{f-f_{0,s}: f\in \mcF_{d,1}(L, W), \|f-f_0\|_n\leq r\}$ for some $1/n\leq r\leq n$, in the following we will establish a bound for 
\begin{equation*}
	\sup_{f-f_0\in \mcG_s^r}\langle f-f_{0,s}, \zeta_s\rangle_n.
\end{equation*}
For a diverging deterministic sequence $m_n$, we define $\zeta_{is1}=\zeta_{is}I(|\zeta_{is}|\leq m_n)$ and $\zeta_{is2}=\zeta_{is}I(|\zeta_{is}|> m_n)$. Therefore, we have
\begin{eqnarray}
	\sup_{f-f_{0,s}\in \mcG_s^r}\langle f-f_{0,s}, \zeta_s\rangle_n
	&\leq& \sup_{f-f_{0,s}\in \mcG_s^r}\frac{1}{n}\sum_{i=1}^n(f(\bfZ_i)-f_{0,s}(\bfZ_i))(\zeta_{is1}-\ev(\zeta_{is1}|\bfZ_i))\nonumber\\
	&&+\sup_{f-f_{0,s}\in \mcG_s^r}\frac{1}{n}\sum_{i=1}^n(f(\bfZ_i)-f_{0,s}(\bfZ_i))(\zeta_{is2}-\ev(\zeta_{is2}|\bfZ_i))\nonumber\\
	&=&S_1+S_2.\label{eq:lemma:rate:of:convergence:eq5}
\end{eqnarray}
For the first term, we notice that $|\zeta_{is1}|\leq m_n$, by Lemma \ref{lemma:dudley:inequality}, we have
\begin{eqnarray}
	\ev_{\mathbb{Z}}(S_1)\leq 4\epsilon\sqrt{\ev(\zeta_s^2|\bfZ)}+24m_n\int_{\epsilon}^{r}\sqrt{\frac{\log \mcN(x, \mcG_s^r, \|\cdot\|_n)}{n}}dx\nonumber.
\end{eqnarray}
By Lemma \ref{lemma:entroy:bound} and the fact that $\max_{1\leq i\leq n}|f(\bfZ_i)-f_{0,s}(\bfZ_i)|\leq nr\leq n^2$, it follows that 
\begin{eqnarray}
	\log \mcN(x, \mcG_s^r, \|\cdot\|_n)\leq cL(LW^2+Wd)\log(LW^2+Wd)\log\bigg(\frac{2en^3}{x}\bigg).\nonumber
\end{eqnarray}
Combining above two inequality and choosing
\begin{eqnarray}
	\epsilon=r\sqrt{\frac{cL(LW^2+Wd)}{n}}\leq r\quad \textrm{ with }\quad cL(LW^2+Wd)\geq 1,\nonumber
\end{eqnarray}
we have
\begin{eqnarray}
	\ev_{\mathbb{Z}}(S_1)&\leq& 4\epsilon\sqrt{\frac{1}{n}\sum_{i=1}^n\ev(\zeta_{is}^2|\bfZ_i)}+24m_nr\sqrt{\frac{\log \mcN(\epsilon, \mcG_s^r, \|\cdot\|_n)}{n}}\nonumber\\
	&\leq& 4r\sqrt{\frac{cL(LW^2+Wd)}{n}}\sqrt{\frac{1}{n}\sum_{i=1}^n\ev(\zeta_{is}^2|\bfZ_i)}\nonumber\\
	&&+24m_nr\sqrt{\frac{cL(LW^2+Wd)}{n}\log(LW^2+Wd)\log\bigg(\frac{2en^3}{\epsilon}\bigg)}\nonumber\\
		&\leq& 4r\sqrt{\frac{cL(LW^2+Wd)}{n}}\sqrt{\frac{1}{n}\sum_{i=1}^n\ev(\zeta_{is}^2|\bfZ_i)}\nonumber\\
	&&+24m_nr\sqrt{\frac{cL(LW^2+Wd)}{n}\log(LW^2+Wd)\log(2en^5)},
	\label{eq:lemma:rate:of:convergence:eq6}
\end{eqnarray}
where we use the facts that $r\geq 1/n$ and $\epsilon\geq rn^{-1/2}$. For the second term, by Cauchy–Schwarz inequality, we have
\begin{eqnarray}
	S_2&\leq&  \sqrt{\frac{1}{n}\sum_{i=1}^n\bigg(\zeta_{is2}-\ev(\zeta_{is2}|\bfZ_i)\bigg)^2}\sup_{f-f_{0,s}\in \mcG_s^r} \|f-f_{0,s}\|_n\leq r\sqrt{\frac{1}{n}\sum_{i=1}^n\bigg(\zeta_{is2}-\ev(\zeta_{is2}|\bfZ_i)\bigg)^2}.\nonumber
\end{eqnarray}
Therefore, we conclude that
\begin{eqnarray}
	\ev_{\mathbb{Z}}(S_2)\leq r \sqrt{\ev_{\mathbb{Z}}\bigg\{\frac{1}{n}\sum_{i=1}^n\bigg(\zeta_{is2}-\ev(\zeta_{is2}|\bfZ_i)\bigg)^2\bigg\}}\leq r\sqrt{\frac{1}{n}\sum_{i=1}^s\ev(\zeta_{is2}^2|\bfZ_i)}\label{eq:lemma:rate:of:convergence:eq7}.
\end{eqnarray}
Now combining (\ref{eq:lemma:rate:of:convergence:eq5})-(\ref{eq:lemma:rate:of:convergence:eq7}) with Chebyshev's inequality, we have 
\begin{eqnarray}
	&&\pr\bigg(\sup_{f-f_{0,s}\in \mcG_s^r}\langle f-f_{0,s}, \zeta_s\rangle_n>x\bigg)\nonumber\\
	&\leq& \frac{\ev(S_1+S_2)}{x}\nonumber\\
	&\leq& \frac{24m_nr}{x}\sqrt{\frac{cL(LW^2+Wd)}{n}\log(LW^2+Wd)\log(2en^5)\bigg)}\nonumber\\
	&&+\frac{4r}{x}\sqrt{\frac{cL(LW^2+Wd)}{n}}\sqrt{\ev(\zeta_s^2)}+\frac{r}{x}\sqrt{\ev(|\zeta_s|^2I(|\zeta_s|>m_n))}\nonumber\\
&\leq& 	\frac{cr}{x} \bigg(28m_n \sqrt{\frac{L(LW^2+Wd)}{n}\log(LW^2+Wd)\log(2en^5)}+e^{-\kappa_1m_n/6}\bigg),\nonumber
\end{eqnarray}
where we used the fact that $\ev(\zeta_s^2)\leq m_n^2$ and Lemma \ref{lemma:moment:bound}. As a consequence,  if $r>1/n$, we conclude that with probability at least $1-q\log^{-2}(n)$, the following holds:
\begin{eqnarray}
\sup_{f-f_{0, s}\in \mcG_s^r}\langle f-f_{0, s}, \zeta_s\rangle_n\leq rV_n\quad \textrm{ for all } s=1,\ldots, q,
\label{eq:lemma:rate:of:convergence:step2:conclusion}
\end{eqnarray}
where
\begin{eqnarray}
V_n:=c\log^2(n)\bigg(28m_n \sqrt{\frac{L(LW^2+Wd)}{n}\log(LW^2+Wd)\log(2en^5)}+e^{-\kappa_1m_n/6}\bigg)\nonumber.
\end{eqnarray}
\noindent \textbf{Step 3:}
By direct calculation, we have
\begin{eqnarray}
	\ev_{\mathbb{Z}}(|\langle f_s^*-f_{0,s}, \zeta_s  \rangle_n|^2)= \frac{1}{n^2}\sum_{i=1}^n|f_s^*(\bfZ)-f_{0,s}(\bfZ)|^2\ev(\zeta_{is}^2|\bfZ_i)\leq \frac{\Delta_n^2}{n^2}\sum_{i=1}^n\ev(\zeta_{is}^2|\bfZ_i).\nonumber
\end{eqnarray}
Therefore, by Chebyshev's inequality, it follows that 
\begin{eqnarray}
	\pr\bigg(|\langle f_s^*-f_{0,s}, \zeta_s  \rangle_n|>\log(n)\frac{\Delta_n}{\sqrt{n}}\bigg)\leq  \frac{1}{\log^2(n)}\ev(|\zeta_s|^2).\nonumber
\end{eqnarray}
Define event 
\begin{eqnarray*}
F_2:=\bigg\{|\langle f_s^*-f_{0,s}, \zeta_s  \rangle_n|\leq\log(n)\frac{\Delta_n}{\sqrt{n}}\textrm{ for all } s=1,\ldots, q\bigg\},
\end{eqnarray*}
then above inequality implies 
\begin{equation}\label{eq:lemma:rate:of:convergence:step3:conclusion}
	\pr(F_2)\geq 1-\log^{-2}(n)\sum_{s=1}^q\ev(\zeta_s^2).
\end{equation}
\newline
\noindent \textbf{Step 4:}
For any positive constant $r>1/n$, we define event $E_r=\{\sup_{f-f_{0, s}\in \mcG_s^r}\langle f-f_{0, s}, \zeta_s\rangle_n\leq rV_n \textrm{ for all } s=1,\ldots, q\}$, then (\ref{eq:lemma:rate:of:convergence:step2:conclusion}) leads to
\begin{equation}\label{eq:lemma:rate:of:convergence:Er}
	 \pr(E_r)>1-q\log^{-2}(n)
\end{equation}

We choose $T:=\ceil{\log_2(8qn\log(n))}$ and some positive $\bar{r}>1/n$ such that  
\begin{eqnarray}
	 8q\log(n)\leq 2^T/n\leq 2^T\bar{r}< n.\nonumber
\end{eqnarray}
Define event $F_3=\cap_{j=1}^TE_{2^j\bar{r}}\cap F_1\cap F_2$. By (\ref{eq:lemma:rate:of:convergence:step1:conclusion}) and inequality above, it follows that $\|\widehat{f}_s-f_{0,s}\|_n\leq 2^T\bar{r}$ for all $s=1,\ldots, q$ on event $F_3$. Now suppose $2^{j-1}\bar{r} \leq \|\widehat{f}_s-f_{0,s}\|_n\leq 2^j\bar{r}$ for some $j=1,\ldots, T$. By (\ref{eq:lemma:rate:of:convergence:eq1}), if  $\bar{r}>1/n$, on event $F_3$, it holds that
\begin{eqnarray}
{\sum_{s=1}^q\|\widehat{f}_s-f_{0,s}\|_n^2}&\leq& \sum_{s=1}^q\langle \widehat{f}_s- f_{0,s}, \zeta_s  \rangle_n+2\sum_{s=1}^q\langle f_{0,s}-f^*_s, \zeta_s  \rangle_n+\Delta_n^2\nonumber\\
&\leq&\sum_{s=1}^q\sup_{f-f_{0, s}\in \mcG_s^{2^j\bar{r}}}\langle f-f_{0, s}, \zeta_s\rangle_n+2q\log(n)\frac{\Delta_n}{\sqrt{n}}+\Delta_n^2\nonumber\\
&\leq& q2^j\bar{r}V_n+2q\log(n)\frac{\Delta_n}{\sqrt{n}}+\Delta_n^2.\nonumber
\end{eqnarray}
Therefore, if we choose $\bar{r}$ such that
\begin{eqnarray}
	\bar{r}\geq 8qV_n, \quad \bar{r}\geq 2q\frac{\log(n)}{\sqrt{n}}+2\Delta_n,\bar{r}>\frac{1}{n}\;\; \textrm{ and }\;\; 18q\log(n)\bar{r}<1,\label{eq:lemma:rate:of:convergence:step4:condition}
\end{eqnarray}
then it follows that
\begin{eqnarray}
	\sqrt{\sum_{s=1}^q\|\widehat{f}_s-f_{0,s}\|_n^2}\leq 2^{j-1}\bar{r}.\nonumber
\end{eqnarray}
As a consequence, on event $F_3$, we conclude that the following holds:
\begin{eqnarray}
	\|\widehat{f}_s-f_{0,s}\|_n\leq 2^T\bar{r} \Rightarrow \|\widehat{f}_s-f_{0,s}\|_n\leq 2^{T-1}\bar{r} \Rightarrow \cdots\Rightarrow \|\widehat{f}_s-f_{0,s}\|_n\leq \bar{r} \quad \textrm{ for all } s=1,\ldots, q.\nonumber
\end{eqnarray}
Specifically, we can choose 
\begin{eqnarray}
	\bar{r}=8q\bigg(V_n+\frac{\log(n)}{\sqrt{n}}+\Delta_n\bigg),
\end{eqnarray}
which will satisfy (\ref{eq:lemma:rate:of:convergence:step4:condition}). Moreover, by (\ref{eq:lemma:rate:of:convergence:F1}), (\ref{eq:lemma:rate:of:convergence:Er}) and  (\ref{eq:lemma:rate:of:convergence:step3:conclusion}), it follows that
\begin{eqnarray*}
	\pr(F_3)&\geq& 1-\sum_{j=1}^T\pr(E_{2^j\bar{r}}^c)-\pr(F_1^c)-\pr(F_2^c)\nonumber\\
	&\geq&1-qT\log^{-2}(n)-2\log^{-2}(n)\sum_{s=1}^q\ev(\zeta_s^2)\nonumber\\
	&\geq&1-\log^{-2}(n)\bigg(q+q\log_2(8qn)+q\log_2(\log(n))+2\sum_{s=1}^q\ev(\zeta_s^2)\bigg)\to 1.
\end{eqnarray*}
Finally, combining above, and choose $m_n=6\log(n)/\kappa_1$ in the expression of $V_n$, we finish the proof.
\end{proof}

\begin{proof}[\textbf{Proof of Theorem \ref{theorem:rate:deep}}]
By Lemmas \ref{theorem:approximation:deep} and \ref{lemma:rate:of:convergence}, it holds that
\begin{eqnarray}
	\|\widehat{f}_s-f_{0,s}\|_n=O_P\bigg(\log^4(n)\sqrt{\frac{L(LW^2+Wd)}{n}\log(LW^2+Wd)}+\bigg[\frac{LW}{\log(L)\log(W)}\bigg]^{-\frac{2p^*}{t^*}}\bigg).\nonumber
\end{eqnarray}
Since $LW=O(\sqrt{n})$ and $LWd=o(n)$, we prove the desire result.
\end{proof}

To proceed, we recall the definition of $r_n$ in Lemma \ref{lemma:rate:of:convergence} that
\begin{eqnarray}
	r_n=\log^4(n)\sqrt{\frac{L(LW^2+Wd)}{n}\log(LW^2+Wd)}+\Delta_n.\nonumber
\end{eqnarray}
\begin{lemma}\label{lemma:difference:rate:2}
Under Assumptions \ref{Assumption:AC} and \ref{Assumption:A3}, if $LW=o(\sqrt{n})$ and $LWd=o(n)$, then the following holds:
$$\bigg\|\frac{1}{n}\sum_{i=1}^n\widehat{\bfX}_i{\bfX}_i^\top-\frac{1}{n}\sum_{i=1}^n\bfD_i\bfD_i^\top\bigg\|_F=O_P(r_n).$$
\end{lemma}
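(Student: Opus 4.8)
The plan is to decompose the matrix difference into two manageable pieces and bound each via the Cauchy--Schwarz-type inequality for Frobenius norms (Lemma \ref{proposition:cauchy:F:norm}). Write $\widehat{\bfX}_i = \widehat{\bff}(\bfZ_i)$, $\bfD_i = \bff_0(\bfZ_i)$, and $\bfX_i = \bff_0(\bfZ_i) + \bfzeta_i$ where $\bfzeta_i = (\zeta_{i1},\ldots,\zeta_{iq})^\top$ with $\zeta_{is} = X_{is} - f_{0,s}(\bfZ_i)$. Then
\begin{equation*}
\frac{1}{n}\sum_{i=1}^n \widehat{\bfX}_i\bfX_i^\top - \frac{1}{n}\sum_{i=1}^n \bfD_i\bfD_i^\top
= \underbrace{\frac{1}{n}\sum_{i=1}^n (\widehat{\bfX}_i - \bfD_i)\bfD_i^\top}_{I}
+ \underbrace{\frac{1}{n}\sum_{i=1}^n \widehat{\bfX}_i\bfzeta_i^\top}_{II}.
\end{equation*}
First I would handle $I$: by Lemma \ref{proposition:cauchy:F:norm}, $\|I\|_F^2 \le \big(\frac1n\sum_i \|\widehat{\bfX}_i-\bfD_i\|_2^2\big)\big(\frac1n\sum_i\|\bfD_i\|_2^2\big)$. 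The first factor equals $\sum_{s=1}^q \|\widehat f_s - f_{0,s}\|_n^2 = O_P(r_n^2)$ by Lemma \ref{lemma:rate:of:convergence}, and the second factor is $\frac1n\sum_i\|\bfD_i\|_2^2 = O_P(1)$ since $\ev\|\bfD\|_2^2 = \sum_s\ev(f_{0,s}^2(\bfZ)) < \infty$ (as each $f_{0,s}$ is bounded on the compact domain by the compositional structure, or by Assumption \ref{Assumption:AC}\ref{Assumption:A1}) together with the law of large numbers. Hence $\|I\|_F = O_P(r_n)$.

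For term $II$, I would further split $\widehat{\bfX}_i = \bfD_i + (\widehat{\bfX}_i - \bfD_i)$, giving $II = \frac1n\sum_i \bfD_i\bfzeta_i^\top + \frac1n\sum_i (\widehat{\bfX}_i-\bfD_i)\bfzeta_i^\top$. The first summand: $\ev(\bfD\bfzeta^\top)$ has $(s,t)$ entry $\ev(f_{0,s}(\bfZ)\zeta_t)$, and since $\ev(\zeta_t | \bfZ) = \ev(X_t|\bfZ) - f_{0,t}(\bfZ) = 0$, this expectation is zero; a second-moment (Chebyshev) bound using the exponential tail of $\zeta_t$ from Lemma \ref{lemma:moment:bound} then gives $\|\frac1n\sum_i \bfD_i\bfzeta_i^\top\|_F = O_P(n^{-1/2}) = O_P(r_n)$. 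For the second summand I again apply Lemma \ref{proposition:cauchy:F:norm}: its squared Frobenius norm is at most $\big(\sum_s\|\widehat f_s - f_{0,s}\|_n^2\big)\big(\frac1n\sum_i\|\bfzeta_i\|_2^2\big) = O_P(r_n^2)\cdot O_P(1)$, using $\frac1n\sum_i\|\bfzeta_i\|_2^2 = \sum_s\|\zeta_s\|_n^2 = O_P(1)$ by the law of large numbers (or exactly the event $F_1$ from the proof of Lemma \ref{lemma:rate:of:convergence}). Combining the three pieces yields $\|II\|_F = O_P(r_n)$, and therefore the whole difference is $O_P(r_n)$.

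The main obstacle is the cross term $\frac1n\sum_i \bfD_i\bfzeta_i^\top$, which must be shown to be $O_P(n^{-1/2})$ and thus negligible relative to $r_n$ (note $r_n \gtrsim n^{-1/2}$ by its definition). This requires the conditional-mean-zero property $\ev(\zeta_s|\bfZ)=0$ — which in turn rests on the identification $f_{0,s}(\bfz) = \ev(X_s|\bfZ=\bfz)$ and, for handling the full sample, on the conditional independence Lemmas \ref{prop:conditional:indpendence:1}--\ref{prop:conditional:indpendence:2} — plus a moment bound on $\|\bfD\bfzeta^\top\|_F$ obtained from the exponential tail assumptions via Lemma \ref{lemma:moment:bound}. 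Everything else is a routine application of Lemma \ref{proposition:cauchy:F:norm} and the rate from Lemma \ref{lemma:rate:of:convergence}; care is only needed to confirm that all error terms are dominated by $r_n$.
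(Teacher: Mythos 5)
Your proof is correct and takes essentially the paper's approach: a Cauchy--Schwarz bound in Frobenius norm (Lemma \ref{proposition:cauchy:F:norm}) together with the rate from Lemma \ref{lemma:rate:of:convergence} for the first-stage error, plus a conditional-mean-zero second-moment argument for the cross term $n^{-1}\sum_i \bfD_i(\bfX_i-\bfD_i)^\top$. Your three-piece split is only cosmetically different from the paper's two-piece split: the paper combines your $I$ and $II_b$ into the single term $n^{-1}\sum_i(\widehat{\bfX}_i-\bfD_i)\bfX_i^\top$, bounding it via $n^{-1}\sum_i\|\bfX_i\|_2^2=O_P(1)$ rather than via your two separate bounded factors.
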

\begin{proof}
By triangle inequality, it follows that
\begin{eqnarray*}
	\bigg\|\frac{1}{n}\sum_{i=1}^n\widehat{\bfX}_i{\bfX}_i^\top-\frac{1}{n}\sum_{i=1}^n\bfD_i\bfD_i^\top\bigg\|_F\nonumber&\leq&\bigg\|\frac{1}{n}\sum_{i=1}^n{\bfX}_i(\widehat{\bfX}_i-\bfD_i)^\top\bigg\|_F+\bigg\|\frac{1}{n}\sum_{i=1}^n{\bfD}_i({\bfX}_i-\bfD_i)^\top\bigg\|_F\nonumber\\
	&:=&R_1+R_2,
\end{eqnarray*}
where the definition of $R_1, R_2$ is straight forward in the context. By Lemma \ref{proposition:cauchy:F:norm}, it follows that
\begin{eqnarray}
	R_1\leq \sqrt{\frac{1}{n}\sum_{i=1}^n\|\bfX_i\|_2^2\times \frac{1}{n}\sum_{i=1}^n\|\widehat{\bfX}_i-\bfD_i\|_2^2}.\nonumber
\end{eqnarray}
Since $\sum_{i=1}^n\|\bfX_i\|_2^2=\sum_{s=1}^q \sum_{i=1}^nX_{s,i}^2$, Assumption \ref{Assumption:AC}\ref{Assumption:A1} and C.L.T together imply $\sum_{i=1}^n\|\bfX_i\|_2^2/n=O_P(1)$. By Lemma \ref{lemma:rate:of:convergence} and the definition of $\widehat{\bfX}_i, \bfD_i$,  we have
\begin{eqnarray}
\frac{1}{n}\sum_{i=1}^n\|\widehat{\bfX}_i-\bfD_i\|_2^2=\frac{1}{n}\sum_{i=1}^n\sum_{s=1}^{q}|\widehat{f}_{s}(\bfZ_i)-f_{0,s}(\bfZ_i)|^2=\sum_{s=1}^{q}\|\widehat{f}_{s}-f_{0,s}\|_n^2=O_P(r_n^2).\nonumber
\end{eqnarray} 
As a consequence of above, we conclude $R_1=O_P(r_n)$. In the following, we will analyse $R_2$. By straightforward calculation, it is not difficult to show that
\begin{eqnarray*}
	R_2^2\leq \sum_{1\leq s, k\leq q}\bigg|\frac{1}{n}\sum_{i=1}^n f_{0,s}(\bfZ_i)[X_{ik}-f_{0,k}(\bfZ_i)]\bigg|^2.
\end{eqnarray*}
Since $\ev(X_{ik}|\bfZ_i)=f_{0,k}(\bfZ_i)$ for $k=1,\ldots, q$, it follows that $\ev\{f_{0,k}(\bfZ_i)[X_{ik}-f_{0,k}(\bfZ_i)]\}=0$ for $1\leq s, k\leq q$. Therefore, by Assumption \ref{Assumption:AC}\ref{Assumption:A1}, we conclude that
\begin{eqnarray*}
\ev(R_2^2)&\leq& \frac{1}{n}\sum_{1\leq s,k\leq q}\ev\{f_{0,s}^2(\bfZ)[X_{k}-f_{0,k}(\bfZ)]^2\}=O(n^{-1})=O(r_n^2).
\end{eqnarray*}
Combining above, we finish the proof.
\end{proof}

\begin{lemma}\label{lemma:difference:rate:3}
Under Assumptions \ref{Assumption:AC} and \ref{Assumption:A3}, if $LWr_n\log^3(n)=o(1)$ and  $\sqrt{LWd}r_n\log^3(n)=o(1)$, then 
$$\bigg\|\frac{1}{\sqrt{n}}\sum_{i=1}^n\widehat{\bfX}_i\epsilon_i-\frac{1}{\sqrt{n}}\sum_{i=1}^n{\bfD}_i\epsilon_i\bigg\|_2^2=o_P(1)$$
\end{lemma}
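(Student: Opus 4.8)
The plan is to reduce the vector statement to $q$ scalar ones and then recycle the localized empirical-process bound from the proof of Lemma~\ref{lemma:rate:of:convergence}. Since $\widehat{\bfX}_i-\bfD_i=(\widehat f_1(\bfZ_i)-f_{0,1}(\bfZ_i),\ldots,\widehat f_q(\bfZ_i)-f_{0,q}(\bfZ_i))^\top$, one has
\begin{equation*}
\bigg\|\frac{1}{\sqrt n}\sum_{i=1}^n\widehat{\bfX}_i\epsilon_i-\frac{1}{\sqrt n}\sum_{i=1}^n\bfD_i\epsilon_i\bigg\|_2^2=\sum_{s=1}^q n\,\langle\widehat f_s-f_{0,s},\epsilon\rangle_n^2 ,
\end{equation*}
so, $q$ being fixed, it suffices to prove $\sqrt n\,\langle\widehat f_s-f_{0,s},\epsilon\rangle_n=o_P(1)$ for each $s$.

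Next I would observe that $\epsilon$ enjoys exactly the structural properties that $\zeta_s=X_s-f_{0,s}(\bfZ)$ had in Step~2 of the proof of Lemma~\ref{lemma:rate:of:convergence}: by Assumption~\ref{Assumption:A3}\ref{A3:a}, $\ev|\epsilon|<\infty$, so Lemma~\ref{prop:conditional:indpendence:1} gives $\ev(\epsilon_i\mid\bfZ_1,\ldots,\bfZ_n)=\ev(\epsilon_i\mid\bfZ_i)=0$, Lemma~\ref{prop:conditional:indpendence:2} gives that $\epsilon_1,\ldots,\epsilon_n$ are conditionally independent given $\bfZ_1,\ldots,\bfZ_n$, $\ev(e^{\kappa_3|\epsilon|})\le\kappa_4$ makes Lemma~\ref{lemma:moment:bound} applicable to $\ev(|\epsilon|^kI(|\epsilon|>B))$, and $\ev(\epsilon^2\mid\bfZ)=\sigma_\epsilon^2$ is a (bounded) conditional variance. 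Hence the argument of that step carries over verbatim, in fact with minor simplifications, taking $m_n=6\log(n)/\kappa_3$: split $\epsilon_i=\epsilon_iI(|\epsilon_i|\le m_n)+\epsilon_iI(|\epsilon_i|>m_n)$, bound the truncated part by Dudley's entropy integral (Lemma~\ref{lemma:dudley:inequality}) together with the pseudo-dimension bound for $\mcF_{d,1}(L,W)$ (Lemma~\ref{lemma:entroy:bound}), and the heavy-tail part by Cauchy--Schwarz and Lemma~\ref{lemma:moment:bound}, applying the one-sided bound to $\pm\epsilon$ to obtain the two-sided estimate. This yields: for every $1/n<r<n$, with probability at least $1-q\log^{-2}(n)$,
\begin{equation*}
\sup_{f\in\mcF_{d,1}(L,W),\,\|f-f_{0,s}\|_n\le r}\big|\langle f-f_{0,s},\epsilon\rangle_n\big|\le r\,\widetilde V_n,\qquad s=1,\ldots,q,
\end{equation*}
where $\widetilde V_n$ has the same order as the quantity $V_n$ in that proof, namely $\sqrt{(L^2W^2+LWd)/n}$ up to a fixed power of $\log n$.

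Finally I would localize. By Theorem~\ref{theorem:rate:deep} (equivalently Lemma~\ref{lemma:rate:of:convergence}), $\|\widehat f_s-f_{0,s}\|_n=O_P(r_n)$, so for any $\eta>0$ there is $M_\eta$ with $\pr(\|\widehat f_s-f_{0,s}\|_n>M_\eta r_n\text{ for some }s)<\eta$; on that complement intersected with the display above at $r=M_\eta r_n$ (still of probability $\ge 1-\eta-q\log^{-2}(n)$) we get $\sqrt n\,|\langle\widehat f_s-f_{0,s},\epsilon\rangle_n|\le M_\eta\sqrt n\,r_n\widetilde V_n\lesssim M_\eta\,r_n\,(LW+\sqrt{LWd})\,\mathrm{polylog}(n)$, using $\sqrt n\,\widetilde V_n\lesssim\sqrt{L^2W^2+LWd}\,\mathrm{polylog}(n)\le(LW+\sqrt{LWd})\,\mathrm{polylog}(n)$. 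Letting $n\to\infty$ and then $\eta\to0$, the hypotheses $LW r_n\log^3(n)=o(1)$ and $\sqrt{LWd}\,r_n\log^3(n)=o(1)$ force the right-hand side to $0$, giving the claim. The one step that genuinely requires care — and the main obstacle — is the bookkeeping of the logarithmic factors: one must track the exact power of $\log n$ accumulated in $\widetilde V_n$ (from $m_n\asymp\log n$, the $\log^2 n$ Chebyshev factor, and the $\sqrt{\log(\cdot)}$ terms in the entropy integral) and verify that, after multiplication by $\sqrt n\,r_n$, it is absorbed by the $\log^3(n)$ in the stated hypotheses; everything else is a mechanical transcription of the proof of Lemma~\ref{lemma:rate:of:convergence}.
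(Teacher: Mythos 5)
Your proposal is correct and follows essentially the same route as the paper's proof: reduce to the $q$ scalar terms, localize over $\{f\in\mcF_{d,1}(L,W):\|f-f_{0,s}\|_n\le br_n\}$ using the first-stage rate, truncate $\epsilon_i$ at $m_n\asymp\log n$ (centering by conditional means so Lemma \ref{lemma:dudley:inequality} applies), control the bounded part via the Dudley/pseudo-dimension entropy bound and the tail part via Cauchy--Schwarz and Lemma \ref{lemma:moment:bound}, and check that $\sqrt{n}\,r_n\widetilde V_n\to 0$ under the stated rate conditions. The log-factor bookkeeping you flag is exactly where the paper's argument lands as well (its final bound is $O_P(m_nr_n\log(n)\sqrt{L(LW^2+Wd)/n}+r_ne^{-\kappa_3m_n/6})$, absorbed by the $\log^3(n)$ in the hypotheses), so no gap remains.
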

\begin{proof}
By definition, it follows that
\begin{eqnarray*}
	\bigg\|\frac{1}{\sqrt{n}}\sum_{i=1}^n\widehat{\bfX}_i\epsilon_i-\frac{1}{\sqrt{n}}\sum_{i=1}^n{\bfD}_i\epsilon_i\bigg\|^2_2=\sum_{s=1}^q\bigg|\frac{1}{\sqrt{n}}\sum_{i=1}^n\bigg(\widehat{f}_s(\bfZ_i)-f_{0,s}(\bfZ_i)\bigg)\epsilon_i\bigg|^2.
\end{eqnarray*}
By Lemma \ref{lemma:rate:of:convergence}, it suffices to show the following holds for all $c>0$:
\begin{equation}\label{eq:lemma:difference:rate:3:goal}
	\sup_{f-f_{0,s}\in \mcH_s^b}\bigg|\frac{1}{n}\sum_{i=1}^n\bigg(f(\bfZ_i)-f_{0,s}(\bfZ_i)\bigg)\epsilon_i\bigg|=o_P(n^{-1/2}),\quad \textrm{ for all } s=1,\ldots, q,
\end{equation}
where $\mcH_s^b=\{f-f_{0,s} :  f\in \mcF_{d,1}(L, W), \|f-f_{0,s}\|_n\leq  br_n\}$. Now define 
\begin{eqnarray}
	\rho_{i1}=\epsilon_i I(|\epsilon_i|\leq m_n)-\ev\bigg(\epsilon_i I(|\epsilon_i|\leq m_n)\bigg| \bfZ_i\bigg)\quad \textrm{ and}\quad \rho_{i2}=\epsilon_i I(|\epsilon_i|> m_n)-\ev\bigg(\epsilon_i I(|\epsilon_i|> m_n)\bigg| \bfZ_i\bigg),\nonumber
\end{eqnarray}
where $m_n$ is a deterministic sequence to be specified later. Therefore, (\ref{eq:lemma:difference:rate:3:goal}) can be bounded by $S_1+S_2$, where
\begin{eqnarray}
	S_1=\sup_{f-f_{0,s}\in \mcH_s^b}\bigg|\frac{1}{n}\sum_{i=1}^n\bigg(f(\bfZ_i)-f_{0,s}(\bfZ_i)\bigg)\rho_{i1}\bigg|\quad \textrm{ and }\quad S_2=\sup_{f-f_{0,s}\in \mcH_s^b}\bigg|\frac{1}{n}\sum_{i=1}^n\bigg(f(\bfZ_i)-f_{0,s}(\bfZ_i)\bigg)\rho_{i2}\bigg|.\nonumber
\end{eqnarray}
Since $|\rho_{i1}|\leq 2m_n$ and $\ev(\rho_{i1}|\bfZ	_i)=0$, therefore, by Lemma \ref{prop:conditional:indpendence:2} and Lemma \ref{lemma:dudley:inequality}, it follows that
\begin{eqnarray}
	\ev_{\mathbb{Z}}(S_1)&\leq&4x\sqrt{\frac{1}{n}\sum_{i=1}^n\ev(\rho_{i1}^2 |\bfZ_i)}+24m_n\int_{x}^{br_n}\sqrt{\frac{\log \mcN(u, \mcH_s^b, \|\cdot\|_n)}{n}}du\quad \textrm{ for all } 0<x<br_n.\nonumber
\end{eqnarray}
By Lemma \ref{lemma:entroy:bound} and the fact that $\max_{1\leq i\leq n}|f(\bfZ_i)-f_{0,s}(\bfZ_i)|\leq bnr_n\leq bn$, it follows that 
\begin{eqnarray}
	\log \mcN(x, \mcH_s^b, \|\cdot\|_n)\leq cL(LW^2+Wd)\log(LW^2+Wd)\log\bigg(\frac{2ben^2}{x}\bigg).\nonumber
\end{eqnarray}
Combining above two inequality and choosing
\begin{eqnarray}
	x=\epsilon:=br_n\sqrt{\frac{cL(LW^2+Wd)}{n}}\leq br_n\quad \textrm{ with }\quad cL(LW^2+Wd)\geq 1,\nonumber
\end{eqnarray}
we have
\begin{eqnarray}
	\ev_{\mathbb{Z}}(S_1)&\leq& 4\epsilon\sqrt{\frac{1}{n}\sum_{i=1}^n\ev(\rho_{i1}^2|\bfZ_i)}+24bm_nr_n\sqrt{\frac{\log \mcN(\epsilon, \mcH_s^b, \|\cdot\|_n)}{n}}\nonumber\\
	&\leq& 4br_n\sqrt{\frac{cL(LW^2+Wd)}{n}}\sqrt{\frac{1}{n}\sum_{i=1}^n\ev(\rho_{i1}^2|\bfZ_i)}\nonumber\\
	&&+24bm_nr_n\sqrt{\frac{cL(LW^2+Wd)}{n}\log(LW^2+Wd)\log\bigg(\frac{2ben^2}{\epsilon}\bigg)}\nonumber\\
		&\leq& 4br_n\sqrt{\frac{cL(LW^2+Wd)}{n}}\sqrt{\frac{1}{n}\sum_{i=1}^n\ev(\rho_{i1}^2|\bfZ_i)}\nonumber\\
	&&+24bm_nr_n\sqrt{\frac{cL(LW^2+Wd)}{n}\log(LW^2+Wd)\log(2en^3)},\nonumber
\end{eqnarray}
where we use the facts that $r_n\geq n^{-1/2}$ and $\epsilon\geq b/n$. Therefore, we conclude that
\begin{eqnarray}
	\ev(S_1)&\leq& 4br_n\sqrt{\frac{cL(LW^2+Wd)}{n}}\sqrt{\ev(\epsilon^2)}\nonumber\\
	&&+24bm_nr_n\sqrt{\frac{cL(LW^2+Wd)}{n}\log(LW^2+Wd)\log(2en^3)}\nonumber\\
	&=&O\bigg(m_nr_n\sqrt{\frac{L(LW^2+Wd)}{n}\log(LW^2+Wd)\log(n)}\bigg).\label{eq:lemma:difference:rate:3:eq1}
\end{eqnarray}
For the second term, by Cauchy–Schwarz inequality, it follows that
\begin{eqnarray}
	S_2\leq \sqrt{\frac{1}{n}\sum_{i=1}^n\bigg|\rho_{i2}-\ev(\rho_{i2}|\bfZ_i)\bigg|^2}\sup_{f-f_{0,s}\in\mcH_s^b}\|f-f_{0,s}\|_n.\nonumber
\end{eqnarray}
By taking conditional expectation, we conclude that
\begin{align}
	\ev(S_2)=\ev(\ev_{\mathbb{Z}}(S_2))\leq br_n\ev\bigg(\sqrt{\frac{1}{n}\sum_{i=1}^n\ev(\rho_{i2}^2|\bfZ_i)}\bigg)&\leq br_n\sqrt{\ev(\epsilon^2I(|\epsilon|>m_n))}\nonumber\\
	&\leq br_n\sqrt{\kappa_4\bigg(\frac{3}{\kappa_3}\bigg)^3e^{-\kappa_3m_n/3}},\label{eq:lemma:difference:rate:3:eq2}
\end{align}
where the last inequality comes from Lemma \ref{lemma:moment:bound}.
As a consequence of (\ref{eq:lemma:difference:rate:3:eq1}) and (\ref{eq:lemma:difference:rate:3:eq2}), we conclude that
\begin{eqnarray}
	&&\sup_{f-f_{0,s}\in \mcH_s^b}\bigg|\frac{1}{n}\sum_{i=1}^n\bigg(f(\bfZ_i)-f_{0,s}(\bfZ_i)\bigg)\epsilon_i\bigg|\nonumber\\
	&=&O_P\bigg(m_nr_n\sqrt{\frac{L(LW^2+Wd)}{n}\log(LW^2+Wd)\log(n)}+r_ne^{-\kappa_3m_n/6}\bigg)\nonumber\\
	&=&O_P\bigg(m_nr_n\log(n)\sqrt{\frac{L(LW^2+Wd)}{n}}+r_ne^{-\kappa_3m_n/6}\bigg),\label{eq:lemma:difference:rate:3:eq3}
\end{eqnarray}
where we use the fact that $\log(LW)=O(\log(n))$.
Notice by the rate conditions given, we always can choose the sequence $m_n$ such that, the rate of (\ref{eq:lemma:difference:rate:3:eq3}) is of order $O_P(n^{-1/2})$
\end{proof}

\begin{lemma}\label{theorem:asymptotic:distribution}
Under Assumptions \ref{Assumption:AC} and \ref{Assumption:A3}, if  $LWr_n\log^3(n)=o(1)$ and  $\sqrt{LWd}r_n\log^3(n)=o(1)$, then it follows that
\begin{equation*}
	\sqrt{n}(\widehat{\beta}-\beta_0)\cid \textrm{N}(0, \sigma_\epsilon^2 \ev^{-1}(\bfD\bfD^\top)).
\end{equation*}
\end{lemma}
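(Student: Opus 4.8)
The plan is to express $\sqrt{n}(\widehat{\beta}-\beta_0)$ as a matrix times a normalized sum, and then to invoke the consequences of the first-stage rate established in Lemmas \ref{lemma:difference:rate:2} and \ref{lemma:difference:rate:3}. Since $Y_i=\beta_0^\top\bfX_i+\epsilon_i$, the definition (\ref{eq:estimator:step:2}) of $\widehat{\beta}$ gives
\[
\sqrt{n}(\widehat{\beta}-\beta_0)=\bigg(\frac{1}{n}\sum_{i=1}^n\widehat{\bfX}_i\bfX_i^\top\bigg)^{-1}\frac{1}{\sqrt{n}}\sum_{i=1}^n\widehat{\bfX}_i\epsilon_i=:A_n^{-1}B_n .
\]
I would first show $A_n\cip\ev(\bfD\bfD^\top)$, then $B_n\cid\textrm{N}(0,\sigma_\epsilon^2\ev(\bfD\bfD^\top))$, and finally combine the two by Slutsky's theorem. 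Observe at the outset that, since $LW\to\infty$, the hypothesis $LWr_n\log^3(n)=o(1)$ already forces $r_n=o(1)$, and the same two rate conditions imply $LW=o(\sqrt{n})$ and $LWd=o(n)$; hence Lemmas \ref{lemma:rate:of:convergence}, \ref{lemma:difference:rate:2} and \ref{lemma:difference:rate:3} are all applicable.

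For the matrix factor, write $A_n=\big(A_n-\tfrac1n\sum_{i=1}^n\bfD_i\bfD_i^\top\big)+\tfrac1n\sum_{i=1}^n\bfD_i\bfD_i^\top$. The first bracket has Frobenius norm $O_P(r_n)=o_P(1)$ by Lemma \ref{lemma:difference:rate:2}. The second term converges in probability to $\ev(\bfD\bfD^\top)$ by the weak law of large numbers: each $f_{0,s}$ is continuous on the compact cube $[a_0,b_0]^d$ by Definition \ref{Definition:Convolution:f0}, so $\bfD=(f_{0,1}(\bfZ),\ldots,f_{0,q}(\bfZ))^\top$ is bounded and $\ev\|\bfD\bfD^\top\|_F<\infty$. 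Because $\ev(\bfD\bfD^\top)$ is positive definite by Assumption \ref{Assumption:A3}\ref{A3:b}, its smallest eigenvalue is bounded away from zero, so with probability tending to one $A_n$ is invertible and, by the continuous mapping theorem, $A_n^{-1}\cip\ev^{-1}(\bfD\bfD^\top)$.

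For the normalized sum, write $B_n=\tfrac1{\sqrt n}\sum_{i=1}^n\bfD_i\epsilon_i+\big(B_n-\tfrac1{\sqrt n}\sum_{i=1}^n\bfD_i\epsilon_i\big)$. The second term is precisely the quantity shown to be $o_P(1)$ in Lemma \ref{lemma:difference:rate:3}, whose hypotheses are exactly the rate conditions assumed here. This is the only genuinely delicate point: $\widehat{\bfX}_i-\bfD_i$ is data-dependent, so one cannot decouple it from $\epsilon_i$ by a crude product bound, and a uniform control over the neural-network class (with a truncation of $\epsilon$, as carried out in that lemma) is required; everything else reduces to bookkeeping. For the leading term, $\ev(\bfD_i\epsilon_i)=\ev\big(\bfD_i\,\ev(\epsilon_i\mid\bfZ_i)\big)=0$ by Assumption \ref{Assumption:A3}\ref{A3:0} (equivalently, the moment condition (\ref{eq:unconditional:moment:condition})), and $\ev(\bfD_i\bfD_i^\top\epsilon_i^2)=\ev\big(\bfD_i\bfD_i^\top\,\ev(\epsilon_i^2\mid\bfZ_i)\big)=\sigma_\epsilon^2\,\ev(\bfD\bfD^\top)$ by Assumption \ref{Assumption:A3}\ref{A3:a}; this covariance is finite since $\bfD$ is bounded and $\epsilon$ has exponential moments. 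The classical multivariate central limit theorem for i.i.d. vectors then gives $\tfrac1{\sqrt n}\sum_{i=1}^n\bfD_i\epsilon_i\cid\textrm{N}(0,\sigma_\epsilon^2\ev(\bfD\bfD^\top))$, hence $B_n\cid\textrm{N}(0,\sigma_\epsilon^2\ev(\bfD\bfD^\top))$.

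Slutsky's theorem applied to $A_n^{-1}B_n$ then yields
\[
\sqrt{n}(\widehat{\beta}-\beta_0)\cid\ev^{-1}(\bfD\bfD^\top)\,\textrm{N}\big(0,\sigma_\epsilon^2\ev(\bfD\bfD^\top)\big)=\textrm{N}\big(0,\sigma_\epsilon^2\ev^{-1}(\bfD\bfD^\top)\big),
\]
which is the assertion. The main obstacle is thus entirely concentrated in Lemma \ref{lemma:difference:rate:3}: bounding $\tfrac1{\sqrt n}\sum_{i=1}^n(\widehat{\bfX}_i-\bfD_i)\epsilon_i$ by $o_P(1)$ under the stated rate conditions. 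I would also note in passing that $q$ is held fixed throughout (only $d$ is allowed to diverge, per Assumption \ref{Assumption:AC}\ref{A1:b}), so the central limit theorem above is applied in a fixed dimension and no high-dimensional refinement is needed.
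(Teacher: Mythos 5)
Your proposal is correct and follows essentially the same route as the paper's proof: the identical decomposition of $\sqrt{n}(\widehat{\beta}-\beta_0)$ into the matrix factor $\big(\tfrac1n\sum_i\widehat{\bfX}_i\bfX_i^\top\big)^{-1}$ times $\tfrac1{\sqrt n}\sum_i\widehat{\bfX}_i\epsilon_i$, controlled respectively by Lemma \ref{lemma:difference:rate:2} and Lemma \ref{lemma:difference:rate:3}, followed by the CLT for $n^{-1/2}\sum_i\bfD_i\epsilon_i$ and Slutsky's theorem. Your added observations (that the stated rate conditions imply $LW=o(\sqrt n)$ and $LWd=o(n)$, and that $\bfD$ is bounded so the CLT covariance is finite) are correct and, if anything, slightly more careful than the paper's write-up.
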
   
\begin{proof}
By simple calculation, it follows that
\begin{eqnarray*}
	\sqrt{n}(\widehat{\beta}-\beta_0)&=&\sqrt{n}\bigg[\bigg(\frac{1}{n}\sum_{i=1}^n\widehat{\bfX}_i{\bfX}_i^\top\bigg)^{-1}\frac{1}{n}\sum_{i=1}^n\widehat{\bfX}_iY_i-\beta_0\bigg]\nonumber\\
	&=&\sqrt{n}\bigg[\bigg(\frac{1}{n}\sum_{i=1}^n\widehat{\bfX}_i{\bfX}_i^\top\bigg)^{-1}\frac{1}{n}\sum_{i=1}^n\widehat{\bfX}_i\bfX_i^\top \beta_0-\beta_0\bigg]+\bigg(\frac{1}{n}\sum_{i=1}^n\widehat{\bfX}_i{\bfX}_i^\top\bigg)^{-1}\frac{1}{\sqrt{n}}\sum_{i=1}^n\widehat{\bfX}_i\epsilon_i\nonumber\\
	&:=&R_1+R_2,
\end{eqnarray*}
where the definition of $R_1, R_2$ is straightforward in the context. By Lemma \ref{lemma:difference:rate:2} and Assumption \ref{Assumption:A3}\ref{A3:b}, $\sum_{i=1}^n\widehat{\bfX}_i{\bfX}_i^\top/n$ is asymptotically invertible, and thus $R_1=0$. Furthermore, one can verify that the  C.L.T holds for $\sum_{i=1}^n\bfD_i\epsilon_i/\sqrt{n}$ using Assumption \ref{Assumption:A3}. As a consequence of Slutsky's Theorem, Lemmas \ref{lemma:difference:rate:2} and \ref{lemma:difference:rate:3}, we can show $R_2\to  \textrm{N}(0, \sigma_\epsilon^2 \ev^{-1}(\bfD\bfD^\top))$
\end{proof}

\begin{proof}[\textbf{Proof of Theorem \ref{theorem:asymptotic:distribution:deep}}]
By Theorem \ref{theorem:rate:deep}, it follows that
\begin{equation*}
r_n=O\bigg(\log^5(n)\sqrt{\frac{L^2W^2+LWd}{n}}+\log^{\frac{4p^*}{t^*}}(n)(LW)^{-\frac{2p^*}{t^*}}\bigg).
\end{equation*}
Therefore, if the following holds
\begin{eqnarray*}
	L^2W^2\log^8(n)=o(n^{1/2}), \quad LWd\log^8(n)=o(n^{1/2}),\quad (LW)^{1-2p^*/t^*}[\log(n)]^{3+4p^*/t^*}=o(1),
\end{eqnarray*}
the rate conditions in Lemma \ref{theorem:asymptotic:distribution} will be satisfied.
\end{proof}

\begin{proof}[\textbf{Proof of Lemma \ref{lemma:estimation:variance}}]
By Lemma \ref{lemma:difference:rate:2}, Assumption \ref{Assumption:A3}\ref{A3:b} and C.L.T, it is not difficult to see that
\begin{eqnarray*}
	\frac{1}{n}\sum_{i=1}^n\widehat{\bfX}_i\bfX_i^\top=\ev(\bfD\bfD^\top)+o_P(1).
\end{eqnarray*}
Moreover, since $\widehat{\beta}=\beta_0+o_P(1)$, we have
\begin{eqnarray*}
	\frac{1}{n}\sum_{i=1}^n\widehat{\epsilon}_i^2&=&\frac{1}{n}\sum_{i=1}^n|Y_i-\widehat{\beta}^\top\bfX_i|^2\nonumber\\
	&=&\frac{1}{n}\sum_{i=1}^n|Y_i-\beta_0^\top\bfX_i|^2+\frac{1}{n}\sum_{i=1}^n|\beta_0^\top\bfX_i-\widehat{\beta}^\top\bfX_i|^2+\frac{2}{n}\sum_{i=1}^n(Y_i-\beta_0^\top\bfX_i)(\beta_0^\top\bfX_i-\widehat{\beta}^\top\bfX_i)\nonumber\\
	&=&\frac{1}{n}\sum_{i=1}^n\epsilon_i^2+(\beta_0-\widehat{\beta})^\top\frac{1}{n}\sum_{i=1}^n\bfX_i\bfX_i^\top(\beta_0-\widehat{\beta})+(\beta_0-\widehat{\beta})^\top\frac{2}{n}\sum_{i=1}^n \epsilon_i\bfX_i
\end{eqnarray*}
\end{proof}

\subsection{Proof of Results in Section \ref{sec:extra:results}}\label{sec:proof:sec:5}
 For constant $C>0$, we define the truncation operator at level $C$ by $T_C(f)=fI(|f|\leq C)$, for real-valued function $f$. Therefore, $\widecheck{f}_s=T_{C_n}(\widehat{f}_s)$.
\begin{lemma}\label{lemma:convergence:truncated:DNN}
Under Assumption \ref{Assumption:AC}, if $C_n \to \infty$, $C_n=O(\log(n))$, $LW=o(\sqrt{n})$ and $LWd=o(n)$, then
\begin{eqnarray}
	\|\widecheck{f}_s-f_{0,s}\|_n=O_P(r_n)\;\;\textrm{ and }\;\; \|\widecheck{f}_s-f_{0,s}\|=O_P(r_n)\quad \textrm{ for all } s=1,\ldots, q.\nonumber
\end{eqnarray}
\end{lemma}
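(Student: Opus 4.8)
The plan is to treat the two assertions separately: the $\|\cdot\|_n$-bound falls out of Lemma~\ref{lemma:rate:of:convergence} once one observes that the indicator truncation cannot make things worse, and the $\|\cdot\|$-bound is obtained from a localized empirical-process argument that exploits the uniform boundedness of $\widecheck f_s$ by $C_n = O(\log n)$.

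\textbf{Empirical norm.} Since each $f_{0,s}$ lies in $\mcCS(L_*,\bfd,\bft,\bfp,\bfa,\bfb,C)$, its range is contained in $[-C,C]$, so $\|f_{0,s}\|_\infty\le C$. As $C_n\to\infty$ we may assume $C_n>2C$, and then at every $\bfz$ a short case check gives $|T_{C_n}(\widehat f_s)(\bfz)-f_{0,s}(\bfz)|\le |\widehat f_s(\bfz)-f_{0,s}(\bfz)|$: the two quantities coincide when $|\widehat f_s(\bfz)|\le C_n$, while if $|\widehat f_s(\bfz)|>C_n$ the left-hand side equals $|f_{0,s}(\bfz)|\le C$ and the right-hand side is at least $C_n-C>C$. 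Hence $\|\widecheck f_s-f_{0,s}\|_n\le \|\widehat f_s-f_{0,s}\|_n=O_P(r_n)$ by Lemma~\ref{lemma:rate:of:convergence}.

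\textbf{Population norm.} Write $g=\widecheck f_s-f_{0,s}$, which belongs to $\mcG_n:=\{T_{C_n}(f)-f_{0,s}:f\in \mcF_{d,1}(L,W)\}$, a class of functions bounded by $C_n+C$. First I would control the complexity of $\mcG_n$. A contraction shortcut is unavailable because $x\mapsto xI(|x|\le C_n)$ is not Lipschitz, but this map has only three monotone pieces, so the subgraphs of its compositions with $\mcF_{d,1}(L,W)$ are Boolean combinations of boundedly many sets taken from VC classes of dimension $O(\textrm{Pdim}(\mcF_{d,1}(L,W)))$; hence $\textrm{Pdim}(\mcG_n)\le c\,\textrm{Pdim}(\mcF_{d,1}(L,W))$, and Lemma~\ref{lemma:entroy:bound} gives $\log N(x,\mcG_n,\|\cdot\|_{n,\infty})\lesssim \textrm{Pdim}(\mcF_{d,1}(L,W))\log(2enC_n/x)$. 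Then I would peel over the dyadic shells $\{g\in\mcG_n:2^{j-1}\delta< \|g\|\le 2^j\delta\}$ with $\delta\asymp r_n$. On each shell $\textrm{Var}(g^2)\le 4C_n^2\|g\|^2$, so pairing the concentration bound of Lemma~\ref{lemma:concentration:inequality} applied to the squared functions with the contraction inequality of Lemma~\ref{lemma:contraction:inequality} (squaring is $2C_n$-Lipschitz on the relevant range) and the chaining bound of Lemma~\ref{lemma:dudley:inequality} fed by the above entropy estimate, one obtains that with probability tending to one, $\big|\,\|g\|^2-\|g\|_n^2\,\big|\le \tfrac12\|g\|^2+c\,r_n^2$ simultaneously over $\mcG_n$; the slack $c\,r_n^2$ absorbs the extra $C_n^2=O(\log^2 n)$ factor precisely because the leading part of $r_n^2$ already carries $\log^8 n$ while $\textrm{Pdim}(\mcF_{d,1}(L,W))/n$ matches the remaining factor in $r_n^2$. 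Rearranging yields $\|g\|^2\le 2\|g\|_n^2+2c\,r_n^2$ for every $g\in\mcG_n$; specializing to $g=\widecheck f_s-f_{0,s}$ and invoking the empirical-norm bound gives $\|\widecheck f_s-f_{0,s}\|=O_P(r_n)$.

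\textbf{Main obstacle.} All the substance is in the population-norm step: one must verify that the indicator truncation inflates the metric entropy of the network class by at most a constant factor (so the usual Lipschitz argument must be replaced by a VC/subgraph count), and then localize carefully enough that the additive slack is $O_P(r_n^2)$ rather than a coarser polylog-inflated term that would spoil the rate — this is where the generous powers of $\log n$ built into $r_n$ are consumed. This $\|\cdot\|$-rate is exactly what makes the cross-fitted products in Stage~3 of the split-sample construction tractable, and hence it underlies Theorem~\ref{theorem:split:sample:DIVE:deep}.
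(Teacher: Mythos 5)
Your empirical-norm step is exactly the paper's: once $C_n\ge C$ one has $|T_{C_n}(\widehat f_s)-f_{0,s}|\le|\widehat f_s-f_{0,s}|$ pointwise, and Lemma~\ref{lemma:rate:of:convergence} finishes. For the population norm your overall architecture (squaring via contraction, the concentration bound of Lemma~\ref{lemma:concentration:inequality}, dyadic peeling, Dudley chaining fed by an entropy estimate) is also the paper's, but two points deserve comment.

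First, the entropy of the truncated class. You replace the Lipschitz-contraction reduction by a VC/subgraph count because $x\mapsto xI(|x|\le C_n)$ is not Lipschitz. The paper instead asserts $\|T_{C_n}(f)-T_{C_n}(g)\|_n\le\|f-g\|_n$ and reduces directly to $\mcN(\cdot,\mcF_{d,1}(L,W),\|\cdot\|_n)$. You are right that the hard truncation is not a contraction (take $f\equiv C_n$ and $g\equiv C_n+\varepsilon$), so your extra care is warranted, although your claim $\textrm{Pdim}(\mcG_n)\le c\,\textrm{Pdim}(\mcF_{d,1}(L,W))$ is itself only sketched; either route lands on the same $L(LW^2+Wd)\log(\cdot)$ entropy and the same final rate.

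Second, and this is the substantive gap: you peel in the population norm over all of $\mcG_n$ and feed Lemma~\ref{lemma:dudley:inequality} ``the above entropy estimate,'' but that lemma's chaining is indexed by the \emph{empirical} radius $\|g\|_n\le r$. On a population shell $\{2^{j-1}\delta<\|g\|\le 2^{j}\delta\}$ you have no a priori control of $\sup\|g\|_n$ beyond the trivial $2C_n$ --- and controlling it is precisely what you are trying to prove. As written, the uniform bound $\bigl|\,\|g\|^2-\|g\|_n^2\,\bigr|\le\tfrac12\|g\|^2+c\,r_n^2$ over the whole class does not follow from the cited lemmas; you would need the full localized-Rademacher fixed-point machinery to localize in the population radius alone. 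The paper avoids this circularity by defining the localized class $\mcH_{a,r}$ with \emph{both} constraints, $\|T_{C_n}(f)-f_{0,s}\|_n\le ar_n$ and $\|T_{C_n}(f)-f_{0,s}\|\le r$: the empirical constraint, which holds for $\widehat f_s$ on a high-probability event by the first part of the lemma, supplies the fixed radius $ar_n$ for the Dudley integral uniformly over every population shell, and the peeling then shrinks only $r$. Adding that intersection repairs your argument; your bookkeeping of the $C_n^2=O(\log^2 n)$ factor against the $\log^8 n$ already present in $r_n^2$ is otherwise the right accounting and matches the paper's choice of $\bar r_*$.
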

\begin{proof}
Since $\|f_{0,s}\|_\infty<C$ and $C_n\geq C$, it is not difficult to verify that $|\widecheck{f}_s(\bfz)-f_{0,s}(\bfz)|\leq |\widehat{f}_s(\bfz)-f_{0,s}(\bfz)|$. Therefore, by Lemma \ref{lemma:rate:of:convergence}, we conclude that
\begin{eqnarray}
	\|\widecheck{f}_s-f_{0,s}\|_n=O_P(r_n)\quad \textrm{ for all } s=1,\ldots, q.\nonumber
\end{eqnarray}
Moreover, by definition, it follows that $\|\widecheck{f}_s-f_{0,s}\|_\infty\leq C_n+C\leq 2C_n$. 
\newline
\noindent \textbf{Step 1:}
For fixed $a>1$, we define 
\begin{equation*}
	\mcH_{a,r}=\{T_{C_n}(f)-f_{0,s}: f\in \mcF_{d,1}(L, W),\|T_{C_n}(f)-f_{0,s}\|_n\leq ar_n  ,\|T_{C_n}(f)-f_{0,s}\|\leq r\}
\end{equation*}
and $\mcH_{a,r}^2=\{h^2: h\in \mcH_a\}$. Therefore, $\|T_{C_n}(f)-f_{0,s}\|_\infty\leq C_n+C\leq 2C_n$. Notice the map $x:\to x^2$ is Lipschitz with Lipschitz constant  $4C_n$ for $x\in [-2C_n, 2C_n]$,  as a consequence of  Lemma \ref{lemma:contraction:inequality}, we have $\ev(\mcRn\mcH_{a,r}^2)\leq 4C_n\ev(\mcRn\mcH_{a,r})$.  Since $\|T_{C_n}(f)-f_{0,s}\|_\infty\leq 2C_n$ and $\textrm{Var}[|T_{C_n}(f)(\bfZ)-f_{0,s}(\bfZ)|^2]\leq 4C_n^2\ev(|T_{C_n}(f)(\bfZ)-f_{0,s}(\bfZ)|^2)\leq 4C_n^2r^2$, 	by Lemma \ref{lemma:concentration:inequality}, with probability at least $1-2e^{-\eta}$, the following holds:
\begin{eqnarray}
	\sup_{T_{C_n}(f)-f_{0,s}\in \mcH_{a,r}}\bigg|\|T_{C_n}(f)-f_{0,s}\|_n^2-\|T_{C_n}(f)-f_{0,s}\|^2\bigg|&=&\sup_{h\in \mcH_{a,r}^2}\bigg|(\pr-\pr_n)[h]\bigg|\nonumber\\
	&\leq&3	\ev(\mcRn\mcH_{a,r}^2)+3C_nr\sqrt{\frac{\eta}{n}}+\frac{16C_n^2\eta}{3n}\nonumber\\
	&\leq& 12C_n\ev(\mcRn\mcH_{a,r})+3C_nr\sqrt{\frac{\eta}{n}}+\frac{6C_n^2\eta}{n}.\nonumber
\end{eqnarray}
Therefore, with probability at least $1-2e^{-\eta}$, it holds for all $T_{C_n}(f)-f_{0,s}\in \mcH_{a,r}$ that
\begin{eqnarray}
	\|T_{C_n}(f)-f_{0,s}\|^2&\leq&\|T_{C_n}(f)-f_{0,s}\|_n^2+12C_n\ev(\mcRn\mcH_{a,r})+3C_nr\sqrt{\frac{\eta}{n}}+\frac{6C_n^2\eta}{n}\nonumber\\
	&\leq& a^2r_n^2+12C_n\ev(\mcRn\mcH_{a,r})+3C_nr\sqrt{\frac{\eta}{n}}+\frac{6C_n^2\eta}{n}.\label{eq:lemma:convergence:truncated:DNN:step1:eq1}
\end{eqnarray}
By (\ref{eq:lemma:convergence:truncated:DNN:step1:eq1}), if $r$ is chosen such that
\begin{eqnarray}
	a^2r_n^2\leq \frac{r^2}{16},\quad 12C_n\ev(\mcRn\mcH_{a,r})\leq\frac{r^2}{16},\quad 3C_nr\sqrt{\frac{\eta}{n}}\leq \frac{r^2}{16} \;\textrm{ and }\;  \frac{6C_n^2\eta}{n}\leq \frac{r^2}{16},\label{eq:lemma:convergence:truncated:DNN:step1:eq2}
\end{eqnarray}
then we conclude that with probability at least $1-2e^{-\eta}$,
\begin{eqnarray}
	\|T_{C_n}(f)-f_{0,s}\|^2\leq \frac{r^2}{4}\;\;\;\; \textrm{ for all } T_{C_n}(f)-f_{0,s}\in \mcH_{a,r}.\label{eq:lemma:convergence:truncated:DNN:step1:eq3}
\end{eqnarray}
\noindent \textbf{Step 2:} Define event $F_1:=\{\|T_{C_n}(\widehat{f}_s)-f_{0,s}\|_n\leq ar_n\}$. Since $\|T_{C_n}(f)-f_{0,s}\|_\infty\leq 2C_n$, we conclude that, on event $F_1$, $T_{C_n}(\widehat{f}_s)-f_{0,s}\in \mcH_{a,  2C_n}$. If we choose some positive integer $S=\ceil{\log_2(2nC_n)}$ and real number $\bar{r}>n^{-1}$, then
\begin{eqnarray}
	2C_n \leq 2^S\bar{r}\nonumber
\end{eqnarray}
As a consequence, on event $F_1$, $\|T_{C_n}(\widehat{f}_s)-f_{0,s}\|\leq 2^S\bar{r}$. For $j=1,\ldots, S$, we further define the event 
\begin{eqnarray}
	E_j:=\bigg\{\|T_{C_n}(f)-f_{0,s}\|\leq 2^{j-1}\bar{r} \;\;\;\; \textrm{ for all } T_{C_n}(f)-f_{0,s}\in \mcH_{a,2^j\bar{r}}\bigg\}\quad \textrm{ for } j=1,\ldots, S.\nonumber
\end{eqnarray}
By (\ref{eq:lemma:convergence:truncated:DNN:step1:eq2}) and (\ref{eq:lemma:convergence:truncated:DNN:step1:eq3}), we have
\begin{eqnarray}
	\pr(E_j)\geq 1-2e^{-\eta}, \label{eq:lemma:convergence:truncated:DNN:step2:eq1}
\end{eqnarray}
if the following holds:
\begin{eqnarray}
	a^2r_n^2\leq \frac{2^{2j}\bar{r}^2}{16},\quad 12C_n\ev(\mcRn\mcH_{a,2^j\bar{r}})\leq\frac{2^{2j}\bar{r}^2}{16},\quad 3C_n\sqrt{\frac{\eta}{n}}\leq \frac{2^j\bar{r}}{16} \;\textrm{ and }\;  \frac{6C_n^2\eta}{n}\leq \frac{2^{2j}\bar{r}^2}{16}.\label{eq:lemma:convergence:truncated:DNN:step2:eq2}
\end{eqnarray}
As a consequence, on the event $F_2:=\cap_{j=1}^SE_j\cap F_1$, we conclude that the following holds:
\begin{eqnarray}
	\|T_{C_n}(\widehat{f}_s)-f_{0,s}\|\leq 2^S\bar{r} \Rightarrow \|T_{C_n}(\widehat{f}_s)-f_{0,s}\|\leq 2^{S-1}\bar{r} \Rightarrow \cdots \|T_{C_n}(\widehat{f}_s)-f_{0,s}\|\leq \bar{r}.\label{eq:lemma:convergence:truncated:DNN:step2:eq3}
\end{eqnarray}
\noindent \textbf{Step 3:} Combining (\ref{eq:lemma:convergence:truncated:DNN:step2:eq1})-(\ref{eq:lemma:convergence:truncated:DNN:step2:eq3}), we are ready to choose appropriate $\bar{r}$.

By Lemma \ref{lemma:dudley:inequality}, we have
\begin{eqnarray}
	\ev_{\mathbb{Z}}(\mcRn\mcH_{a,r})&\leq& \inf_{0<x<ar_n}\bigg\{4x+12\int_{x}^{ar_n}\sqrt{\frac{\log \mcN(u, \mcH_{a,r}, \|\cdot\|_n)}{n}}du\bigg\}\nonumber\\
	&\leq& 4\epsilon+12ar_n\sqrt{\frac{\log \mcN(\epsilon, \mcH_{a,r}, \|\cdot\|_n)}{n}} \;\;\textrm{ for any } 0<\epsilon<ar_n.\label{eq:lemma:convergence:truncated:DNN:step3:eq1}
\end{eqnarray}
Notice the facts that $\|T_{C_n}(f)-T_{C_n}(g)\|_n\leq \|f-g\|_n$ for all functions $f, g$, and that $\max_{1\leq i\leq n}|T_{C_n}(f)(\bfZ_i)-f_{0,s}(\bfZ_i)|\leq 2C_n\leq n^2$. Therefore, by Lemma \ref{lemma:entroy:bound}, we have
\begin{eqnarray}
	\log \mcN(\epsilon, \mcH_{a,r}, \|\cdot\|_n)\leq cL(LW^2+Wd)\log(LW^2+Wd)\log\bigg(\frac{2en^3}{\epsilon}\bigg).\nonumber
\end{eqnarray}
Now we choose 
\begin{eqnarray}
	\epsilon=ar_n\sqrt{\frac{cL(LW^2+Wd)}{n}}<ar_n\;\textrm{ with } cL(LW^2+Wd)\geq 1,\nonumber
\end{eqnarray}
then (\ref{eq:lemma:convergence:truncated:DNN:step3:eq1}) becomes
\begin{eqnarray}
	\ev_{\mathbb{Z}}(\mcRn\mcH_{a,r})&\leq& 4ar_n\sqrt{\frac{cL(LW^2+Wd)}{n}}+12ar_n\sqrt{\frac{cL(LW^2+Wd)}{n}\log(n)\log\bigg(\frac{2en^3}{\epsilon}\bigg)}\nonumber\\
	&\leq&4ar_n\sqrt{\frac{cL(LW^2+Wd)}{n}}+12ar_n\sqrt{\frac{cL(LW^2+Wd)}{n}\log(n)\log\bigg(\frac{2en^4}{a}\bigg)}\nonumber\\
	&\leq&16ar_n\sqrt{\frac{cL(LW^2+Wd)}{n}\log(n)\log\bigg(\frac{2en^4}{a}\bigg)}\nonumber,
\end{eqnarray}
where we use the fact that $r_n\geq n^{-1/2}$ and $\epsilon\geq an^{-1}$. Using above inequality, the conditions in (\ref{eq:lemma:convergence:truncated:DNN:step2:eq2}) will be satisfied for all $j=1,\ldots, S$, if we choose
\begin{equation}
\bar{r}=\bar{r}_*:=2ar_n+32C_n\sqrt{\frac{cL(LW^2+Wd)}{n}\log(n)\log\bigg(\frac{2en^4}{a}\bigg)}+4C_n\sqrt{\frac{\eta}{n}}.\nonumber
\end{equation}

As a consequence of (\ref{eq:lemma:convergence:truncated:DNN:step2:eq1}) and (\ref{eq:lemma:convergence:truncated:DNN:step2:eq3}), we conclude that $\|T_{C_n}(\widehat{f}_s)-f_{0,s}\|\leq \bar{r}_*$ with probability at least
\begin{align}
	\pr(F_2)=\pr(\cap_{j=1}^SE_j\cap F_1)&\geq 1-\pr(F_1^C)-\sum_{j=1}^S\pr(E_j^c)\nonumber\\
	&\geq 1-\pr(\|\widehat{f}_s)-f_{0,s}\|_n> ar_n)-2Se^{-\eta}\nonumber\\
	&\geq 1-\pr(\|\widehat{f}_s)-f_{0,s}\|_n> ar_n)-3\log_2(2nC_n)e^{-\eta}. \label{eq:lemma:convergence:truncated:DNN:step3:eq2}
\end{align}
If choosing $\eta=2\log(n)$ and using the fact $a>1$, the following holds on event $F_2$:
\begin{eqnarray}
	\|T_{C_n}(\widehat{f}_s)-f_{0,s}\|&\leq& 2ar_n+32C_n\sqrt{\frac{cL(LW^2+Wd)}{n}\log(n)\log(2en^4)}+8C_n\sqrt{\frac{\log(n)}{n}}.\nonumber
\end{eqnarray}
Therefore, it follows that 
\begin{eqnarray}
	&&\lim_{a\to\infty}\lim_{n\to\infty}\pr(\|T_{C_n}(\widehat{f}_s)-f_{0,s}\|>4ar_n)\nonumber\\&\leq&\lim_{a\to\infty}\lim_{n\to\infty}\pr(F_2^C)+\lim_{a\to\infty}\lim_{n\to\infty}\pr(2ar_n>2ar_n)\nonumber\\
	&&+\lim_{a\to\infty}\lim_{n\to\infty}\pr\bigg(32C_n\sqrt{\frac{cL(LW^2+Wd)}{n}\log(n)\log(2en^4)}>ar_n\bigg)\nonumber\\
	&&+\lim_{a\to\infty}\lim_{n\to\infty}\pr\bigg(8C_n\sqrt{\frac{\log(n)}{n}}>ar_n\bigg)\nonumber\\
	&\leq&  \lim_{a\to\infty}\lim_{n\to\infty}\pr(\|T_{C_n}(\widehat{f}_s)-f_{0,s}\|_n> ar_n)+0+0+0=0,\nonumber
\end{eqnarray}
where the last inequality follows from (\ref{eq:lemma:convergence:truncated:DNN:step3:eq2}) and the rate conditions
 \begin{eqnarray}
	C_n=O(\log(n)),\; \log(n)\sqrt{\frac{cL(LW^2+Wd)}{n}\log(n)\log(2en^4)}=O(r_n),\;\log(n)\sqrt{\frac{\log(n)}{n}}=O(r_n).\nonumber
\end{eqnarray}
%
\end{proof}

\begin{lemma}\label{lemma:split:sample:empirical:rate}
Under Assumption \ref{Assumption:AC}, if $C_n \to \infty$, $C_n=O(\log(n))$, $LW=o(\sqrt{n})$, $LWd=o(n)$ and $\log^2(n)=o(nr_n^2)$, then
\begin{eqnarray*}
	\frac{1}{n_a}\sum_{i=1}^{n_a}|\widecheck{f}_s^b(\bfZ_i^a)-f_{0,s}(\bfZ_i^a)|^2=O_P(r_n^2) \quad\textrm{  and } \quad \frac{1}{n_b}\sum_{i=1}^{n_b}|\widecheck{f}_s^a(\bfZ_i^b)-f_{0,s}(\bfZ_i^b)|^2=O_P(r_n^2).
\end{eqnarray*}
\end{lemma}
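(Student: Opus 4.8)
\textbf{Proof proposal for Lemma \ref{lemma:split:sample:empirical:rate}.}

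The plan is to exploit the independence created by sample splitting: $\widecheck{f}_s^b$ is a function built only from group $b$, hence it is independent of the observations $\{(\bfX_i^a,\bfZ_i^a)\}_{i=1}^{n_a}$ in group $a$. Thus, conditionally on group $b$ (which fixes the realized function $\widecheck{f}_s^b$), the quantity $n_a^{-1}\sum_{i=1}^{n_a}|\widecheck{f}_s^b(\bfZ_i^a)-f_{0,s}(\bfZ_i^a)|^2$ is an empirical average of i.i.d.\ terms $|\widecheck{f}_s^b(\bfZ_i^a)-f_{0,s}(\bfZ_i^a)|^2$ whose conditional mean is exactly $\|\widecheck{f}_s^b - f_{0,s}\|^2$. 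By Lemma \ref{lemma:convergence:truncated:DNN} applied to group $b$ (whose size $n_b \asymp n$ satisfies the same rate conditions), we already know $\|\widecheck{f}_s^b - f_{0,s}\| = O_P(r_n)$, so the conditional mean of the empirical average is $O_P(r_n^2)$. It remains only to control the fluctuation of the empirical average about its conditional mean.

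First I would condition on group $b$ and write, with $h := \widecheck{f}_s^b - f_{0,s}$ a fixed bounded function satisfying $\|h\|_\infty \le 2C_n$,
\begin{equation*}
\frac{1}{n_a}\sum_{i=1}^{n_a} h^2(\bfZ_i^a) = \|h\|^2 + \Big(\frac{1}{n_a}\sum_{i=1}^{n_a} h^2(\bfZ_i^a) - \|h\|^2\Big).
\end{equation*}
The second term is a centered average of i.i.d.\ variables bounded by $(2C_n)^2 = O(\log^2 n)$ and with variance at most $(2C_n)^2\|h\|^2 = O(\log^2(n)\,r_n^2)$ conditionally. By Chebyshev's inequality (conditionally on group $b$), this fluctuation is $O_P\big(\sqrt{\log^2(n)\,r_n^2/n}\big) = O_P(r_n \cdot \sqrt{\log^2(n)/n})$. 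Since the rate condition $\log^2(n) = o(n r_n^2)$ gives $\sqrt{\log^2(n)/n} = o(r_n)$, the fluctuation term is $o_P(r_n^2)$, which is dominated by the $O_P(r_n^2)$ bound on $\|h\|^2$. Taking expectations / using that conditional-in-probability bounds that hold for almost every realization of group $b$ lift to unconditional $O_P$ statements (because the bound $r_n^2$ is deterministic), we obtain $n_a^{-1}\sum_{i=1}^{n_a}|\widecheck{f}_s^b(\bfZ_i^a)-f_{0,s}(\bfZ_i^a)|^2 = O_P(r_n^2)$. The symmetric argument with $a$ and $b$ interchanged handles the second claim.

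The only mild subtlety — and the step I would be most careful about — is the passage from the conditional statement ``given group $b$, the quantity is $O_P(r_n^2)$'' to the unconditional $O_P(r_n^2)$. This is routine but deserves a sentence: since $r_n$ is nonrandom, for any $\epsilon>0$ one can choose $M$ so that $\pr\big(n_a^{-1}\sum_i h^2(\bfZ_i^a) > M r_n^2 \,\big|\, \text{group }b\big) \le \epsilon$ on an event of group-$b$ probability at least $1-\epsilon$ (using both the Chebyshev bound above and the $O_P$ bound on $\|h\|$ from Lemma \ref{lemma:convergence:truncated:DNN}), and then integrate out. No new machinery is needed beyond Lemma \ref{lemma:convergence:truncated:DNN} and elementary conditioning, so this lemma is genuinely easy given what precedes it; its role is just to upgrade the $\|\cdot\|$-rate of the split estimator to an empirical-norm bound on the other half of the data, which feeds into the analysis of $\widecheck{\beta}^{ab}$.
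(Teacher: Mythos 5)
Your proposal is correct and matches the paper's argument: both condition on group $b$, apply Chebyshev's inequality to the centered empirical average, bound the conditional variance by $n_a^{-1}(2C_n)^2\|\widecheck{f}_s^b-f_{0,s}\|^2$ using the sup-norm truncation bound, and invoke Lemma \ref{lemma:convergence:truncated:DNN} together with the rate condition $\log^2(n)=o(nr_n^2)$. The only difference is cosmetic — you write the bias/fluctuation decomposition explicitly and say a word about lifting the conditional $O_P$ statement to an unconditional one (a step the paper leaves implicit), but the underlying estimate is identical.
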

\begin{proof}
By Lemma \ref{lemma:convergence:truncated:DNN}, we have
\begin{eqnarray*}
	\|\widecheck{f}_s^b-f_{0,s}\|^2=O_P(r_n^2).
\end{eqnarray*}
Now conditioning on observations $\mcD_b=\{(Y_i^b, \bfX_i^b, \bfZ_i^b), i=1,\ldots, n_b\}$ and by Chebyshev's inequality, we have
\begin{eqnarray*}
	&&\pr\bigg(\bigg|\frac{1}{n_a}\sum_{i=1}^{n_a}|\widecheck{f}_s^b(\bfZ_i^a)-f_{0,s}(\bfZ_i^a)|^2-\|\widecheck{f}_s^b-f_{0,s}\|^2\bigg|>\delta r_n^2\bigg| \mcD_b\bigg)\nonumber\\
	&\leq& \frac{1}{\delta^2 n_ar_n^4}\ev\bigg(|\widecheck{f}_s^b(\bfZ_1^a)-f_{0,s}(\bfZ_a^a)|^4\bigg| \mcD_b\bigg)\nonumber\\
	&\leq&\frac{4C_n^2}{\delta^2n_ar_n^4}\ev\bigg(|\widecheck{f}_s^b(\bfZ_1^a)-f_{0,s}(\bfZ_1^a)|^2\bigg| \mcD_b\bigg)\nonumber\\
	&=&\frac{4C_n^2}{\delta^2n_ar_n^4}\|\widecheck{f}_s^b-f_{0,s}\|^2=O_P\bigg(\frac{\log^2(n)}{nr_n^2}\bigg)=o_P(1),
\end{eqnarray*}
where we use the facts that $\|\widecheck{f}_s^b-f_{0,s}\|_\infty\leq 2C_n=O(\log(n))$ and $\log^2(n)=o(nr_n^2)$. 
\end{proof}
\begin{lemma}\label{lemma:split:sample:difference:rate:2}
Under Assumptions \ref{Assumption:AC} and \ref{Assumption:A3}, if $C_n \to \infty$, $C_n=O(\log(n))$, $LW=o(\sqrt{n})$, $LWd=o(n)$ and $\log^2(n)=o(nr_n^2)$, then the following holds:
$$\bigg\|\frac{1}{n_k}\sum_{i=1}^{n_k}\widecheck{\bfX}_i^k{\bfX}_i^{k\top}-\frac{1}{n_k}\sum_{i=1}^{n_k}\bfD_i^k\bfD_i^{k\top}\bigg\|_F=O_P(r_n)\;\;\textrm{ for } k=a,b.$$
\end{lemma}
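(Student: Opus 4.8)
The plan is to transcribe the argument of Lemma~\ref{lemma:difference:rate:2}, with the cross-fitting structure replacing the in-sample bound: since $\widecheck{\bfX}_i^k=(\widecheck f_1^{k'}(\bfZ_i^k),\ldots,\widecheck f_q^{k'}(\bfZ_i^k))^\top$ uses the estimator trained on the complementary group $k'$, the functions $\widecheck f_s^{k'}$ are independent of the design points $\{\bfZ_i^k\}_{i=1}^{n_k}$, so we may condition on $\mcD_{k'}$ and treat $\widecheck f_s^{k'}$ as a deterministic, $2C_n$-bounded function. Fix $k\in\{a,b\}$. First I would split by the triangle inequality,
\[
\Big\|\tfrac1{n_k}\sum_{i=1}^{n_k}\widecheck{\bfX}_i^k\bfX_i^{k\top}-\tfrac1{n_k}\sum_{i=1}^{n_k}\bfD_i^k\bfD_i^{k\top}\Big\|_F\le R_1+R_2,
\]
where $R_1=\big\|\tfrac1{n_k}\sum_{i}\bfX_i^k(\widecheck{\bfX}_i^k-\bfD_i^k)^\top\big\|_F$ and $R_2=\big\|\tfrac1{n_k}\sum_{i}\bfD_i^k(\bfX_i^k-\bfD_i^k)^\top\big\|_F$.

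For $R_1$, Lemma~\ref{proposition:cauchy:F:norm} gives $R_1\le\sqrt{\tfrac1{n_k}\sum_i\|\bfX_i^k\|_2^2\cdot\tfrac1{n_k}\sum_i\|\widecheck{\bfX}_i^k-\bfD_i^k\|_2^2}$. The first factor is $O_P(1)$ by Assumption~\ref{Assumption:AC}\ref{Assumption:A1} and the law of large numbers (the exponential moment bound on $X_s$ forces $\ev(X_s^2)<\infty$). The second factor equals $\sum_{s=1}^q\tfrac1{n_k}\sum_{i=1}^{n_k}|\widecheck f_s^{k'}(\bfZ_i^k)-f_{0,s}(\bfZ_i^k)|^2$, which is $O_P(r_n^2)$ by Lemma~\ref{lemma:split:sample:empirical:rate}; this is exactly the step that uses $C_n=O(\log n)$ and $\log^2(n)=o(nr_n^2)$, and it is where the independence from $\{\bfZ_i^k\}$ is essential. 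Hence $R_1=O_P(r_n)$.

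For $R_2$, expanding the Frobenius norm yields $R_2^2\le\sum_{1\le s,t\le q}\big|\tfrac1{n_k}\sum_{i=1}^{n_k}f_{0,s}(\bfZ_i^k)[X_{it}^k-f_{0,t}(\bfZ_i^k)]\big|^2$. Since $\ev(X_{it}^k\mid\bfZ_i^k)=f_{0,t}(\bfZ_i^k)$, each summand $f_{0,s}(\bfZ_i^k)[X_{it}^k-f_{0,t}(\bfZ_i^k)]$ has mean zero, and Assumption~\ref{Assumption:AC}\ref{Assumption:A1} (via $\|f_{0,s}\|_\infty\le C$ and the exponential moment on $X_t$) controls the relevant second moments, so for i.i.d.\ summation $\ev(R_2^2)=O(n_k^{-1})=O(n^{-1})=O(r_n^2)$, using $r_n\ge n^{-1/2}$; thus $R_2=O_P(r_n)$. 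Combining the two bounds gives the claim, and the argument is symmetric in $k=a,b$.

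I do not expect a genuine obstacle here: the only new ingredient relative to Lemma~\ref{lemma:difference:rate:2} is passing from the $L^2(\pr)$ rate of the truncated, cross-fitted estimator to its empirical-$L^2$ rate on the held-out sample, which is already supplied by Lemma~\ref{lemma:split:sample:empirical:rate}; the main point to get right is the careful bookkeeping of which subsample trains the estimator and which supplies the design points.
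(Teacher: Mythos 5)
Your proposal is correct and follows essentially the same route as the paper's proof: the identical triangle-inequality decomposition into $R_1$ and $R_2$, the bound on $R_1$ via Lemma \ref{proposition:cauchy:F:norm} together with the held-out empirical rate from Lemma \ref{lemma:split:sample:empirical:rate}, and the mean-zero second-moment argument giving $\ev(R_2^2)=O(n^{-1})=O(r_n^2)$. The cross-fitting bookkeeping you emphasize is exactly what the paper delegates to Lemma \ref{lemma:split:sample:empirical:rate}, so there is no gap.
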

\begin{proof}
W.L.O.G, we prove $k=a$. By triangle inequality, it follows that
\begin{eqnarray*}
	\bigg\|\frac{1}{n_a}\sum_{i=1}^{n_a}\widecheck{\bfX}_i^a{\bfX}_i^{a\top}-\frac{1}{n_a}\sum_{i=1}^{n_a}\bfD_i^a\bfD_i^{a\top}\bigg\|_F\nonumber&\leq&\bigg\|\frac{1}{n_a}\sum_{i=1}^{n_a}{\bfX}_i^a(\widecheck{\bfX}_i^a-\bfD_i^a)^{\top}\bigg\|_F+\bigg\|\frac{1}{n_a}\sum_{i=1}^{n_a}{\bfD}_i^a({\bfX}_i^a-\bfD_i^a)^\top\bigg\|_F\nonumber\\
	&:=&R_1+R_2,
\end{eqnarray*}
where the definition of $R_1, R_2$ is straight forward in the context. By Lemma \ref{proposition:cauchy:F:norm}, it follows that
\begin{eqnarray}
	R_1\leq \sqrt{\frac{1}{n_a}\sum_{i=1}^{n_a}\|\bfX_i^a\|_2^2\times \frac{1}{n_a}\sum_{i=1}^{n_a}\|\widecheck{\bfX}_i^a-\bfD_i^a\|_2^2}.\nonumber
\end{eqnarray}
Since $\sum_{i=1}^{n_a}\|\bfX_i^a\|_2^2=\sum_{s=1}^q \sum_{i=1}^{n_a}|X_{s,i}^{a}|^2$, Assumption \ref{Assumption:AC}\ref{Assumption:A1} and C.L.T together imply $\sum_{i=1}^{n_a}\|\bfX_i^a\|_2^2/n_a=O_P(1)$. By Lemma \ref{lemma:split:sample:empirical:rate} and the definition of $\widecheck{\bfX}_i^a, \bfD_i^a$,  we have
\begin{eqnarray}
\frac{1}{n_a}\sum_{i=1}^{n_a}\|\widecheck{\bfX}_i^a-\bfD_i^a\|_2^2=\frac{1}{n}\sum_{i=1}^{n_a}\sum_{s=1}^{q}|\widecheck{f}_{s}^b(\bfZ_i^a)-f_{0,s}(\bfZ_i^a)|^2=O_P(r_n^2).\nonumber
\end{eqnarray} 
As a consequence of above, we conclude $R_1=O_P(r_n)$. In the following, we will analyse $R_2$. By straightforward calculation, it is not difficult to show that
\begin{eqnarray*}
	R_2^2\leq \sum_{1\leq s, k\leq q}\bigg|\frac{1}{n_a}\sum_{i=1}^{n_a} f_{0,s}(\bfZ_i^a)[X_{ik}^a-f_{0,k}(\bfZ_i^a)]\bigg|^2.
\end{eqnarray*}
Since $\ev(X_{ik}^a|\bfZ_i^a)=f_{0,k}(\bfZ_i^a)$ for $k=1,\ldots, q$, it follows that $\ev\{f_{0,s}(\bfZ_i^a)[X_{ik}^a-f_{0,k}(\bfZ_i^a)]\}=0$ for $1\leq s, k\leq q$. Therefore, by Assumption \ref{Assumption:AC}\ref{Assumption:A1}, we conclude that
\begin{eqnarray*}
\ev(R_2^2)&\leq& \frac{1}{n_a}\sum_{1\leq s,k\leq q}\ev\{f_{0,s}^2(\bfZ)[X_k-f_{0,s}(\bfZ)]^2\}=O(n^{-1})=O(r_n^2).
\end{eqnarray*}
Combining above, we finish the proof.
\end{proof}

\begin{lemma}\label{lemma:split:sample:difference:rate:3}
Under Assumptions \ref{Assumption:AC} and \ref{Assumption:A3}, if $C_n \to \infty$, $C_n=O(\log(n))$, $LW=o(\sqrt{n})$, $LWd=o(n)$, $\log^2(n)=o(nr_n^2)$ and $\log(n)r_n=o(1)$, then it holds that 
$$\bigg\|\frac{1}{\sqrt{n}}\sum_{i=1}^{n_k}\widecheck{\bfX}_i^k\epsilon_i^k-\frac{1}{\sqrt{n}}\sum_{i=1}^{n_k}\bfD_i^k\epsilon_i^k\bigg\|_2^2=o_P(1) \quad \textrm{ for } k=a, b.$$
\end{lemma}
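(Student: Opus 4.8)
The plan is to exploit the cross-fitting structure. For group $a$ the fitted values $\widecheck{\bfX}_i^a$ use the estimator $\widecheck{\bff}^b$ trained only on group $b$, so conditionally on the group-$b$ data $\mcD_b:=\{(Y_i^b,\bfX_i^b,\bfZ_i^b)\}_{i=1}^{n_b}$ the function $\widecheck f_s^b$ is deterministic and the group-$a$ observations are i.i.d.\ and independent of $\mcD_b$. This eliminates any empirical-process control over the neural network class and reduces the statement to a conditional second-moment bound. Since $q$ is fixed and $n_a\asymp n_b\asymp n$, it suffices to show, for each coordinate $s=1,\dots,q$, that $T_n^{a}:=\frac{1}{\sqrt n}\sum_{i=1}^{n_a}\big(\widecheck f_s^b(\bfZ_i^a)-f_{0,s}(\bfZ_i^a)\big)\epsilon_i^a=o_P(1)$, together with the symmetric statement for $k=b$; squaring and summing over $s$ then yields the claimed $o_P(1)$ for both $k=a,b$.

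Fix $s$ and condition on $\mcD_b$, writing $g_b:=\widecheck f_s^b-f_{0,s}$, which is $\mcD_b$-measurable hence fixed under the conditioning. By the independence of the group-$a$ sample from $\mcD_b$ together with Lemmas \ref{prop:conditional:indpendence:1}--\ref{prop:conditional:indpendence:2}, the summands $g_b(\bfZ_i^a)\epsilon_i^a$, $i=1,\dots,n_a$, are conditionally i.i.d.; Assumption \ref{Assumption:A3}\ref{A3:0} gives $\ev(\epsilon_i^a\mid\bfZ_i^a,\mcD_b)=\ev(\epsilon_i^a\mid\bfZ_i^a)=0$, so each has conditional mean zero, and Assumption \ref{Assumption:A3}\ref{A3:a} gives $\ev((\epsilon_i^a)^2\mid\bfZ_i^a,\mcD_b)=\sigma_\epsilon^2$. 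Hence
\begin{equation*}
	\textrm{Var}\big(T_n^{a}\mid\mcD_b\big)=\frac{1}{n}\sum_{i=1}^{n_a}\ev\big(g_b^2(\bfZ_i^a)(\epsilon_i^a)^2\mid\mcD_b\big)=\frac{n_a}{n}\,\sigma_\epsilon^2\,\|\widecheck f_s^b-f_{0,s}\|^2 ,
\end{equation*}
and a conditional Chebyshev inequality gives, for every $\delta>0$, $\pr\!\left(|T_n^{a}|>\delta\,\middle|\,\mcD_b\right)\le \sigma_\epsilon^2\,\|\widecheck f_s^b-f_{0,s}\|^2/\delta^2$.

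It remains to integrate out $\mcD_b$. The estimator $\widecheck f_s^b$ is the truncated network estimator of Stage~2 built from a subsample of size $n_b\asymp n$, so Lemma \ref{lemma:convergence:truncated:DNN} applies with $n$ replaced by $n_b$ --- the rate conditions $C_n\to\infty$, $C_n=O(\log n)$, $LW=o(\sqrt n)$, $LWd=o(n)$ being unaffected --- and yields $\|\widecheck f_s^b-f_{0,s}\|=O_P(r_n)$; moreover $r_n=o(1)$ by the hypothesis $\log(n)r_n=o(1)$. Thus, given $\eta>0$, pick $M$ with $\pr(\|\widecheck f_s^b-f_{0,s}\|>Mr_n)<\eta$ for all large $n$; on the complementary event the conditional bound above is at most $\sigma_\epsilon^2M^2r_n^2/\delta^2\to0$, so $\limsup_n\pr(|T_n^{a}|>\delta)\le\eta$, and letting $\eta\downarrow0$ proves $T_n^{a}=o_P(1)$. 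The case $k=b$ is identical after interchanging the two groups (conditioning on $\mcD_a$ and using $\|\widecheck f_s^a-f_{0,s}\|=O_P(r_n)$).

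The only delicate point is the bookkeeping in the conditioning step: certifying that, given $\mcD_b$, the group-$a$ summands are conditionally i.i.d.\ with conditional mean zero and conditional second moment $\sigma_\epsilon^2\|\widecheck f_s^b-f_{0,s}\|^2$, which rests on the independence of the two data halves and on the conditional-independence facts already recorded in Lemmas \ref{prop:conditional:indpendence:1}--\ref{prop:conditional:indpendence:2}; everything else is a one-line variance estimate. In contrast with the non-split analogue Lemma \ref{lemma:difference:rate:3}, no truncation of $\epsilon$ and no Dudley/entropy bound for $\mcF_{d,1}(L,W)$ are needed, precisely because $\widecheck f_s^b$ is a single fixed function once $\mcD_b$ is fixed.
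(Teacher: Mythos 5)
Your argument is correct, and it reaches the conclusion by a genuinely simpler route than the paper's. Both proofs start identically: condition on $\mcD_b$, note that $\widecheck f_s^b$ is then a fixed function and the group-$a$ summands $g_b(\bfZ_i^a)\epsilon_i^a$ are conditionally i.i.d.\ with conditional mean zero, and apply a conditional Chebyshev bound so that everything reduces to controlling the conditional second moment $\ev\big(g_b^2(\bfZ_i^a)(\epsilon_i^a)^2\mid\mcD_b\big)$; the final step of integrating out $\mcD_b$ via the event $\{\|\widecheck f_s^b-f_{0,s}\|\le Mr_n\}$ from Lemma \ref{lemma:convergence:truncated:DNN} is also the same. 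Where you diverge is in how that second moment is handled. You invoke the conditional homoskedasticity $\ev(\epsilon^2\mid\bfZ)=\sigma_\epsilon^2$ of Assumption \ref{Assumption:A3}\ref{A3:a} to factor it exactly as $\sigma_\epsilon^2\|\widecheck f_s^b-f_{0,s}\|^2$, which kills the problem in one line and, as you note, makes the truncation of $\epsilon$ unnecessary. The paper instead splits $\epsilon$ at a level $m_n\asymp\log n$, bounds the truncated part by $m_n^2\|\widecheck f_s^b-f_{0,s}\|^2=O_P(m_n^2r_n^2)$, and bounds the tail part by the $L^\infty$ bound $2C_n$ on the truncated network together with the exponential-tail estimate of Lemma \ref{lemma:moment:bound}, giving $O_P(C_n^2e^{-\kappa_3m_n/3})$; this uses only the exponential moment of $\epsilon$ and not the constancy of $\ev(\epsilon^2\mid\bfZ)$, so it would survive heteroskedastic errors, at the price of the extra hypothesis $\log(n)r_n=o(1)$ being genuinely needed to absorb the $m_n^2$ factor (in your version the same hypothesis is only used to conclude $r_n=o(1)$). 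Both routes are legitimate under the stated assumptions. One caveat that applies equally to your write-up and to the paper: the vanishing of the conditional mean (equivalently, of the cross terms in the second moment) requires $\ev(\epsilon\mid\bfZ)=0$, which is slightly stronger than the literal statement $\ev(\epsilon\mid Z_j)=0$ for each $j$ in Assumption \ref{Assumption:A3}\ref{A3:0}; since the paper relies on the same reading, this is not a gap in your argument relative to the source.
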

\begin{proof}
We only prove the case when $k=a$. By conditioning on observations $\mcD_b=\{(Y_i^b, \bfX_i^b, \bfZ_i^b), i=1,\ldots, n_b\}$ and Chebyshev's inequality, we have
\begin{eqnarray*}
	\pr\bigg(\bigg|\frac{1}{n}\sum_{i=1}^{n_a}\bigg(\widecheck{f}_s^b(\bfZ_i^a)-f_{0,s}(\bfZ_i^a)\bigg)\epsilon_i^a\bigg|>\frac{\delta}{\sqrt{n}} \bigg| \mcD_b\bigg)&\leq& \frac{1}{\delta^2n}\sum_{i=1}^{n_a}\ev\bigg(|\widecheck{f}_s^b(\bfZ^a_i)-f_{0,s}(\bfZ^a_i)|^2|\epsilon_i^a|^2\bigg| \mcD_b\bigg)\nonumber\\
	&\leq& S_1+S_2,\nonumber
\end{eqnarray*}
where 
\begin{eqnarray}
	S_1&=&\frac{1}{\delta^2n} \sum_{i=1}^{n_a}\ev\bigg(|\widecheck{f}_s^b(\bfZ_i^a)-f_{0,s}(\bfZ_i^a)|^2|\epsilon_i^a|^2I(|\epsilon_i^a|\leq m_n)\bigg| \mcD_b\bigg)\nonumber\\
	S_2&=&\frac{1}{\delta^2n}\sum_{i=1}^{n_a} \ev\bigg(|\widecheck{f}_s^b(\bfZ_i^a)-f_{0,s}(\bfZ_i^a)|^2|\epsilon_i^a|^2I(|\epsilon_i^a|> m_n)\bigg| \mcD_b\bigg)\nonumber,
\end{eqnarray}
for any diverging sequence $m_n$. By Lemma \ref{lemma:convergence:truncated:DNN}, it follows that 
\begin{eqnarray}
	S_1\leq \frac{m_n^2}{\delta^2n}\sum_{i=1}^{n_a}\ev\bigg(|\widecheck{f}_s^b(\bfZ_i^a)-f_{0,s}(\bfZ_i^a)|^2|\bigg| \mcD_b\bigg)= \frac{n_am_n^2}{\delta^2n}\|\widecheck{f}_s^b-f_{0,s}\|^2=O_P(m_n^2r_n^2).\nonumber
\end{eqnarray}
Moreover, due to truncation and Lemma \ref{lemma:moment:bound}, we have
\begin{eqnarray}
	S_2&\leq&\frac{4C_n^2}{\delta^2n}\sum_{i=1}^{n_a} \ev\bigg(|\epsilon_i^a|^2I(|\epsilon_i^a|> m_n)\bigg| \mcD_b\bigg)\leq \frac{36\kappa_4n_aC_n^2}{\kappa_3^2\delta^2n}e^{-\kappa_3m_n/3}=O_P(C_n^2e^{-\kappa_3m_n/3}).\nonumber
\end{eqnarray}
Since $C_n=O(\log(n))$, if we choose $m_n=3\kappa^{-1}\log(n)$, then $S_1+S_2=o_P(1)$ provided $\log(n)r_n=o(1)$. 
Notice the fact that
\begin{eqnarray*}
	\bigg\|\frac{1}{n}\sum_{i=1}^{n_a}\bigg(\widecheck{\bfX}_i^a-\bfD_i^a\bigg)\epsilon_i^a\bigg\|_2^2&=&\sum_{s=1}^q \bigg|\frac{1}{n}\sum_{i=1}^{n_a}\bigg(\widecheck{f}_s^b(\bfZ_i^a)-f_{0,s}(\bfZ_i^a)\bigg)\epsilon_i^a\bigg|^2,
\end{eqnarray*}
the desired result follows.
\end{proof}

\begin{lemma}\label{lemma:split:sample:DIVE}
Under Assumptions \ref{Assumption:AC} and \ref{Assumption:A3}, if $C_n \to \infty$, $C_n=O(\log(n))$, $LW=o(\sqrt{n})$, $LWd=o(n)$, $\log^2(n)=o(nr_n^2)$ and $\log(n)r_n=o(1)$, then it holds that
\begin{equation*}
	\sqrt{n}(\widecheck{\beta}^{ab}-\beta_0)\to  \textrm{N}(0, \sigma_\epsilon^2 \ev^{-1}(\bfD\bfD^\top)).
\end{equation*}
\end{lemma}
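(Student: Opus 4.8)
The plan is to reduce the matrix-weighted combination $\widecheck{\beta}^{ab}$ to a single pooled cross-fitted least-squares expression and then read off the limiting distribution from the auxiliary lemmas and the classical CLT. First I would substitute the definitions of $\widecheck{\beta}^a$ and $\widecheck{\beta}^b$ into the formula for $\widecheck{\beta}^{ab}$: since $\big(\sum_{i=1}^{n_k}\widecheck{\bfX}_i^k\bfX_i^{k\top}\big)\widecheck{\beta}^k=\sum_{i=1}^{n_k}\widecheck{\bfX}_i^k Y_i^k$ for $k=a,b$, the weight matrices cancel and one obtains
\begin{equation*}
	\widecheck{\beta}^{ab}=\bigg(\sum_{i=1}^{n_a}\widecheck{\bfX}_i^a\bfX_i^{a\top}+\sum_{i=1}^{n_b}\widecheck{\bfX}_i^b\bfX_i^{b\top}\bigg)^{-1}\bigg(\sum_{i=1}^{n_a}\widecheck{\bfX}_i^a Y_i^a+\sum_{i=1}^{n_b}\widecheck{\bfX}_i^b Y_i^b\bigg).
\end{equation*}
Plugging in $Y_i^k=\bfX_i^{k\top}\beta_0+\epsilon_i^k$ then gives $\sqrt{n}(\widecheck{\beta}^{ab}-\beta_0)=\widehat{A}_n^{-1}\widehat{B}_n$ with $\widehat{A}_n=\frac1n\sum_{k\in\{a,b\}}\sum_{i=1}^{n_k}\widecheck{\bfX}_i^k\bfX_i^{k\top}$ and $\widehat{B}_n=\frac1{\sqrt n}\sum_{k\in\{a,b\}}\sum_{i=1}^{n_k}\widecheck{\bfX}_i^k\epsilon_i^k$.

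Next I would handle $\widehat{A}_n$. All hypotheses of Lemma~\ref{lemma:split:sample:difference:rate:2} are among the assumptions of the present lemma, so $\big\|\frac1{n_k}\sum_{i=1}^{n_k}\widecheck{\bfX}_i^k\bfX_i^{k\top}-\frac1{n_k}\sum_{i=1}^{n_k}\bfD_i^k\bfD_i^{k\top}\big\|_F=O_P(r_n)=o_P(1)$ for $k=a,b$, where $r_n=o(1)$ because $\log(n)r_n=o(1)$ is assumed. Since groups $a$ and $b$ form a random partition of the $n$ observations and $\bfD_i$ is simply the \emph{true} function $\bff_0$ evaluated at $\bfZ_i$ (so it does not depend on the group), the two pooled sums combine into $\frac1n\sum_{i=1}^n\bfD_i\bfD_i^\top$, which tends to $\ev(\bfD\bfD^\top)$ by the strong law (note $\bfD$ is a.s.\ bounded by the compositional-structure Assumption~\ref{Assumption:AC}\ref{A1:b}). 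Hence $\widehat{A}_n=\ev(\bfD\bfD^\top)+o_P(1)$, which is invertible by Assumption~\ref{Assumption:A3}\ref{A3:b}, so $\widehat{A}_n^{-1}=\ev^{-1}(\bfD\bfD^\top)+o_P(1)$.

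Then I would handle $\widehat{B}_n$. The hypotheses here also match those of Lemma~\ref{lemma:split:sample:difference:rate:3}, which yields $\big\|\frac1{\sqrt n}\sum_{i=1}^{n_k}\widecheck{\bfX}_i^k\epsilon_i^k-\frac1{\sqrt n}\sum_{i=1}^{n_k}\bfD_i^k\epsilon_i^k\big\|_2=o_P(1)$ for $k=a,b$; pooling as before, $\widehat{B}_n=\frac1{\sqrt n}\sum_{i=1}^n\bfD_i\epsilon_i+o_P(1)$. For the pooled sum the classical CLT applies exactly as in the proof of Lemma~\ref{theorem:asymptotic:distribution}: under Assumption~\ref{Assumption:A3} one has $\ev(\bfD\epsilon)=\mathbf{0}$ and, using $\ev(\epsilon^2\mid\bfZ)=\sigma_\epsilon^2$ together with the boundedness of $\bfD$, $\ev(\bfD\bfD^\top\epsilon^2)=\sigma_\epsilon^2\ev(\bfD\bfD^\top)$, so $\widehat{B}_n\cid \textrm{N}(0,\sigma_\epsilon^2\ev(\bfD\bfD^\top))$. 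Slutsky's theorem then gives $\sqrt{n}(\widecheck{\beta}^{ab}-\beta_0)=\widehat{A}_n^{-1}\widehat{B}_n\cid \textrm{N}(0,\sigma_\epsilon^2\ev^{-1}(\bfD\bfD^\top))$, as claimed.

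I expect the main obstacle to lie not in this final assembly but in the pieces already established: the substantive work — exploiting the independence afforded by cross-fitting, i.e.\ conditioning on $\mcD_b$ so that $(\bfZ_i^a,\epsilon_i^a)$ act as fresh i.i.d.\ draws for the estimator $\widecheck{\bff}^b$ — has been absorbed into Lemmas~\ref{lemma:split:sample:empirical:rate}, \ref{lemma:split:sample:difference:rate:2} and \ref{lemma:split:sample:difference:rate:3}. What remains to be done carefully is the algebraic collapse of the matrix-weighted combination defining $\widecheck{\beta}^{ab}$ into the pooled normal equations, and the verification that every rate condition required by those lemmas is implied by the hypotheses of the present lemma — the crucial point being that, thanks to the Stage-2 truncation delivering control in the population norm $\|\cdot\|$ rather than only in $\|\cdot\|_n$, the smoothness condition $p^*>t^*/2$ of Theorem~\ref{theorem:asymptotic:distribution:deep} does \emph{not} reappear here.
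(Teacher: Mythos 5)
Your proof is correct and follows essentially the same route as the paper: substitute the normal equations $\bigl(\sum_{i}\widecheck{\bfX}_i^k\bfX_i^{k\top}\bigr)\widecheck{\beta}^k=\sum_{i}\widecheck{\bfX}_i^k Y_i^k$ so the inner weight matrices cancel, expand $Y_i^k=\bfX_i^{k\top}\beta_0+\epsilon_i^k$ to isolate $\beta_0$ plus a noise term, then apply Lemma~\ref{lemma:split:sample:difference:rate:2} for the inverse matrix, Lemma~\ref{lemma:split:sample:difference:rate:3} for the score, the CLT for $\sum_i\bfD_i\epsilon_i/\sqrt{n}$, and Slutsky. Your extra remarks about pooling the group-$a$/$b$ sums and about the boundedness of $\bfD$ for the CLT are details the paper leaves implicit, but the argument is the same.
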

\begin{proof}
By simple calculation, it follows that
\begin{eqnarray*}
	\widecheck{\beta}^{ab}&=&	\bigg(\sum_{i=1}^{n_a}\widecheck{\bfX}_i^a\bfX_i^{a\top}+\sum_{i=1}^{n_b}\widecheck{\bfX}_i^b\bfX_i^{b\top}\bigg)^{-1}\bigg(\sum_{i=1}^{n_a}\widecheck{\bfX}_i^a\bfX_i^{a\top}\widecheck{\beta}^a+\sum_{i=1}^{n_b}\widecheck{\bfX}_i^b\bfX_i^{b\top}\widecheck{\beta}^b\bigg)\nonumber\\
	&=&	\bigg(\sum_{i=1}^{n_a}\widecheck{\bfX}_i^a\bfX_i^{a\top}+\sum_{i=1}^{n_b}\widecheck{\bfX}_i^b\bfX_i^{b\top}\bigg)^{-1}\bigg(\sum_{i=1}^{n_a}\widecheck{\bfX}_i^aY_i^a+\sum_{i=1}^{n_b}\widecheck{\bfX}_i^bY_i^b\bigg)\nonumber\\
	&=&	\bigg(\sum_{i=1}^{n_a}\widecheck{\bfX}_i^a\bfX_i^{a\top}+\sum_{i=1}^{n_b}\widecheck{\bfX}_i^b\bfX_i^{b\top}\bigg)^{-1}\bigg(\sum_{i=1}^{n_a}\widecheck{\bfX}_i^a\bfX_i^{a\top}+\sum_{i=1}^{n_b}\widecheck{\bfX}_i^b\bfX_i^{b\top}\bigg)\beta_0\nonumber\\
	&&+	\bigg(\sum_{i=1}^{n_a}\widecheck{\bfX}_i^a\bfX_i^{a\top}+\sum_{i=1}^{n_b}\widecheck{\bfX}_i^b\bfX_i^{b\top}\bigg)^{-1}\bigg(\sum_{i=1}^{n_a}\widecheck{\bfX}_i^a\epsilon_i^a+\sum_{i=1}^{n_b}\widecheck{\bfX}_i^b\epsilon_i^b\bigg)\nonumber\\
	&:=&R_1+R_2.
\end{eqnarray*}
where the definition of $R_1, R_2$ is straightforward in the context. By Lemma \ref{lemma:split:sample:difference:rate:2}  and Assumption \ref{Assumption:A3}\ref{A3:b}, we can see that $\sum_{i=1}^{n_a}\widecheck{\bfX}_i^a\bfX_i^{a\top}/n+\sum_{i=1}^{n_b}\widecheck{\bfX}_i^b\bfX_i^{b\top}/n$ is asymptotically invertible, and thus $R_1=\beta_0$. Furthermore, one can verify that the  C.L.T holds for $\sum_{i=1}^{n_a}\bfD_i^a\epsilon_i^a/\sqrt{n}+\sum_{i=1}^{n_b}\bfD_i^b\epsilon_i^b/\sqrt{n}$ using Assumption \ref{Assumption:A3}. As a consequence of Slutsky's Theorem, Lemmas \ref{lemma:split:sample:difference:rate:2} and \ref{lemma:split:sample:difference:rate:3}, we can show $\sqrt{n}R_2\to  \textrm{N}(0, \sigma_\epsilon^2 \ev^{-1}(\bfD\bfD^\top))$
\end{proof}
\begin{proof}[\bf{Proof of Theorem \ref{theorem:split:sample:DIVE:deep}}]
By Lemmas \ref{theorem:approximation:deep} and \ref{lemma:rate:of:convergence}, we have
\begin{eqnarray}
	r_n=\log^4(n)\sqrt{\frac{L(LW^2+Wd)}{n}\log(LW^2+Wd)}+\bigg(\frac{LW}{\log(L)\log(W)}\bigg)^{-\frac{2p^*}{t^*}}.\nonumber
\end{eqnarray}
The first result follows by combining above and Lemma \ref{lemma:convergence:truncated:DNN}.

Notice that the conditions in Lemma \ref{lemma:split:sample:DIVE} can be satisfied  by the rate conditions given. So the second result follows.
\end{proof}

\begin{proof}[\bf{Proof of Lemma  \ref{lemma:split:sample:estimation:variance}}]
By Lemma \ref{lemma:split:sample:difference:rate:2} and Theorems \ref{theorem:split:sample:DIVE:deep} or   \ref{theorem:split:sample:DIVE:shallow}, we can prove the desired result using similar argument in the proof of Lemma \ref{lemma:estimation:variance}.
\end{proof}

\begin{proof}[\textbf{Proof of Theorem \ref{theorem:specification:test}}]
Under $H_0$, similar proof of  Lemma \ref{theorem:asymptotic:distribution} leads to 
\begin{eqnarray}
	\sqrt{n}(\widetilde{\beta}-\beta_0)&=&\ev^{-1}(\widetilde{\bfD}\widetilde{\bfD}^\top)\frac{1}{\sqrt{n}}\sum_{i=1}^n\widetilde{\bfD}_i\epsilon_i+o_P(1),\label{eq:lemma:test:H0:eq1}\\
	\sqrt{n}(\widehat{\beta}-\beta_0)&=&\ev^{-1}({\bfD}{\bfD}^\top)\frac{1}{\sqrt{n}}\sum_{i=1}^n{\bfD}_i\epsilon_i+o_P(1).\label{eq:lemma:test:H0:eq2}
\end{eqnarray}
Notice $\bfZ$ contains $\widetilde{\bfZ}$, we show that
\begin{eqnarray*}
	\ev(\epsilon_i^2 \widetilde{\bfD}_i\bfD^\top_i| \widetilde{\bfZ_i})=\sigma_\epsilon^2 \widetilde{\bfD}_i\ev(\bfD^\top_i | \widetilde{\bfZ}_i)=\sigma_\epsilon^2 \widetilde{\bfD}_i \widetilde{\bfD}_i^\top,
\end{eqnarray*}
which further implies
\begin{equation*}
	\ev(\epsilon_i^2 \widetilde{\bfD}_i\bfD^\top_i)=\sigma_\epsilon^2 \ev(\widetilde{\bfD}_i \widetilde{\bfD}_i^\top).
\end{equation*}
By above equation, C.L.T, (\ref{eq:lemma:test:H0:eq1}) and (\ref{eq:lemma:test:H0:eq2}) , we conclude that
\begin{equation*}
	\sqrt{n}\begin{pmatrix}
	\widetilde{\beta}-\beta_0\\
	\widehat{\beta}-\beta_0
	\end{pmatrix}\cid N\bigg(0, \sigma_\epsilon^2\begin{pmatrix}
	\ev^{-1}(\widetilde{\bfD}\widetilde{\bfD}^\top) &  \ev^{-1}({\bfD}{\bfD}^\top)\\
	 \ev^{-1}({\bfD}{\bfD}^\top) & \ev^{-1}({\bfD}{\bfD}^\top)
	\end{pmatrix}\bigg).
\end{equation*}
By delta method, it follows that
\begin{equation}\label{eq:lemma:test:H0:eq3}
	\sqrt{n}(\widehat{\beta}-\widetilde{\beta})\cid N(0, \sigma_\epsilon^2[\ev^{-1}(\widetilde{\bfD}\widetilde{\bfD}^\top)-\ev^{-1}({\bfD}{\bfD}^\top)]).
\end{equation}
As a consequence of Assumption \ref{Assumption:A7}\ref{A7.b} and (\ref{eq:lemma:test:H0:eq3}), we have
\begin{equation*}
	\sigma_\epsilon^{-2} n(\widehat{\beta}-\widetilde{\beta})^\top[\ev^{-1}(\widetilde{\bfD}\widetilde{\bfD}^\top)-\ev^{-1}({\bfD}{\bfD}^\top)]^{-1}(\widehat{\beta}-\widetilde{\beta})\cid \chi^2(q).
\end{equation*}
Under $H_0$ and assumptions given, above equation still holds when the unknown parameters $\sigma_\epsilon$, $\ev(\widetilde{\bfD}\widetilde{\bfD}^\top)$ and $\ev({\bfD}{\bfD}^\top)$ are replaced with their empirical counterparts.

Now we will prove the result under $H_1$. Direct calculation reveals that
\begin{eqnarray*}
	\bigg\|\frac{1}{\sqrt{n}}\sum_{i=1}^n\widehat{\bfX}_i\epsilon_i-\frac{1}{\sqrt{n}}\sum_{i=1}^n{\bfD}_i\epsilon_i\bigg\|_2^2=\sum_{s=1}^q\bigg|\frac{1}{\sqrt{n}}\sum_{i=1}^n\bigg(\widehat{f}_s(\bfZ_i)-f_{0,s}(\bfZ_i)\bigg)\epsilon_i\bigg|^2.
\end{eqnarray*}
By Cauchy's inequality and Lemma \ref{lemma:rate:of:convergence}, it shows that
\begin{eqnarray*}
\bigg|\frac{1}{n}\sum_{i=1}^n\bigg(\widehat{f}_s(\bfZ_i)-f_{0,s}(\bfZ_i)\bigg)\epsilon_i\bigg|^2&\leq& \|\widehat{f}_s-f_{0,s}\|_n^2\times \frac{1}{n}\sum_{i=1}^n\epsilon_i^2=o_P(r_n^2).
\end{eqnarray*}
Therefore by conditions given, it follows that
\begin{eqnarray}
	\bigg\|\frac{1}{\sqrt{n}}\sum_{i=1}^n\widehat{\bfX}_i\epsilon_i\bigg\|_2&\geq& \bigg\|\frac{1}{\sqrt{n}}\sum_{i=1}^n{\bfD}_i\epsilon_i\bigg\|_2-\bigg\|\frac{1}{\sqrt{n}}\sum_{i=1}^n\widehat{\bfX}_i\epsilon_i-\frac{1}{\sqrt{n}}\sum_{i=1}^n{\bfD}_i\epsilon_i\bigg\|_2\nonumber\\
	&\geq& \bigg\|\sqrt{n}\ev(\bfD\epsilon)\bigg\|_2-\bigg\|\frac{1}{\sqrt{n}}\sum_{i=1}^n{\bfD}_i\epsilon_i-\sqrt{n}\ev(\bfD\epsilon)\bigg\|_2-\bigg\|\frac{1}{\sqrt{n}}\sum_{i=1}^n\widehat{\bfX}_i\epsilon_i-\frac{1}{\sqrt{n}}\sum_{i=1}^n{\bfD}_i\epsilon_i\bigg\|_2\nonumber\\
	&\geq& \sqrt{n}\delta_{\textrm{bias}}-o_P(\sqrt{n})-o_P(\sqrt{n}r_n).\label{eq:lemma:difference:rate:3:H1:eq1}
\end{eqnarray}
On the other hand, it yields that
\begin{eqnarray}
	\bigg\|\frac{1}{\sqrt{n}}\sum_{i=1}^n\widehat{\bfX}_i\epsilon_i\bigg\|_2&\leq& \bigg\|\frac{1}{\sqrt{n}}\sum_{i=1}^n{\bfD}_i\epsilon_i\bigg\|_2+\bigg\|\frac{1}{\sqrt{n}}\sum_{i=1}^n\widehat{\bfX}_i\epsilon_i-\frac{1}{\sqrt{n}}\sum_{i=1}^n{\bfD}_i\epsilon_i\bigg\|_2\nonumber\\
	&\leq& \bigg\|\sqrt{n}\ev(\bfD\epsilon)\bigg\|_2+\bigg\|\frac{1}{\sqrt{n}}\sum_{i=1}^n{\bfD}_i\epsilon_i-\sqrt{n}\ev(\bfD\epsilon)\bigg\|_2+\bigg\|\frac{1}{\sqrt{n}}\sum_{i=1}^n\widehat{\bfX}_i\epsilon_i-\frac{1}{\sqrt{n}}\sum_{i=1}^n{\bfD}_i\epsilon_i\bigg\|_2\nonumber\\
	&=& \sqrt{n}\delta_{\textrm{bias}}+o_P(\sqrt{n})+o_P(\sqrt{n}r_n)=O_P(\sqrt{n}).\label{eq:lemma:difference:rate:3:H1:eq2}
\end{eqnarray}
By simple calculation, we have
\begin{eqnarray*}
	\sqrt{n}(\widehat{\beta}-\beta_0)&=&\sqrt{n}\bigg[\bigg(\frac{1}{n}\sum_{i=1}^n\widehat{\bfX}_i{\bfX}_i^\top\bigg)^{-1}\frac{1}{n}\sum_{i=1}^n\widehat{\bfX}_iY_i-\beta_0\bigg]\nonumber\\
	&=&\sqrt{n}\bigg[\bigg(\frac{1}{n}\sum_{i=1}^n\widehat{\bfX}_i{\bfX}_i^\top\bigg)^{-1}\frac{1}{n}\sum_{i=1}^n\widehat{\bfX}_i\bfX_i^\top \beta_0-\beta_0\bigg]+\bigg(\frac{1}{n}\sum_{i=1}^n\widehat{\bfX}_i{\bfX}_i^\top\bigg)^{-1}\frac{1}{\sqrt{n}}\sum_{i=1}^n\widehat{\bfX}_i\epsilon_i\nonumber\\
	&:=&R_1+R_2,
\end{eqnarray*}
Lemma \ref{lemma:difference:rate:2} and Assumption \ref{Assumption:A3}\ref{A3:b} imply that $\sum_{i=1}^n\widehat{\bfX}_i{\bfX}_i^\top/n$ is asymptotically invertible, and thus $R_1=0$. 
Direct examination and (\ref{eq:lemma:difference:rate:3:H1:eq2}) together lead to
\begin{eqnarray*}
	R_2&=&\ev^{-1}(\bfD\bfD^\top)\frac{1}{\sqrt{n}}\sum_{i=1}^n\widehat{\bfX}_i\epsilon_i+\bigg[\bigg(\frac{1}{n}\sum_{i=1}^n\widehat{\bfX}_i{\bfX}_i^\top\bigg)^{-1}-\ev^{-1}(\bfD\bfD^\top)\bigg]\frac{1}{\sqrt{n}}\sum_{i=1}^n\widehat{\bfX}_i\epsilon_i\nonumber\\
	&=& \ev^{-1}(\bfD\bfD^\top)\frac{1}{\sqrt{n}}\sum_{i=1}^n\widehat{\bfX}_i\epsilon_i+o_P(\sqrt{n}),\nonumber
\end{eqnarray*}
where we use Lemma \ref{lemma:difference:rate:2}  and Assumption  \ref{Assumption:A3}\ref{A3:b} that $\ev(\bfD\bfD^\top)$ is positive definite. As a consequence of (\ref{eq:lemma:difference:rate:3:H1:eq1}),  (\ref{eq:lemma:difference:rate:3:H1:eq2}) and above equation, we have
it follows that
\begin{eqnarray*}
	n\|\widehat{\beta}-\beta_0\|_2^2&=& R_2^{\top}R_2\nonumber\\
	&\geq& \lambda_{\min}^{-2}(\ev(\bfD\bfD^\top))\bigg\|\frac{1}{\sqrt{n}}\sum_{i=1}^n\widehat{\bfX}_i\epsilon_i\bigg\|_2^2-o_P(n)\nonumber\\
	&\geq&  \lambda_{\min}^{-2}(\ev(\bfD\bfD^\top))n\delta_{\textrm{bias}}^2-o_P(n).
\end{eqnarray*}
Therefore, with probability approaching one, it follows that
\begin{eqnarray*}
	\sqrt{n}\|\widehat{\beta}-\widetilde{\beta}\|_2&\geq&	\sqrt{n}\|\widehat{\beta}-\beta_0\|_2-	\sqrt{n}\|\widetilde{\beta}-\beta_0\|_2\nonumber\\
	&\geq&  \lambda_{\min}^{-1}(\ev(\bfD\bfD^\top))\sqrt{n}\delta_{\textrm{bias}}-o_P(1),
\end{eqnarray*}
where the last inequality follows from the fact that $\widetilde{\beta}$ is $\sqrt{n}$-consistent under $H_1$. Now direct calculation leads to
\begin{eqnarray*}
	&&\sigma_\epsilon^{-2} n(\widehat{\beta}-\widetilde{\beta})^\top[\ev^{-1}(\widetilde{\bfD}\widetilde{\bfD}^\top)-\ev^{-1}({\bfD}{\bfD}^\top)]^{-1}(\widehat{\beta}-\widetilde{\beta})\nonumber\\
	&\geq& \sigma_\epsilon^{-2} \lambda_{\min}^{-1}\bigg(\ev^{-1}(\widetilde{\bfD}\widetilde{\bfD}^\top)-\ev^{-1}({\bfD}{\bfD}^\top)\bigg)n\|\widehat{\beta}-\widetilde{\beta}\|_2^2 \to\infty \textrm{ in probability}.
\end{eqnarray*}
Finally, Under $H_1$ and assumptions given, we replace unknown parameters $\sigma_\epsilon$, $\ev(\widetilde{\bfD}\widetilde{\bfD}^\top)$ and $\ev({\bfD}{\bfD}^\top)$ with their empirical counterparts, and conclude that $J \to \infty$ in probability.
\end{proof}

\end{document}